\theoremstyle{plain}
\newtheorem{theorem}{Theorem}[section]
\newtheorem{corollary}[theorem]{Corollary}
\newtheorem{lemma}[theorem]{Lemma}
\newtheorem{proposition}[theorem]{Proposition}
\newtheorem{conj}[theorem]{Conjecture}
\newtheorem{claim}[theorem]{Claim}
\theoremstyle{definition}
\newtheorem{definition}[theorem]{Definition}
\newtheorem{prop_def}[theorem]{Proposition-definition}
\newtheorem{convention}[theorem]{Convention}
\theoremstyle{remark}
\newtheorem{example}[theorem]{Example}
\newtheorem{remark}[theorem]{Remark}
\DeclareMathOperator*{\argmin}{argmin}
\newcommand{\DSF}{\text{DSF}}
\newcommand{\CFD}{\text{CFD}}
\newcommand{\MBD}{\text{MBD}}
\newcommand{\CO}{\text{CO}}
\newcommand{\Cyl}{\text{Cyl}}
\newcommand{\Slice}{\text{Slice}}
\newcommand{\EmptySlice}{\text{EmptySlice}}
\newcommand{\Approx}{\text{Approx}}
\newcommand{\FC}{\text{FC}}
\newcommand{\Bi}{\text{BI}}
\newcommand{\Rad}{\text{Rad}}
\newcommand{\Leb}{\text{Leb}}
\newcommand{\N}{\mathbb{N}}
\newcommand{\tvc}[1]{#1}
\title{The Directed Spanning Forest in the hyperbolic space}
\author{Lucas Flammant \footnote{Univ. de Valenciennes, CNRS, EA 4015 - LAMAV, F-59313 Valenciennes Cedex 9, France}
\footnote{Univ. Lille, CNRS, UMR 8524 - Laboratoire P. Painlevé, F-59000 Lille, France, E-mail: lucas.flammant@univ-lille.fr}}
\date{\today}
\begin{document}

\maketitle

\begin{abstract}

The Euclidean Directed Spanning Forest is a random forest in $\mathbb{R}^d$ introduced by Baccelli and Bordenave in 2007 and we introduce and study here the analogous tree in the hyperbolic space. The topological properties of the Euclidean DSF have been stated for $d=2$ and conjectured for $d \ge 3$ (see further): it should be a tree for $d \in \{2,3\}$ and a countable union of disjoint trees for $d \ge 4$. Moreover, it should not contain bi-infinite branches whatever the dimension $d$. In this paper, we construct the Hyperbolic Directed Spanning Forest (HDSF) and we give a complete description of its topological properties, which are radically different from the Euclidean case. Indeed, for any dimension, the hyperbolic DSF is a tree containing infinitely many bi-infinite branches, whose asymptotic directions are investigated. The strategy of our proofs consists in exploiting the Mass Transport Principle, which is adapted to take advantage of the invariance by isometries. Using appropriate mass transports is the key to carry over the hyperbolic setting ideas developed in percolation and for spanning forests. This strategy provides an upper-bound for horizontal fluctuations of trajectories, which is the key point of the proofs. To obtain the latter, we exploit the representation of the forest in the hyperbolic half space.
\end{abstract}

\textbf{Key words: } continuum percolation, hyperbolic space, stochastic geometry, random geometric tree, Directed Spanning Forest, Mass Transport Principle, Poisson point processes.
\bigbreak
\textbf{AMS 2010 Subject Classification:} Primary 60D05, 60K35, 82B21.
\bigbreak
\textbf{Acknowledgments.} This work has been supervised by David Coupier and Viet Chi Tran, who helped a lot and finalize the manuscript. It has been supported by the LAMAV (Université Polytechnique des Hauts de France) and the Laboratoire P. Painlevé (Lille). It has also benefitted from the GdR GeoSto 3477, the Labex CEMPI (ANR-11-LABX-0007-01) and the ANR PPPP (ANR-16-CE40-0016).


\section{Introduction}

Many random objects present radically different behaviors depending on whether they are considered in an Euclidean or hyperbolic setting. With the dichotomy of recurrence and transience for symmetric random walks \cite{markvorsen}, one of the most emblematic example is given by continuum percolation models. Indeed, the Poisson-Boolean model contains at most one unbounded component in $\mathbb{R}^d$ \cite{continuum} whereas it admits a non-degenerate regime with infinitely many unbounded components in the hyperbolic plane \cite{tykesson}. The difference is mainly explained by the fact that the hyperbolic space is non-amenable, i.e. the measure of the boundary of a large subset is not negligible with respect to its volume. For this reason, arguments based on comparison between volume and surface, such as the Burton and Keane argument \cite{burton}, fail in hyperbolic geometry. For background in hyperbolic geometry, the reader may refer to \tvc{\cite{cannon,paupert}, or to \cite{chavel,iversen,ratcliffe} for more exhaustive information}.

Hence there is a growing interest for the study of random models in a hyperbolic setting. Let us cite the work of Benjamini \& Schramm about the Bernoulli percolation on regular tilings and Voronoï tesselation in the hyperbolic plane \cite{benjamini}, and the work of Calka \& Tykesson about asymptotic visibility in the Poisson-Boolean model \cite{visibility}. Mean characteristics of the Poisson-Voronoï tesselation have also been studied in a general Riemannian manifold by Calka et. al. \cite{calka2018mean}. In addition, huge differences between amenable and non-amenable spaces are well known in a discrete context \cite{percolationbeyondZd, lyons2017probability, pete2014probability}.

It is in order to highlight new behaviors that we investigate the study of the hyperbolic counterpart of the Euclidean Directed Spanning Forest (DSF) defined in $\mathbb{R}^d$ by \cite{baccelli}. To our knowledge, this is the first study of a spanning forest in the hyperbolic space.

\bigbreak

Geometric random trees are well studied in the literature since it interacts with many other fields, such as communication networks, particles systems or population dynamics. We can cite the work of Norris and Turner \cite{norristurner} establishing some scaling limits for a model of planar aggregation. In addition, hyperbolic random graphs are well-fitted to modelize social networks (\textit{e.g.} \cite{socialnetworks} or \cite{vanderhoorn}).

The Euclidean DSF is a random forest whose introduction has been motivated by applications for communication networks.  The set of vertices is given by a homogeneous Poisson Point Process (PPP) $N$ of intensity $\lambda$ in $\mathbb{R}^d$. For any unit vector $u \in \mathbb{R}^d$, the (Euclidean) DSF with direction $u$ is the graph obtained by connecting each point $z \in N$ to the closest point to $z$ among all points $z' \in N$ that are further in the direction $u$ (i.e. such that $\langle z'-z,u \rangle>0$). \tvc{The latter point is called the parent or ancestor of $z$, and denoted by $A(z)$. Because the parent of a point always exists, it is possible to construct from any $z\in N$ a sequence of Poisson points $(z_n)_{n\in \N}$ such that $z_1=z$ and $z_{n+1}=A(z_n)$ for all $n\in \N$. This sequence constitutes a branch of the DSF, which is by construction infinite in the direction $u$. The branch is said to be bi-infinite if every point is the parent of another point of $N$. In this case, the bi-infinite branch can be represented by a sequence $(z_n)_{n\in \mathbb{Z}}$ where for each $n\in \mathbb{Z}$, $z_{n+1}$ is the parent of $z_n$.}

The topological properties of the Euclidean DSF are now well-understood. Coupier and Tran showed in 2010 that, in dimension $2$, it is a tree that does not contain bi-infinite branches \cite{dsf}. Their proof used a Burton \& Keane argument, so it cannot be carried over the hyperbolic case. In addition, Coupier, Saha, Sarkar \& Tran developed tools to split trajectories in i.i.d. blocks \cite{dsftobw}, and these tools may permit to show that the Euclidean DSF is a tree in dimension $2$ and $3$ but not in dimension $4$ and more (see \cite[Remark 18, p.35]{dsftobw}). This dichotomy and the absence of bi-infinite branches for any dimension $d$  have been proved for similar models defined on lattices and presenting less geometrical dependencies \cite{roy2016random, gangopadhyay, athreya, ferrari}. \tvc{Indeed, compared with these models, the DSF exhibits complex geometrical dependencies: given a Poisson point $z \in N$, knowing the position of its parent $A(z)$ implies that some region is empty in the direction $u$, which affects the future evolution of trajectories (Figure \ref{fig:constructionhyper}), and thus destroys nice Markov properties available for the models on lattices mentioned above. For instance, if $u$ is the upward direction, the knowledge of $z$ and $A(z)$ indicates that the upper part of a hyperbolic ball centered at $z$ and having $A(z)$ on its boundary is empty of Poisson point.}

The hyperbolic space is a homogeneous space with constant negative curvature, that can be chosen equal to $-1$ without loss of generality. It can be represented by several models, all related by isometries. We will work in the $(d+1)$-dimensional upper half-space $H:=\{z=(x_1,...,x_d,y) \in \mathbb{R}^{d+1},~y>0\}$  \cite[p.69]{cannon}
endowed with the metric
\begin{eqnarray*}
ds_H^2:=\frac{dx_1^2+...+dx_d^2+dy^2}{y^2}.
\end{eqnarray*}

This representation is well adapted to our problem as explained in Section \ref{S:generalsettings}. Now, let us define the hyperbolic DSF. The set of vertices is given by a homogeneous PPP $N$ of intensity $\lambda>0$ in $(H,ds_H^2)$. Given a point $x \in N$, choosing its closest vertex according to a given direction can be interpreted in different ways in the hyperbolic space. Hence several hyperbolic DSF could be considered. We choose to connect each point $z=(x_1,...,x_d,y) \in N$ to the closest point to $z$ \tvc{(for the hyperbolic distance)}  among all points $z'=(x'_1,...,x'_d,y') \in N$ with $y'>y$ (called the parent or ancestor of $z$, $z$ being the descendant of $z'$). An equivalent and more intrinsic definition of this model using horodistances is given in the core of the article. The main interest of this definition is the preservation of the link between the DSF and the Radial Spanning Tree (RST) existing in the Euclidean setting. The (Euclidean) RST, also defined by \cite{baccelli}, is a random tree whose set of vertices is given by a homogeneous PPP $N$ plus the origin $0$ and defined by connecting each point $z \in N$ to the closest point to $z$ among all points $z' \in N \cup \{0\}$ such that $\|z'\|<\|z\|$. In the Euclidean setting, the DSF approximates locally the RST in distribution far from the origin. This remains true in the hyperbolic setting for our definition of hyperbolic DSF. \tvc{The results established in this paper are of great importance for the study of the hyperbolic RST that is not considered here, but in \cite{flammant-RST}}. A simulation of the hyperbolic DSF is given in Figure \ref{fig:simuhypdsf}.

In this paper, we give a complete description of the topological properties of the hyperbolic DSF which present huge differences with the Euclidean case : whatever the dimension $d$, the hyperbolic DSF is a.s. a tree (Theorem \ref{Thm:coalescence}) and admits infinitely many bi-infinite branches (Theorem \ref{Thm:biinfinite}).

\begin{figure}[!h]
    \centering
    \includegraphics[scale=0.37]{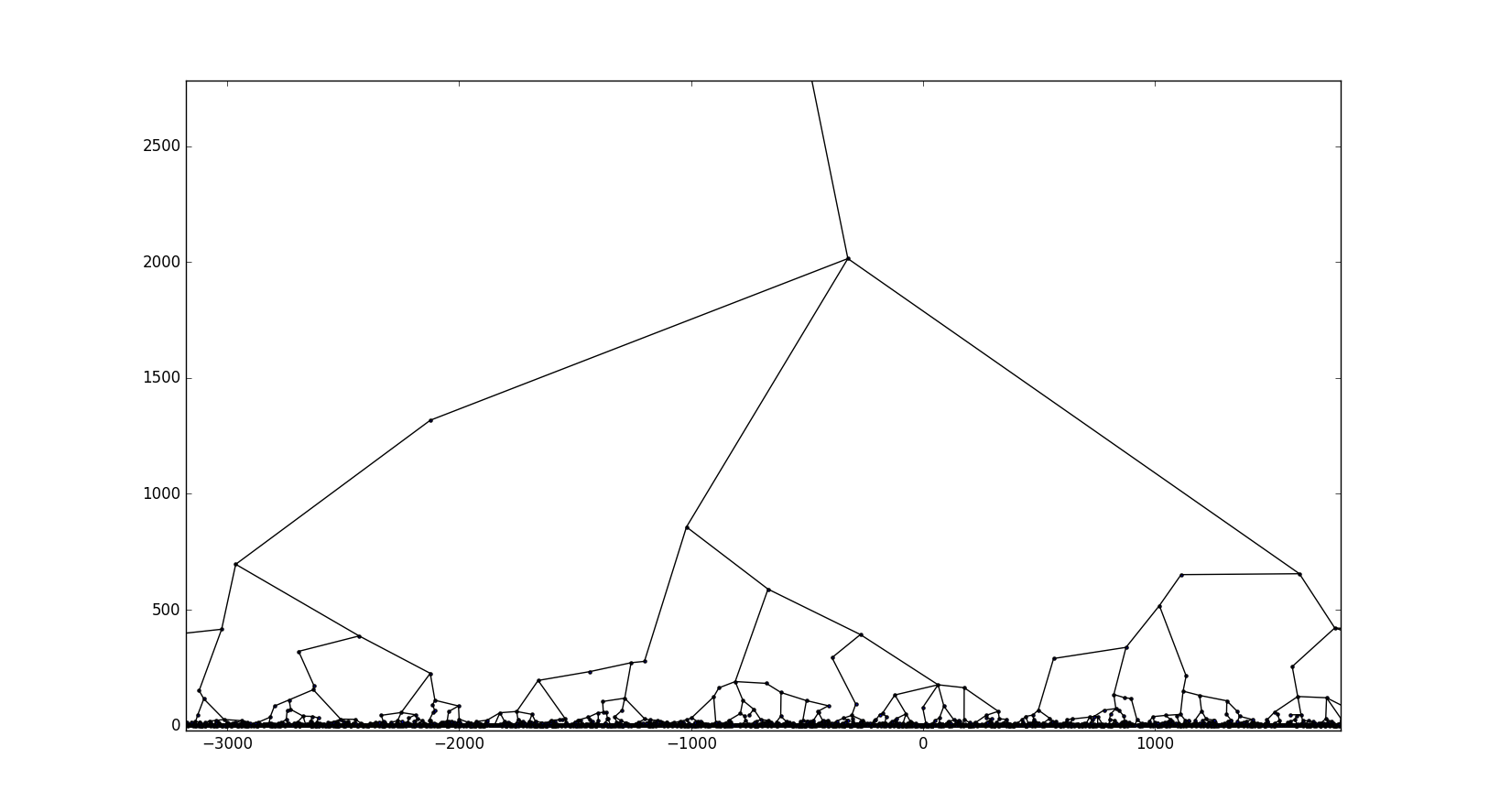}
    \includegraphics[scale=0.37]{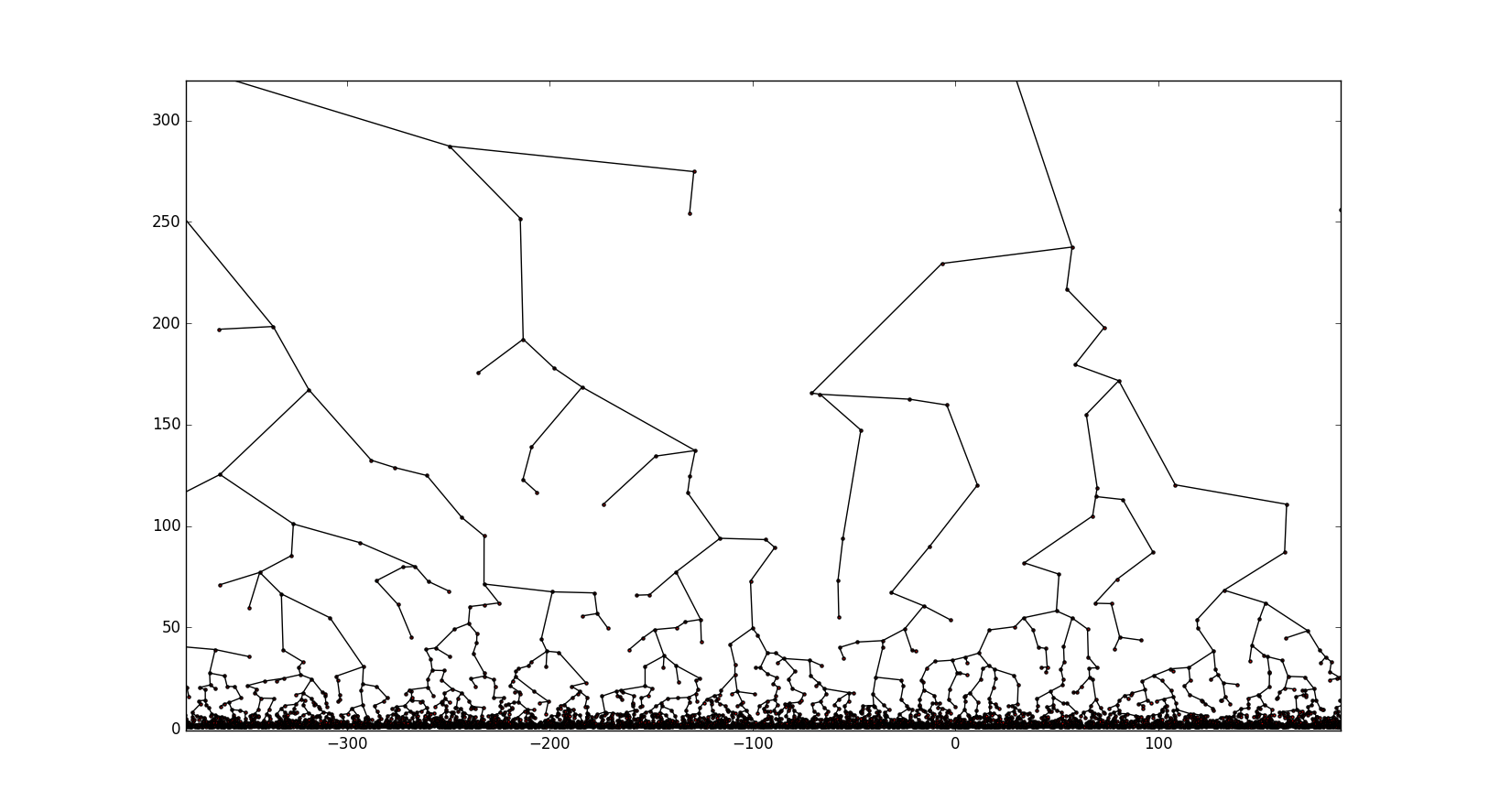}
    \caption{Simulation of the DSF in the half-space model, with  $\lambda=1$ (to the top) and $\lambda=10$ (to the bottom). The local behavior of the hyperbolic DSF depends on the intensity $\lambda$ because the space is curved. For instance the average number of \tvc{descendants}  is larger for $\lambda=1$ than for $\lambda=10$. But its topological properties do not depend on $\lambda$ (see Theorems \ref{Thm:coalescence} and \ref{Thm:biinfinite}). \tvc{Notice that throughout the paper, the $(d+1)$-th coordinate $y$ is seen as the time; the future is upward and the past is downward. Thus, coalescence events being encountered when advancing in $y$, the tree structure is such that ancestors (in the tree) are in the `future' of descendants (i.e. with higher $y$-coordinate).}}
    \label{fig:simuhypdsf}
\end{figure}

For the DSF, being a tree means that all branches eventually coalesce, i.e. any two points $z,z' \in N$ have a common ancestor somewhere in the DSF. For any bounded measurable subset $A \in \mathbb{R}^d$, we can define its \emph{coalescing height} $\tau_A$ as the smallest $\tau \ge 0$ such that every branches passing through $A \times \{e^0\}$ have merged below ordinate $e^\tau$ (see Definition \ref{Def:coalesceheight}). Here is our first main result:

\begin{theorem}
\label{Thm:coalescence}
For all $d \ge 1$ and for all intensity $\lambda>0$, the hyperbolic DSF in dimension $d+1$ is a.s. a tree.
Moreover, if $d=1$, for all $a>0$, the coalescing height $\tau_{[-a,a]}$ admits exponential tail decay: for any $t>0$,
\begin{eqnarray*}
\mathbb{P}\left[ \tau_{[-a,a]}>t \right] \le 2\alpha_0ae^{-t},
\end{eqnarray*}
where the positive constant $\alpha_0$ will be specified later (in Proposition \ref{Prop:finiteintensity}).
\end{theorem}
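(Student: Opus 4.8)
The plan is to establish the quantitative bound for $d=1$ and to deduce the tree property from it; the same scheme handles every dimension $d\ge 1$, the only difference being that for $d\ge 2$ the underlying horizontal-fluctuation estimate need only be strong enough to force coalescence, not to yield a clean rate, which is why the explicit exponential tail is stated for $d=1$ only. Recall that the DSF direction is the boundary point $\infty\in\partial H$ (the parent of a vertex is its nearest neighbour among vertices of larger ordinate, i.e.\ among vertices closer to $\infty$ for the horodistance), so every ancestral line is an infinite upward path converging to $\infty$ and every vertex has exactly one parent; hence each connected component of the DSF is a tree, and the DSF is a single tree if and only if any two ancestral lines eventually merge. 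Thus, once we know that for every $a>0$ the coalescing height is a.s.\ finite — and, by isometry invariance, the analogous statement at every ordinate — a routine conditioning-and-union argument over pairs of Poisson points and over dyadic boxes shows that any two ancestral lines merge a.s., i.e.\ the DSF is a.s.\ a tree.

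Fix now $d=1$ and $a>0$, and let $n_t$ be the number of distinct branches crossing $[-a,a]\times\{1\}$ that have not yet coalesced at ordinate $e^t$ (equivalence classes, for the relation ``share an ancestor of ordinate $\le e^t$'', of ancestral lines meeting that segment). Then $n_t$ is non-increasing in $t$, is $\ge 1$, and $\{\tau_{[-a,a]}>t\}=\{n_t\ge 2\}$; since $n_t-1\ge \mathbf{1}_{\{n_t\ge 2\}}$ for an integer-valued $n_t\ge 1$, we get $\mathbb{P}[\tau_{[-a,a]}>t]\le\mathbb{E}[n_t]-1$, so it suffices to prove $\mathbb{E}[n_t]\le 1+2\alpha_0 a e^{-t}$. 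Letting $t\to\infty$ this also gives $n_t\to 1$ a.s., hence coalescence at this scale and, as above, the tree property. (A priori finiteness of $n_t$ — equivalently local finiteness of the branch-crossing point process of a horosphere — is itself the substance of Proposition~\ref{Prop:finiteintensity}, which defines $\alpha_0$ as the intensity of that point process on $\{y=1\}$, measured per unit hyperbolic length.)

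To bound $\mathbb{E}[n_t]$ I would exploit two invariances. First, \emph{self-similarity}: since the parent of a vertex of ordinate $y$ lies among the vertices of ordinate $>y$, the DSF restricted to $\{y>h\}$ is exactly the DSF built on $N\cap\{y>h\}$; combining this with the fact that $z\mapsto e^{-t}z$ is a hyperbolic isometry under which the Poisson process is invariant in law and the DSF equivariant, $n_t$ equals in law the number of still-distinct-at-ordinate-$1$ branches crossing the near-boundary segment $[-ae^{-t},ae^{-t}]\times\{e^{-t}\}$. Second, invariance under horizontal translations and reflections. Then by the Mecke/Slivnyak formula, $\mathbb{E}[n_t]$ is rewritten and bounded above by an integral, over a horosphere-crossing edge $(z,z')$ and a vertex below it, of the probability that $(z,z')$ is a DSF edge times the probability that the branch through it, continued downward, visits the prescribed region: the horosphere-crossing factor of this integral is by definition $\alpha_0$ times the relevant Euclidean length, while the ``continued downward into the tiny near-boundary segment'' factor is where the hyperbolic contraction toward $\infty$ produces the $e^{-t}$ — a hyperbolic climb of height $t$ contracts Euclidean horizontal distances by $e^{t}$, and the target segment has Euclidean length $2ae^{-t}$ — so that the integral is at most $1+2\alpha_0 a e^{-t}$.

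The crux, and the only place where independence is unavailable and a genuinely new idea is needed, is the control of the horizontal fluctuations of branches that makes the downward-visit factor as small as claimed (and that also yields the finiteness of $\alpha_0$): because a branch's past encodes the emptiness of large hyperbolic regions, the model has no Markov or renewal structure, so one cannot split trajectories into i.i.d.\ blocks as for lattice drainage networks. Instead I would run a Mass Transport argument for the group of hyperbolic isometries fixing $\infty$ — the group of similarities of $\mathbb{R}^{d}$, under which the DSF on the Poisson process is invariant — choosing the transport rule so that the identity ``mass sent out $=$ mass received'' turns a hypothetical super-linear horizontal wandering (equivalently, an infinite branch-crossing intensity, equivalently, infinitely many never-coalescing branches) into a contradiction with the mass balance; following the abstract, this is carried out in the half-space representation, where the geometry of nearest-neighbour connections and of horospheres is most explicit. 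I expect the design of this transport rule and the extraction from it of a usable quantitative fluctuation bound to be by far the main obstacle; the reductions of the first two paragraphs and the concluding Mecke computation are bookkeeping by comparison.
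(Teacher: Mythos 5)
There is a genuine gap, in two places. First, for $d=1$ your proposed Mecke/Slivnyak computation for $\mathbb{E}[n_t]$ does not factorize the way you describe: the event ``$(z,z')$ is a DSF edge crossing $\{y=e^t\}$'' and the event ``the branch continues downward to $[-a,a]\times\{1\}$'' are determined by the same Poisson configuration on overlapping regions, so they are \emph{not} independent and cannot be multiplied. You acknowledge in the last paragraph that this lack of independence is ``the crux,'' but that contradicts the second paragraph where you treat the Mecke computation as if it closes. The paper's $d=1$ proof avoids this dependence entirely, and it does so \emph{without any fluctuation estimate}: by planarity one can define, for each level-$t$ point $x$ that has descendants at level $0$, its \emph{left-most} such descendant $M_t(x)$, and one runs the Mass Transport Principle on $\mathbb{R}$ with the rule ``transport unit mass from $x$ to $M_t(x)$.'' The outgoing mass from $A$ is $\mathbb{E}[\#(B_t\cap A)]\le \alpha_0 e^{-t}\operatorname{Leb}(A)$ and the incoming mass is $\mathbb{E}[\#(S_t\cap A)]$ where $S_t$ is the set of left-most descendants (``separating points''). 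A planarity lemma (if $x_1<x_2<x_3$ with $x_1,x_3\in\mathcal{D}_0^t(x)$ then $x_2\in\mathcal{D}_0^t(x)$) shows that $S_t\cap[-a,a]=\emptyset$ forces $\tau_{[-a,a]}\le t$, giving the exact rate $2a\alpha_0 e^{-t}$. Your proposal contains neither the left-most-descendant device nor the planarity lemma, and without them the bound on $\mathbb{E}[n_t]$ is unproved. (Minor: your inequality $n_t-1\ge\mathbf{1}_{\{n_t\ge 2\}}$ presumes $n_t\ge 1$, which fails with positive probability; one should write $\mathbb{P}[\tau>t]=\mathbb{P}[n_t\ge 2]\le\mathbb{E}[(n_t-1)_+]$.)

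Second, your opening claim that ``the same scheme handles every dimension $d\ge 1$, the only difference being that for $d\ge 2$ the fluctuation estimate need only be strong enough to force coalescence'' misidentifies the structure of the general-$d$ proof. Planarity (and hence the left-most-descendant transport) is unavailable for $d\ge 2$, and there is no analogue of the clean $e^{-t}$ rate. The paper's general-$d$ argument is of a different nature: it does use the forward fluctuation control (your ``crux''), but it then argues by contradiction, combining (a) that two chosen trajectories stay in a cylinder $B(0,Ae^h)\times(0,e^h)$ with high probability, (b) a martingale convergence step $\mathbb{E}[\mathbf{1}_{\mathrm{CO}}\,|\,\mathcal{F}_{h-}]\to\mathbf{1}_{\mathrm{CO}}$ a.s., and (c) a local-surgery / ``forcing coalescence'' construction showing that modifying the Poisson process in a fixed-size region near height $h$ forces the two trajectories to merge with probability bounded below uniformly in $h$. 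This has no counterpart in your sketch; it is not a quantitative moment bound on a crossing count, and it does not reduce to ``bookkeeping'' on top of the fluctuation estimate. So both halves of your plan---the explicit $d=1$ bound, and the reduction of $d\ge 2$ to it---need a new idea that the proposal does not supply.
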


The coalescence in every dimension is specific to the hyperbolic case, since in the Euclidean case, it is expected that the DSF is a tree in dimension $2$ and $3$ only. \tvc{For $d=1$, the coalescing height admits exponential tail decay in the hyperbolic case whereas $\mathbb{P}[\tau_{[-a,a]}>t] \sim Ca/\sqrt{t}$ when $t \to \infty$ and for a constant $C>0$ in the Euclidean case \cite{dsftobw} (heuristically, it can be compared to the coalescing height of two Brownian motions starting from $(-a,0)$ and $(a,0)$ and directed to the top).} The coalescence of all trajectories can be heuristically explained by the fact that two trajectories starting from the ordinate $e^0$ almost remain in a cone: their typical horizontal deviations at ordinate $e^t$ are of order $e^t$. So, roughly speaking, they remain at the same hyperbolic horizontal distance from each other as they go up, implying that they must coalesce. This behavior is due to the hyperbolic metric and does not occur in $\mathbb{R}^d$.

\tvc{Recall that a bi-infinite branch can be identified with a sequence of Poisson points $(z_n)_{n \in \mathbb{Z}} \in N^\mathbb{Z}$ such that $z_{n+1}$ is the parent of $z_n$ for all $n \in \mathbb{Z}$. In the upper half-space representation, the limit $\lim_{n\rightarrow -\infty} z_n$, when it exists, is necessarily a point of the boundary hyperplane $\mathbb{R}^d \times \{0\}$. The points of $\mathbb{R}^d\times \{0\}$ are `points at infinity' for the hyperbolic geometry. We say that a bi-infinite branch has an asymptotic direction if there exists $(x,0) \in \mathbb{R}^d \times \{0\}$ such that the path converges towards the past to this point at infinity: $\lim_{n\rightarrow -\infty} z_n=(x,0)$.}  Our second main result concerns bi-infinite branches and their asymptotic directions. \tvc{The $(d+1)$-th coordinate $y$ is seen as the time; the future is upward and the past is downward.}

\begin{theorem}
\label{Thm:biinfinite}
For all $d \ge 1$ and for all intensity $\lambda>0$, the following assertions hold outside a set of probability zero: \\
(i) The hyperbolic DSF admits infinitely many bi-infinite branches. \\
(ii) Every bi-infinite branch of the hyperbolic DSF converges toward the past. \\
(iii) \tvc{For every $x \in \mathbb{R}^d$, there exists a bi-infinite branch of the hyperbolic DSF that converges to $(x,0)$ toward the past.} \\
(iv) \tvc{Such a branch is unique for almost every $x \in \mathbb{R}^d$. The set of $x \in \mathbb{R}^d$ for which there is no uniqueness is dense in $\mathbb{R}^d$.} It is moreover countable in the bi-dimensional case (i.e. if $d=1$).

Moreover, for any deterministic $x \in \mathbb{R}^d$, the bi-infinite branch converging to $(x,0)$ toward the past is unique a.s.
\end{theorem}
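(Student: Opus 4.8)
The plan is to isolate two analytic inputs — the coalescence of all trajectories (Theorem~\ref{Thm:coalescence}) and the upper bound on horizontal fluctuations announced in the abstract — and to deduce all four assertions, together with the deterministic statement, by combinatorial and topological arguments on the tree combined with the invariance under the isometry group of $H$. A preliminary structural remark: since the DSF is a.s.\ a tree with a single ``top end'' (all ancestral lines coalesce going upwards by Theorem~\ref{Thm:coalescence}, and their ordinates tend to $+\infty$ by local finiteness of $N$), every bi-infinite branch splits as one ancestral ray, whose ordinates increase to $\infty$ (its future), followed by one descending ray, whose ordinates decrease (its past). For (ii) I would then argue that the total horizontal Euclidean displacement of the descending ray below ordinate $h$ is $O(h)$, by summing the fluctuation bound geometrically over dyadic ordinate scales; hence its horizontal coordinate is Cauchy and the ray converges, and its ordinate tends to $0$ (otherwise a bounded region of $H$ would contain infinitely many points of $N$), so the limit lies on $\mathbb{R}^d\times\{0\}$.

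For the existence of a bi-infinite branch — hence for part of (i) — and in fact for one converging to an arbitrarily prescribed $(x,0)$, I would build it explicitly. Let $z_n$ be the point of $N$ closest to $(x,e^{-n})$, so $z_n\to(x,0)$; by the fluctuation bound (in a form uniform over scales) the ancestral lines $A_n$ lie, up to ordinate $1$, in a common bounded cylinder, and by Theorem~\ref{Thm:coalescence} any two of them coalesce below a finite ordinate $H$. Since $A_0$ has only finitely many vertices between ordinates $1$ and $H$, the lowest common ancestor of $z_0,\dots,z_m$ stabilises as $m\to\infty$ to a vertex $L$ that is an ancestor of every $z_n$; thus $L$ has infinitely many descendants and, by K\"onig's lemma applied to its (locally finite) descendant subtree, carries a descending ray $r$. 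Each vertex of $r$ is an ancestor of some $z_n$ with $n\to\infty$ along $r$, so by the fluctuation bound its horizontal coordinate is within $O(\text{ordinate})$ of that of $z_n\to(x,0)$; hence $r$ converges to $(x,0)$, and $r$ together with the ancestral ray of $L$ is the desired bi-infinite branch. The ``infinitely many'' in (i) is then immediate from (iii) (distinct past-limits force distinct branches), or can be obtained independently from ergodicity of the DSF under the isometry group plus a $0$–$\infty$ law: a finite positive number of bi-infinite branches would produce a stationary finite nonempty random subset of $\mathbb{R}^d$, which is impossible.

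For (iii) I would study $A\subseteq\mathbb{R}^d$, the set of past-limits of bi-infinite branches, which is a.s.\ nonempty by the construction above and whose law is invariant under translations and dilations of $\mathbb{R}^d$. Invariance under dilations centred at a point of a given ball $D$, together with $\mathbb{P}[A\neq\emptyset]=1$, forces $\mathbb{P}[A\cap D\neq\emptyset]=1$ for every ball, so $A$ is a.s.\ dense. For closedness: if $x_n\in A$ with $x_n\to x$, the corresponding branches all cross ordinate $1$ inside a fixed bounded region (uniform fluctuation bound), so infinitely many share a vertex $v^\star$ there; a diagonal extraction of the children used by these branches below $v^\star$ yields a descending ray from $v^\star$, which the uniform fluctuation bound forces to converge to $x$, whence $x\in A$. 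A closed dense set is everything, which is (iii); and then (i) follows.

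The uniqueness for deterministic $x$ — hence, by Fubini, the a.e.-uniqueness in (iv) — is where quantitative control is needed. If two distinct bi-infinite branches converge to $(x,0)$ they coalesce at a vertex of ordinate $h^\star>0$, so they occupy distinct vertices at every small ordinate $e^{-n}<h^\star$, both lying (by the fluctuation bound) in a hyperbolic ball of fixed radius around $(x,e^{-n})$; rescaling by $e^{n}$ and invoking the coalescence estimate at that scale (the exponential tail of Theorem~\ref{Thm:coalescence} for $d=1$, a weaker polynomial-type estimate for $d\ge2$), Borel--Cantelli gives that their ancestral lines coalesce below ordinate $e^{-n+o(n)}\to0$, contradicting $h^\star>0$. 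For the remaining parts of (iv): density of the non-uniqueness set follows from $A=\mathbb{R}^d$ and the fact that it contains the (dense) images of the branching vertices of the tree under the past-limit map; its countability when $d=1$ comes from planarity — the descending rays cannot cross, so the double points inject into $N$ times a finite set — while for $d\ge2$ a self-similar, Cantor-type construction exhibits uncountably many. The step I expect to be the main obstacle is exactly the quantitative coalescence in dimension $d\ge2$, since Theorem~\ref{Thm:coalescence} only records the exponential tail in the planar case: pushing the uniqueness argument (and the uniform fluctuation bound underlying closedness) through in higher dimensions should require either a polynomial-rate coalescence estimate or, more plausibly, a Mass-Transport-Principle argument tailored to the isometry group, together with some care in making the fluctuation bounds uniform across scales and across starting points.
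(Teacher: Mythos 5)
Your outline shares the paper's general skeleton (horizontal fluctuation control plus coalescence plus invariance), but there are two substantive gaps, the second of which you flag yourself.

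First, you repeatedly invoke an \emph{a.s.\ uniform-over-scales} fluctuation bound --- for instance to put all ancestral lines $A_n$ in a ``common bounded cylinder,'' and in the closedness argument to make a diagonal extraction converge. The quantitative bounds the paper actually establishes (Theorems~\ref{Thm:controlforward} and~\ref{Thm:controlbackward}) are in expectation under the Palm measure, not almost surely uniform. Upgrading them is precisely the content of the paper's tightness statement (Proposition~\ref{Prop:tightness}), which requires its own mass-transport argument (Proposition~\ref{Lem:MTlemma1} combined with Lemma~\ref{Lem:scaleinv}). Without that step your König-lemma construction does not close: the coalescence heights $H_m$ of $z_0$ with $z_m$ could a priori escape to infinity, so the lowest common ancestor need not stabilise, and the diagonal-extraction ray need not converge. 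The paper sidesteps your construction entirely for existence: it runs a second-moment argument on the Lebesgue measure of the cell $\Lambda_{f_t}(0)$, using $\mathbb{E}_{0\in\mathcal{L}_t}[\Leb(\Lambda_{f_t}(0))]=\alpha_0^{-1}e^{dt}$ together with the second-moment control from Proposition~\ref{Lem:technical} and Theorem~\ref{Thm:controlforward}, and then the density of $\mathcal{I}$ is immediate from translation and dilation invariance, and closedness from local finiteness of the cell family (another mass-transport argument).

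Second, and more importantly, the uniqueness for a deterministic direction is where your plan breaks, as you anticipate. You propose a quantitative Borel--Cantelli argument across dyadic scales and correctly observe that the exponential coalescence tail is only proved for $d=1$, worrying that a polynomial rate or a new MTP argument is needed for $d\geq2$. In fact no quantitative estimate is needed at all: the paper's argument is soft. Define $\tau_0$ to be the first level at which all bi-infinite branches aimed at $(0,0)$ have coalesced. By the (qualitative) coalescence of Theorem~\ref{Thm:coalescence} together with the local finiteness of the cell family, $\tau_0<+\infty$ a.s.\ in every dimension. On the other hand, dilation invariance gives $\tau_0\overset{d}{=}\tau_0+t$ for every $t\in\mathbb{R}$; the only $\{-\infty\}\cup\mathbb{R}\cup\{+\infty\}$-valued law invariant under all real shifts and a.s.\ $<+\infty$ is the Dirac mass at $-\infty$. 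Hence $\tau_0=-\infty$ a.s., giving uniqueness in one stroke, for all $d$. This dilation trick is the key idea you are missing, and it also removes the need for the quantitative rate you identify as the main obstacle. Two smaller remarks: your claim that for $d\geq2$ a ``Cantor-type construction'' gives uncountably many non-uniqueness points is not part of the theorem (and not proved in the paper), and your argument for density of the non-uniqueness set (``images of branching vertices'') is not what the paper does --- it instead shows $\mathcal{I}'=\emptyset$ would force a single cell $\Psi_0(x)$ to be clopen, hence all of $\mathbb{R}^d$, contradicting stationarity, and then bootstraps density from translation and dilation invariance.
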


This result is specific to the hyperbolic case since the Euclidean DSF does not admit bi-infinite branches \cite{dsf}.

The existence of bi-infinite branches can be suggested by the following heuristic. In the half-space representation, because of the hyperbolic metric, the density of points decreases with the height, implying that a typical point will have a mean number of descendants larger than 1. Thus the tree of descendants of a typical point could be compared to a supercritical Galton-Watson tree and then should be infinite with positive probability. According to this heuristic, the hyperbolic DSF should admit infinitely many bi-infinite branches. On the contrary, in the Euclidean DSF, a typical point has a mean number of descendants equal to $1$ (it can be seen by the Mass Transport Principle discussed later). Hence the corresponding analogy leads to a critical Galton-Watson tree which is finite a.s., which suggests that the Euclidean DSF does not admit bi-infinite branches.

The key point of the proofs is to upper-bound horizontal fluctuations of trajectories, both forward (i.e. upward) and backward (i.e. downward). Roughly speaking, we establish that a typical trajectory almost remains in a forward cone. Controlling the fluctuations of trajectories is a common technique to obtain the existence of infinite branches and to control their asymptotic directions: it is done for the RST in \cite{baccelli}, and also by Howard \& Newman in the context of first passage percolation \cite{howard}.

To do it, we proceed in two steps. We first use a percolation argument to upper-bound fluctuations on a small vertical distance. Then we generalise the bound on an arbitrary vertical distance by a new technique based on the Mass Transport Principle (Theorem \ref{Thm:masstranport}). This principle roughly says that for a given mass transport with isometries invariance properties (Definition \ref{Def:diaginv}), the incoming mass is equal to the outgoing mass. Most models in hyperbolic space studied in the literature are invariant by the group of all isometries, which is unimodular (i.e. the left-invariant Haar measure is also right-invariant), and the Mass Transport in the hyperbolic space \cite[pp. 13-14]{benjamini} is well-adapted for these models. However, the hyperbolic DSF is only invariant by the group of isometries that fix a particular point at infinity, and this group is not unimodular \tvc{(the invariance properties are explained in Section} \ref{S:properties}). For this reason, the Mass Transport Principle cannot be used in the same way. Instead, we introduce a slicing of $H$ into levels $\mathbb{R}^d \times \{e^t\}$ for $t \in \mathbb{R}$, and we typically consider appropriate mass transports from $\mathbb{R}^d \times \{e^{t_1}\}$ to $\mathbb{R}^d \times \{e^{t_2}\}$ with $t_1 \le t_2$, in order to obtain useful equalities by identifying the incoming mass and the outgoing mass.

\bigbreak

The rest of the paper is organized as follows. In Section \ref{S:hyperbolicgeometry}, we set some reminders on hyperbolic geometry. We also define the hyperbolic DSF in more details and we give its basic properties.

In Section \ref{S:tools}, we state some technical results derived from the Mass Transport Principle in $\mathbb{R}^d$. These results are well fitted to take advantage of the translation invariance of the model in distribution. 

In Section \ref{S:prooffluctuations}, we establish upper-bounds for horizontal fluctuations of forward (i.e. upward) and backward (i.e. downward) trajectories, which is the key point of the proofs. In particular, we show that a typical trajectory almost stays in a forward cone. A block control argument is used to upper-bound the fluctuations on a small vertical distance, and Mass Transport arguments are used to deduce the general bound.

In Section \ref{S:coalescencealld}, we exploit the control of horizontal fluctuations to prove the coalescence in any dimensions (Theorem \ref{Thm:coalescence}). The idea behind it is that, since two trajectories almost stay in cone, they roughly stay at the same hyperbolic horizontal distance to each other as they go up, thus they must coalesce. We also give a simpler proof of coalescence in the bi-dimensional case based on planarity.

In Section \ref{S:proofofinfinitebranches}, we prove Theorem \ref{Thm:biinfinite}. We use a second moment technique to show the existence of bi-infinite branches, based on the control of forward horizontal fluctuations. We exploit the control of fluctuations backward to prove the results concerning asymptotic directions.

\section{Definition of the hyperbolic DSF and general settings}
\label{S:generalsettings}

We denote by $\mathbb{N}$ the set of non-negative integers and by $\mathbb{N}^*$ the set of positive integers. After recalling some facts on the hyperbolic space $\mathbb{H}^{d+1}$ (Section \ref{S:hyperbolicgeometry}), we consider an homogeneous PPP on $\mathbb{H}^{d+1}$ and construct the hyperbolic DSF (Section \ref{S:defDSF}).

\subsection{The hyperbolic space and the half-space model}
\label{S:hyperbolicgeometry}

For $d \in \mathbb{N}^*$, the $(d+1)$-dimensional hyperbolic space, denoted by $\mathbb{H}^{d+1}$, is a $(d+1)$-dimensional Riemannian manifold, homogeneous and isotropic, and of constant negative curvature equal to $-1$. The reader may refer to \tvc{\cite[Section 4.6]{ratcliffe}} for background on hyperbolic geometry. The space $\mathbb{H}^{d+1}$ can be described with several isometric models and we will work in the \emph{half-space model}, defined as the upper half-space $H:=\{(x_1,...,x_d,y) \in \mathbb{R}^{d+1},~y>0\}$ endowed with the metric 
\begin{eqnarray*}
ds_H^2:=\frac{dx_1^2+...+dx_d^2+dy^2}{y^2}.
\end{eqnarray*}
The metric $ds_H^2$ naturally gives a volume measure $\mu$ on $(H,ds_H^2)$, given by \tvc{(see \cite[Th. 4.6.7]{ratcliffe})}
\begin{eqnarray*}
d\mu=\frac{dx_1...dx_ddy}{y^{d+1}}.
\end{eqnarray*}
Note that the last coordinate $y$ plays a special role with respect to the other ones. The metric becomes smaller as we get closer to the boundary hyperplane $\partial H=\mathbb{R}^d \times \{0\}$, and this boundary is at infinite hyperbolic distance from any point of $H$. In the following, we will identify the point $(x_1,...,x_{d+1}) \in H$ with the couple $(x,y) \in \mathbb{R}^d \times \mathbb{R}_+^*$ with $x:=(x_1,...,x_d)$ and $y:=x_{d+1}$. The coordinate $x$ is referred as the \emph{abscissa} and $y$ as the \emph{ordinate}. For $z=(x,y) \in H$, we denote by $\pi_x(z)=x$ the abscissa of $z$ and by $\pi_y(z)=y$ its ordinate. We also define its $\emph{height}$ as $h(z):=\ln(y)$. \tvc{All the points of same height $h$ constitute the \textit{level} $h$, \textit{i.e.} it is the hyperplane $\mathbb{R}\times \{h\}$ of all points of $H$ with ordinate $e^h$. The height can be positive or negative depending on whether $y \ge 1$ or $y \le 1$. All along the paper, we will use the level $0$, of height $0$ and corresponding to the ordinate $y=e^0$ as a reference.}

\begin{figure}
    \centering
    \includegraphics[scale=0.8]{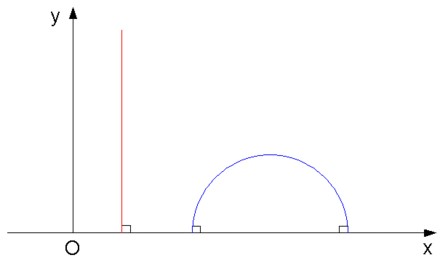}
    \caption{The geodesics on $(H,ds_H^2)$ are of two types: on the one hand, the vertical straight lines $\{x=a\}$ for any $a \in \mathbb{R}^d$ (in \textcolor{red}{red}) and, on the other hand, the semi-circles contained in $H$ and centered at a point of the boundary hyperplane $\partial H$ (in \textcolor{blue}{blue}).}
    \label{fig:halfspace}
\end{figure}

Let us set some general notation. We denote by $d(\cdot,\cdot)$ the hyperbolic distance in $(H,ds_H^2)$ and by $\|\cdot\|$ the Euclidean norm in $\mathbb{R}^d \cup \{\infty\}$, with the convention $\|\infty\|=\infty$. For $z_1,z_2 \in H$, we denote by $[z_1,z_2]$ the geodesic between $z_1$ and $z_2$ and by $[z_1,z_2]_{eucl}$ the Euclidean segment between $z_1$ and $z_2$. For $z \in H$ and $\rho>0$, let $B_H(z,\rho):=\{z' \in H,~d(z,z')<\rho \}$ be the hyperbolic ball centered at $z$ of radius $\rho$. For $x \in \mathbb{R}^d$ and $r>0$, let $B_{\mathbb{R}^d}(x,r):=\{x' \in \mathbb{R}^d,~\|x'-x\|<r\}$ be the Euclidean ball centered at $x$ of radius $r$. If there is no ambiguity, we will replace the notations $B_H(\cdot,\cdot)$ and $B_{\mathbb{R}^d}(\cdot,\cdot)$ with $B(\cdot,\cdot)$. Finally, for $z=(x,y) \in H$ and $\rho>0$, we define the \emph{upper semi-ball}
\begin{eqnarray*}
B_+(z,\rho):=B_H(z,\rho) \cap (\mathbb{R}^d \times (y,\infty)).
\end{eqnarray*}
It is the part of the (hyperbolic) ball $B_H(z,\rho)$ that is above the hyperplane $\mathbb{R}^d \times \{y\}$ containing $z$. This hyperplane is a curved subspace of $(H,ds_H^2)$, so it does not split $B_H(z,\rho)$ in two isometric pieces.

We now state some useful facts about the half-space model. In $(H,ds_H^2)$, hyperbolic spheres are also Euclidean spheres. Moreover, the Euclidean center and the hyperbolic center belong to the same vertical line, but they do not coincide \cite[Fact 1, p.86]{cannon}. Hence, if $z_1,z_2,z_3 \in H$ are aligned in this order for the Euclidean metric (i.e. $z_2 \in [z_1,z_3]_{eucl}$), then $d(z_1,z_2) \le d(z_1,z_3)$. We will use the following distance formula to do precise calculations in the half space model:

\begin{proposition}[Distance formula]
\label{Prop:disthalfspace}
Let $z_1=(x_1,y_1) \in H$ and $z_2=(x_2,y_2) \in H$. Let $\kappa=\|x_1-x_2\|/y_1$ and $v=y_2/y_1$. Then
\begin{eqnarray}
\label{E:distanceformula}
d(z_1,z_2)=2\tanh^{-1}\left(\sqrt{\frac{\kappa^2+(v-1)^2}{\kappa^2+(v+1)^2}}\right)=\Phi\left( \frac{\kappa^2+(v+1)^2}{v} \right)
\end{eqnarray}
where $\Phi:[4,+\infty) \rightarrow \mathbb{R}_+$ is increasing and defined as
\begin{eqnarray*}
\Phi(t)=2\tanh^{-1}\left(\sqrt{1-4/t}\right)=\ln\left( \frac{1+\sqrt{1-4/t}}{1-\sqrt{1-4/t}} \right).
\end{eqnarray*}
\end{proposition}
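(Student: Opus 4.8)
The plan is to normalize the pair $(z_1,z_2)$ by an isometry so that it lies in a totally geodesic copy of $\mathbb{H}^2$, compute the distance there along the explicit geodesic, and then rewrite the answer in the two claimed forms. First I would record that the horizontal translations $(x,y)\mapsto(x+a,y)$ ($a\in\mathbb{R}^d$), the dilations $(x,y)\mapsto(\lambda x,\lambda y)$ ($\lambda>0$) and the rotations $(x,y)\mapsto(\rho x,y)$ ($\rho\in O(d)$) all preserve $ds_H^2$, which is immediate from the displayed form of the metric. Composing such maps carries $(z_1,z_2)$ isometrically onto $\big((0,\dots,0,1),(\kappa,0,\dots,0,v)\big)$ with $\kappa=\|x_1-x_2\|/y_1$ and $v=y_2/y_1$; since $d(\cdot,\cdot)$ is isometry invariant, it then suffices to compute the distance between these two normalized points, which in particular shows it depends only on $\kappa$ and $v$.

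Next, both normalized points lie in the vertical $2$-plane $P=\mathbb{R}\times\{0\}^{d-1}\times\mathbb{R}_+^*$. If $d\ge 2$, then $P$ is the fixed-point set of the isometric reflection $(x_1,x_2,\dots,x_d,y)\mapsto(x_1,-x_2,\dots,-x_d,y)$, hence totally geodesic; if $d=1$ then $P=H$. In both cases the metric induced on $P$ is the half-plane metric $(dx_1^2+dy^2)/y^2$, so the task reduces to computing the hyperbolic distance in $\mathbb{H}^2$ between $(0,1)$ and $(\kappa,v)$. When $\kappa=0$ these lie on a vertical geodesic and integrating $dy/y$ gives $|\ln v|$. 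When $\kappa>0$, the geodesic joining them is the Euclidean semicircle centred at $(c,0)$ with $c=(\kappa^2+v^2-1)/(2\kappa)$ and radius $R=\sqrt{c^2+1}$; parametrizing it by the polar angle $\theta\in(0,\pi)$ turns the length element into $d\theta/\sin\theta$, whose primitive is $\ln\tan(\theta/2)$. Evaluating at the endpoints via $\tan(\theta/2)=y/(R+x_1-c)$ and simplifying, using $R^2-c^2=1$ and the elementary identity $(\kappa^2+v^2-1)^2+4\kappa^2=\big(\kappa^2+(v+1)^2\big)\big(\kappa^2+(v-1)^2\big)$, should give
\begin{eqnarray*}
d(z_1,z_2)=\ln\frac{(\sqrt{a}+\sqrt{b})^2}{4v},\qquad a:=\kappa^2+(v+1)^2,\quad b:=\kappa^2+(v-1)^2,
\end{eqnarray*}
an expression which also returns $|\ln v|$ when $\kappa=0$.

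It then remains to match this with the two expressions in the statement. Since $a-b=4v$ and $2\tanh^{-1}s=\ln\frac{1+s}{1-s}$, one has $\ln\frac{(\sqrt a+\sqrt b)^2}{4v}=\ln\frac{\sqrt a+\sqrt b}{\sqrt a-\sqrt b}=2\tanh^{-1}\sqrt{b/a}$, with $b/a=\frac{\kappa^2+(v-1)^2}{\kappa^2+(v+1)^2}$, which is the first claimed formula. Setting $t=a/v=\frac{\kappa^2+(v+1)^2}{v}$ and using $a-4v=b$ gives $1-4/t=b/a$, so $\Phi(t)=2\tanh^{-1}\sqrt{b/a}=d(z_1,z_2)$, the logarithmic form of $\Phi$ again being $2\tanh^{-1}s=\ln\frac{1+s}{1-s}$; finally $\Phi$ is increasing on $[4,+\infty)$ as a composition of the increasing maps $t\mapsto1-4/t$, $\sqrt{\cdot}$ and $\tanh^{-1}$, and $t-4=\frac{\kappa^2+(v-1)^2}{v}\ge0$ always holds, with equality iff $z_1=z_2$.

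I expect no conceptual obstacle here; the only slightly delicate point is the bookkeeping in the second paragraph, namely reducing the elementary semicircle integral to a closed form symmetric in $a$ and $b$. If one prefers to avoid it, an alternative is to check that $|z_1-z_2|^2/(\operatorname{Im}z_1\operatorname{Im}z_2)$, with $z_1,z_2$ viewed in the upper half-plane $\mathbb{H}^2\subset\mathbb{C}$, is invariant under $\mathrm{PSL}_2(\mathbb{R})$ and to evaluate it on the pair $(i,ie^{d(z_1,z_2)})$ lying on the imaginary axis, which yields $\cosh d(z_1,z_2)=1+\frac{\kappa^2+(v-1)^2}{2v}$; the two displayed forms then follow from $\tanh^2(d/2)=\frac{\cosh d-1}{\cosh d+1}$ and from $\Phi(t)=2\tanh^{-1}\sqrt{1-4/t}$ at $t=\frac{\kappa^2+(v+1)^2}{v}$.
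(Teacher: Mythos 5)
Your proposal is correct, and the normalization step (translate, dilate, rotate to place $z_1$ at $(0,\dots,0,1)$ and $z_2$ at $(\kappa,0,\dots,0,v)$, then restrict to a totally geodesic $2$--plane) matches the paper's reduction in spirit. Where you diverge is in the $\mathbb{H}^2$ computation itself: the paper applies the Cayley-type isometry $z\mapsto\frac{z-i}{z+i}$ onto the open-ball model, reducing the problem to a \emph{radial} distance in the disc, which collapses to a one-line integral $\int_0^{r_0}\frac{2\,ds}{1-s^2}$; you instead parametrize the semicircular geodesic by the polar angle and integrate $d\theta/\sin\theta$ directly in the half-plane (or, in your alternative, invoke the $\mathrm{PSL}_2(\mathbb{R})$-invariant $\cosh d=1+\frac{|z_1-z_2|^2}{2\,\mathrm{Im}\,z_1\,\mathrm{Im}\,z_2}$). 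Your route avoids introducing a second model but requires more algebraic bookkeeping to get from $\ln\tan(\theta/2)$ at the endpoints to the symmetric form $\ln\frac{(\sqrt a+\sqrt b)^2}{4v}$; the paper's route trades that bookkeeping for the one-time cost of transporting to the disc. Your final reconciliation of the two displayed forms of $d(z_1,z_2)$ via $a-b=4v$ and $1-4/t=b/a$ is clean and correct, as is the monotonicity and range argument for $\Phi$. One minor notational slip: in $\tan(\theta/2)=y/(R+x_1-c)$ the symbol $x_1$ should denote the first coordinate of the running point on the geodesic, not the point $z_1$; worth rewording if this is written up.
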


\begin{remark}
\label{Rem:disthalfplane}
Given the ratio $v=y_2/y_1$, the distance $d(z_1,z_2)$ increases with $\kappa$. In particular, when $y_1$, $y_2$ are fixed, the distance $d(z_1,z_2)$ is minimal when $x_1=x_2$.
\end{remark}

The proof of Proposition \ref{Prop:disthalfspace} is given in the Supplementary materials (Section \ref{S:hyperbolicgeometry}). We now discuss some particular cases of the distance formula. For two points on a same vertical straight line,  $z_1=(x,y_1)$ and  $z_2=(x,y_2)$, their distance is $d(z_1,z_2)=|\ln(y_2/y_1)|=|h(z_1)-h(z_2)|.$ This shows that the notion of height is compatible with the hyperbolic distance, this justifies the relevance of this notion. In particular, for $z=(x,e^t)$ and $\rho>0$; consider the hyperbolic (closed) ball $\overline{B}_H(z,\rho)$. Then the \emph{top} (i.e. the point with the highest ordinate) of $\overline{B}_H(z,\rho)$ is precisely $(x,e^{t+\rho})$, and the \emph{bottom} (i.e. the point with the lowest ordinate) of $\overline{B}_H(z,\rho)$ is $(x,e^{t-\rho})$.

For two points on the same horizontal hyperplane, $z_1=(x_1,y)$ and $z_2=(x_2,y)$, denoting by $R=\|x_1-x_2\|$ their horizontal Euclidean distance, their hyperbolic distance can be estimated when $R \to \infty$ by $d(z_1,z_2)=2\ln(R/y)+o(1)$. Moreover, for any $R>0$, $d(z_1,z_2) \le R/y$.

The hyperbolic space $\mathbb{H}^{d+1}$ is equipped with a set of points at infinity, denoted by $\partial \mathbb{H}^{d+1}$. In the half-space model $(H,ds_H^2)$, the points at infinity are identified by the boundary hyperplane $\partial H=\mathbb{R}^d \times \{0\}$, plus an additional point at infinity in all directions, obtained by compactification of the closed half-space $\mathbb{R}^d \times \mathbb{R}_+$. This particular point at infinity will be denoted by $\infty$. 

\subsection{Definition of the hyperbolic DSF}
\label{S:defDSF}

\subsubsection{Poisson point processes}

Let $E=\mathbb{R}^d$ or $\mathbb{H}^d$. For any measurable subset $A \subset E$, we denote by $|A|$ its volume (it is either $\Leb(A)$ in the Euclidean case or $\mu(A)$ in the hyperbolic case). Let us denote by $\mathcal{N}_S$ the space of locally finite subsets of $E$, and for $A \subset E$ measurable, let $\mathcal{N}_S(A)$ be the space of locally finite subsets of $A$. The spaces $\mathcal{N}_S$ and $\mathcal{N}_S(A)$ are equipped with the $\sigma$-algebra generated by counting applications (i.e. of the form $\eta \mapsto \#(\eta \cap K)$ for any compact set $K$).

\begin{definition}[Homogeneous Poisson point process (PPP)]
For $\lambda>0$, a point process $N$ is called \emph{homogeneous Poisson point process of intensity $\lambda$} if for any measurable set $A \subset E$, $\#(N \cap A)$ is distributed according to the Poisson law with parameter $\lambda |A|$.
\end{definition}

It can be shown that there is a unique probability measure on $\mathcal{N}_S$ satisfying this condition. Moreover, if $N$ is a homogeneous PPP and $A_1,..,.A_n \subset E$ are pairwise disjoint measurable subsets, then $N \cap A_1,...,N \cap A_n$ are mutually independent \cite{stochastic}.

\subsubsection{Horodistance}

In $\mathbb{H}^{d+1}$, the \emph{horodistance} formalizes the notion of "distance from a point at infinity".

\begin{definition}[Horodistance functions]
\label{def:horodistance}
Let $z_0 \in \mathbb{H}^{d+1}$ be an arbitrary point, considered as the origin. Given a point at infinity $I \in \partial \mathbb{H}^{d+1} $, the \emph{horodistance function} $\mathcal{H}_I:\mathbb{H}^{d+1} \to \mathbb{R}$ is defined as
\begin{eqnarray}
\label{E:defhorodistance}
\mathcal{H}_I(z):=\lim_{z' \to I} d(z,z')-d(z_0,z').
\end{eqnarray}
\end{definition}

The existence of the limit (\ref{E:defhorodistance}) is proved in \cite[Appendix B]{version_longue}. Any change of the origin point $z_0$ only affects the function $\mathcal{H}_I$ up to an additive constant. So $\mathcal{H}_I$ is naturally defined modulo an additive constant.
 
The level sets of $\mathcal{H}_I$, i.e. the sets of points at the same horodistance to $I$, are called \emph{horospheres} (centerered at $I$). Horospheres in $(H,ds_H^2)$ are represented in Figure \ref{fig:horospheres}.

\begin{proposition}
\label{prop:horosphereshalfplane}
Consider $(H,ds_H^2)$ and recall that the boundary point $\infty$ has been defined in Section \ref{S:hyperbolicgeometry}. The horodistance function $\mathcal{H}_\infty$ is (modulo an additive constant):
\begin{eqnarray*}
\mathcal{H}_\infty((x,y))=-\ln(y).
\end{eqnarray*}
\end{proposition}

We refer to \cite[Appendix B]{version_longue} for a proof.

\begin{figure}[!h]
    \centering
    \begin{tikzpicture}[scale=1.7]

\draw[>=stealth,->] (1.5,-0.8)--(3.5,-0.8);
\draw[>=stealth,->] (1.7,-1)--(1.7,1);
\draw (1.7,-0.8) node[below right] {$(0,0)$};
\filldraw[gray] (2.5,-0.8) circle (2pt);
\draw (2.5,-0.8) node[below right] {$I$};
\draw[dashed] (2.5,-0.6) circle (0.2);
\draw[dashed] (2.5,-0.4) circle (0.4);
\draw[dashed] (2.5,-0.2) circle (0.6);
\draw[dashed] (2.5,0) circle (0.8);

\draw[>=stealth,->] (4.2,-0.8)--(6.2,-0.8);
\draw[>=stealth,->] (4.4,-1)--(4.4,1);
\draw (4.4,-0.8) node[below right] {$(0,0)$};
\draw (5.2,1) node {$I=\infty$};
\draw[dashed] (4.2,-0.6)--(6.2,-0.6);
\draw[dashed] (4.2,-0.4)--(6.2,-0.4);
\draw[dashed] (4.2,-0.2)--(6.2,-0.2);
\draw[dashed] (4.2,0)--(6.2,0);
\end{tikzpicture}

\caption{Horospheres centered at $I$ in the half-space representation.}
\label{fig:horospheres}
\end{figure}

\subsubsection{The DSF in hyperbolic space} 

We now introduce our model of DSF in $\mathbb{H}^{d+1}$. Fix $\lambda>0$ and let $N$ be a homogeneous PPP of intensity $\lambda$ in $\mathbb{H}^{d+1}$. Consider a point at infinity $I \in \partial \mathbb{H}^{d+1}$, devoted to be the direction of the DSF or the \emph{target point}. The choice of $I$ is analogous to the choice of the direction vector $u$ in the Euclidean case. In $\mathbb{R}^d$, each $z \in N$ is connected to its closest Poisson point among those that are "further" than $z$ in some direction $u$. Similarly, in the hyperbolic case, we connect each point $z \in N$ to the closest Poisson point among those that are "further in direction $I$", where being "further in direction $I$" is formalized by the notion of horodistance (Definition \ref{def:horodistance}).

\begin{definition}[Directed Spanning Forest in $\mathbb{H}^{d+1}$]
\label{Def:dsfhyp}
Let $I \in \partial \mathbb{H}^{d+1}$. We call \emph{Directed Spanning Forest (DSF)} in $\mathbb{H}^{d+1}$ of direction $I$ the oriented graph whose set of vertices is $N$ and obtained by connecting each $z \in N$ to its \emph{parent} $A(z)$ defined as $$A(z):=\argmin_{z' \in N,~\mathcal{H}_I(z')<\mathcal{H}_I(z)} d(z'-z).$$
\end{definition}

A sketch of the construction is given in Figure \ref{fig:constructionhyper}. The choice of the direction $I$ only affects the law of the DSF up to an isometry. Indeed, for any two points at infinity $I,I' \in \partial \mathbb{H}^{d+1}$, there exists an isometry that sends $I$ on $I'$. In the following, we will work in the half-space representation $(H,ds_H^2)$ (Section \ref{S:hyperbolicgeometry}) and we set the direction $I=\infty$ for convenience. Indeed, the horodistance function $\mathcal{H}_\infty$ only depends on the ordinate (Proposition \ref{prop:horosphereshalfplane}), and $\mathcal{H}_\infty(z_1)<\mathcal{H}_\infty(z_2)$ if and only if $y_1>y_2$. Thus, the parent of $z$ is its closest Poisson point among those having higher ordinate than $z$.

Definition \ref{Def:dsfhyp} does not specify the shape of edges, but the results announced in Theorems \ref{Thm:coalescence} and \ref{Thm:biinfinite} only concern the graph structure of the hyperbolic DSF, so their veracity do not depend on the way points are connected. Here, we choose to connect each point $z \in N$ to its parent $A(z)$ by the Euclidean segment $[z,A(z)]_{eucl}$. It is more natural to represent edges with hyperbolic geodesics, but the choice of Euclidean segments will appear more convenient for the proofs. The main reason of this choice is that we want that the $y$-coordinate increases along a given edge, and it is not the case using geodesics. Thus, we define the random subset $\DSF \subset H$ as the union of all Euclidean segments $[z,A(z)]_{eucl}$ for $z \in N$: $DSF=\bigcup_{z \in N}[z,A(z)]_{eucl}$.

\begin{remark}
For $z \in N$, by definition of the parent $A(z)$, the upper semi-ball $B_+(z,d(z,A(z)))$ contains no points of $N$.
\end{remark}

\begin{convention}
This (random) upper semi-ball $B_+(z,d(z,A(z)))$ will be more simply denoted by $B_+(z)$.
\end{convention}

\begin{proposition}
\label{Prop:DSFisforest1}
The DSF in $\mathbb{H}^{d+1}$ is a forest a.s.
\end{proposition}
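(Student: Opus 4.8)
The plan is to show that the underlying undirected graph of $(V,\vec{E})$ contains no cycle; since, moreover, every vertex will be seen to have exactly one outgoing edge (its parent), each connected component is then an infinite tree, so the DSF is a forest. By homogeneity of $\mathbb{H}^{d+1}$ we may assume from the start that the target point is $I=\infty$ in the half-space model: an isometry sending a general $I\in\partial\mathbb{H}^{d+1}$ to $\infty$ preserves the hyperbolic distance and the volume measure $\mu$, hence maps the homogeneous PPP $N$ to a homogeneous PPP of the same intensity and the DSF with target $I$ to the DSF with target $\infty$. Then, up to an additive constant, $\mathcal{H}_\infty(z)=-h(z)=-\ln\pi_y(z)$.

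First I would check that the parent map $A$ is almost surely well defined on all of $N$ simultaneously. The set $\{z'\in N:\mathcal{H}_I(z')<\mathcal{H}_I(z)\}$ corresponds to the points of $N$ lying strictly above the horizontal hyperplane through $z$, i.e. in $\mathbb{R}^d\times(\pi_y(z),\infty)$. This region has infinite $\mu$-volume (a feature special to the hyperbolic half-space, since $\int_{\mathbb{R}^d}dx=\infty$), hence a.s. contains infinitely many points of $N$; in particular it is non-empty, and for $\rho$ large enough the upper semi-ball $B_+(z,\rho)$ already contains one of them. Since $B_+(z,\rho)$ has finite volume it contains only finitely many points of $N$, so the minimum defining $A(z)$ is attained. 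Uniqueness of the minimiser follows from the fact that, a.s., $N$ contains no isosceles triangle: the third factorial moment measure of $N$ is $\lambda^3\mu^{\otimes 3}$, and for fixed $z,z_1$ the set of $z_2$ with $d(z,z_2)=d(z,z_1)$ is a hyperbolic sphere, hence $\mu$-null, so such a triple occurs with probability zero (see \cite{stochastic}). Thus $A:N\to N$ is a.s. well defined and each vertex has out-degree exactly one.

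The key structural fact is that $\mathcal{H}_I$ is strictly decreasing along every edge: by the definition of the parent, $\mathcal{H}_I(A(z))<\mathcal{H}_I(z)$ for every $z\in N$. This immediately excludes directed cycles, for a directed cycle $z_0\to A(z_0)\to\cdots\to z_0$ would give $\mathcal{H}_I(z_0)>\mathcal{H}_I(z_0)$. To rule out undirected cycles, suppose $v_0,v_1,\dots,v_m=v_0$ with $m\ge 3$ were a cycle in the underlying undirected graph, each consecutive pair being joined by an edge, that is $v_{i+1}=A(v_i)$ or $v_i=A(v_{i+1})$. A first-moment computation shows that a.s. all points of $N$ have distinct ordinates (the set $\{(z_1,z_2):\pi_y(z_1)=\pi_y(z_2)\}$ is $\mu^{\otimes 2}$-null), hence the horodistances $\mathcal{H}_I(v_i)=-\ln\pi_y(v_i)$ are pairwise distinct; let $v_j$ be the cycle vertex of largest horodistance. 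Since the parent of a vertex has strictly smaller horodistance, neither $v_{j-1}$ nor $v_{j+1}$ can be the parent of $v_j$, so $v_{j-1}=A(v_j)=v_{j+1}$, contradicting the distinctness of cycle vertices when $m\ge 3$. (A loop $z=A(z)$ is impossible for the same reason, and a ``length-two cycle'' $v_0\,v_1\,v_0$ is not a genuine cycle.) Hence the underlying graph is acyclic.

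Combining these steps: the underlying undirected graph is acyclic, so it is a forest; and since every vertex has a parent, each connected component contains the descending ray $z,A(z),A(A(z)),\dots$ and is an infinite tree. The only point requiring real care is the a.s. well-definedness of $A$ — existence relies on the ``closer to $I$'' half-space having infinite volume, and uniqueness on the no-isosceles-triangle property — after which acyclicity is an immediate consequence of the monotonicity of $\mathcal{H}_I$ along edges.
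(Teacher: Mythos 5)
Your proof is correct and follows the same route as the paper: on a putative cycle, look at the vertex of lowest ordinate (equivalently largest horodistance), observe that both of its cycle-neighbours must be its unique parent, and derive a contradiction; you merely add useful detail on the a.s.\ well-definedness of the parent map $A$ that the paper takes for granted. One small wording slip worth fixing: you write that ``neither $v_{j-1}$ nor $v_{j+1}$ can be the parent of $v_j$,'' but what the horodistance monotonicity actually rules out is $v_j$ being the parent of $v_{j\pm 1}$ (a parent has strictly \emph{smaller} horodistance, so $v_j$, having the largest, cannot be anyone's parent on the cycle); your next clause $v_{j-1}=A(v_j)=v_{j+1}$ is the correct conclusion of that corrected claim, so the argument goes through.
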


\begin{proof}
Suppose that the hyperbolic DSF contains a cycle $(z_0,...z_{k-1})$. Consider the point of the cycle with the lowest ordinate. Then, by construction, both neighbors of $z_i$ in the cycle \tvc{must be} parents of $z_i$, but $z_i$ has only one parent, this is a contradiction. Therefore, the DSF does not contain cycles, it is a forest.
\end{proof}

\begin{proposition}
\label{Prop:firstprop}
Almost surely, the DSF is non-crossing and has finite degree.
\end{proposition}

Proposition \ref{Prop:firstprop} is show in the Appendix (Section \ref{S:firstprop})

\begin{figure}[!h]
    \centering
\begin{tikzpicture}

\draw [>=stealth,->] (-1,0)--(10,0);
\draw [>=stealth,->] (0,0)--(0,6);
\draw (0,0) node[below] {$0$};
\draw (10,0) node[below] {$x$};
\draw (0,6) node[left] {$y$};
\filldraw [gray] (2,1) circle (2pt);
\filldraw [gray] (3,3) circle (2pt);
\filldraw [gray] (5,4) circle (2pt);
\filldraw [gray] (8,1.5) circle (2pt) node[below left, black] {$z$};
\filldraw [gray] (6,2.5) circle (2pt) node[below left, black] {$A(z)$};

\draw [dashed] (-1,1)--(10,1);
\draw [dashed] (-1,3)--(10,3);
\draw [dashed] (-1,4)--(10,4);
\draw [dashed] (-1,1.5)--(10,1.5);
\draw [dashed] (-1,2.5)--(10,2.5);

\draw  [>=stealth,->] (2,1)--(3,3);
\draw  [>=stealth,->] (3,3)--(5,4);
\draw  [>=stealth,->] (8,1.5)--(6,2.5);
\draw  [>=stealth,->] (6,2.5)--(5,4);

\draw (0.7090055512641942,1) arc (212.84213041441177:-32.842130414411756:1.536590742882148);
\draw (1.06350832689629127,3) arc ( 197.88741291941264:-17.887412919412647:2.0348525745124637);
\draw (6.267949192431123,1.5) arc (210:-30:2);
\draw (4.574780718626077, 2.5) arc (195.90987591526664:-15.909875915266635:1.4819882126724222);
\fill [pattern=north east lines] (10,2.5)--(6,2.5) arc (180:0:2);
\draw (1,6.5) node {$I=\infty$};
\end{tikzpicture}
    \caption{Sketch of construction of the hyperbolic DSF. This picture illustrates dependence phenomenons existing in a single trajectory and between trajectories. Given a Poisson point $z \in N$, knowing the position of its parent $A(z)$ implies the knowledge that some region above $A(z)$, the upper part of a hyperbolic ball centered at $z$ (the crosshatched area) is empty of Poisson points, which affects the future evolution of trajectories, and thus destroys nice Markov properties in the hyperbolic DSF.}
    \label{fig:constructionhyper}
\end{figure}
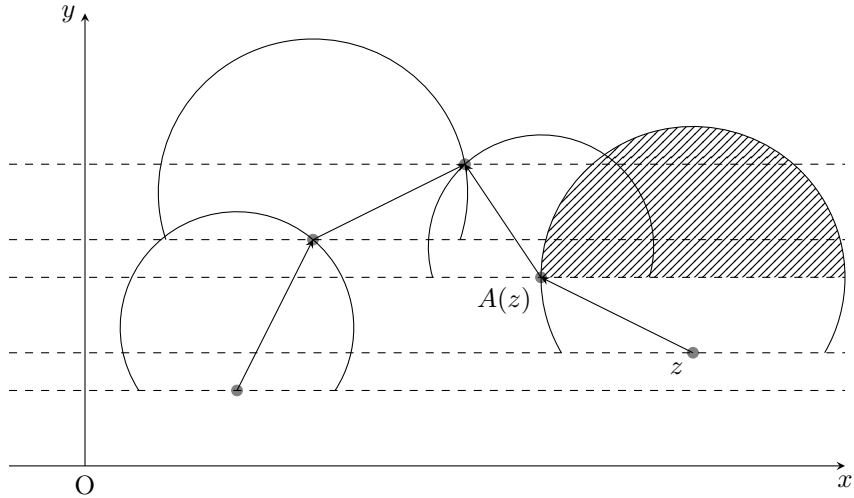

\subsubsection{General notations}

If $X_1,...,X_k$ are random variables, we denote by $\sigma(X_1,...,X_k)$ the $\sigma$-algebra generated by $X_1,...,X_k$. If a random variable $X$ is measurable w.r.t. $\sigma(N)$, then for $\eta \in \mathcal{N}_S$, we denote by $X(\eta)$ the value of $X$ when $N=\eta$. Let us also denote by $T_s:x \mapsto x+s$ the \tvc{translation by} $s$ in $\mathbb{R}^d$.

Let $z \in \DSF$. Since the DSF is non-crossing \cite[Section 3.1]{version_longue}, there exists a unique $z_0 \in N$ such that $z \in [z_0,A(z_0))_{eucl}$. 
Then we define
\begin{eqnarray}
\label{E:updownarrow}
z_\downarrow:=z_0, \quad z_\uparrow:=A(z_0).
\end{eqnarray}

For $z \in \DSF$, we define the $\textit{trajectory from $z$}$ as $
[z,z_\uparrow)_{eucl} \cup \bigcup_{n \in \mathbb{N}} [A^{(n)}(z_\uparrow),A^{(n+1)}(z_\uparrow))_{eucl}$, where $A^{(n)}:=A \circ... \circ A$ $n$ times.

\begin{definition}
For all $t \in \mathbb{R}$, we define the \emph{level $t$}, denoted by $\mathcal{L}_t$, as the set of abscissas of points in DSF with height $t$:
\begin{eqnarray*}
\mathcal{L}_t=\{x \in \mathbb{R}^d,(x,e^t) \in \DSF\}.
\end{eqnarray*}
\end{definition}

\begin{definition}
Let $t_1 \le t_2$, and let $x \in \mathcal{L}_{t_1}$. The trajectory from $(x,e^{t_1})$ crosses the level $t_2$ (the hyperplane $\mathbb{R}^d \times \{e^{t_2}\}$) at most at one point. It could \emph{a priori} never cross the level $t_2$, if the $y$-coordinate stays indefinitely below $e^{t_2}$. Thus we define $\mathcal{A}_{t_1}^{t_2}(x)$ as the point $x' \in \mathbb{R}^d$ such that $(x',e^{t_2})$ belongs to the trajectory from $(x,e^{t_1})$ and we set $\mathcal{A}_{t_1}^{t_2}(x)=\infty$ if this trajectory does not cross the level $t_2$. The point $\mathcal{A}_{t_1}^{t_2}(x)$ is called the $\emph{ancestor}$ of $x$ (or $(x,e^{t_1})$) at level $t_2$.
\end{definition}

\begin{center}
\begin{tikzpicture}[scale=0.9]
\draw [>=stealth,->] (0,0)--(6,0) node[right] {$\mathbb{R}^d$}; 
\draw [>=stealth,->] (0,0)--(0,4) node[left] {$\mathbb{R}$};
\draw [dashed] (0,1)--(6,1) node[right] {$e^{t_1}$};

\filldraw (0.2,1) circle (3pt); \draw [black,dashed](0.2,1)--(0.2,0);
\filldraw [gray] (0.2,0)circle (3pt);
\draw (0,0.5)--(0.4,1.5);

\filldraw (1.4,1) circle (3pt); \draw [black,dashed](1.4,1)--(1.4,0);
\filldraw [gray] (1.4,0) circle (3pt);
\draw (0.6,0.8)--(1.8,1.1);

\draw [black,dashed](2.5,1)--(2.5,0);
\filldraw [gray] (2.5,0) circle (3pt);

\filldraw (4.2,1) circle (3pt); \draw [black,dashed](4.2,1)--(4.2,0);
\filldraw [gray] (4.2,0) circle (3pt);
\draw (4.5,0.5)--(3.75,1.75);

\draw [gray] (0,0) node[left] {$\mathcal{L}_{t_1}$};

\draw [dashed] (0,3)--(6,3) node[right] {$e^{t_2}$};
\draw [red]  (2,0.9)--(3.5,1.2)--(2.4,2)--(3.8,2.3)--(1.9,2.6)--(2.9,2.8)--(2.3,3.1);
\filldraw [red] (2.5,1) circle (3pt) node[above] {$(x,e^{t_1})$}
;
\filldraw [red] (2.5,3) circle (3pt) node[red, above] {$(\mathcal{A}_{t_1}^{t_2}(x),e^{t_2})$};

\end{tikzpicture}
\end{center}

Actually, it will be shown later that the $y$-coordinates always goes to infinity a.s.:
\begin{proposition}
\label{Prop:welldefined}
Almost surely, for all $t_1 \le t_2$, for all $x \in \mathcal{L}_{t_1}$, $\mathcal{A}_{t_1}^{t_2}(x) \neq \infty$.
\end{proposition}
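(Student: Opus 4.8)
The plan is to show that along any trajectory, the $y$-coordinate cannot stay bounded, so every level above the starting height is eventually crossed. First I would reduce the statement to a cleaner one: it suffices to prove that almost surely, for every $z \in N$, the sequence of ordinates $\pi_y(A^{(n)}(z))$ tends to $+\infty$ as $n \to \infty$. Indeed, since edges are drawn as Euclidean segments $[z_0, A(z_0)]_{eucl}$ and the $y$-coordinate is monotone along each such segment (the parent always has strictly larger ordinate by the discussion after Proposition \ref{Prop:horodistancehalfplane}), the supremum of the $y$-coordinate along the whole trajectory from any point equals $\sup_n \pi_y(A^{(n)}(z_\uparrow))$; if this supremum is infinite then every level $e^{t_2}$ is crossed exactly once, giving $\mathcal{A}_{t_1}^{t_2}(x) \neq \infty$. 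So the whole question is: can a trajectory have ordinates converging to a finite limit $L$?

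Suppose, for contradiction, that with positive probability some trajectory $z, A(z), A^{(2)}(z), \dots$ has ordinates increasing to a finite limit $L < \infty$. Then the abscissas $\pi_x(A^{(n)}(z))$ must also converge: by the distance formula (Proposition \ref{Prop:disthalfspace}), two consecutive points with ordinates both close to $L$ and bounded abscissa-gap would be at small hyperbolic distance only if their Euclidean distance is small, but more directly, the hyperbolic step lengths $d(A^{(n)}(z), A^{(n+1)}(z))$ must be summable-ish — actually the key geometric fact I want is: the consecutive points accumulate at some point $z_\infty = (x_\infty, L) \in H$ which lies in the interior of the half-space (since $L > 0$). Now apply the defining emptiness property: for each $n$, the upper semi-ball $B_+(A^{(n)}(z)) = B_+(A^{(n)}(z), d(A^{(n)}(z), A^{(n+1)}(z)))$ contains no point of $N$. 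I would argue that the union of these semi-balls, together with the fact that $A^{(n+1)}(z)$ is the \emph{closest} point of $N$ above $A^{(n)}(z)$, forces a whole fixed-size hyperbolic neighbourhood of $z_\infty$ above some level to contain at most finitely many points of $N$ — in fact, that there is a nonempty open region (a hyperbolic ball around a point slightly above $z_\infty$, intersected with ordinates $> L$) that is entirely empty of $N$, because any Poisson point there would be a valid candidate parent for all $A^{(n)}(z)$ with $n$ large, contradicting convergence of the ordinates to $L$. Since a homogeneous PPP of intensity $\lambda > 0$ charges every nonempty open set with positive $\mu$-measure almost surely, and $\mu$ gives positive mass to any hyperbolic ball with center of ordinate $>L>0$ (the density $y^{-(d+1)}$ is bounded below there), this is an almost sure contradiction. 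A countability argument over $z \in N$ (a locally finite set) then upgrades "positive probability" to "the event is null", completing the proof.

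The main obstacle is making the emptiness-forces-contradiction step rigorous: I need to pin down a \emph{deterministic-size} empty region rather than just "the semi-balls shrink". The cleanest route is to fix $\varepsilon > 0$ and the point $w = (x_\infty, L(1+\varepsilon))$ sitting a fixed hyperbolic distance above the accumulation point; I claim $B_H(w, \rho) \cap N$ must be finite (indeed eventually avoided) for a suitable small fixed $\rho = \rho(\varepsilon)$, because for $n$ large every point of $N$ in $B_H(w,\rho)$ has ordinate bounded away from $L$ from above and is hyperbolically closer to $A^{(n)}(z)$ than $A^{(n+1)}(z)$ is (using that $A^{(n)}(z) \to z_\infty$ and the distance formula to compare), contradicting the minimality in the definition of the parent. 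Care is needed because "closest point above" is what defines $A^{(n+1)}$, so I must ensure the candidate in $B_H(w,\rho)$ has strictly larger ordinate than $A^{(n)}(z)$ — true once $\pi_y(A^{(n)}(z)) < L \le L(1+\varepsilon) - \rho$-ish — and smaller hyperbolic distance, which follows from $d(A^{(n)}(z), A^{(n+1)}(z)) \le d(A^{(n)}(z), z_\infty) + d(z_\infty, A^{(n+1)}(z)) \to 0$ while $d(A^{(n)}(z), w) \to d(z_\infty, w) > 0$ stays bounded below. Once this fixed empty ball is identified the Poisson/Borel–Cantelli conclusion is routine.
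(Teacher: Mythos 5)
Your argument breaks down at the crucial inequality. You want a Poisson point $z' \in B_H(w,\rho)$ to be a \emph{closer} candidate parent for $A^{(n)}(z)$ than the actual parent $A^{(n+1)}(z)$, in order to contradict minimality. But look at the asymptotics you yourself write in the final sentence: $d(A^{(n)}(z),A^{(n+1)}(z)) \to 0$ while $d(A^{(n)}(z),w) \to d(z_\infty,w)>0$ stays bounded below, so for any fixed small $\rho$ and any $z'\in B_H(w,\rho)$ you have $d(A^{(n)}(z),z')\to d(z_\infty,z')\ge d(z_\infty,w)-\rho>0$. In other words, for $n$ large the candidate $z'$ is \emph{farther} from $A^{(n)}(z)$ than $A^{(n+1)}(z)$ is, not closer. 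There is no violation of minimality: the actual parent, being squeezed toward $z_\infty$, always wins the comparison with anything in $B_H(w,\rho)$. So the conclusion that $B_H(w,\rho)\cap N$ must be empty does not follow, and the contradiction evaporates.

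There are two further issues. First, the assertion that the abscissas $\pi_x(A^{(n)}(z))$ must converge when the ordinates do is not justified; it is not even clear that the abscissas stay bounded. (If they \emph{did} converge to some $z_\infty$ in the interior, a cleaner contradiction would be that $B_H(z_\infty,1)$ has finite $\mu$-measure yet would contain infinitely many distinct points of $N$ — no need for the auxiliary point $w$ at all. But the hard case, abscissas escaping to infinity while ordinates stay below $L$, is exactly the one your approach does not address.) Second, even if you could exhibit an empty region, its location is determined by the random configuration itself, so the observation that a homogeneous PPP a.s.\ charges every \emph{deterministic} open set does not apply directly; one would need a covering or Borel--Cantelli argument over a countable family of test regions. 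These obstacles are why the paper does not take a geometric route: it instead derives Proposition \ref{Prop:welldefined} from Proposition \ref{Prop:smallheight} (a quantitative moment bound on the cumulative horizontal deviation over a small vertical increment $\delta$, proved via a block-percolation/domination argument), upgrades "$\mathbb{P}_{0\in\mathcal{L}_0}$-a.s." to "a.s.\ for all $x\in\mathcal{L}_0$" through the Palm calculus (Lemma \ref{Lem:technicallemma}), and then propagates the bound from height $\delta$ to arbitrary heights by dilation invariance and a self-similarity argument on $H_0$. The probabilistic input is essential and is precisely what your sketch replaces with an incorrect geometric claim.
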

This statement is proved in Section \ref{S:proofwelldefined}.

\begin{definition}
Let $t_1 \le t_2$, and let $x \in \mathcal{L}_{t_2}$. We define the sets of \emph{descendants} of $x$, denoted by $\mathcal{D}_{t_1}^{t_2}(x)$, as the set of points $x' \in \mathcal{L}_{t_1}$ such that $(x,e^{t_2})$ belongs to the trajectory from $(x',e^{t_1})$: $\mathcal{D}_{t_1}^{t_2}(x)=\{x' \in \mathcal{L}_{t_1}, \mathcal{A}_{t_1}^{t_2}(x')=x\}$.
\end{definition}

\begin{center}
\begin{tikzpicture}[decoration={brace}]
\pgftext{\includegraphics[width=8cm]{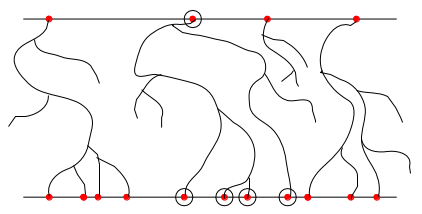}} at (0,0);
\draw (-1,2) node {$(x,e^{t_2})$};
\draw (3.9,1.8)  node {$e^{t_2}$};
\draw (3.9,-1.6) node {$e^{t_1}$};
\draw [decorate,line width=1.5pt] (1.7,-1.95) -- (-0.8,-1.95);
\draw (0.5,-2) node[below] {$\in \mathcal{D}_{t_1}^{t_2}(x) \times \{e^{t_1}\}$};
\end{tikzpicture}
\end{center}

\begin{definition}
\label{Def:coalesceheight}
Let $A \subset \mathbb{R}^d$ be measurable. We define the \emph{coalescing height of $A$}, denoted by $\tau_A$, as
\begin{eqnarray*}
\tau_A=\inf \{t \ge 0, \forall x,x' \in \mathcal{L}_0 \cap A,\  \mathcal{A}_0^t(x)=\mathcal{A}_0^t(x')\} \in \mathbb{R}_+ \cup \{\infty\}.
\end{eqnarray*}
It it the lowest height where all trajectories from points of $(\mathcal{L}_0 \cap A) \times \{e^0\}$ coalesce. 
\end{definition}

The following definition concerns bi-infinite branches, i.e. branches that are infinite in both directions.

\begin{definition}
\label{Def:biinfinite}
Let $f:\mathbb{R} \to \mathbb{R}^d$. We say that $f$ \emph{encodes a bi-infinite branch} if for all $t_1 \le t_2$, $f(t_2)=\mathcal{A}_{t_1}^{t_2}(f(t_1))$. In this case, the subset
$\{(f(t),e^t),~t \in \mathbb{R}\} \subset H$
is called a $\emph{bi-infinite branch}$ of the DSF.\\
We denote by $\Bi$ the random set of functions $f: \mathbb{R} \to \mathbb{R}^d$ that encode a bi-infinite branch.
\end{definition}

\subsection{Preliminary properties}
\label{S:properties}
We will exploit invariance by isometries of the model. The family of \emph{translations} $T^H_s:(x,y) \mapsto (x+s,y)$ for $s \in \mathbb{R}^d$ and the \emph{dilations} $D_\lambda:(x,y) \mapsto(\lambda x,\lambda y)$ are isometries of $(H,ds_H^2)$ ~\cite[p.79]{cannon}, thus they preserve the law of $N$. Moreover, these isometries fix the point $\infty$ (isometries of $(H,ds_H^2)$ are naturally extended to the set of points at infinity). Therefore they also preserve the horodistance function $\mathcal{H}_\infty$ modulo an additive constant, so they preserve the graph structure of the DSF in law.

In addition, these isometries preserve Euclidean segments. Then, they preserve the law of the random subset $\DSF$. A consequence of this translation invariance property is that, for all $t \in \mathbb{R}$, $\mathcal{L}_t$ is a stationary point process.

It will be required to have a control of moments for the number of points of $\mathcal{L}_t$ in a given compact set:

\begin{proposition}
\label{Prop:nop}
We have $\mathbb{E}[\#(\mathcal{L}_t \cap B_{\mathbb{R}^d}(0,R))^p]<\infty$ for all $t \in \mathbb{R}$ and $p,R \ge 0$.
\end{proposition}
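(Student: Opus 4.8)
The plan is to bound $X:=\#(\mathcal L_t\cap B_{\mathbb R^d}(0,R))$ by the number of Poisson points that are forced to carry a large empty \emph{upper} semi-ball, and then to estimate that number by a shell decomposition together with the Mecke formula. Set $S:=\overline B_{\mathbb R^d}(0,R)\times\{e^t\}$. Since $\DSF=\bigcup_{z\in N}[z,A(z)]_{eucl}$ and every edge meets the hyperplane $\{\pi_y=e^t\}$ in at most one point, $X$ is at most the number of $z\in N$ whose edge $[z,A(z)]_{eucl}$ meets $S$; for such a $z$, a.s.\ $\pi_y(z)<e^t<\pi_y(A(z))$, and if $p\in S$ is the crossing point then $z,p,A(z)$ are Euclidean-aligned in this order, so Corollary \ref{Cor:distpointsaligned} gives $d(z,S)\le d(z,p)\le d(z,A(z))$. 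Hence $B_+\bigl(z,d(z,S)\bigr)\subseteq B_+(z)$, which contains no point of $N$. As distinct edges have distinct lower endpoints, this yields $X\le Y:=\#\{z\in N:\pi_y(z)<e^t,\ B_+(z,d(z,S))\cap N=\emptyset\}$, and it suffices to prove $\mathbb E[Y^p]<\infty$ for every integer $p$.

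Next I would record two geometric facts. By Corollary \ref{Cor:disthor}, $S$ has bounded hyperbolic diameter, so its $r$-neighbourhood lies in a hyperbolic ball of radius $\rho_\star(R,t)+r$, whence $\mu(\{z:d(z,S)<r\})\le C_1 e^{dr}$ for $r\ge1$. More importantly, the upper semi-ball has volume growing at least exponentially, uniformly in $z$: there are $c,a>0$ and $\rho_0$ (depending only on $d$) with $\mu(B_+(z,\rho))\ge c\,e^{a\rho}$ for $\rho\ge\rho_0$. Indeed, by invariance of $\mu$ under horizontal translations and under the dilations $(x,y)\mapsto(\alpha x,\alpha y)$ one reduces to $z=(0,1)$; then $B_H((0,1),\rho)$ is the Euclidean ball of centre $(0,\cosh\rho)$ and radius $\sinh\rho$, and for large $\rho$ it contains the box $\{\|x\|\le\sinh(\rho/2),\ 1\le y\le2\}$, which lies above $z$ and has hyperbolic volume of order $(\sinh(\rho/2))^d\asymp e^{d\rho/2}$.

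Then I would decompose $Y=\sum_{k\ge0}Y_k$ along the shells $A_k:=\{z:\pi_y(z)<e^t,\ k\le d(z,S)<k+1\}$, each of finite volume $\le C_1 e^{d(k+1)}$. For $z\in A_k$ with $k\ge\rho_0$ one has $B_+(z,d(z,S))\supseteq B_+(z,k)$ with $\mu(B_+(z,k))\ge c\,e^{ak}$, so the Mecke/Slivnyak formula gives
\begin{equation*}
\mathbb E[Y_k]\le\lambda\int_{A_k}\mathbb P\bigl(B_+(z,k)\cap N=\emptyset\bigr)\,d\mu(z)=\lambda\int_{A_k}e^{-\lambda\mu(B_+(z,k))}\,d\mu(z)\le\lambda C_1 e^{d(k+1)}e^{-\lambda c\,e^{ak}},
\end{equation*}
which is super-exponentially small; this already yields $\mathbb E[Y]<\infty$ (the finitely many terms $k<\rho_0$ being bounded by $\lambda\mu(A_k)<\infty$). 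For the $p$-th moment I would use Minkowski's inequality $\|Y\|_p\le\sum_k\|Y_k\|_p$: the terms $k<\rho_0$ are dominated by Poisson variables $\#(N\cap A_k)$, hence in $L^p$; and for $k\ge\rho_0$, combining $Y_k\le\#(N\cap A_k)$, Cauchy--Schwarz, the Poisson moment bound $\mathbb E[(\#(N\cap A_k))^{2p}]\le C_p(1+\mu(A_k))^{2p}$, and $\mathbb P(Y_k\ge1)\le\mathbb E[Y_k]$ gives $\mathbb E[Y_k^p]\le C_p' e^{d p(k+1)}\bigl(e^{d(k+1)}\bigr)^{1/2}e^{-\frac{\lambda c}{2}e^{ak}}$, which is summable in $k$. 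Hence $\|Y\|_p<\infty$ and $\mathbb E[X^p]\le\mathbb E[Y^p]<\infty$.

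The step I expect to be the crux is the exponential lower bound on $\mu(B_+(z,\rho))$ for the \emph{upper} semi-ball: one cannot use the full ball $B_H(z,d(z,A(z)))$, since it is not empty (it contains $A(z)$ and possibly points below $z$), and the volume measure concentrates near the boundary hyperplane, i.e.\ \emph{below} $z$, which is precisely the part the upper semi-ball discards — so one must check that enough volume survives near the level of $z$. Everything else is a routine Mecke/second-moment computation; in particular no attempt is made to localise the lower endpoint $z$ itself, which may lie arbitrarily deep below level $t$.
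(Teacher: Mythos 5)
Your argument is correct, and it rests on exactly the same two pillars as the paper's own proof: (a) every level-$t$ point in $B(0,R)$ identifies a unique lower endpoint $z\in N$ whose empty upper semi-ball $B_+(z)$ has radius at least the hyperbolic distance from $z$ to the target slice, and (b) the volume of $B_+(z,\rho)$ grows exponentially in $\rho$ uniformly in $z$, so such a $z$ cannot lie too far away. The executions, however, differ in a few instructive ways. The paper fixes a threshold $L=L(n)$ that depends on the target cardinality $n$, splits points according to whether $d\bigl((x,e^0)_\downarrow,(0,e^0)\bigr)\le L$ or $>L$, and estimates the tail $\mathbb{P}[\#\ge n]$ directly via a Chernoff bound for the near part and a Campbell computation for the far part; combining with $L\approx \tfrac1d\ln n$ it obtains a stretched-exponential tail $e^{-n^{1/(5d)}}$, which is strictly stronger than finiteness of all polynomial moments. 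You instead set up a shell decomposition $A_k=\{k\le d(z,S)<k+1\}$ independent of $p$ and $n$, apply Mecke to get super-exponentially small $\mathbb{E}[Y_k]$, and then assemble the $L^p$ bound via Minkowski together with the Cauchy--Schwarz trick $\mathbb{E}[Y_k^p]\le \mathbb{E}[\#(N\cap A_k)^{2p}]^{1/2}\,\mathbb{P}(Y_k\ge1)^{1/2}$. This gives the stated proposition but does not directly produce the tail bound. Your reduction from the empty upper semi-ball to $d(z,S)$ is also a bit cleaner: you apply Corollary~\ref{Cor:distpointsaligned} to the triple $z,p,A(z)$ with $p$ the crossing point, whereas the paper compares $d(z,(0,e^0))$ to $d(z,A(z))$ through a triangle inequality and carries an extra $-R$. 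Finally, you re-derive the exponential lower bound on $\mu(B_+(z,\rho))$ inline (with exponent $d\rho/2$); the paper instead reuses the bound $\mu(B_+(z,\rho))\ge e^{d\rho/3}$ already established in the finite-degree proof (its equation~(\ref{E:lowbplus})), which would save you the inline box computation.
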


We refer to the Appendix (Section \ref{S:nop}) for the proof.

\begin{corollary}
\label{Prop:finiteintensity}
For all $t \in \mathbb{R}$, $\mathcal{L}_t$ has finite intensity $\alpha_0e^{-dt}$, where $\alpha_0$ is the intensity of $\mathcal{L}_0$.
\end{corollary}

\begin{proof}[Proposition \ref{Prop:nop} implies Corollary \ref{Prop:finiteintensity}]
By Proposition $\ref{Prop:nop}$ with $p=1$, $\mathcal{L}_t$ has finite intensity for all $t \in \mathbb{R}$. Then we can define $\alpha_0$ as the intensity of $\mathcal{L}_0$. For $t \in \mathbb{R}$, the dilation $D_{e^t}$ preserves the DSF in distribution, so $\mathcal{L}_t \overset {d}{=} e^t\mathcal{L}_0$. Then $\mathcal{L}_t$ has finite intensity $\alpha_0e^{-dt}$ for any $t \in \mathbb{R}$.
\end{proof}

In the following, we will have to consider the law of DSF conditionally to $\{x \in \mathcal{L}_t\}$, for given $x \in \mathbb{R}^d$ and $t \in \mathbb{R}$. Thus we define the probability measure $\mathbb{P}_{x \in \mathcal{L}_t}[\cdot]$ on $\mathcal{N}_S$ as the Palm distribution of $N$ conditioned on $x \in \mathcal{L}_t$ (and let $\mathbb{E}_{x \in \mathcal{L}_t}[\cdot]$ its associated expectation). The definition of this probability measure follows the standard definition of Palm measures, however it should be a probability measure on all the point process (on $\mathcal{N}_S$) and not only on $\mathcal{L}_t$, that is why we need to re-define properly this probability distribution. 

\begin{prop_def}[Conditional distribution given $\{x \in \mathcal{L}_t\}$]
\label{PropDef:palm}
\!\
\begin{itemize}
\item (Definition) For $\Gamma \subset \mathcal{N}_S$ measurable, we define the measure $\mu_\Gamma$ on $\mathbb{R}^d$ by
\begin{eqnarray}\label{E:defcond1}\mu_\Gamma(A):=\mathbb{E}\left[ \sum_{s \in \mathcal{L}_t \cap A} \mathbf{1}_{T_{x-s}N \in \Gamma} \right]\end{eqnarray}
for all measurable set $A \subset \mathbb{R}^d$. Note that $\mu_\Gamma$ depends on $t$ and $x$. 
\item (Proposition) For all measurable set $\Gamma \subset \mathcal{N}_S$, the measure $\mu_\Gamma$ is invariant by translations and finite on compact sets.
\item (Definition) Then for all measurable set $\Gamma \subset \mathcal{N}_S$, $\mu_\Gamma$ is a multiple of the Lebesgue measure, so we can define:
\begin{eqnarray}\label{E:defcond2}\mathbb{P}_{x \in \mathcal{L}_t}[\Gamma]:=\frac{d\mu_\Gamma}{\alpha_0e^{-dt}d\Leb}.\end{eqnarray}
\item (Proposition) The map $\Gamma \mapsto \mathbb{P}_{x \in \mathcal{L}_t}[\Gamma]$ so defined is a probability measure on $\mathcal{N}_S$. We denote by $\mathbb{E}_{x \in \mathcal{L}_t}$ its associated expectation.
\end{itemize}
\end{prop_def}

We refer to the Supplementary materials (Section 2) for the proof of Proposition-Definition \ref{PropDef:palm}.



\begin{lemma}
\label{Lem:scaleinv}
(Invariance by dilations).
Let $t,t' \in \mathbb{R}$. We have 
\begin{eqnarray*}
\mathbb{P}_{0 \in \mathcal{L}_t}\left[D_{e^{t'-t}}(N) \in \cdot \right]=\mathbb{P}_{0 \in \mathcal{L}_{t'}}\left[ N \in \cdot \right]
\end{eqnarray*}
\end{lemma}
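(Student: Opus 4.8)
The plan is to reduce the statement to the conditional-expectation characterization given by Lemma \ref{Lem:technicallemma}, applied at the relevant levels, and then to exploit the dilation invariance of the unconditioned DSF. Concretely, fix a measurable set $\Gamma \subset \mathcal{N}_S$; I want to show $\mathbb{P}_{0 \in \mathcal{L}_t}[D_{e^{t'-t}}(N) \in \Gamma] = \mathbb{P}_{0 \in \mathcal{L}_{t'}}[N \in \Gamma]$. The key point is that $D_{e^{t'-t}}$ maps the height-$t$ level to the height-$t'$ level: if $(x,e^t) \in \DSF(\eta)$ then $(e^{t'-t}x, e^{t'}) \in \DSF(D_{e^{t'-t}}\eta)$, since $D_\alpha$ is an isometry fixing $\infty$ and preserving Euclidean segments (as noted in Section \ref{S:properties}). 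Hence $\mathcal{L}_{t'}(D_{e^{t'-t}}\eta) = e^{t'-t}\,\mathcal{L}_t(\eta)$.

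First I would express both sides via the defining formula \eqref{E:defcond2}, i.e. through the Palm measure $\mu_\Gamma$. Taking $A = [0,1]^d$ in \eqref{E:defcond1},
\begin{eqnarray*}
\mathbb{P}_{0 \in \mathcal{L}_t}[D_{e^{t'-t}}(N) \in \Gamma] = \alpha_0^{-1}e^{dt}\,\mathbb{E}\Big[\sum_{s \in \mathcal{L}_t \cap [0,1]^d} \mathbf{1}_{D_{e^{t'-t}}(T_{-s}N) \in \Gamma}\Big],
\end{eqnarray*}
using that $\mathbb{P}_{0\in\mathcal L_t}[\,\cdot\,]$ is obtained by shifting the point $s\in\mathcal L_t$ to the origin. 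Now I would commute the translation and the dilation: $D_{e^{t'-t}} \circ T_{-s} = T_{-e^{t'-t}s} \circ D_{e^{t'-t}}$. Setting $s' = e^{t'-t}s$ and using that $D_{e^{t'-t}}$ preserves the law of $N$ together with the level identity $\mathcal L_{t'}(D_{e^{t'-t}}N) = e^{t'-t}\mathcal L_t(N)$, the sum over $s \in \mathcal{L}_t \cap [0,1]^d$ becomes a sum over $s' \in \mathcal{L}_{t'} \cap [0,e^{t'-t}]^d$ of $\mathbf{1}_{T_{-s'}N \in \Gamma}$. Then a change of variables in the Lebesgue normalization (the cube $[0,e^{t'-t}]^d$ has volume $e^{d(t'-t)}$, which together with Proposition \ref{Prop:finiteintensity}'s intensity $\alpha_0 e^{-dt'}$ produces exactly the factor $\alpha_0^{-1}e^{dt}$) recovers $\mathbb{P}_{0 \in \mathcal{L}_{t'}}[N \in \Gamma]$ via \eqref{E:defcond2} applied at level $t'$.

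The main obstacle I anticipate is bookkeeping the normalization constants and making the change of variables in the Palm formula rigorous: one must be careful that "shifting $s$ to the origin" means applying $T_{x-s}$ with $x=0$, i.e. $T_{-s}$, and that after the dilation the resulting point process is still distributed as (an unconditioned) $N$ so that Lemma \ref{Lem:technicallemma} — or directly the definition \eqref{E:defcond1}–\eqref{E:defcond2} — applies at the new level $t'$ with its own intensity $\alpha_0 e^{-dt'}$. A clean way to organize this, avoiding the cube entirely, is to apply Lemma \ref{Lem:technicallemma} with an arbitrary bounded weight function: for any bounded measurable $F:\mathcal N_S\to\mathbb R_+$, show
\begin{eqnarray*}
\mathbb{E}\Big[\sum_{s\in\mathcal L_t\cap A} F(D_{e^{t'-t}}T_{-s}N)\Big] = \mathbb{E}\Big[\sum_{s'\in\mathcal L_{t'}\cap e^{t'-t}A} F(T_{-s'}N)\Big]
\end{eqnarray*}
directly from dilation invariance of $N$ and the level identity, then divide by the appropriate Lebesgue normalizations on each side and invoke the uniqueness of the Palm density. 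Once the identity is established for indicators $F=\mathbf 1_\Gamma$, the lemma follows.
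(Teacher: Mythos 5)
Your proposal is correct and follows essentially the same path as the paper's own proof: express the Palm probability via the defining measure $\mu_\Gamma$, commute $D_{e^{t'-t}}$ past $T_{-s}$, change variables $s' = e^{t'-t}s$ using the dilation invariance of $N$ and the level identity $\mathcal{L}_{t'}(D_{e^{t'-t}}\eta) = e^{t'-t}\mathcal{L}_t(\eta)$, and reconcile the normalizations through $\Leb(e^{t'-t}A) = e^{d(t'-t)}\Leb(A)$. The only cosmetic difference is that you specialize to $A=[0,1]^d$ up front (the paper keeps $A$ arbitrary), and your closing remark about running the argument with an arbitrary bounded weight $F$ is precisely the paper's computation spelled out.
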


The proof of Lemma \ref{Lem:scaleinv} is also given in the Supplementary materials (Lemma 2 Section 2.1).

\section{The Mass Transport Principle and its consequences}
\label{S:tools}

In this section, we state a main ingredient of the proofs, the Mass Transport Principle and explore some consequences.

\subsection{The Mass Transport Principle}

This theorem is an adaptation of its version on the hyperbolic plane, which is due to Benjamini and Schramm \cite[p.13-14]{benjamini}.

\begin{definition}[Diagonally invariant measure]
\label{Def:diaginv}
Let $\pi$ be some measure on $\mathbb{R}^d \times \mathbb{R}^d$ for the Borel $\sigma$-algebra. We say that $\pi$ is \textit{diagonally invariant} if for all $x \in \mathbb{R}^d$,
\begin{eqnarray*}\pi(A \times B)=\pi(T_xA \times T_xB).\end{eqnarray*}
\end{definition}

\begin{theorem}[Mass Transport Principle]
\label{Thm:masstranport}
Let $\pi$ be some positive diagonally invariant measure on $\mathbb{R}^d \times \mathbb{R}^d$. Then for any measurable set $A \subset \mathbb{R}^d$ with nonempty interior, the following identity holds:
\begin{eqnarray*}\pi(A \times \mathbb{R}^d)=\pi(\mathbb{R}^d \times A),\end{eqnarray*}
these values can be eventually infinite.
\end{theorem}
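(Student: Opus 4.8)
The plan is to recast the statement as an equality between two translation-invariant Borel measures on $\mathbb{R}^d$. For a Borel set $A \subset \mathbb{R}^d$ define
\[
\mu_{\mathrm{out}}(A) := \pi(A \times \mathbb{R}^d), \qquad \mu_{\mathrm{in}}(A) := \pi(\mathbb{R}^d \times A).
\]
Since $A \times \mathbb{R}^d$ and $\mathbb{R}^d \times A$ are Borel subsets of $\mathbb{R}^d \times \mathbb{R}^d$, countable additivity of $\pi$ gives at once that $\mu_{\mathrm{out}}$ and $\mu_{\mathrm{in}}$ are Borel measures on $\mathbb{R}^d$, possibly with infinite values. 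Because $T_x \mathbb{R}^d = \mathbb{R}^d$, diagonal invariance of $\pi$ yields $\mu_{\mathrm{out}}(T_x A) = \pi(T_x A \times T_x \mathbb{R}^d) = \pi(A \times \mathbb{R}^d) = \mu_{\mathrm{out}}(A)$, and similarly for $\mu_{\mathrm{in}}$; hence both are translation-invariant. The theorem is exactly the assertion $\mu_{\mathrm{out}}(A) = \mu_{\mathrm{in}}(A)$.

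The core step is to compute both measures on the unit cell $Q := [0,1)^d$, using the tiling $\mathbb{R}^d = \bigsqcup_{z \in \mathbb{Z}^d}(Q + z)$. Countable additivity gives
\[
\mu_{\mathrm{out}}(Q) = \sum_{z \in \mathbb{Z}^d} \pi\big(Q \times (Q + z)\big), \qquad \mu_{\mathrm{in}}(Q) = \sum_{z \in \mathbb{Z}^d} \pi\big((Q + z) \times Q\big).
\]
Applying diagonal invariance with the translation of vector $-z$ to each term of the second sum turns $\pi\big((Q+z) \times Q\big)$ into $\pi\big(Q \times (Q - z)\big)$, and re-indexing $z \mapsto -z$ (a bijection of $\mathbb{Z}^d$) identifies the second sum with the first. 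Hence $\mu_{\mathrm{out}}(Q) = \mu_{\mathrm{in}}(Q) =: m \in [0, +\infty]$.

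It then remains to pass from equality on $Q$ to equality on $A$, and here a dichotomy on $m$ is needed. If $m < \infty$, then $\mu_{\mathrm{out}}$ is translation-invariant and finite on $Q$, hence finite on every compact set (each being covered by finitely many cells $Q + z$), hence locally finite; by the uniqueness up to scaling of locally finite translation-invariant Borel measures on $\mathbb{R}^d$ it equals $m\,\Leb$ (normalising $\Leb(Q) = 1$), and likewise $\mu_{\mathrm{in}} = m\,\Leb$, so $\mu_{\mathrm{out}} = \mu_{\mathrm{in}}$ and in particular $\mu_{\mathrm{out}}(A) = \mu_{\mathrm{in}}(A)$. If $m = +\infty$, one uses the hypothesis that $A$ has nonempty interior: choose a nonempty half-open cube $p + cQ \subset A$ with $c > 0$, cover $Q$ by finitely many translates of $cQ$, and deduce from translation invariance that $+\infty = \mu_{\mathrm{out}}(Q) \le N\,\mu_{\mathrm{out}}(cQ)$, whence $\mu_{\mathrm{out}}(cQ) = +\infty$ and therefore $\mu_{\mathrm{out}}(A) \ge \mu_{\mathrm{out}}(p + cQ) = \mu_{\mathrm{out}}(cQ) = +\infty$; the identical argument gives $\mu_{\mathrm{in}}(A) = +\infty$, so again $\mu_{\mathrm{out}}(A) = \mu_{\mathrm{in}}(A)$.

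I expect the main obstacle to be precisely this last dichotomy: when $\pi$ is not assumed $\sigma$-finite the measures $\mu_{\mathrm{out}}, \mu_{\mathrm{in}}$ may fail to be locally finite, in which case they need not be multiples of Lebesgue measure, and one genuinely must invoke the nonempty-interior assumption to force both sides to be infinite simultaneously; the rest of the argument is a routine bookkeeping of disjoint unions and translations.
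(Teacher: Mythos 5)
Your proof is correct, but it takes a genuinely different route from the paper's. The paper first establishes a preliminary lemma extending diagonal invariance from rectangles to arbitrary measurable sets (via the monotone class theorem), then handles the case where both marginals are locally finite by a Fubini computation of the type $\int_{\mathbb{R}^d}\pi(T_sB\times B)\,ds=\Leb(B)\,\pi(\mathbb{R}^d\times B)$, and finally reduces the mixed (finite/infinite) case to the finite one by truncating $\pi$ to the band $F_r=\{(x,x'):\|x-x'\|<r\}$ and taking $r\to\infty$ via monotone convergence. Your proof sidesteps all of this: the tiling $\mathbb{R}^d=\bigsqcup_{z\in\mathbb{Z}^d}(Q+z)$ with $Q=[0,1)^d$, combined with diagonal invariance applied only to rectangles and the bijection $z\mapsto -z$ of $\mathbb{Z}^d$, directly yields $\mu_{\mathrm{out}}(Q)=\mu_{\mathrm{in}}(Q)=:m$, after which the dichotomy on $m$ finishes the argument using only subadditivity and the uniqueness (up to scaling) of translation-invariant locally finite Borel measures on $\mathbb{R}^d$. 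Both proofs ultimately invoke that uniqueness, but yours avoids Fubini, the band truncation, and the monotone class lemma, making it shorter and more elementary. One aesthetic trade-off: the paper's Fubini step is the more standard ``averaging'' argument that mirrors how mass transport principles are typically proved on general unimodular groups, so it generalises more readily; your tiling argument is tailored to $\mathbb{R}^d$ (or any group with a fundamental domain tiling) but is cleaner in this setting. Also worth noting: your argument shows that in the case $m<\infty$ the identity $\mu_{\mathrm{out}}=\mu_{\mathrm{in}}$ holds for \emph{all} Borel $A$, and the hypothesis that $A$ has nonempty interior is needed only when $m=\infty$ — the paper's proof has the same feature, just less visibly.
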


A proof of Theorem \ref{Thm:masstranport} is given in the Supplementary materials (Section \ref{S:masstransport}). The intuition behind the Mass Transport Principle can be understood as follows. The measure $\pi$ describes a mass transport from $\mathbb{R}^d$ to $\mathbb{R}^d$, that is, $\pi(A \times B)$ corresponds to the amount of mass transported from $A$ to $B$. Then the Mass Transport Principle asserts that the outgoing mass equals the incoming mass.

In the literature, the study of percolation in hyperbolic space mostly concerns models that are invariant under any isometry of $\mathbb{H}^{d+1}$ (see for instance, the Poisson-Boolean model studied in \cite{tykesson} or the Poisson-Voronoï model studied in \cite{benjamini}). Thus it is relevant to use the Mass Transport Principle on $\mathbb{H}^{d+1}$ \cite{benjamini} to study these models. However, our model of DSF is directed, so it is only invariant under isometries that fix the target point. This group of isometries is not unimodular, so \tvc{their} version of the Mass Transport on $\mathbb{H}^{d+1}$ cannot be used for the study of the DSF. Instead of considering mass transports on $\mathbb{H}^{d+1}$, we typically consider mass transport from level $t_1$ to level $t_2$ (for $t_1,t_2 \in \mathbb{R}$), that is why we need the Mass Transport on $\mathbb{R}^d$.

We now state some consequences of the Mass Transport Principle, that play a central role in the control of horizontal fluctuations of trajectories (proofs of Theorems \ref{Thm:controlforward} and \ref{Thm:controlbackward} in Section \ref{S:prooffluctuations}). We first define the concepts of weight function and association function (Section \ref{S:weightassociationfunction}). From these objects, we construct diagonally invariant measures and obtain different equalities by identifying both sides of equality given in the Mass Transport Principle (Section \ref{S:results}). Proofs are given in Section \ref{S:proofresults}.

\subsection{Association functions and weight functions}
\label{S:weightassociationfunction}

Let us introduce a random variable $Y$ independent of $N$, valued in some measurable space $\Upsilon$. In a majority of applications, the extra random variable $Y$ will not be necessary. However, an extra random variable will be used in the proof of (ii) in Theorem \ref{Thm:controlforward}, because some association function using extra randomness will be considered.

\subsubsection{Association functions}

\begin{definition}[Association function, general case]
Let $t \in \mathbb{R}$. We call \emph{level $t$-association function} or more simply \emph{association function} a measurable function $f:\mathbb{R}^d \times \mathcal{N}_S \times \Upsilon \to \mathbb{R}^d$ such that
\begin{itemize}
\item $f$ is valued in $\mathcal{L}_t$, more precisely
\begin{eqnarray*}
\forall x \in \mathbb{R}^d,~f(x,N,Y) \in \mathcal{L}_t \text{ a.s.}
\end{eqnarray*}
\item $f$ is \emph{translation invariant}, in the following sense: for all $\eta \in \mathcal{N}_S$, for all $x,x' \in \mathbb{R}^d$,
\begin{eqnarray*}
f(x+x',T_{x'}\eta,Y) \overset{(d)}{=} f(x,\eta,Y)+x'.
\end{eqnarray*}
\end{itemize}
We set the notation $f(x):=f(x,N,Y)$. An association function can be seen as a (translation invariant) random function from $\mathbb{R}^d$ to $\mathbb{R}^d$.
\end{definition}
For most of applications, $f$ will not depend on $Y$ ($Y$ will be deterministic). This case will be refered as the \emph{non-marked case}. In this case, the notation $f(x,\eta,Y)$ will be replaced by $f(x,\eta)$ for simplicity.

\begin{definition}
\label{Def:cellofpoint}
[Cell of a point] Let $t \in \mathbb{R}$, and let $f$ be a level $t$-association function. For $x \in \mathcal{L}_t$, we define the \emph{cell} of $x$ as the (random) subset of $\mathbb{R}^d$:
\begin{eqnarray*}
\Lambda_f(x):=\{x' \in \mathbb{R}^d,~f(x')=x\}.
\end{eqnarray*}

\end{definition}
\begin{example}
\label{Ex:closest}
The most useful example to keep in mind is the following: $f(x,\eta)$ is defined as the point of $\mathcal{L}_0(\eta)$ the closest to $x$:
\begin{eqnarray*}
f(x,\eta):=\argmin_{x' \in \mathcal{L}_0(\eta)} \|x'-x\|. 
\end{eqnarray*}
Then $f$ is a level $0$-association function independent of $Y$ (the non-marked case). Moreover, for $x \in \mathcal{L}_0$, $\Lambda_f(x)$ is the Voronoï cell of $x$ associated to the point process $\mathcal{L}_0$.
\end{example}

\begin{example}
\label{Ex:poisson}
Suppose that $Y$ is a (homogeneous) PPP on $\mathbb{R}^d$ independent of $N$. Define $f(x,\eta,\xi)$ as the point of $\mathcal{L}_0(\eta)$ the closest to $x$ among all points $x' \in \mathcal{L}_0$ such that the ball $B(x',1)$ contains no points of $\xi$. Then $f$ is a level $0$-association function.

Another association function depending on a extra argument $Y$ will be constructed in Section \ref{S:proofcontrolforward2}.
\end{example}

\subsubsection{Weight functions}

\begin{definition}[Weight function]
We call \emph{weight function} a measurable function $w:\mathbb{R}^d \times \mathcal{N}_S \times \Upsilon \to \mathbb{R}_+$ that is \emph{translation invariant} in the following sense: for all $\eta \in \mathcal{N}_S$ and for all $x,x'\in \mathbb{R}^d$,
\begin{eqnarray*}
w(x,\eta,Y) \overset{(d)}{=} w(x+x',T_{x'}\eta,Y).
\end{eqnarray*}
We set the notation 
\begin{eqnarray*}
w(x):=w(x,N,Y).
\end{eqnarray*}
A weight function can be seen as a random application from $\mathbb{R}^d$ to $\mathbb{R}_+$.
\end{definition}

\tvc{The case where $w$ does not depend on $Y$ will be referred as the \emph{non-marked case}. In this case, we replace the notation $w(x,\eta,Y)$ by $w(x,\eta)$.}

\begin{example}
Consider the function $w(x,\eta):=\mathbf{1}_{x \in \mathcal{L}_0(\eta)}\|\mathcal{A}_0^1(x)(\eta)-x\|$. It is the horizontal deviation between levels $0$ and $1$ of the trajectory from $(x,e^0)$ when $x \in \mathcal{L}_0$. Then $w$ is a weight function in the non-marked case.
\end{example}

\begin{example}
Suppose that $Y$ is a random variable independent of $N$ and valued in $\mathbb{N}^*$. We can define $w(x,\eta,n)$ as the distance ($\| \cdot \|_2$ in $\mathbb{R}^d$) between $x$ and the point of $\mathcal{L}_0$ which is the $n$-th closest to $x$. Then $w$ is a weight function.
\end{example}

\subsubsection{Weighted association functions}

\begin{definition}
Let $t \in \mathbb{R}$. We call \emph{level $t$-weighted association function} (or more simply, \emph{weighted association function}) a couple $(f,w)$, where $f$ is an association function and $w$ is a weight function, such that the \emph{couple} $(f,w)$ is translation invariant in distribution, that is, for all $\eta \in \mathcal{N}_S$ and for all $x,x' \in \mathbb{R}^d$:
\begin{eqnarray}
\label{E:defweighteddiaginv}
\Big(f(x+x',T_{x'}\eta,Y),~w(x+x',T_{x'}\eta,Y) \Big) \overset{(d)}{=} \Big(f(x,\eta,Y)+x',~w(x,\eta,Y) \Big).
\end{eqnarray}
\end{definition}

Note that, in the non-marked case ($f$ and $w$ does not depend of $Y$), the condition (\ref{E:defweighteddiaginv}) is useless.

\begin{example}
Consider the association $f$ introduced in Example \ref{Ex:closest}: $f(x,\eta)$ is the point of $\mathcal{L}_0(\eta)$ the closest to $x$. Let $w(x,\eta):=\|x-f(x,\eta)\|$. Then $w$ is a weight function and $(f,w)$ is a weighted association function (in the non-marked case).
\end{example}

\begin{example}
Consider the association function $f$ introduced in Example \ref{Ex:poisson}. Then define $w(x,\eta,\xi):=\#(\xi \cap B(0,1))$. Then $w$ is a weight function, however the couple $(f,w)$ is not a weighted association function. \tvc{To see this, consider for instance the case $d=1$. The value of $w(x,\eta,\xi)$ depends here only on $\xi$ and is thus constant: it indicates the number of points of $\xi$ in the interval $(-1,1)$. Imagine that this number is zero. Then, the distance of $f(x,\eta,\xi)$ to $x$ is at most $|x|$, and we have loose the translation invariance in distribution.}
\end{example}

\subsection{Results derived from the Mass Transport Principle}
\label{S:results}

\tvc{We now enounce some results using the Mass Transport Principle that will be useful. The proofs are postponed to Section \ref{S:proofresults}.}

\tvc{Let us extend the Palm distribution $\mathbb{P}_{x \in \mathcal{L}_t}$ defined in Section \ref{S:properties} on $\sigma(N)$ to $\sigma(N,Y)$ by setting:
\begin{eqnarray*}
\mathbb{P}_{x \in \mathcal{L}_t}[N \in \Gamma,~Y \in \Gamma']=\mathbb{P}_{x \in \mathcal{L}_t}[N \in \Gamma]\mathbb{P}[Y \in \Gamma']
\end{eqnarray*}
for all $\Gamma \in \mathcal{N}_S$ and $\Gamma' \subset \Upsilon$. We also extend the notation $\mathbb{E}_{x \in \mathcal{L}_t}[X]$ to random variables $X$ measurable w.r.t. $\sigma(N,Y)$.}

\tvc{\begin{lemma}
\label{Lem:technicallemmageneral}
Let $x \in \mathbb{R}^d$ and $t \in \mathbb{R}$. Let $w:\mathbb{R}^d \times \mathcal{N}_S \times \Upsilon \rightarrow \mathbb{R}_+$ be a weight function. Then for all measurable set $A \subset \mathbb{R}^d$,
\begin{eqnarray}
\label{E:technicallemmageneral}
\mathbb{E}\left[ \sum_{s \in \mathcal{L}_t \cap A} w(s)\right]=\alpha_0 e^{-dt}\Leb(A)\mathbb{E}_{0 \in \mathcal{L}_t}[w(0)].
\end{eqnarray}
\end{lemma}
}
\begin{proposition}
\label{Lem:MTlemma1}
Let $t_1,t_2 \in \mathbb{R}$ with $t_1 \le t_2$. Let $w:\mathbb{R}^d \times \mathcal{N}_s \times \Upsilon \rightarrow \mathbb{R}_+$ be a weight function. Then
\begin{eqnarray}
\label{E:MTlemma1}
\mathbb{E}_{0 \in \mathcal{L}_{t_1}}\left[ w(0) \right]=e^{d(t_1-t_2)}\mathbb{E}_{0 \in \mathcal{L}_{t_2}}\left[ \sum_{x \in \mathcal{D}_{t_1}^{t_2}(0)} w(x) \right].
\end{eqnarray}
\end{proposition}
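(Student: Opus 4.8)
The plan is to recast the identity (\ref{E:MTlemma1}) as a mass transport on $\mathbb{R}^d$ and apply the Mass Transport Principle (Theorem \ref{Thm:masstranport}), transporting the weight $w(x)$ of each point $x\in\mathcal{L}_{t_1}$ to its ancestor $\mathcal{A}_{t_1}^{t_2}(x)$ at level $t_2$. First I would introduce, for Borel sets $A,B\subset\mathbb{R}^d$,
\begin{equation*}
\pi(A\times B):=\mathbb{E}\Big[\sum_{x\in\mathcal{L}_{t_1}\cap A}w(x)\,\mathbf{1}_{\mathcal{A}_{t_1}^{t_2}(x)\in B}\Big],
\end{equation*}
the expectation running over $(N,Y)$ (with the natural convention that the summand vanishes when $\mathcal{A}_{t_1}^{t_2}(x)=\infty$, i.e.\ when the trajectory from $(x,e^{t_1})$ never reaches level $t_2$). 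As a countable sum of nonnegative terms this defines a positive, a priori possibly infinite, measure on $\mathbb{R}^d\times\mathbb{R}^d$.

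Next I would verify that $\pi$ is diagonally invariant. Fix $s\in\mathbb{R}^d$. Since $T^H_s$ is an isometry of $H$ fixing $\infty$ and $Y\perp N$, we have $(N,Y)\overset{(d)}{=}(T^H_sN,Y)$, and since the DSF construction commutes with $T^H_s$ one has the exact covariances $\mathcal{L}_{t_1}(T^H_s\eta)=T_s\mathcal{L}_{t_1}(\eta)$ and $\mathcal{A}_{t_1}^{t_2}(T_sx)(T^H_s\eta)=T_s\,\mathcal{A}_{t_1}^{t_2}(x)(\eta)$. Replacing $N$ by $T^H_sN$ in $\pi(T_sA\times T_sB)$ and then substituting $x=T_sx'$ in the sum turns it into
\begin{equation*}
\pi(T_sA\times T_sB)=\mathbb{E}\Big[\sum_{x'\in\mathcal{L}_{t_1}\cap A}w(T_sx',T^H_sN,Y)\,\mathbf{1}_{\mathcal{A}_{t_1}^{t_2}(x')\in B}\Big].
\end{equation*}
Conditioning on $N$ — the random set $\mathcal{L}_{t_1}\cap A$ and the indicator are $\sigma(N)$-measurable — and then using $Y\perp N$ together with the defining property $w(T_sx',T^H_s\eta,Y)\overset{(d)}{=}w(x',\eta,Y)$ of a weight function (which holds for each fixed $\eta$), the conditional expectation of $w(T_sx',T^H_sN,Y)$ given $N$ equals that of $w(x',N,Y)$; hence $\pi(T_sA\times T_sB)=\pi(A\times B)$. (In the non-marked case this final step is simply Definition \ref{Def:weightfunction}.)

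Then I would apply Theorem \ref{Thm:masstranport}: taking $A$ to be, say, a bounded ball (nonempty interior, $0<\Leb(A)<\infty$) it gives $\pi(A\times\mathbb{R}^d)=\pi(\mathbb{R}^d\times A)$. By Proposition \ref{Prop:welldefined}, almost surely $\mathcal{A}_{t_1}^{t_2}(x)\neq\infty$ for every $x\in\mathcal{L}_{t_1}$, so the left side is $\pi(A\times\mathbb{R}^d)=\mathbb{E}\big[\sum_{x\in\mathcal{L}_{t_1}\cap A}w(x)\big]=\alpha_0e^{-dt_1}\Leb(A)\,\mathbb{E}_{0\in\mathcal{L}_{t_1}}[w(0)]$ by the (marked analogue of) Lemma \ref{Lem:technicallemma}. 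For the right side, Proposition \ref{Prop:welldefined} also ensures that $\{\mathcal{D}_{t_1}^{t_2}(x'')\}_{x''\in\mathcal{L}_{t_2}}$ is a partition of $\mathcal{L}_{t_1}$; regrouping the sum over $x\in\mathcal{L}_{t_1}$ with $\mathcal{A}_{t_1}^{t_2}(x)\in A$ by the value $x''=\mathcal{A}_{t_1}^{t_2}(x)$ gives
\begin{equation*}
\pi(\mathbb{R}^d\times A)=\mathbb{E}\Big[\sum_{x''\in\mathcal{L}_{t_2}\cap A}\ \sum_{x\in\mathcal{D}_{t_1}^{t_2}(x'')}w(x)\Big]=\alpha_0e^{-dt_2}\Leb(A)\,\mathbb{E}_{0\in\mathcal{L}_{t_2}}\Big[\sum_{x\in\mathcal{D}_{t_1}^{t_2}(0)}w(x)\Big],
\end{equation*}
the last equality being Lemma \ref{Lem:technicallemma} at level $t_2$ applied to the weight function $x''\mapsto\mathbf{1}_{x''\in\mathcal{L}_{t_2}}\sum_{x\in\mathcal{D}_{t_1}^{t_2}(x'')}w(x)$ (which is a weight function by the argument of the previous paragraph, now using $\mathcal{D}_{t_1}^{t_2}(T_sx'')(T^H_s\eta)=T_s\,\mathcal{D}_{t_1}^{t_2}(x'')(\eta)$). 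Equating the two displays and dividing by the finite nonzero quantity $\alpha_0\Leb(A)$ gives $e^{-dt_1}\mathbb{E}_{0\in\mathcal{L}_{t_1}}[w(0)]=e^{-dt_2}\mathbb{E}_{0\in\mathcal{L}_{t_2}}\big[\sum_{x\in\mathcal{D}_{t_1}^{t_2}(0)}w(x)\big]$, which is (\ref{E:MTlemma1}).

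The one genuinely delicate point I expect is the diagonal-invariance verification (and its reuse for the descendant-sum weight function): one must reconcile the \emph{exact} translation-covariance of the DSF-derived objects $\mathcal{L}_t$, $\mathcal{A}_{t_1}^{t_2}$, $\mathcal{D}_{t_1}^{t_2}$ with the merely \emph{distributional} translation-invariance that a marked weight function is allowed to have, and conditioning on $N$ first — before invoking $Y\perp N$ — is exactly what lets the distributional identity suffice. Everything else is bookkeeping with nonnegative sums, together with the already-available Lemma \ref{Lem:technicallemma} and Proposition \ref{Prop:welldefined}.
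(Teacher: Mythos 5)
Your proposal is correct and follows the same route as the paper: define the mass transport $\pi(A\times B)=\mathbb{E}[\sum_{x\in\mathcal{L}_{t_1}\cap A}w(x)\mathbf{1}_{\mathcal{A}_{t_1}^{t_2}(x)\in B}]$, verify diagonal invariance using the exact translation-covariance of the DSF together with the distributional invariance of $w$ (conditioning on $N$ first, as you note), apply the Mass Transport Principle, and evaluate both sides via Lemma \ref{Lem:technicallemmageneral} with the descendant-sum as a new weight function. Your explicit appeals to Proposition \ref{Prop:welldefined} (to guarantee $\mathcal{A}_{t_1}^{t_2}(x)\neq\infty$ and that the descendant sets partition $\mathcal{L}_{t_1}$) are left implicit in the paper but are exactly the right justifications.
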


\begin{corollary}[Expected number of descendants]
\label{Cor:expnumdesc}
Let $t_1,t_2 \in \mathbb{R}$ with $t_1 \le t_2$. We have
\begin{eqnarray}
\label{E:expnumdesc}
\mathbb{E}_{0 \in \mathcal{L}_{t_2}}[\#\mathcal{D}_{t_1}^{t_2}(0)]=e^{d(t_2-t_1)}.
\end{eqnarray}
In particular, for all $x \in \mathcal{L}_{t_2}$, $\#\mathcal{D}_{t_1}^{t_2}(x)<\infty$ almost surely.
\end{corollary}

\begin{proof}[Proof of Corollary \ref{Cor:expnumdesc} knowing Proposition \ref{Lem:MTlemma1}]
Applying Proposition \ref{Lem:MTlemma1} to $w \equiv 1$ leads to (\ref{E:expnumdesc}).

Thus, we obtain that $\mathbb{P}_{0 \in \mathcal{L}_{t_2}}[\#\mathcal{D}_{t_1}^{t_2}(0)<\infty]=1$. Then we apply Lemma \ref{Lem:technicallemmageneral} with $A=\mathbb{R}^d$ to the weight function $w'$ defined as
\begin{eqnarray*}
w'(x,\eta)=\mathbf{1}_{x \in \mathcal{L}_{t_2}(\eta),\#\mathcal{D}_{t_1}^{t_2}(x)(\eta)=\infty}.
\end{eqnarray*}
It leads to:
\begin{eqnarray*}
&\mathbb{E}\left[ \#\{x \in \mathcal{L}_{t_2},~\#\mathcal{D}_{t_1}^{t_2}(x)=\infty \} \right]&=\mathbb{E}\left[ \sum_{x \in \mathcal{L}_{t_2}} w'(x) \right]=\infty\times \mathbb{E}_{0 \in \mathcal{L}_{t_2}}[w'(0)] \nonumber\\
&&=\infty\times \mathbb{P}_{0 \in \mathcal{L}_{t_2}}[\#\mathcal{D}_{t_1}^{t_2}(0)=\infty]=0.
\end{eqnarray*}
Thus, for all $x \in \mathcal{L}_{t_2}$, $\#\mathcal{D}_{t_1}^{t_2}(x)<\infty$.
\end{proof}

\begin{proposition}
\label{Lem:MTlemma2}
Let $t \in \mathbb{R}$ and let $(f,w)$ be a level $t$-weighted association function. Then
\begin{eqnarray}
\label{E:MTlemma2}
\mathbb{E}\left[ w(0) \right]=\alpha_0e^{-dt}\mathbb{E}_{0 \in \mathcal{L}_t} \left[ \int_{\Lambda_f(0)} w(x)~dx \right].
\end{eqnarray}
\end{proposition}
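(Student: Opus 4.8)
The plan is to apply the Mass Transport Principle (Theorem \ref{Thm:masstranport}) to a well-chosen diagonally invariant measure on $\mathbb{R}^d \times \mathbb{R}^d$, and then identify the two marginals $\pi(A \times \mathbb{R}^d)$ and $\pi(\mathbb{R}^d \times A)$ with the two sides of (\ref{E:MTlemma2}). The measure I would use sends mass from a generic abscissa $x \in \mathbb{R}^d$ to the point $f(x) \in \mathcal{L}_t$ it is associated with, weighted by $w(x)$; concretely, for measurable $A, B \subset \mathbb{R}^d$ set
\begin{eqnarray*}
\pi(A \times B) := \mathbb{E}\left[ \int_{A} w(x) \, \mathbf{1}_{f(x) \in B} \, dx \right].
\end{eqnarray*}
Since the couple $(f,w)$ is translation invariant in the sense of (\ref{E:defweighteddiaginv}) and Lebesgue measure is translation invariant, a change of variables $x \mapsto x + s$ inside the integral shows $\pi(T_sA \times T_sB) = \pi(A \times B)$, so $\pi$ is diagonally invariant; one should also note $\pi$ is positive, which is all Theorem \ref{Thm:masstranport} requires.

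The next step is to compute the two marginals. On one hand, $\pi(A \times \mathbb{R}^d) = \mathbb{E}\left[ \int_A w(x)\, dx \right] = \Leb(A)\, \mathbb{E}[w(0)]$ by Fubini and stationarity of the weight function $w$ (its distribution does not depend on the base point, by translation invariance of $N$ and of $w$). On the other hand,
\begin{eqnarray*}
\pi(\mathbb{R}^d \times B) = \mathbb{E}\left[ \int_{\mathbb{R}^d} w(x)\, \mathbf{1}_{f(x) \in B} \, dx \right] = \mathbb{E}\left[ \sum_{s \in \mathcal{L}_t \cap B} \int_{\Lambda_f(s)} w(x)\, dx \right],
\end{eqnarray*}
where I have used that $\{x : f(x) \in B\}$ is the disjoint union over $s \in \mathcal{L}_t \cap B$ of the cells $\Lambda_f(s) = \{x : f(x) = s\}$ (valid because $f$ is $\mathcal{L}_t$-valued). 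Now I would invoke Lemma \ref{Lem:technicallemma}: the map $s \mapsto \int_{\Lambda_f(s)} w(x)\, dx$ is (the realisation of) a weight function in the sense of Definition \ref{Def:weightfunction} — here the translation invariance of this new weight function follows from that of the couple $(f,w)$, since translating $N$ by $x'$ translates $\Lambda_f(s)$ to $\Lambda_f(s+x')$ and preserves the values of $w$. Lemma \ref{Lem:technicallemma} then gives $\pi(\mathbb{R}^d \times B) = \alpha_0 e^{-dt} \Leb(B)\, \mathbb{E}_{0 \in \mathcal{L}_t}\left[ \int_{\Lambda_f(0)} w(x)\, dx \right]$.

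Finally, choosing $A = B$ with $0 < \Leb(A) < \infty$ and equating the two marginals via Theorem \ref{Thm:masstranport} yields
\begin{eqnarray*}
\Leb(A)\, \mathbb{E}[w(0)] = \alpha_0 e^{-dt} \Leb(A)\, \mathbb{E}_{0 \in \mathcal{L}_t}\left[ \int_{\Lambda_f(0)} w(x)\, dx \right],
\end{eqnarray*}
and dividing by $\Leb(A)$ gives (\ref{E:MTlemma2}). The main obstacle I anticipate is the bookkeeping needed to verify that $s \mapsto \int_{\Lambda_f(s)} w(x)\, dx$ is genuinely a weight function and that the marked version of Lemma \ref{Lem:technicallemma} (over $\sigma(N,Y)$ rather than $\sigma(N)$) applies — in particular one must be careful with the distributional identities in (\ref{E:defweighteddiaginv}) versus pointwise ones, and with measurability of the cell integral; a secondary technical point is to handle the case where the expectations are infinite, which is permitted in Theorem \ref{Thm:masstranport} and carries through both marginals by monotone convergence.
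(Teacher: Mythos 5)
Your proof is correct and follows essentially the same route as the paper: the same mass transport $\pi(A\times B)=\mathbb{E}\bigl[\int_A w(x)\,\mathbf{1}_{f(x)\in B}\,dx\bigr]$, the same identification of the two marginals, and the same appeal to Lemma \ref{Lem:technicallemma} applied to the derived weight function $s \mapsto \int_{\Lambda_f(s)} w(x)\,dx$. The "obstacle" you flag at the end is exactly what the paper handles via Lemma \ref{Lem:technicallemmageneral} (the marked extension of Lemma \ref{Lem:technicallemma} to $\sigma(N,Y)$), and the verification that the cell-integral is indeed a weight function is a short direct computation, so the plan is complete.
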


\begin{corollary}[Expected volume of a typical cell]
\label{Cor:expectedvolcell}
Let $t \in \mathbb{R}$, $f$ be a level $t$-association function. Applying Proposition \ref{Lem:MTlemma2} with $w \equiv 1$ (it is easy to check that $(f,w)$ is a weighted association function), we obtain:
\begin{eqnarray*}
\mathbb{E}_{0 \in \mathcal{L}_t}[\Leb(\Lambda_f(0))]=\alpha_0^{-1}e^{dt}.
\end{eqnarray*}
\end{corollary}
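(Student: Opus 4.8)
The plan is simply to feed the constant weight $w\equiv 1$ into Proposition \ref{Lem:MTlemma2} and read off the resulting identity.

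First I would verify that the hypotheses of Proposition \ref{Lem:MTlemma2} are satisfied by the couple $(f,w)$ with $w\equiv 1$. The constant map $w:\mathbb{R}^d\times\mathcal{N}_S\times\Upsilon\to\mathbb{R}_+$ is a weight function, being translation invariant for the trivial reason that it is constant; and the translation-invariance condition (\ref{E:defweighteddiaginv}) for the couple $(f,1)$ reduces to the translation invariance of $f$, which holds since $f$ is by assumption a level $t$-association function. Hence $(f,w\equiv 1)$ is a level $t$-weighted association function and Proposition \ref{Lem:MTlemma2} applies to it.

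Next I would evaluate the two sides of (\ref{E:MTlemma2}). On the left, $\mathbb{E}[w(0)]=\mathbb{E}[1]=1$. On the right, for a fixed realization of $(N,Y)$ the cell $\Lambda_f(0)=\{x'\in\mathbb{R}^d:\ f(x')=0\}$ is the preimage of $\{0\}$ under the measurable map $x'\mapsto f(x')$, hence a Borel subset of $\mathbb{R}^d$, so $\int_{\Lambda_f(0)}1\,dx=\Leb(\Lambda_f(0))$, and the map sending a realization to $\Leb(\Lambda_f(0))$ is measurable by Tonelli's theorem. Thus the right-hand side equals $\alpha_0 e^{-dt}\,\mathbb{E}_{0\in\mathcal{L}_t}[\Leb(\Lambda_f(0))]$. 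Equating the two sides gives $1=\alpha_0 e^{-dt}\,\mathbb{E}_{0\in\mathcal{L}_t}[\Leb(\Lambda_f(0))]$; since $\alpha_0>0$ (it is the intensity of $\mathcal{L}_0$, finite by Proposition \ref{Prop:finiteintensity}, and positivity is implicit already in the definition of $\mathbb{P}_{0\in\mathcal{L}_t}$), dividing through yields $\mathbb{E}_{0\in\mathcal{L}_t}[\Leb(\Lambda_f(0))]=\alpha_0^{-1}e^{dt}$.

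There is no real obstacle: the entire substance of the corollary is already contained in Proposition \ref{Lem:MTlemma2}, and the only points deserving a word of comment are the (immediate) check that $(f,1)$ is an admissible weighted association function and the reading of $\int_{\Lambda_f(0)}1\,dx$ as the Lebesgue measure of the cell. A worthwhile by-product of the identity, which I would record, is that $\mathbb{E}_{0\in\mathcal{L}_t}[\Leb(\Lambda_f(0))]$ is automatically finite, i.e. the typical cell of any level $t$-association function has finite expected volume.
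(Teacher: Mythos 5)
Your proof is correct and takes exactly the route the paper intends: you plug $w\equiv 1$ into Proposition \ref{Lem:MTlemma2}, verify the (trivial) translation invariance of the couple $(f,1)$, compute $\mathbb{E}[w(0)]=1$, identify $\int_{\Lambda_f(0)}1\,dx$ with $\Leb(\Lambda_f(0))$, and divide by $\alpha_0 e^{-dt}$. The extra remarks on measurability and on the finiteness by-product are sound but go beyond what the paper records, which presents the corollary as an immediate consequence.
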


\begin{proposition}
\label{Lem:technical}
Let $t \in \mathbb{R}$ and $p \ge 1$. Let $f$ be a level $t$-association function. Then
\begin{eqnarray}
\label{E:technical}
\mathbb{E}_{0 \in \mathcal{L}_t}[\Leb(\Lambda_f(0))^{1+p/d}] \le C_{p,d}e^{dt}\mathbb{E}[\|f(0)\|^p],
\end{eqnarray}
where $C_{p,d}$ is a positive constant that only depends on $p$ and $d$.
\end{proposition}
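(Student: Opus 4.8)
The plan is to derive this from the mass-transport identity of Proposition \ref{Lem:MTlemma2} applied to a well-chosen weight function, combined with an elementary rearrangement (bathtub) inequality in $\mathbb{R}^d$. First I would take $w(x):=\|f(x)-x\|^p$. One checks at once that $w$ is a weight function and that the couple $(f,w)$ is a level $t$-weighted association function: since $w$ is obtained by pushing $f$ forward through the (translation-compatible) map $z\mapsto\|z-\cdot\|^p$, the joint covariance (\ref{E:defweighteddiaginv}) follows from that of $f$ alone. Because every $x\in\Lambda_f(0)$ satisfies $f(x)=0$ by definition of the cell, we have $w(x)=\|x\|^p$ on $\Lambda_f(0)$, so Proposition \ref{Lem:MTlemma2} gives
\begin{eqnarray*}
\mathbb{E}\left[\|f(0)\|^p\right]=\alpha_0 e^{-dt}\,\mathbb{E}_{0\in\mathcal{L}_t}\!\left[\int_{\Lambda_f(0)}\|x\|^p\,dx\right].
\end{eqnarray*}

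Next I would prove the purely deterministic fact that for every measurable $S\subset\mathbb{R}^d$,
\begin{eqnarray*}
\Leb(S)^{1+p/d}\le\frac{d+p}{d}\,\eta^{p/d}\int_S\|x\|^p\,dx,
\end{eqnarray*}
where $\eta=\Leb(B_{\mathbb{R}^d}(0,1))$ (the inequality being trivial if the right-hand side is infinite). This follows from the bathtub principle: among sets of fixed volume $V$, the integral $\int_S\|x\|^p\,dx$ is minimized by the centered ball $B=B_{\mathbb{R}^d}(0,R)$ with $\eta R^d=V$, since swapping $S\setminus B$ with $B\setminus S$ (which have equal measure) can only decrease the integral because $\|x\|\ge R$ on the former and $\|x\|\le R$ on the latter. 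For the centered ball a direct computation in polar coordinates (using $\mathcal{S}(d)=d\eta$) gives $\int_B\|x\|^p\,dx=\frac{d\eta}{d+p}R^{d+p}=\frac{d}{d+p}\eta^{-p/d}V^{1+p/d}$, which is exactly the equality case.

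Finally I would apply the inequality pathwise to $S=\Lambda_f(0)$, take $\mathbb{E}_{0\in\mathcal{L}_t}[\cdot]$, and substitute the identity from the first step, obtaining (\ref{E:technical}) with $C_{p,d}=\frac{d+p}{d}\eta^{p/d}\alpha_0^{-1}$ (strictly speaking this constant also involves the fixed model quantity $\alpha_0$, but since $\alpha_0$ is fixed this does not affect the statement). None of the steps is a serious obstacle; the only points that require a little care are verifying that $\big(f,\|f(\cdot)-\cdot\|^p\big)$ genuinely qualifies as a weighted association function so that Proposition \ref{Lem:MTlemma2} is applicable, and making sure every equality and inequality is allowed to take the value $+\infty$ (in which case the bound is vacuous and the mass-transport identity still holds in $[0,\infty]$).
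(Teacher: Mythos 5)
Your proposal is correct and follows essentially the same route as the paper: apply Proposition \ref{Lem:MTlemma2} to the weight $w(x)=\|f(x)-x\|^p$, note that $w\equiv\|\cdot\|^p$ on $\Lambda_f(0)$, and combine with the deterministic inequality $\Leb(S)^{1+p/d}\le(1+p/d)\vartheta(d)^{p/d}\int_S\|x\|^p\,dx$. The only cosmetic difference is that you invoke the bathtub/rearrangement principle for that inequality, while the paper derives it by the equivalent layer-cake computation $\|x\|^p=\int_0^\infty\mathbf{1}_{r\le\|x\|}\,pr^{p-1}\,dr$ followed by the estimate $\Leb(\Lambda_f(0)\setminus B_r)\ge\Leb(\Lambda_f(0))-\vartheta(d)r^d$.
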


\subsection{Proofs}
\label{S:proofresults}

We first prove Lemma \ref{Lem:technicallemmageneral}.

\begin{proof}[Proof of Lemma \ref{Lem:technicallemmageneral}]
\tvc{We first prove the non-marked case. Let $w:\mathbb{R}^d \times \mathcal{N}_S \to \mathbb{R}_+$ be a weight function in the non-marked case.

For $\eta \in \mathcal{N}_S$, define $g(\eta)=w(x,\eta).$ By translation invariance, for all $r \in \mathbb{R}^d$, $\eta \in \mathcal{N}_S$, $w(r,\eta)=w(x,T_{x-r}\eta)=g(T_{x-r}\eta)$. In particular $w$ is entirely determined by $g$.

Let $\Gamma \subset \mathcal{N}_S$ be measurable. For all measurable set $A \subset \mathbb{R}^d$,
\begin{eqnarray*}
\mathbb{E}\left[ \sum_{s \in \mathcal{L}_t \cap A} w(s,N) \right]=\mathbb{E}\left[ \sum_{s \in \mathcal{L}_t \cap A} w(x,T_{x-s}N) \right]=\mathbb{E}\left[\sum_{s \in \mathcal{L}_t \cap A} g(T_{x-s}N) \right]
\end{eqnarray*}
and
\begin{eqnarray*}
\mathbb{E}_{x \in \mathcal{L}_t}[w(x,N)]=\mathbb{E}_{x \in \mathcal{L}_t}[g(N)].
\end{eqnarray*}
Thus it suffices to prove that the identity
\begin{eqnarray}
\label{E:equa0}
\mathbb{E}\left[\sum_{s \in \mathcal{L}_t \cap A} g(T_{x-s}N) \right]=\alpha_0 e^{-dt}\Leb(A)\mathbb{E}_{x \in \mathcal{L}_t}[g(N)]
\end{eqnarray}
holds for all measurable functions $g:\mathcal{N}_S \rightarrow \mathbb{R}_+$ and all measurable set $A \subset \mathbb{R}$. Let $\Gamma \subset \mathcal{N}_S$ and $A \subset \mathbb{R}^d$ be measurable. We show (\ref{E:equa0}) for $g=\mathbf{1}_\Gamma$:
\begin{eqnarray*}
&\mathbb{E}\left[\sum_{s\in \mathcal{L}_t \cap A} g(T_{x-s}N) \right]&=\mathbb{E}\left[\sum_{s \in \mathcal{L}_t \cap A} \mathbf{1}_{T_{x-s}N \in \Gamma} \right]=\alpha_0 e^{-dt}\Leb(A)\mathbb{P}_{x \in \mathcal{L}_t}[\Gamma] \text{ by (\ref{E:defcond2})}\nonumber\\
&&=\alpha_0 e^{-dt}\Leb(A)\mathbb{E}_{x \in \mathcal{L}_t}[\mathbf{1}_{N \in \Gamma}] =\mathbb{E}_{x \in \mathcal{L}_t}[g(N)],
\end{eqnarray*}
so (\ref{E:equa0}) holds for $g=\mathbf{1}_\Gamma$. Since both sides of equality (\ref{E:equa0}) are linear in $g$, (\ref{E:equa0}) holds for all step functions. Now we pass to the limit to obtain (\ref{E:equa0}) for all measurable function $g$. Let $g:\mathcal{N}_S \rightarrow \mathbb{R}_+$ be measurable, and consider a non-decreasing sequence $(g_n)_{n \in \mathbb{N}}$ of step functions that converges to $g$. By monotone convergence theorem,
\begin{eqnarray*}
\lim_{n \rightarrow \infty} \uparrow \sum_{s \in \mathcal{L}_t \cap A}g_n(T_{x-s}N)=\sum_{s \in \mathcal{L}_t \cap A}g(T_{x-s}N) \text{ a.s.}
\end{eqnarray*}
Then by monotone convergence theorem,
\begin{eqnarray}
\label{E:cv1}
\mathbb{E}\left[ \sum_{s \in \mathcal{L}_t \cap A} g_n(T_{x-s}N) \right] \underset{n \rightarrow \infty}{\longrightarrow}\mathbb{E}\left[ \sum_{s \in \mathcal{L}_t \cap A} g(T_{x-s}N) \right].
\end{eqnarray}
On the other hand, again by monotone convergence,
\begin{eqnarray}
\label{E:cv2}
\mathbb{E}_{x \in \mathcal{L}_t}[g_n(N)] \underset{n \rightarrow \infty}{\longrightarrow}\mathbb{E}_{x \in \mathcal{L}_t}[g(N)].
\end{eqnarray}
By (\ref{E:cv1}) and (\ref{E:cv2}) we obtain (\ref{E:equa0}) for $g$ by passing to the limit.
}
We wove on to show the general case. Let us denote by $\mathbb{P}_\lambda$ the distribution of $N$ (probability measure on $\mathcal{N}_S$) and by $\mathbb{Q}$ the distribution of $Y$ (probability measure on $\Upsilon$). Let $w:\mathbb{R}^d \times \mathcal{N}_S \times \Upsilon \rightarrow \mathbb{R}_+$ be a weight function in the general case. Define $\tilde{w}(x,\eta):=\int_\xi w(x,\eta,\xi) ~\mathbb{Q}(d\xi)$ for all $x \in \mathbb{R}^d$, $\eta \in \mathcal{N}_S$. Then $\tilde{w}$ is a weight function in the non-marked case, so by the non-marked case applies to $\tilde{w}$:
\begin{eqnarray}
\label{E:lemmageq1}
\mathbb{E}\left[ \sum_{s \in \mathcal{L}_t \cap A} \tilde{w}(s)\right]=\alpha_0 e^{-dt}\Leb(A)\mathbb{E}_{x \in \mathcal{L}_t}[\tilde{w}(x)].
\end{eqnarray}
For all $\eta \in \mathcal{N}_S$,
\begin{eqnarray*}
\underset{s \in \mathcal{L}_t(\eta) \cap A}{\sum} \tilde{w}(s,\eta)=\underset{s \in \mathcal{L}_t(\eta) \cap A}{\sum} \int_\xi w(s,\eta,\xi) ~\mathbb{Q}(d\xi)=\int_{\xi} \underset{s \in \mathcal{L}_t(\eta) \cap A}{\sum} w(s,\eta,\xi) ~\mathbb{Q}(d\xi).
\end{eqnarray*}
Therefore
\begin{eqnarray}
\label{E:lemmageq2}
&\mathbb{E}\left[ \sum_{s \in \mathcal{L}_t \cap A}\tilde{w}(s) \right]&=\int_{\eta} \sum_{x \in \mathcal{L}_t(\eta) \cap A} \tilde{w}(s,\eta) ~d\mathbb{P}_\lambda(\eta)=\int_{\eta,\xi} \sum_{s \in \mathcal{L}_t(\eta) \cap A} w(s,\eta,\xi) ~\mathbb{P}_\lambda(d\eta) \otimes \mathbb{Q}(d\xi)\nonumber\\
&&=\mathbb{E}\left[ \sum_{s \in \mathcal{L}_t(\eta) \cap A} w(s) \right]
\end{eqnarray}
since $N$ and $Y$ are independent. On the other hand,
\begin{eqnarray}
\label{E:lemmageq3}
\mathbb{E}_{x \in \mathcal{L}_t}[\tilde{w}(x)]=\int_{\eta,\xi} w(x,\eta,\xi) ~\mathbb{P}_{x \in \mathcal{L}_t}(d\eta) \otimes \mathbb{Q}(d\xi)=\mathbb{E}_{x \in \mathcal{L}_t}[w(x)],
\end{eqnarray}
by definition of $\mathbb{E}_{x \in \mathcal{L}_t}$. Finally, the conclusion is obtained by combining (\ref{E:lemmageq1}), (\ref{E:lemmageq2}) and (\ref{E:lemmageq3}).
\end{proof}

The proofs of Propositions \ref{Lem:MTlemma1} and \ref{Lem:MTlemma2} are based on the Mass Transport Principle. 

\begin{proof}[Proof of Proposition \ref{Lem:MTlemma1}]
Let us define the following measure $\pi$ on $\mathbb{R}^d \times \mathbb{R}^d$:
\begin{eqnarray*}
\pi(E)=\mathbb{E}\left[ \sum_{x \in \mathcal{L}_{t_1}} \mathbf{1}_{\big(x,~\mathcal{A}_{t_1}^{t_2}(x)\big) \in E} w(x)\right]
\end{eqnarray*}
for all measurable set $E \subset \mathbb{R}^d \times \mathbb{R}^d$. \tvc{This measure $\pi$ may be interpreted as the following mass transport: from all point $x \in \mathcal{L}_{t_1}$, we transport a mass $w(x)$ to its ancestor $\mathcal{A}_{t_1}^{t_2}(x)$.} The diagonally invariance of $\pi$ easily follows from the translation invariance property of the model, we refer to \cite[Section 5.4]{version_longue} for the details. By the Mass Transport Principle (Theorem \ref{Thm:masstranport}), for any measurable set $A \subset \mathbb{R}^d$ with non-empty interior, $\pi(A \times \mathbb{R}^d)=\pi(\mathbb{R}^d \times A)$. On the one hand,
\begin{eqnarray}
\label{E:MTlem1eq1}
\pi(A \times \mathbb{R}^d)=\mathbb{E}\left[ \sum_{x \in \mathcal{L}_{t_1}} \mathbf{1}_{x \in A} w(x) \right]=\mathbb{E}\left[ \sum_{x \in \mathcal{L}_{t_1} \cap A} w(x)\right]=\alpha_0e^{-dt_1}\Leb(A)\mathbb{E}_{0 \in \mathcal{L}_{t_1}} \left[ w(0) \right],
\end{eqnarray}
where Lemma \ref{Lem:technicallemmageneral} has been applied to $w$ with $t=t_1$ and $x=0$. On the other hand,
\begin{eqnarray}
\label{E:MTlem1eqoh}
&\pi(\mathbb{R}^d \times A)&=\mathbb{E}\left[ \sum_{x \in \mathcal{L}_{t_1}} \mathbf{1}_{\mathcal{A}_{t_1}^{t_2}(x) \in A} w(x) \right]=\mathbb{E}\left[ \sum_{\substack{x \in \mathcal{L}_{t_1},\\x' \in \mathcal{L}_{t_2} \cap A}} \mathbf{1}_{\mathcal{A}_{t_1}^{t_2}(x)=x'} w(x) \right] \nonumber\\
&&=\mathbb{E}\left[ \sum_{x' \in \mathcal{L}_{t_2} \cap A} \sum_{x \in \mathcal{D}_{t_1}^{t_2}(x')} w(x) \right].
\end{eqnarray}
Consider the level $t_2$-weight function
\begin{eqnarray*}
h(x',\eta,\xi)=\mathbf{1}_{x' \in \mathcal{L}_{t_2}}\sum_{x \in \mathcal{D}_{t_1}^{t_2}(x')(\eta)} w(x,\eta,\xi).
\end{eqnarray*}

The fact that $h$ is a weight function directly follows from translation invariance, precise computations are done in \cite[Section 5.4]{version_longue}. Lemma \ref{Lem:technicallemmageneral} applied to $h$ with $t=t_2$ and $x=0$ gives,
\begin{eqnarray}
\label{E:MTlem1eq2}
\pi(\mathbb{R}^d \times A) \overset{(\ref{E:MTlem1eqoh})}{=}\mathbb{E}\left[ \sum_{x' \in \mathcal{L}_{t_2} \cap A} \sum_{x \in \mathcal{D}_{t_1}^{t_2}(x')} w(x) \right]=\alpha_0e^{-dt_2}\Leb(A)\mathbb{E}_{0 \in \mathcal{L}_{t_2}}\left[ \sum_{x \in \mathcal{D}_{t_1}^{t_2}(0)} w(x) \right].
\end{eqnarray}
By combining (\ref{E:MTlem1eq1}), (\ref{E:MTlem1eq2}) and the Mass Transport Principle with some open set $A \subset \mathbb{R}^d$ verifying $\Leb(A)<\infty$, we obtain (\ref{E:MTlemma1}).
\end{proof}

\bigbreak

\begin{proof}[Proof of Proposition \ref{Lem:MTlemma2}]
Let us consider the measure on $\mathbb{R}^d \times \mathbb{R}^d$ defined as
\begin{eqnarray*}
\pi(E):=\mathbb{E}\left[ \int_{\mathbb{R}^d} \mathbf{1}_{(x,f(x)) \in E}~w(x)~dx \right]
\end{eqnarray*}
for all $E \in \mathbb{R}^d \times \mathbb{R}^d$.  \tvc{This measure $\pi$ may be interpreted as the following Mass Transport: for each point $x \in \mathbb{R}^d$, we transport a mass $w(x,f(x))~dx$ from $x$ to $f(x)$.} The measure $E$ is diagonally invariant, thus the Mass Transport Principle applies. On the one hand:
\begin{eqnarray}
\label{E:mtlem2e1}
\pi(A \times \mathbb{R}^d)=\mathbb{E}\left[ \int_A w(x)~dx \right]=\int_A \mathbb{E}\left[ w(x) \right]~dx=\int_A \mathbb{E}\left[w(0)\right]~dx=\Leb(A)\mathbb{E}[w(0)].
\end{eqnarray}
where the translation invariance of $(f,w)$ was used in the third equality. Indeed, $w(x,N,Y) \overset{(d)}{=} w(0,T_{-x}N,Y) \overset{(d)}{=} w(0,N,Y)$ so $\mathbb{E}\left[ w(x) \right]=\mathbb{E}\left[ w(0) \right]$ for all $x \in \mathbb{R}^d$. On the other hand, since $(f,w)$ is a level $t$-weighted association function,
\begin{eqnarray}
\label{E:mtlem2e2}
&\pi(\mathbb{R}^d \times A)&=\mathbb{E}\left[ \int_{\mathbb{R}^d} \mathbf{1}_{f(x) \in A} ~w(x)~dx \right] =\mathbb{E}\left[ \int_{\mathbb{R}^d} \sum_{x' \in \mathcal{L}_t \cap A} \mathbf{1}_{f(x)=x'} ~w(x)~dx \right] \nonumber\\
&&=\mathbb{E}\left[ \sum_{x' \in \mathcal{L}_t \cap A} \int_{\mathbb{R}^d} \mathbf{1}_{f(x)=x'} ~w(x)~dx \right] =\mathbb{E}\left[ \sum_{x' \in \mathcal{L}_t \cap A} \int_{\Lambda_f(x')} w(x)~dx \right].
\end{eqnarray}
Let $h:\mathbb{R}^d \times \mathcal{N}_S \times \Upsilon \rightarrow \mathbb{R}_+$ be defined as $h(x',\eta,\xi)=\int_{\Lambda_f(x')(\eta)} w(x,\eta,\xi)~dx$. This function $h$ is a weight function (details in \cite[Section 5.4]{version_longue}), so, by Lemma \ref{Lem:technicallemmageneral} applied to $h$ with $x=0$,
\begin{eqnarray}
\label{E:mtlem2e3}
\mathbb{E}\left[ \sum_{x' \in \mathcal{L}_t \cap A} \int_{\Lambda_f(x')} w(x)~dx \right]=\alpha_0e^{-dt}\Leb(A)\mathbb{E}_{0 \in \mathcal{L}_t} \left[ \int_{\Lambda_f(0)} w(x)~dx \right].
\end{eqnarray}
Thus, by (\ref{E:mtlem2e2}) and (\ref{E:mtlem2e3}), we obtain
\begin{eqnarray}
\label{E:mtlem2e4}
\pi(\mathbb{R}^d \times A)=\alpha_0e^{-dt}\Leb(A)\mathbb{E}_{0 \in \mathcal{L}_t} \left[ \int_{\Lambda_f(0)} w(x)~dx \right].
\end{eqnarray}
Finally, we obtain (\ref{E:MTlemma2}) by combining (\ref{E:mtlem2e1}), (\ref{E:mtlem2e4}) and the Mass Transport Principle for some open set $A \subset \mathbb{R}^d$ verifying $\Leb(A)<\infty$.
\end{proof}

\bigbreak

\begin{proof}[Proof of Proposition \ref{Lem:technical}]
Let us consider the function $w$ defined as $w(x,\eta,\xi)=\|f(x,\eta,\xi)-x\|^p$ for $x \in \mathbb{R}^d$, $\eta \in \mathcal{N}_S$ and $\xi \in \Upsilon$. It follows from translation invariance that the couple $(f,w)$ is a level $t$-weighted association function (details in \cite[Section 5.4]{version_longue}). Proposition \ref{Lem:MTlemma2} applied to $(f,w)$ gives,
\begin{eqnarray}
\label{E:MTlem3eq1}
\mathbb{E}\left[ \|f(0)\|^p \right]=\alpha_0e^{-dt}\mathbb{E}_{0 \in \mathcal{L}_t} \left[ \int_{\Lambda_f(0)} \|f(x)-x\|^p ~dx \right]=\alpha_0e^{-dt}\mathbb{E}_{0 \in \mathcal{L}_t} \left[ \int_{\Lambda_f(0)} \|x\|^p ~dx \right]
\end{eqnarray}
because $f(x)=0$ for all $x \in \Lambda_f(0)$. Suppose for the moment that the following inequality holds $\mathbb{P}_{0 \in \mathcal{L}_t}$-almost surely:
\begin{eqnarray}
\label{E:MTlem3eq2}
\Leb(\Lambda_f(0))^{1+p/d} \le \tilde{C}_{p,d}\int_{\Lambda_f(0)} \|x\|^p ~dx,
\end{eqnarray}
where $\tilde{C}_{p,d}$ is a constant that only depends on $p$ and $d$. Then
\begin{eqnarray*}
\mathbb{E}_{0 \in \mathcal{L}_t}[\Leb(\Lambda_f(0))^{1+p/d}] \overset{(\ref{E:MTlem3eq2})}{\le}\tilde{C}_{p,d}\mathbb{E}_{0 \in \mathcal{L}_t}\left[ \int_{\Lambda_f(0)} \|x\|^p ~dx \right]\overset{(\ref{E:MTlem3eq1})}{=}\frac{\tilde{C}_{p,d}e^{dt}}{\alpha_0}\mathbb{E}\left[ \|f(0)\|^p \right],
\end{eqnarray*} 
so (\ref{E:technical}) holds for $C_{p,d}=\tilde{C}_{p,d}/\alpha_0$. It remains to show that (\ref{E:MTlem3eq2}) holds $\mathbb{P}_{0 \in \mathcal{L}_t}$-almost surely. For $r \ge 0$ we denote by $B_r:=\{x \in \mathbb{R}^d,~\|x\|<r\}$ the Euclidean ball of radius $r$ centered at the origin, and we denote by $\vartheta(d):=\Leb(B_1)$ the volume of the unit ball in $\mathbb{R}^d$. We rewrite $\|x\|^p$ as $\int_0^\infty \mathbf{1}_{r \le \|x\|} pr^{p-1}dr$. On the event $\{0 \in \mathcal{L}_t\}$, \tvc{Fubini's Theorem gives,}
\begin{eqnarray*}
&\int_{\Lambda_f(0)}\|x\|^p~dx&=\int_{\mathbb{R}^d} \int_0^\infty \mathbf{1}_{x \in \Lambda_f(0)}\mathbf{1}_{r \le \|x\|} ~pr^{p-1}~dr~dx=\int_0^\infty \int_{\mathbb{R}^d} \mathbf{1}_{x \in \Lambda_f(0) \backslash B_r} ~pr^{p-1}~dx~dr \nonumber\\
&&=\int_0^\infty pr^{p-1} \Leb(\Lambda_f(0) \backslash B_r)~dr \ge \int_0^{\left(\frac{\Leb(\Lambda_f(0))}{\vartheta(d)}\right)^{1/d}} pr^{p-1} (\Leb(\Lambda_f(0))-\Leb(B_r))~dr \nonumber\\
&&=\int_0^{\left(\frac{\Leb(\Lambda_f(0))}{\vartheta(d)}\right)^{1/d}}pr^{p-1}(\Leb(\Lambda_f(0))-\vartheta(d) r^d)~dr \nonumber\\
&&=\left[ \Leb(\Lambda_f(0))r^p-\frac{\vartheta(d) p}{p+d}r^{p+d} \right]_{r=0}^{\left(\frac{\Leb(\Lambda_f(0))}{\vartheta(d)}\right)^{1/d}}=\left( \frac{d}{p+d}\vartheta(d)^{-p/d} \right)\Leb(\Lambda_f(0))^{1+p/d}.
\end{eqnarray*}
Therefore (\ref{E:MTlem3eq2}) holds for $\tilde{C}_{p,d}=(1+p/d)\vartheta(d)^{p/d}$. This completes the proof.
\end{proof}

\section{Controlling fluctuations of trajectories}
\label{S:prooffluctuations}

In order to show the main results (Theorems \ref{Thm:coalescence} and \ref{Thm:biinfinite}), the key point of the proofs is to upper-bound horizontal fluctuations of trajectories.

\subsection{Cumulative Forward Deviation and Maximal Backward Deviation}
We first define the Cumulative Forward Deviation ($\CFD$) and Maximal Backward Deviation ($\MBD$) that measure horizontal deviations, then we state the results concerning $\CFD$ and $\MBD$.

\begin{definition}[Cumulative Forward Deviation]
\label{Def:CFD}
Let $t_1 \le t_2$. For $x \in \mathcal{L}_{t_1}$, we define the \emph{Cumulative Forward Deviation} for $x$ from level $t_1$ to level $t_2$, denoted by $\CFD_{t_1}^{t_2}(x)$, as
\begin{eqnarray*}
\CFD_{t_1}^{t_2}(x)=\int_{t_1}^{t_2} \left\|\frac{\partial}{\partial s}\mathcal{A}_{t_1}^s(x)\right\|~ds.
\end{eqnarray*}
\end{definition}

The quantity $\CFD_{t_1}^{t_2}(x)$ can be considered as the cumulative horizontal deviations (i.e. projected on $\mathbb{R}^d$) of the trajectory starting from $(x,e^{t_1})$ between level $t_1$ and level $t_2$. If $\mathcal{A}_{t_1}^s(x)=\infty$ for some $s \in [t_1,t_2]$, we set $\CFD_{t_1}^{t_2}(x)=\infty$.

We give an equivalent definition of the quantity $\CFD_{t_1}^{t_2}(x)$. \tvc{We suppose that $\mathcal{A}_{t_1}^{t_2}(x)<\infty$ (otherwise $\CFD_{t_1}^{t_2}(x)=\infty$).} Let us define the points $
z_{start}=(x,e^{t_1})_\downarrow$ and $z_{stop}=(\mathcal{A}_{t_1}^{t_2}(x),e^{t_2})_\downarrow$, where the notations $\downarrow$ and $\uparrow$ have been introduced in \eqref{E:updownarrow}.
Thus $z_{stop}$ is on the trajectory from $z_{start}$, let $n \in \mathbb{N}$ such that $z_{stop}=A^{(n)}(z_{start})$. Let us introduce $(x_0,e^{u_0})=z_0=z_{start}$, $ (x_i,e^{u_i})=z_i=A^{(i)}(z_{start})$ for  $i \in \llbracket 1,n \rrbracket$. In particular, $z_n=z_{stop}$. Then:

\begin{proposition}[Alternative writing of $\CFD$]
\begin{eqnarray*}
\CFD_{t_1}^{t_2}(x)=\left\{
\begin{aligned}[l|l]
&\|\mathcal{A}_{t_1}^{t_2}(x)-x\| \text{ if } n=0,\\
&\|x_1-x\|+\sum_{i=1}^{n-1} \|x_{i+1}-x_i \|+\|\mathcal{A}_{t_1}^{t_2}(x)-x_n\| \text{ if }n \ge 1.
\end{aligned}
\right.
\end{eqnarray*}
\end{proposition}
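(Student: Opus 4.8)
The plan is to describe the trajectory from $(x,e^{t_1})$ explicitly as a concatenation of Euclidean segments along which the ordinate strictly increases, and then to evaluate the integral in Definition \ref{Def:CFD} segment by segment.

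First I would record the geometry. Each edge $[z_i,z_{i+1}]_{eucl}$ (with $z_{i+1}=A(z_i)$) has ordinate strictly increasing from $e^{u_i}$ to $e^{u_{i+1}}$ --- this is precisely why the model connects points by Euclidean segments rather than geodesics --- so the ordinate is strictly increasing along the whole trajectory from $z_{start}$. Hence for each height $s$ at which the trajectory attains ordinate $e^s$ there is a unique such point, with abscissa $\mathcal{A}_{t_1}^s(x)$, and the parametrization of each (sub-)segment by the height $s$ is a bijection. If the trajectory never reaches level $t_2$ then $\mathcal{A}_{t_1}^{t_2}(x)=\infty$ and both sides of the claimed identity are $+\infty$ by convention, so I may assume $n<\infty$. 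The portion of the trajectory between levels $t_1$ and $t_2$ then decomposes as the sub-segment of $[z_0,z_1]_{eucl}$ from $(x,e^{t_1})$ to $z_1$, the full segments $[z_1,z_2]_{eucl},\dots,[z_{n-1},z_n]_{eucl}$, and the sub-segment of $[z_n,A(z_n)]_{eucl}$ from $z_n$ to $(\mathcal{A}_{t_1}^{t_2}(x),e^{t_2})$ (when $n=0$ it is the single sub-segment of $[z_0,z_1]_{eucl}$ from $(x,e^{t_1})$ to $(\mathcal{A}_{t_1}^{t_2}(x),e^{t_2})$).

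The core computation is on one segment. On a Euclidean segment with endpoints $(a,y_a)$ and $(b,y_b)$, $y_a<y_b$, the point at ordinate $y\in[y_a,y_b]$ has abscissa $a+\frac{b-a}{y_b-y_a}(y-y_a)$; writing $y=e^s$, this is real-analytic in $s$ with derivative $\frac{b-a}{y_b-y_a}e^s$, a vector of constant direction. Hence, for $s'\le s''$ with $[e^{s'},e^{s''}]\subset[y_a,y_b]$, one gets
\[
\int_{s'}^{s''}\Bigl\|\tfrac{\partial}{\partial s}\mathcal{A}_{t_1}^s(x)\Bigr\|\,ds=\frac{\|b-a\|}{y_b-y_a}\int_{s'}^{s''}e^s\,ds=\frac{\|b-a\|}{y_b-y_a}\bigl(e^{s''}-e^{s'}\bigr)=\bigl\|\mathcal{A}_{t_1}^{s''}(x)-\mathcal{A}_{t_1}^{s'}(x)\bigr\|,
\]
the last equality being immediate from the affine parametrization. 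I would then sum this over the pieces: $s\mapsto\mathcal{A}_{t_1}^s(x)$ is piecewise real-analytic on $[t_1,t_2]$, smooth on each of $[t_1,u_1],[u_1,u_2],\dots,[u_{n-1},u_n],[u_n,t_2]$ and non-differentiable at most at the finitely many junction heights $u_1,\dots,u_n$, a Lebesgue-null set. The interval $[t_1,u_1]$ contributes $\|x_1-x\|$, the interval $[u_i,u_{i+1}]$ contributes $\|x_{i+1}-x_i\|$, and $[u_n,t_2]$ contributes $\|\mathcal{A}_{t_1}^{t_2}(x)-x_n\|$; adding these gives the formula for $n\ge1$, and for $n=0$ the single interval $[t_1,t_2]$ gives $\|\mathcal{A}_{t_1}^{t_2}(x)-x\|$.

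I do not expect a genuine obstacle here: the statement is essentially bookkeeping once the trajectory is identified as a piecewise-linear curve with monotone ordinate. The only points needing a little care are the treatment of the two partial end-segments (whose endpoints are $(x,e^{t_1})$, $z_1$ and $z_n$, $(\mathcal{A}_{t_1}^{t_2}(x),e^{t_2})$ rather than Poisson points) and the remark that the junction heights $u_i$ do not contribute to the integral, which is what makes the piecewise splitting legitimate.
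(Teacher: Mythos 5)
Your proof is correct and follows essentially the same route as the paper's: decompose $[t_1,t_2]$ at the junction heights $u_1,\dots,u_n$, note that $\partial_s\mathcal{A}_{t_1}^s(x)$ has constant direction on each resulting sub-interval, and hence $\int\|\cdot\|=\|\int\cdot\|$ there. You spell out the affine parametrization and the $\int e^s\,ds$ computation more explicitly, and add the helpful remarks about the $\infty$ convention and the null set of junction points, but the underlying decomposition and key observation are the same.
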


\begin{proof}
If $n=0$ then $(x,e^{t_1})$ and $\big(\mathcal{A}_{t_1}^{t_2}(x),e^{t_2}\big)$ belong to the same edge, so the function $\frac{\partial}{\partial s} \mathcal{A}_{t_1}^s$ has constant direction. Then
\begin{eqnarray*}
\CFD_{t_1}^{t_2}(x)=\int_{t_1}^{t_2} \left\|\frac{\partial}{\partial s} \mathcal{A}_{t_1}^s(x)\right\|~ds=\left\|\int_{t_1}^{t_2} \frac{\partial}{\partial s} \mathcal{A}_{t_1}^s(x)~ds\right\| =\|\mathcal{A}_{t_1}^{t_2}(x)-x\|.
\end{eqnarray*}
If $n \ge 1$, then
\begin{eqnarray*}
&\CFD_{t_1}^{t_2}(x)&=\int_{t_1}^{u_1} \left\|\frac{\partial}{\partial s} \mathcal{A}_{t_1}^s(x)\right\|~ds+\sum_{i=1}^{n-1} \int_{u_i}^{u_{i+1}} \left\|\frac{\partial}{\partial s} \mathcal{A}_{t_1}^s(x)\right\|~ds+\int_{u_n}^{t_2}\left\|\frac{\partial}{\partial s} \mathcal{A}_{t_1}^s(x)\right\|~ds \nonumber\\
&&=\left\|\int_{t_1}^{u_1}\frac{\partial}{\partial s} \mathcal{A}_{t_1}^s(x)~ds\right\|+\sum_{i=1}^{n-1} \left\|\int_{u_i}^{u_{i+1}} \frac{\partial}{\partial s} \mathcal{A}_{t_1}^s(x)~ds\right\|+\left\|\int_{u_n}^{t_2}\frac{\partial}{\partial s} \mathcal{A}_{t_1}^s(x)~ds\right\| \nonumber\\
&&=\|x_1-x\|+\sum_{i=1}^{n-1} \|x_{i+1}-x_i \|+\|\mathcal{A}_{t_1}^{t_2}(x)-x_n\|.
\end{eqnarray*}
For the second equality, we used the fact that, for each term of the sum, the function $s \to \frac{\partial}{\partial s}\mathcal{A}_{t_1}^s$ has constant direction in the corresponding integration interval. 
\end{proof}

Note that $\CFD$ upperbounds the horizontal deviations, in the following sense: for all $t_1 \le t_2$ and $x \in \mathcal{L}_{t_1}$, 
\begin{eqnarray*}
\|\mathcal{A}_{t_1}^{t_2}(x)-x\| =\left\| \int_{t_1}^{t_2} \frac{\partial}{\partial s} \mathcal{A}_{t_1}^s(x)~ds\right\| \le \int_{t_1}^{t_2} \left\| \frac{\partial}{\partial s} \mathcal{A}_{t_1}^s(x) \right\|~ds=\CFD_{t_1}^{t_2}(x).
\end{eqnarray*}

\begin{definition}[Maximal Backward Deviation]
\label{Def:defofcfd}
We define the \emph{Maximal Backward Deviation} of $x$ from level $t_1$ to level $t_2$, denoted by $\MBD_{t_1}^{t_2}(x)$, as

\begin{eqnarray*}\MBD_{t_1}^{t_2}(x)=
\left\{
\begin{aligned}
&\max_{x' \in \mathcal{D}_{t_1}^{t_2}(x)} \CFD_{t_1}^{t_2}(x') \text{ if }\mathcal{D}_{t_1}^{t_2}(x) \neq \emptyset \\
&0 \text{ otherwise.}
\end{aligned}
\right.\end{eqnarray*}
This is the maximal cumulative horizontal deviations among all trajectories between levels $t_1$ and $t_2$ and ending at $(x,e^{t_2})$ .
\end{definition}

The following theorem controls the cumulative forward deviation (\CFD).

\begin{theorem}(Forward fluctuations control.)
\label{Thm:controlforward}
Let $p \ge 1$. \\
(i) Let $Y$ be a random variable independant of $N$. Let $X_0$ be a random point of $\mathcal{L}_0$ (i.e. a random point of $\mathbb{R}^d$ such that $X_0 \in \mathcal{L}_0$ a.s.), measurable w.r.t. $\sigma(N,Y)$, and such that $\mathbb{E}\left[\|X_0\|^p\right]<\infty$. Then there exists a constant $K>0$ that only depends on $p,d$ and $\lambda$ (but not on the distribution of $X_0$) such that:
\begin{eqnarray*}
\limsup_{t \to \infty}\mathbb{E}\left[\left(e^{-t}\CFD_0^t(X_0)\right)^p \right] \le K.
\end{eqnarray*}
(ii) We have
\begin{eqnarray}
\label{E:conclcontrolbackwardii}
\limsup_{t \to \infty} \mathbb{E}_{0 \in \mathcal{L}_0} \left[ \left( e^{-t}\CFD_0^t(0) \right)^p \right]<\infty.
\end{eqnarray}
\end{theorem}

The intuition behind Theorem \ref{Thm:controlforward} is the following. Let us consider a typical trajectory. Because of the hyperbolic metric on $(H,ds_H^2)$, the horizontal fluctuations increase as the trajectory goes up. \tvc{More precisely, the fluctuations around height $h$ (say between $h$ and $h+1$, i.e. between the ordinates $e^h$ and $e^{h+1}$) are to the order of $e^h$. Then the forward cumulative deviation $\CFD_0^h(X_0)$ is almost determined by the last steps of the trajectory, and it is of order $e^h$. This behavior is typical to hyperbolic geometry. In Euclidean geometry, the fluctuations around height $h$ (say between $h$ and $h+1$) are to the order of $1$ for all $h$.}

The following theorem controls the backward maximal deviation (\MBD).

\begin{theorem}(Backward fluctuations control.)
\label{Thm:controlbackward}
For all $p \ge 1$,
\begin{eqnarray}
\label{E:controlbackward}
\limsup_{h \to \infty} \mathbb{E}_{0 \in \mathcal{L}_0} \left[ \MBD_{-h}^0(0)^p \right]<\infty.
\end{eqnarray}
\end{theorem}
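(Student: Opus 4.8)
The plan is to dominate $\MBD_{-h}^0(0)$ by a sum, over unit-height slices, of slice-wise maxima of forward deviations, each of which can be controlled by the Mass Transport Principle (Proposition \ref{Lem:MTlemma1}) together with the dilation invariance of the Palm measures (Lemma \ref{Lem:scaleinv}). Two reductions first. By Jensen's inequality $\mathbb{E}_{0\in\mathcal{L}_0}[\MBD_{-h}^0(0)^p]\le\mathbb{E}_{0\in\mathcal{L}_0}[\MBD_{-h}^0(0)^{p'}]^{p/p'}$ for $p\le p'$, so it suffices to prove \eqref{E:controlbackward} for one arbitrarily large exponent; fix $p>d$. Also, for every fixed $T\ge 0$ one has $\mathbb{E}_{0\in\mathcal{L}_0}[\CFD_0^T(0)^p]<\infty$: indeed $\CFD_0^T(0)\le\CFD_0^t(0)$ for $t\ge T$ by additivity of $\CFD$ along trajectories, and Theorem \ref{Thm:controlforward}(ii) bounds $\mathbb{E}_{0\in\mathcal{L}_0}[\CFD_0^t(0)^p]$ by a constant times $e^{pt}$ once $t$ is large. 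Set $C_0:=\mathbb{E}_{0\in\mathcal{L}_0}[\CFD_0^1(0)^p]<\infty$.

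\emph{Telescoping.} Let $n=\lfloor h\rfloor$. Along a trajectory the $y$-coordinate is nondecreasing (the Euclidean edges go up), so for $x\in\mathcal{D}_{-h}^0(0)$ the trajectory from $(x,e^{-h})$ crosses every level of $[-h,0]$, and additivity of $\CFD$ gives $\CFD_{-h}^0(x)=\CFD_{-h}^{-n}(x)+\sum_{j=1}^{n}\CFD_{-j}^{-j+1}(\mathcal{A}_{-h}^{-j}(x))$. Since the image of $\mathcal{D}_{-h}^0(0)$ under $x\mapsto\mathcal{A}_{-h}^{-j}(x)$ is contained in $\mathcal{D}_{-j}^0(0)$ (and likewise at level $-n$), subadditivity of the supremum yields
\[
\MBD_{-h}^0(0)\ \le\ B_h+\sum_{j=1}^{n}M_j,\qquad B_h:=\max_{x\in\mathcal{D}_{-h}^0(0)}\CFD_{-h}^{-n}(x),\quad M_j:=\max_{y\in\mathcal{D}_{-j}^0(0)}\CFD_{-j}^{-j+1}(y).
\]

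\emph{Per-slice bound and summation.} Dominate the maximum by the sum, $M_j^p\le\sum_{y\in\mathcal{D}_{-j}^0(0)}\CFD_{-j}^{-j+1}(y)^p$, and apply Proposition \ref{Lem:MTlemma1} with $t_1=-j$, $t_2=0$ to the level-$(-j)$ weight function $w(y)=\mathbf{1}_{y\in\mathcal{L}_{-j}}\CFD_{-j}^{-j+1}(y)^p$:
\[
\mathbb{E}_{0\in\mathcal{L}_0}[M_j^p]\ \le\ \mathbb{E}_{0\in\mathcal{L}_0}\Big[\sum_{y\in\mathcal{D}_{-j}^0(0)}\CFD_{-j}^{-j+1}(y)^p\Big]\ =\ e^{dj}\,\mathbb{E}_{0\in\mathcal{L}_{-j}}[\CFD_{-j}^{-j+1}(0)^p].
\]
The dilation $D_{e^j}$ sends the level $-j$ to the level $0$ and multiplies horizontal lengths by $e^j$, so Lemma \ref{Lem:scaleinv} gives $\mathbb{E}_{0\in\mathcal{L}_{-j}}[\CFD_{-j}^{-j+1}(0)^p]=e^{-pj}C_0$, hence $\mathbb{E}_{0\in\mathcal{L}_0}[M_j^p]\le C_0\,e^{(d-p)j}$. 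The same argument applied to $B_h$ (weight $w(x)=\mathbf{1}_{x\in\mathcal{L}_{-h}}\CFD_{-h}^{-n}(x)^p$ at level $-h$; use $\CFD_{-h}^{-n}(x)\le\CFD_{-h}^{-h+1}(x)$ since $-h\le -n<-h+1$, then dilation $D_{e^h}$) gives $\mathbb{E}_{0\in\mathcal{L}_0}[B_h^p]\le C_0\,e^{(d-p)h}\le C_0$. Finally, Minkowski's inequality in $L^p(\mathbb{P}_{0\in\mathcal{L}_0})$ together with $p>d$ yields
\[
\big\|\MBD_{-h}^0(0)\big\|_p\ \le\ \|B_h\|_p+\sum_{j=1}^{n}\|M_j\|_p\ \le\ C_0^{1/p}\Big(1+\sum_{j=1}^{\infty}e^{(d-p)j/p}\Big)\ <\ \infty,
\]
a bound independent of $h$; taking $\limsup_{h\to\infty}$ proves \eqref{E:controlbackward} for $p>d$, hence for all $p\ge 1$.

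The main obstacle is the per-slice estimate $\mathbb{E}_{0\in\mathcal{L}_0}[M_j^p]\le C_0 e^{(d-p)j}$: the factor $\#\mathcal{D}_{-j}^0(0)\sim e^{dj}$ produced by the union bound is fatal unless it is beaten by the $e^{-pj}$ decay coming from the hyperbolic scaling of $\CFD$ over a unit window — which is precisely why one must telescope into unit slices (a single window $[-h,0]$ would only reproduce the useless $e^{dh}$ factor) and take $p$ large so the geometric series converges (whence the Jensen reduction). One further point of care: the sub-unit leftover interval must be kept at the bottom, so that $B_h$ is a forward quantity summed over descendants — controllable by the same device — rather than another backward maximum, which would make the argument circular.
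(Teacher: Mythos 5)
Your proof is correct, and it takes a genuinely different (arguably cleaner) route than the paper's. The paper establishes a one-step recursion
\begin{equation*}
\mathbb{E}_{0 \in \mathcal{L}_0}\!\left[ \MBD_{-t-a}^0(0)^p \right] \le 2^{p-1}e^{a(d-p)}\Big(\mathbb{E}_{0 \in \mathcal{L}_0}\!\left[ \MBD_{-t}^0(0)^p \right]+\mathbb{E}_{0 \in \mathcal{L}_0}\!\left[ \CFD_0^a(0)^p \right] \Big)
\end{equation*}
via the elementary inequality $(A+B)^p\le 2^{p-1}(A^p+B^p)$, then iterates and concludes via a contraction/fixed-point argument on the sequence $S_n=\sup_{t\in[na_0,(n+1)a_0]}\mathbb{E}_{0\in\mathcal{L}_0}[\MBD_{-t}^0(0)^p]$. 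You instead unroll the decomposition all at once, telescoping $\CFD_{-h}^0$ into unit-height slices, push the maximum inside, and use Minkowski plus a convergent geometric series. Both proofs hinge on exactly the same trio of ingredients — Proposition \ref{Lem:MTlemma1}, the dilation invariance of Lemma \ref{Lem:scaleinv}, and the forward control of Theorem \ref{Thm:controlforward}(ii) — and both reduce to $p>d$ (the paper just declares the reduction; you justify it via Jensen, which is fine). Your telescoping version avoids setting up the fixed-point machinery and makes the competition between the $e^{dj}$ counting factor and the $e^{-pj}$ hyperbolic compression entirely explicit, which is pedagogically clearer. The paper's recursive form is marginally more flexible (it would still close if the per-step contraction factor weren't exactly geometric), but for this statement the two arguments are of essentially equal strength. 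One small remark: your handling of the leftover sub-unit interval at the bottom (controlling $B_h$ as a forward sum over descendants rather than another backward max) is exactly the right move to avoid circularity, and it is worth emphasizing, as you do, that a single-window bound with no telescoping would fail because $e^{dh}$ would not be beaten.
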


The intuition behind Theorem \ref{Thm:controlbackward} is that horizontal fluctuations decrease toward the past (recall that the fluctuations around height $h$ are of order $e^h$), so the sum of fluctuations between level $-\infty$ and $0$ of a typical trajectory must by bounded.

The rest of this section is devoted to the proofs of Theorems \ref{Thm:controlforward} and \ref{Thm:controlbackward}. It will be also proved that almost surely, the $y$-coordinate goes to infinity along any trajectory (Proposition \ref{Prop:welldefined}).

\subsection{Sketch of the proofs}

The proofs are organized as follows. First, we control horizontal deviations between level $0$ and level $\delta$ for some small $\delta>0$. More precisely, we prove the following proposition:

\begin{proposition}
\label{Prop:smallheight}
Recall that $\CFD$ has been defined in Definition \ref{Def:CFD}. There exists $\delta>0$ such that, for all $p \ge 1$,
\begin{eqnarray}
\label{E:smallheight}
\mathbb{E}_{0 \in \mathcal{L}_0}\left[ \CFD_0^\delta(0)^p \right]<\infty.
\end{eqnarray}
In particular, for all $t \in [0,\delta]$, $\mathbb{P}_{0 \in \mathcal{L}_0}$-a.s., $\mathcal{A}_0^t(0) \neq \infty$.
\end{proposition}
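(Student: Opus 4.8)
The plan is to prove the slightly stronger quantitative statement that, for $\delta>0$ small enough, the tail $\mathbb{P}_{0\in\mathcal{L}_0}[\CFD_0^\delta(0)>L]$ decays faster than any polynomial in $L$; this gives finiteness of every moment at once, and the ``in particular'' part is then immediate, since by convention $\CFD_0^\delta(0)=\infty$ whenever $\mathcal{A}_0^t(0)=\infty$ for some $t\le\delta$, so $\CFD_0^\delta(0)<\infty$ a.s.\ forces the trajectory from $(0,e^0)$ to meet every level in $[0,\delta]$. It is convenient to first apply the Palm formula of Lemma \ref{Lem:technicallemma} to the weight function $w(x,\eta)=\mathbf 1_{x\in\mathcal{L}_0(\eta)}\CFD_0^\delta(x)^p$, which reduces $\mathbb{E}_{0\in\mathcal{L}_0}[\CFD_0^\delta(0)^p]<\infty$ to $\mathbb{E}\bigl[\sum_{s\in\mathcal{L}_0\cap[0,1]^d}\CFD_0^\delta(s)^p\bigr]<\infty$; combined with the moment bounds of Proposition \ref{Prop:nop} for $\#(\mathcal{L}_0\cap[0,1]^d)$ and Hölder's inequality, this boils down to a uniform super-polynomial tail bound for the horizontal deviation accumulated by a single trajectory while it crosses the slab from level $0$ to level $\delta$.

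Next I would use that the slab is thin. Fix $\delta<1$; inside a slab $\mathbb{R}^d\times[e^{-M},e^{\delta})$ the hyperbolic and Euclidean metrics are comparable up to multiplicative constants depending only on $M,\delta$, and one has convenient bounds such as $\|\pi_x(z')-\pi_x(z)\|\le d(z,z')$ from Corollaries \ref{Cor:distvert} and \ref{Cor:disthor}. The only piece of the trajectory from $(0,e^0)$ that can leave such a slab downwards is its first edge $[(0,e^0)_\downarrow,(0,e^0)_\uparrow]$; but under $\mathbb{P}_{0\in\mathcal{L}_0}$ the point $(0,e^0)_\downarrow$ carries an empty upper semi-ball $B_+((0,e^0)_\downarrow)$ of radius at least $-h((0,e^0)_\downarrow)$ containing $(0,e^0)$, so the volume lower bound (\ref{E:lowbplus}) shows that $h((0,e^0)_\downarrow)$ has doubly-exponential lower tails. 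Choosing $M=M(L)$ growing slowly (say $M=\log\log L$) makes the event $\{h((0,e^0)_\downarrow)<-M\}$ negligible, so it suffices to bound the horizontal deviation of the part of the trajectory contained in $\mathbb{R}^d\times[e^{-M},e^{\delta})$.

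The core is a block (percolation) argument. Tile $\mathbb{R}^d$ by cubes of side $a$, with $a$ large, and call a cube \emph{good} if a suitable box sitting above it contains at least one point of $N$; since inside the slab the intensity of $N$ with respect to Lebesgue measure is bounded below by $\lambda e^{-(d+1)}$, the probability of being good can be pushed arbitrarily close to $1$ by enlarging $a$, so the good cubes dominate a highly supercritical Bernoulli site percolation and bad cubes form only exponentially small clusters. Then three observations combine: (i) a step of the trajectory issued from a vertex whose column lies in a good cube lands at hyperbolic distance $O(a)$, hence moves horizontally by $O(a)$; (ii) a step of horizontal length exceeding $a$ can only be issued from a vertex whose empty upper semi-ball has radius $\gtrsim\ln a$, hence volume $\gtrsim a^{d/3}$ by (\ref{E:lowbplus}), an event of probability at most $e^{-\lambda a^{d/3}}$ per vertex; (iii) because the DSF is a forest directed upward whose edges do not cross (Propositions \ref{Prop:DSFisforest1} and \ref{Prop:noncrossing}), the trajectory never revisits a Poisson point, so it can make only finitely many steps before leaving any bounded horizontal window, and, fed into the percolation of good cubes, this shows the trajectory exits the slab after visiting $O(1)$ cubes with exponentially decaying tails (this is where the case $d=1$ is genuinely simpler, as the ``window filling'' estimate closes for all scales, which is why the explicit rate in Theorem \ref{Thm:coalescence} is stated only for $d=1$). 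Assembling (i)--(iii) with a union bound over the slowly growing family of relevant scales yields $\mathbb{P}_{0\in\mathcal{L}_0}[\CFD_0^\delta(0)>L]\le C\exp(-cL^{\beta})$ for some $\beta>0$, which finishes the proof.

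The hard part will be point (iii). Individually, ``tiny'' steps of the trajectory cost essentially nothing, since an empty upper semi-ball of small radius is cheap, so there is no way to bound the number of steps by a local estimate at a single vertex; one must really use the global structure — no vertex is used twice, and dense good blocks percolate — to exclude the scenario in which arbitrarily many short steps pile up a large horizontal deviation inside a small region. Turning this interplay between the combinatorial no-revisiting input and the percolation estimate into a quantitative tail bound is the technical heart of the argument, and handling the awkward first edge through $(0,e^0)$ cleanly alongside it is an additional, if minor, complication.
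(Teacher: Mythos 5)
The overall shape of your plan — reduce via the Palm identity of Lemma \ref{Lem:technicallemma}, tile $\mathbb{R}^d$ by boxes, distinguish good boxes, dominate the bad boxes by a highly supercritical site percolation, and assemble — is indeed the strategy the paper follows. However, the mechanism you propose in point~(iii) is not the paper's, and as sketched it has a genuine gap that your own final paragraph correctly senses but does not close.

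The circularity is this: you want to bound the number of steps of the trajectory inside the slab by counting Poisson points in a bounded horizontal window, but the window you are allowed to use depends on how far the trajectory has already drifted, which is exactly the quantity you are trying to control. A ``no-revisiting'' observation does not break this, and neither does planarity: the trajectory always moves upward, so it never revisits a vertex in any dimension, yet this alone does not confine it. The paper resolves the circularity with a concrete geometric lemma (Lemma \ref{Lem:exitsdelta}) that you do not have. Its notion of a good cube is stronger than yours: a cube $K_a$ is \emph{good} if the bottom strip $\Psi_a^b = K_a \times [e^0,e^\delta)$ is \emph{empty} of Poisson points \emph{and} the top strip $\Psi_a^t = K_a \times [e^\delta,e]$ contains at least one Poisson point. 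The emptiness forces any parent of a vertex sitting below the cube to land above height $e^\delta$, while the competitor point in $\Psi_a^t$ caps the jump. A distance comparison with the formula of Proposition \ref{Prop:disthalfspace} (that is, $d(z_0,z_1)>d(z_0,z_2)$ for $z_1$ on the side of the cylinder $B_{\mathbb{R}^d}(Ra,mR)\times[e^0,e^{t'}]$ and $z_2$ the competitor) then shows that from an $m$-very-good cube the trajectory crosses level $\delta$ in a \emph{single} step and lands within $mR$ of the cube centre, for $m$ large but fixed. After that, the $\CFD$ is bounded by $2R(\sqrt{d}+m+\rho_m+1)\bigl(1+\#(N\cap\mathcal{C})\bigr)$ where $\rho_m$ is the radius of the bad cluster and $\mathcal{C}$ the confining cylinder; both factors have exponential moments, and Cauchy–Schwarz finishes. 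Your condition ``a suitable box above contains at least one point'' does not include the emptiness requirement and therefore cannot drive the trap.

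Two smaller points. First, the worry about the first edge dipping below the slab is moot: $\CFD_0^\delta$ by definition only integrates $\|\frac{\partial}{\partial s}\mathcal{A}_0^s(x)\|$ for $s\in[0,\delta]$, so the portion of the first edge below height $e^0$ never enters. Second, the claim that the window-filling estimate ``closes for all scales'' only for $d=1$, and that this explains the explicit rate in Theorem \ref{Thm:coalescence}, misreads the paper: the explicit rate for $d=1$ comes from the separate planarity-based proof of coalescence in Section \ref{S:coalescencealld}, while Proposition \ref{Prop:smallheight} itself is proved uniformly in $d$ by the argument above.
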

The proof of Proposition \ref{Prop:smallheight}, based on a block control argument, is done in Section \ref{S:proofsmallheight}. In Section \ref{S:proofwelldefined}, we deduce Proposition \ref{Prop:welldefined} from Proposition \ref{Prop:smallheight}.

Then, we prove (i) in Theorem \ref{Thm:controlforward} as follows: we propagate the control up to level $\delta$ given by Proposition \ref{Prop:smallheight} to obtain a control up to level $t$ for all $t \ge 0$. It will be done by induction: from a control up to level $t$, we deduce a new control up to level $t+\delta$, by using Proposition \ref{Prop:smallheight} and mass transport arguments. The proof of (i) is done in Section \ref{S:proofcontrolforward}.

In order to prove (ii), we will apply (i) to some particular $X_0$ measurable w.r.t $\sigma(N,Y)$, where $Y$ is some random variable independent of $N$. The extra randomness that will be used in the definition of $X_0$ is the reason why we introduced the extra random variable $Y$ in Section \ref{S:tools}. The proof of (ii) is done in Section \ref{S:proofcontrolforward2}.

Finally we prove Theorem \ref{Thm:controlbackward} in Section \ref{S:proofcontrolbackward}. The proof is based on (ii) in Theorem \ref{Thm:controlforward} and mass transport arguments.

\subsection{Proof of Proposition \ref{Prop:smallheight}}
\label{S:proofsmallheight}

Let us first introduce some notations.
We pave $\mathbb{R}^d$ with cubes of edge length $R$, where $R>0$ is sufficiently large and will be chosen later.  For $a=(a_1,...,a_d) \in \mathbb{Z}^d$, let us define the cube $K_a:=\prod_{1 \le i \le d} [R(a_i-1/2),R(a_i+1/2))$. Let us also define the bottom and top cells $\Psi_a^b:=K_a \times [e^0,e^\delta)$ and $\Psi_a^t:=K_a \times [e^\delta,e]$, where $0<\delta \le 1/2$ is sufficiently small and will be chosen later. 

For $a \in \mathbb{Z}^d$, we say that $K_a$ is \emph{good} if $\Psi_a^b$ contains no points of $N$, and $\Psi_a^t$ contains at least one point of $N$, i.e. we define the event
\begin{eqnarray*}
\text{Good}(a):=\{N \cap \Psi_a^b=\emptyset\} \cap \{N \cap \Psi_a^t \neq \emptyset\}.
\end{eqnarray*}
Note that the event $\text{Good}(a)$ only depends on $N \cap (\Psi_a^b \cup \Psi_a^t)$ and the cells $(\Psi_a^b \cup \Psi_a^t)$ are disjoint, so the events $\text{Good}(a)$ are mutually independent. Moreover they have the same probability by invariance by horizontal translations.

For $m \in \mathbb{N}$, we say that $K_a$ is $m$-\emph{very good}, if $K_a$ is good and if all cubes at distance at most $m$ of the $K_a$ are good:
\begin{eqnarray*}
\text{VeryGood}_m(a):=\bigcap_{\substack{a' \in \mathbb{Z}^d,\\ \|a'-a\|_{\infty} \le m}} \text{Good}(a').
\end{eqnarray*}

\begin{figure}[!h]
    \centering
    \begin{tikzpicture}[scale=0.7]
        \draw [>=stealth,->] (-3,0)--(-2,0) node[right] {$x$};
        \draw [>=stealth,->] (-3,0)--(-3,1) node[above] {$y$};
        \draw (0,0)--(5,0)--(5,5)--(0,5)--(0,0);
        \draw (0,1)--(5,1);
        \draw (0,0) node[left] {$e^0$};
        \draw (0,1) node[left] {$e^\delta$};
        \draw (0,5) node[left] {$e^1$};
        \filldraw[gray] (2,3) circle (2pt);
        \filldraw[gray] (3,4) circle (2pt);
        \draw[decorate,decoration={brace}] (5.1,1)--(5.1,0);
        \draw[decorate,decoration={brace}] (5.1,5)--(5.1,1);
        \draw (5.2,0.5) node[right] {$\Psi_a^b$};
        \draw (5.2,3) node[right] {$\Psi_a^t$};
        \draw (2.5,-0.1) node[below] {$K_a$};
    \end{tikzpicture}
    \caption{A good cube}
    \label{Fig:goodcube}
\end{figure}
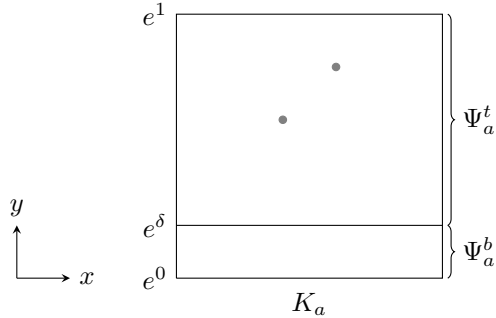

We can consider the random field $V_m:\mathbb{Z}^d \rightarrow \{0,1\}$ defined as $V_m(a)=\mathbf{1}_{\text{VeryGood}_m(a)}$ for all $a \in \mathbb{Z}^d$. We denote by $\Upsilon_m$ the connected component of the subgraph induced by $\{a \in \mathbb{Z}^d,~V_m(a)=0\}$ containing the origin if $V_m(0)=0$ (otherwise we set $\Upsilon_m=\emptyset$). This is the connected component of (indices of) non $m$-very good cubes containing the origin.  Let $\tilde{\Upsilon}_m=\bigcup_{a \in \Upsilon_m} K_a$. We also define $\rho_m:=\sup\{\|a\|,~a \in \Upsilon_m\} \in \mathbb{N} \cup \{\infty\}$ the radius of $\Upsilon_m$, with the convention $\sup\emptyset=0$. Note that those quantities depend on $\delta$ and $R$.\\

In order to prove Proposition \ref{Prop:smallheight}, we will prove that, for $m$ large enough, any trajectory from a $0$-level point in $K_0 \times \{e^0\}$ crosses the level $\delta$ at most "just after" it exits $\tilde{\Upsilon}_m \times [e^0,\infty)$, and that we can choose $R,\delta$ such that $\Upsilon_m$ is small (i.e. its radius admits exponential moment).

More precisely, we will use the two following lemmas that are both proved at the end of the section. The first lemma asserts that, when a trajectory (projected on the $x$-axis hyperplane) crosses a $m$-very good cube for $m$ large enough, then it crosses the level $\delta$ not far from this cube.

\begin{lemma}
\label{Lem:exitsdelta}
There exists $m \in \mathbb{N}$ depending only on $d$ such that, almost surely, for all $R \ge 1$ and for all $\delta \in (0,1/2]$, the following happens: \\
(i) for all $m$-very good cube $K_a$, for all $0 \le t \le t' \le \delta$ and for all $x \in \mathcal{L}_t \cap K_a$,
\begin{eqnarray}
\label{E:exitsdelta}
\|\mathcal{A}_t^{t'}(x)-Ra\| \le mR. 
\end{eqnarray}
(ii) for all $x \in \mathcal{L}_0 \cap K_0$, and for all $t' \in [0,\delta]$, 
\begin{eqnarray}
\label{E:inequastep2}
\|\mathcal{A}_0^{t'}(x)-x\| \le R(\sqrt{d}+m+\rho_m+1).
\end{eqnarray} 
\end{lemma}
The intuition behind \eqref{E:inequastep2} is the following. If $x \in \mathcal{L}_0 \cap K_0$, when the trajectory from $(x,e^0)$ exits the "bad" component $\Upsilon_m$, which has radius $\rho_m$, it crosses a $m$-very good cube. Then by Point (i), the trajectory should exit the strip $\mathbb{R}^d \times [\tvc{e^0},e^\delta]$ at most at distance $mR$ from the center of this cube. 

To prove the Point (i) of Lemma \ref{Lem:exitsdelta}, we will show that the radius admits exponential moment, and use for this a theorem due to Liggett, Schonmann and Stacey \cite[Theorem 0.0, p.75]{liggett} to show that the field $(V_m(a))_{a \in \mathbb{Z}^d}$ is dominated from below by a product random field with density $\rho$ that can arbitrarily close to 1 as $\mathbb{P}[\text{VeryGood}_m(0)]$ is close to 1. \\

The next lemma asserts that, $R$ and $\delta$ can be chosen such that the radius $\rho_m$ of the "bad" component admits exponential moments.

\begin{lemma}
\label{Lem:expdeca}
For all $m \in \mathbb{N}$, there exists $R \ge 1$ and $\delta \in (0,1/2]$ such that for all \tvc{$A \ge 1$,}
\begin{eqnarray}
\label{E:expdeca}
\mathbb{P}[\rho_m>A] \le e^{-CA}.
\end{eqnarray}
\end{lemma}

Let us now end the proof of Proposition \ref{Prop:smallheight}, assuming Lemmas \ref{Lem:exitsdelta} and \ref{Lem:expdeca}. 
Choose $m$ that satisfies Lemma \ref{Lem:exitsdelta}. Then choose $R,\delta>0$ that satisfies Lemma \ref{Lem:expdeca} for the value of $m$ previously chosen. \\

Let $x \in \mathcal{L}_0 \cap K_0$. By Inequality (\ref{E:inequastep2}) proved in Step 2, the trajectory starting from $(x,e^0)$ is entirely contained in the cylinder $\mathcal{C}:=B(0,R(\sqrt{d}+m+\rho_m+1)) \times [e^0,e^\delta]$ before exiting the strip $\mathbb{R}^d \times [e^0,e^\delta]$. Then this portion of trajectory is made of Euclidean segments whose horizontal deviations are upperbounded by $2R(\sqrt{d}+m+\rho_m+1)$. Moreover, the number of segments is (roughly) upperbounded by $1+\#(N \cap \mathcal{C})$. Then
\begin{eqnarray}
\label{E:smheeqn0}
\CFD_0^\delta(x) \le 2R(\sqrt{d}+m+\rho_m+1)(1+\#(N \cap \mathcal{C})).
\end{eqnarray}

By construction, $\rho_m$ admits exponential moments, and $\#(N \cap \mathcal{C})$ admits exponential moments, therefore, $2R(\sqrt{d}+m+\rho_m+1)(1+\#(N \cap \mathcal{C})) \in L^p$ for all $p \ge 1$.

Now, let $p \ge 1$. Lemma \ref{Lem:technicallemmageneral} applied to the weight function $g(x,\eta)=\CFD_0^\delta(x)(\eta)^p$, with $A=[-1/2,1/2]^d$, gives:
\begin{eqnarray}
\label{E:smheeqn1}
&&\alpha_0\mathbb{E}_{0 \in \mathcal{L}_0}\left[\CFD_0^\delta(x)^p \right]=\mathbb{E}\left[ \sum_{x \in [-1/2,1/2]^d} \CFD_0^\delta(x)^p \right] \nonumber\\
&& \overset{(\ref{E:smheeqn0})}{\le} \mathbb{E}\left[ \#(\mathcal{L}_0 \cap [-1/2,1/2]^d) ~ \big[2R(\sqrt{d}+m+\rho_m+1)(1+\#(N \cap \mathcal{C}))\big]^p \right].
\end{eqnarray}

By Proposition \ref{Prop:nop}, $\#(\mathcal{L}_0 \cap [-1/2,1/2]^d) \in L^2$. Moreover, $2R(\sqrt{d}+m+\rho_m+1)(1+\#(N \cap \mathcal{C})) \in L^{2p}$ by the previous discussion. Thus Cauchy-Schwarz gives,
\begin{eqnarray}
\label{E:smheeqn2}
\mathbb{E}\left[ \#(\mathcal{L}_0 \cap [-1/2,1/2]^d) ~\big[ 2R(\sqrt{d}+m+\rho_m+1)(1+\#(N \cap \mathcal{C}))\big]^p \right]<\infty,
\end{eqnarray}
so, combining (\ref{E:smheeqn1}) and (\ref{E:smheeqn2}), we obtain $\mathbb{E}_{0 \in \mathcal{L}_0}\left[ \CFD_0^\delta(0)^p \right]<\infty$, this proves Proposition \ref{Prop:smallheight}. \hfill $\Box$

It remains to prove Lemmas \ref{Lem:exitsdelta} and \ref{Lem:expdeca}.

\begin{proof}[Proof of Lemma \ref{Lem:exitsdelta}] 
We start with the proof of Point (i). Consider $m \in \mathbb{N}$ large enough that will be chosen later. Let $R \ge 1$ and $\delta \in [0,1/2]$. Let $a \in \mathbb{Z}^d$ and suppose that $K_a$ is an $m$-very good cube. Let $0 \le t \le t' \le \delta$ and $x \in \mathcal{L}_t \cap K_a$, define $z_0=(x,e^t)$ and let $z=z_{0\downarrow} \in N$ (recall that the notation $z_{0\downarrow}$ has been defined by \eqref{E:updownarrow}). Let $B=B_{\mathbb{R}^d}(Ra,Rm)$. By definition of a $m$-very good cube, none of the $\Psi^b_{a'}$ when $\|a'-a\|_\infty \le m$ contains points of $N$ and since $B$ is included in the union of $K_{a'}$ for $\|a'-a\|_\infty \le m$, $N \cap (B \times [e^0,e^{t'}])=\emptyset$. Thus $A(z) \notin B \times [e^0,e^{t'}]$. 

Suppose $m \ge R\sqrt{d}/2$. Then $K_a \subset B$, so $z_0=(x,e^t) \in K_a \times \{e^t\} \subset B \times [e^0,e^{t'}]$. So $[z_0,A(z)]_{eucl}$ must cross either $B \times \{e^{t'}\}$, or $\partial B \times [e^0,e^{t'}]$. In the first case, $\mathcal{A}_t^{t'}(x) \in B$, so we are done.

\tvc{It then remains to eliminate the case $[z_0,A(z)]_{eucl} \cap (\partial B \times [e^0,e^{t'}]) \neq \emptyset$. Suppose by contradiction that $[z_0,A(z)]_{eucl}$ crosses $\partial B \times [e^0,e^{t'}]$ and denote by $z_1$ this intersection point. Since $z_0$ and $z_1$ belong to the Euclidean segment $[z,A(z)]_{eucl}$ and by convexity of hyperbolic balls, the inclusion
\begin{equation}
\label{InclusionKa}
B(z_0 , d(z_0,z_1)) \subset B(z , d(z,A(z)))
\end{equation}
holds where $d(\cdot,\cdot)$ denotes the hyperbolic distance. Besides $d(z_0,z_1)$ becomes as large as we want as $m\to\infty$. Hence, we choose $m$ large enough so that $B(z_0 , d(z_0,z_1))$ contains $\Psi_a^t$. Since $K_a$ is a good cube, there is at least a Poisson point in $\Psi_a^t$, say $z_2$. Combining with (\ref{InclusionKa}), we get $z_2 \in B^{+}(z,d(z,A(z)))$ which contradicts the definition of parent.}

\begin{figure}[!ht]
\begin{center}
\includegraphics[width=11cm,height=5.5cm]{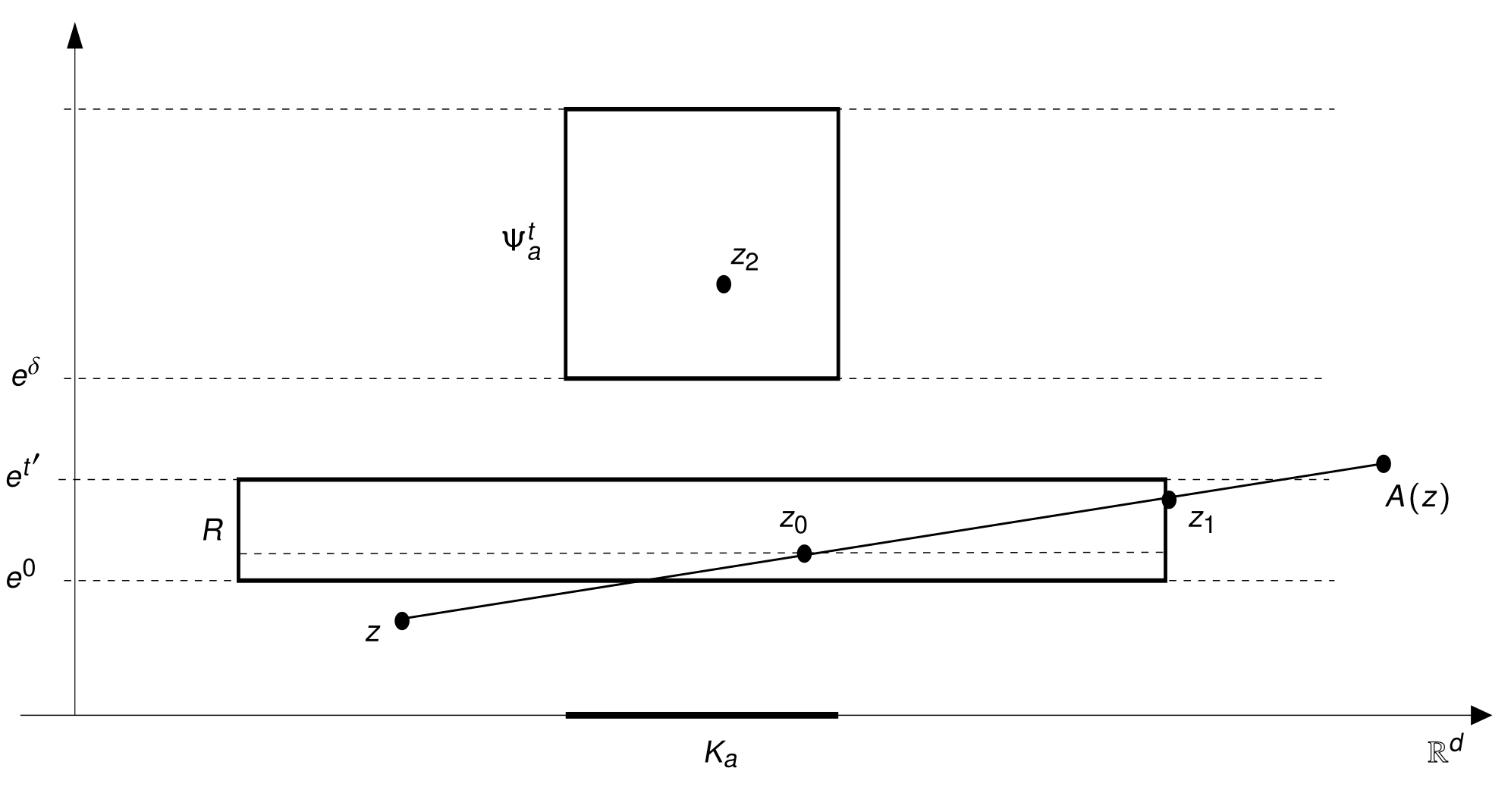}
\caption{The Euclidean segment $[z,A(z)]_{eucl}$ exits the rectangle $R:=B \times [e^0,e^{t'}]$ through $z_1\in \partial B \times [e^0,e^{t'}]$. Whenever $K_a$ is a good cube, this situation does not occur.}
\end{center}
\end{figure}

Let us now prove Point (ii). The conclusion is immediate if $\rho_m=\infty$, so we suppose that $\rho_m<\infty$ in the following. Let $x \in \mathcal{L}_0 \cap K_0$ and $t' \in [0, \delta]$. If $K_0$ is very good, then by the Point (i) of Lemma \ref{Lem:exitsdelta} applied with $t=0$, $\|\mathcal{A}_0^{t'}(x)\| \le mR$, so $\|\mathcal{A}_0^{t'}(x)-x\| \le mR+\|x\| \le R(m+\sqrt{d}/2)$, so we are done.

Suppose that $K_0$ is not $m$-very-good. Then $\Upsilon_m \neq \emptyset$. Let us define the outer-boundary of $\Upsilon_m$ and $\tilde{\Upsilon}_m$ by $\partial_{out}\Upsilon_m=\{a \in \mathbb{Z}^d, a \notin \Upsilon_m \text{ and } \exists a'\in\Upsilon_m, \|a-a'\|=1 \}$.

\begin{figure}[!h]
    \centering
    \begin{tikzpicture}
    \node at (0,0)
    {\includegraphics[scale=0.5]{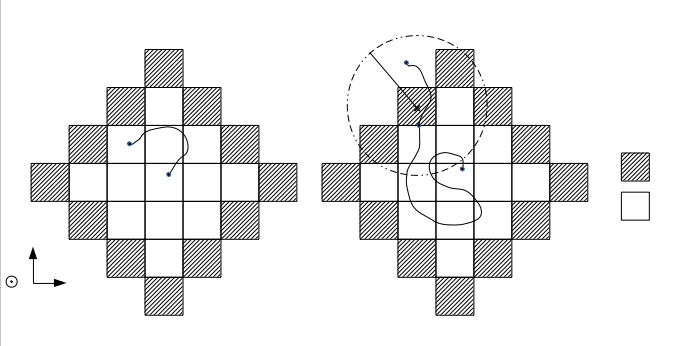}};
    \draw (-4.7,-2) node {$x_1$};
    \draw (-5.6,-1.1) node {$x_2$};;
    \draw (-5.67,-2) node {$y$};
    \draw (5.5,0.1) node[right] {Very good cube};
    \draw (5.5,-0.6) node[right] {Bad cube};
    \draw (-3.2,-3) node {Case 1};
    \draw (2,-3) node {Case 2};
    \draw (-3.15,-0.05) node[right] {$(x,e^0)$};
    \draw (2.05,-0.05) node[right] {$(x,e^0)$};
    \draw (1.1,1.7) node[rotate=-47] {$mR$};
    \end{tikzpicture}

    \caption{Representation of the trajectory from $(x,e^0)$ below level $t'$}
    \label{Fig:exitstrip}
\end{figure}

By definition $\partial_{out}
\Upsilon_m$ is made of (indices of) very good cubes. Moreover, for all $a \in \partial_{out}\Upsilon_m$, $\|a\| \le \rho_m+1$. Since $\rho_m<\infty$, $\tilde{\Upsilon}_m$ is bounded so there are a finite number of points of $N$ in $\tilde{\Upsilon}_m \times [e^0,e^{t'}]$. Then the trajectory starting from $(x,e^0)$ should exit $\tilde{\Upsilon}_m \times [e^0,e^{t'}]$, so by continuity it should cross $\partial(\tilde{\Upsilon}_m \times [e^0,e^{t'}])$. Consider the first time (i.e. the lowest level) when the trajectory crosses $\partial(\tilde{\Upsilon}_m \times [e^0,e^{t'}])$, i.e.
\begin{eqnarray*}
t_{min}=\min\{t>0,(\mathcal{A}_0^t(x),e^t) \in \partial(\tilde{\Upsilon}_m \times [e^0,e^{t'}])\}.
\end{eqnarray*}
The time $t_{min}$ is well-defined since $\partial(\tilde{\Upsilon}_m \times [e^0,e^{t'}])$ is closed. If $t_{min}=t'$ (Case 1 in Figure \ref{Fig:exitstrip}), then for all $t \in [0,t']$, $\mathcal{A}_0^t(x) \in \tilde{\Upsilon}_m$, so $\|\mathcal{A}_0^t(x)-x\| \le \rho_m R+\|x\| \le R(\rho_m+\sqrt{d}/2)+R\sqrt{d}/2=R(\rho_m+\sqrt{d})$, so we are done. Otherwise (Case 2 in Figure \ref{Fig:exitstrip}), $t_{min}<t'$. In this case, $\mathcal{A}_0^{t_{min}}(x) \in \partial\tilde{\Upsilon}_m$ so $\mathcal{A}_0^{t_{min}}(x) \in K_a$ for some $a \in \partial_{out}\Upsilon_m$. Since $a \in \partial\Upsilon_m$, $K_a$ is a very good cube, therefore by Lemma \ref{Lem:exitsdelta}, $\|\mathcal{A}_0^{t'}(x)-Ra\| \le \|\mathcal{A}_{t_{min}}^{t'}(\mathcal{A}_0^{t_{min}}(x))-Ra\| \le mR$. Then $\|\mathcal{A}_0^{t'}(x)-x\| \le \|Ra\|+\|Ra-\mathcal{A}_0^{t'}(x)\|+\|x\| \le R(\rho_m+1)+mR+R\sqrt{d}/2=R(\sqrt{d}+m+\rho_m+1)$, this completes the proof of (\ref{E:inequastep2}).
\end{proof}

\begin{proof}[Proof of Lemma \ref{Lem:expdeca}]
Let $m \in \mathbb{N}$. By translation invariance, $\mathbb{P}[\text{Good}(a)]=\mathbb{P}[\text{Good}(0)]$ for all $a \in \mathbb{Z}^d$. Since the events $\text{Good}(a)$ are mutually independent, for all $a \in \mathbb{Z}^d$, $
\mathbb{P}[\text{VeryGood}_m(a)]=\mathbb{P}[\text{Good}(0)]^{(2m+1)^d}$.
By definition, for $a \in \mathbb{Z}^d$, the event $\text{VeryGood}_m(a)$ only depends on the events $\text{Good}(a')$ with $\|a'-a\|_\infty \le m$. In particular, the events $\text{VeryGood}_m(a)$ are not mutually independent. However, the dependencies are only local. Let $a,a' \in \mathbb{Z}^d$ such that $\|a-a' \|_\infty>2m$. For all $a'' \in \mathbb{Z}^d$, we can't have both $\|a''-a\|_\infty \le m$ and $\|a''-a'\|_\infty \le m$. Therefore, $\text{VeryGood}_m(a)$ is independent of the family of events $(\text{VeryGood}_m(a'))_{a' \in \mathbb{Z}^d, \|a'-a\|_\infty>2m}$. So the field $(V_m(a))_{a \in \mathbb{Z}^d}$ is $2m$-dependant.

Thus Theorem 0.0 of \cite{liggett} tells us that there exists a non-decreasing function $\chi:[0,1] \to [0,1]$ verifying $\lim_{t \to 1} \chi(t)=1$ (and independent of the parameters $R,\delta$) such that, if $(Y_a)_{a \in \mathbb{Z}^d}$ is a product random field of intensity $\chi(\mathbb{P}[V_m(0)=1])$, then $(V_m(a))_{a \in \mathbb{Z}^d} \succeq_{st} (Y_a)_{a \in \mathbb{Z}^d}$ for the product order on $\{0,1\}^{\mathbb{Z}^d}$.

\tvc{Using a Peierls argument, we can choose $\tilde{p}_0>0$ sufficiently small such that for all $p<\tilde{p}_0$, in the product random field $(Y_a)_{a \in \mathbb{Z}^d}$ of density $p$, the radius of the cluster containing the origin admits exponential moments.} Pick $p_0'<1$ such that $\chi(p)>1-\tilde{p}_0$ for all $p>p_0'$ (it is possible since $\lim_{p \to 1} \chi(p)=1$), and set $p_0^*=p_0'^{(2m+1)^{-d}}<1$. It is shown in the next paragraph that $\mathbb{P}[\text{Good}(0)]>p_0^*$ for judiciously chosen $R,\delta$. Then $\mathbb{P}[\text{VeryGood}_m(a)]=\mathbb{P}[\text{Good}(0)]^{(2m+1)^d}>p_0'$. Therefore $\chi\left(\mathbb{P} [\text{VeryGood}_m(a)]\right)>1-\tilde{p}_0$ by our choice of $p_0'$. So the field $(Y_a)_{a \in \mathbb{Z}^d}$ is a product random field with density larger than $1-\tilde{p}_0$. By our choice of $\tilde{p}_0$, it implies that the radius of the component of $\{a \in \mathbb{Z}^d,~Y_a=0\}$ containing the origin admits exponential moments, which implies that $\rho_m$ admits exponential moments by stochastic domination.

It remains to show that we can choose $R \ge 1$ and $\delta \in (0,1/2]$ such that $\mathbb{P}[\text{Good}(0)]>p_0^*$. Since $\Psi_0^b$ and $\Psi_0^t$ are disjoint, by independence
\begin{eqnarray*}
\mathbb{P}[\text{Good}(0)]&=\mathbb{P}[N \cap \Psi_a^b=\emptyset]\mathbb{P}[N \cap \Psi_a^t \neq \emptyset] =\exp\left({-\lambda\mu(\Psi_a^b)}\right)\left(1-\exp(-\lambda\mu(\Psi_a^t))\right).
\end{eqnarray*}
We have
\begin{eqnarray*}
\mu\left( \Psi_a^b \right)=\int_{K_0}\int_1^{e^\delta} \frac{1}{y^d}~dy~dx=\begin{cases}R^d\frac{1-e^{-(d-1)\delta}}{d-1}  & \mbox{ if }d>1\\
\delta R^d & \mbox{ if }d=1.\end{cases}
\end{eqnarray*}
and
\begin{eqnarray*}
&\mu\left( \Psi_a^t \right)&=\int_{K_0}\int_{e^\delta}^e \frac{1}{y^d}~dy~dx=\begin{cases}
R^d\frac{e^{-(d-1)\delta}-e^{-(d-1)}}{d-1} & \mbox{ if } d>1\\
(1-\delta) R^d & \mbox{ if } d=1.
 \end{cases}
\end{eqnarray*}
When $d>1$, let $\kappa:=\frac{e^{-(d-1)/2}-e^{-(d-1)}}{d-1}$. If $\delta<1/2$, then
\begin{eqnarray*}
\mu\left( \Psi_a^t \right) \ge R^d\frac{e^{-(d-1)/2}-e^{-(d-1)}}{d-1}=\kappa R^d.
\end{eqnarray*}
Since $1-\exp\left( -\lambda \kappa R^d \right) \to 1$ when $R \to \infty$, we can pick $R$ large enough such that $1-\exp\left( -\lambda \kappa R^d \right)>\sqrt{p_0^*}$.

Now $R$ is chosen, and at $R$ fixed, $\mu\left( \Psi_a^b \right) \to 0$ when $\delta \to 0$, so $\exp\left({-\lambda\mu(\Psi_a^b)}\right) \to 1$ when $\delta \to 0$. We pick $\delta$ small enough (and also smaller than 1/2) such that $\exp\left(-\lambda\mu(\Psi_a^b)\right)>\sqrt{p_0^*}$. For this choice of $(R,\delta)$,
\begin{eqnarray*}
&\mathbb{P}[\text{Good}(0)]&=\exp\left({-\lambda\mu(\Psi_a^b)}\right)\left(1-\exp(-\lambda\mu(\Psi_a^t))\right) >
p_0^*,
\end{eqnarray*}
this proves Lemma \ref{Lem:expdeca}.\end{proof}

\subsection{Proof of Proposition \ref{Prop:welldefined}}
\label{S:proofwelldefined}
In the following, we consider $\delta>0$ such that Proposition \ref{Prop:smallheight} holds. Let us deduce Proposition \ref{Prop:welldefined} from Proposition \ref{Prop:smallheight}.

\begin{proof}[Proof of Proposition \ref{Prop:welldefined}]

By Proposition \ref{Prop:smallheight}, $\mathbb{P}_{0 \in \mathcal{L}_0}\left[ \mathcal{A}_0^\delta(0)=\infty\right]=0$. Recall that $\mathcal{A}_0^\delta(x)(\eta)$ is the value of $\mathcal{A}_0^\delta(x)$ when $N=\eta$. Define the weight function (in the non-marked case)
$w(x,\eta)=\mathbf{1}_{x \in \mathcal{L}_0(\eta) \text{ and } \mathcal{A}_0^\delta(x)(\eta)=\infty}$
for $x \in \mathbb{R}^d$, $\eta \in \mathcal{N}_S$. Lemma \ref{Lem:technicallemmageneral} applied with $A=\mathbb{R}^d$ gives,
\begin{eqnarray*}
\mathbb{E}[\#\{x \in \mathcal{L}_0,~\mathcal{A}_0^\delta(x)=\infty \}]=\mathbb{E}\left[ \sum_{x \in \mathcal{L}_0} w(x) \right]=\infty \times
 \mathbb{E}_{0 \in \mathcal{L}_0}[w(0)]=\infty\times \mathbb{P}_{0 \in \mathcal{L}_0}\left[ \mathcal{A}_0^\delta(0)=\infty \right]=0.
\end{eqnarray*}
Thus a.s., for all $x \in \mathcal{L}_0$, $\mathcal{A}_0^\delta(x) \neq \infty$.

The \tvc{dilation} invariance property of the model implies that, for all $h \in \mathbb{R}$, a.s., for all $x \in \mathcal{L}_h$, $\mathcal{A}_h^{h+\delta}(x) \neq \infty$. We define
\begin{eqnarray*}
H_0:=\sup\{h \ge 0,~\forall x \in \mathcal{L}_0,~\mathcal{A}_0^h(x) \neq \infty\} \in [0,\infty]
\end{eqnarray*}
Note that $H_0 \ge \delta$ by the previous discussion. Suppose that $\mathbb{P}[H_0<\infty]>0$. Then there exists some (deterministic) $h_0 \ge 0$ such that
$\mathbb{P}[h_0<H_0<h_0+\delta]>0$. On this event, there exists some $x \in \mathcal{L}_0$ such that $\mathcal{A}_0^{h_0}(x) \neq \infty$ but $\mathcal{A}_{h_0}^{h_0+\delta}(\mathcal{A}_0^{h_0}(x))=\mathcal{A}_0^{h_0+\delta}(x)=\infty$. Therefore $x':=\mathcal{A}_0^{h_0}(x)$ satisfies $\mathcal{A}_{h_0}^{h_0+\delta}(x')=\infty$. So $\mathbb{P}\left[ \exists x \in \mathcal{L}_{h_0},~\mathcal{A}_{h_0}^{h_0+\delta}(x)=\infty \right]>0$, which contradicts the previous discussion. So $H_0=\infty$ a.s., i.e. a.s. for all $h \ge 0$ and for all $x \in \mathcal{L}_0$, $\mathcal{A}_0^h(x) \neq \infty$.

By dilations invariance, the same result is true for each level $t \in \mathbb{R}$: for all $t \in \mathbb{R}$, a.s., for all $x \in \mathcal{L}_t$ and for all $h \ge 0$, $\mathcal{A}_t^{t+h}(x) \neq \infty$. Therefore, almost surely,
$\forall t \in \mathbb{Q},~\forall h \in \mathbb{R}_+,~\mathcal{A}_t^{t+h}(x) \neq \infty.$ Every trajectory crosses a rational level $t \in \mathbb{Q}$, since it is the case for every non-horizontal Euclidean segments. Thus we can replace $\mathbb{Q}$ by $\mathbb{R}$ in the above conclusion. This completes the proof.
\end{proof}

\subsection{Proof of (i) in Theorem \ref{Thm:controlforward}}
\label{S:proofcontrolforward}

Let $p \ge 1$. Recall that $\delta>0$ is chosen according to Proposition \ref{Prop:smallheight}. 

The strategy of the proof consists in iterating the control of horizontal deviations up to level $\delta$ given by Proposition \ref{Prop:smallheight} to obtain a control up to level $t$ for all $t>0$. It will be shown that, for all $t \ge 0$,

\begin{eqnarray}
\label{E:propag}
\mathbb{E}\left[ \left( \frac{\|X_0\|+\CFD_0^{t+\delta}(X_0)}{e^{t+\delta}} \right)^p\right]^{1/p} \le \varphi\left(\mathbb{E}\left[ \left( \frac{\|X_0\|+\CFD_0^t(X_0)}{e^t} \right)^p\right]^{1/p}\right),
\end{eqnarray}
where
\begin{eqnarray}
\label{E:defofphi}
\varphi(s)=e^{-\delta}s+C_0s^{\frac{d}{p+d}},
\end{eqnarray}
where $C_0>0$ is a constant that only depends of $p,d,\lambda$.

The key point is that the function $\varphi$ defined in (\ref{E:defofphi}) admits a fixed point. As it will be shown later, the factor $e^{-\delta}$ in the first term of the r.h.s. of (\ref{E:defofphi}) comes from the dilation invariance. Because of the metric of $(H,ds^2)$, the horizontal fluctuations of the lowest part of a trajectory are compressed by rescaling so they have negligible impact on the total cumulated deviations. This is specific to hyperbolic geometry; in Euclidean geometry, the same argument leads to a roughly non-optimal upper-bound of horizontal deviations. 

\paragraph{Step 1:} we prove (i) assuming (\ref{E:propag}).

By assumption, $\mathbb{E}\left[ \left( \frac{\|X_0\|+\CFD_0^0(X_0)}{e^0} \right)^p\right]^{1/p}=\mathbb{E}\left[\|X_0\|^p\right]^{1/p}<\infty$, so by iterating (\ref{E:propag}), since $\varphi$ is  non-decreasing, we get for all $n \in \mathbb{N}$,
\begin{eqnarray}
\label{E:forwinequa0}
\mathbb{E}\left[ \left( \frac{\|X_0\|+\CFD_0^{n\delta}(X_0)}{e^{n\delta}} \right)^p\right]^{1/p} \le \varphi^n\left( \mathbb{E}\left[ \|X_0\|^p\right]^{1/p} \right),
\end{eqnarray}
where $\varphi^n=\varphi \circ...\circ \varphi$ $n$ times. Let $t \ge 0$ and $n=\lceil t/\delta \rceil$ (thus $\delta(n-1)<t\le\delta n$). Then, using the fact that $t \mapsto \CFD_0^t(X_0)$ is non-decreasing,
\begin{eqnarray}
\label{E:forwinequa1}
&&\mathbb{E}\left[ \left(e^{-t} \CFD_0^t(X_0) \right)^p \right]^{1/p} \le \mathbb{E}\left[ \left( \frac{\|X_0\|+\CFD_0^t(X_0)}{e^t} \right)^p\right]^{1/p}=e^{n\delta-t}\mathbb{E}\left[ \left( \frac{\|X_0\|+\CFD_0^t(X_0)}{e^{n\delta}} \right)^p\right]^{1/p} \nonumber\\
&&\le e^\delta \mathbb{E}\left[ \left( \frac{\|X_0\|+\CFD_0^{n\delta}(X_0)}{e^{n\delta}} \right)^p\right]^{1/p} \overset{(\ref{E:forwinequa0})}{\le} e^\delta \varphi^n\left( \mathbb{E}\left[ \|X_0\|^p\right]^{1/p} \right).
\end{eqnarray}

The function $\varphi$ is continuous, non-decreasing and admits a unique positive fixed point $s_0=\left( \frac{C_0}{1-e^{-\delta}} \right)^{1+d/p}$ such that
\begin{eqnarray*}
\left\{
\begin{aligned}[c|l]
\varphi(s)>s &\text{ if } 0<s<s_0\\
\varphi(s)=s &\text{ if } s \in \{0,s_0\}\\
\varphi(s)<s &\text{ if } s>s_0.
\end{aligned}
\right.
\end{eqnarray*}

Therefore, since $\mathbb{E}\left[ \|X_0\|^p\right]^{1/p} \in (0,\infty)$, $\varphi^n\left( \mathbb{E}\left[ \|X_0\|^p\right]^{1/p} \right) \to s_0$ when $n \to \infty$. Combining this with (\ref{E:forwinequa1}), we obtain that (i) holds for $K=e^\delta s_0$.

\bigbreak

\paragraph{Step 2:} we show that (\ref{E:propag}) holds for all $t \ge 0$. Let $t \ge 0$. By Minkowski inequality,
\begin{eqnarray*}
&&\mathbb{E}\left[ \left(\|X_0\|+\CFD_0^{t+\delta}(X_0)\right)^p \right]^{1/p} \nonumber\\
&&\le \mathbb{E}\left[ \left(\|X_0\|+\CFD_0^t(X_0)\right)^p \right]^{1/p}+\mathbb{E}\left[ \left(\CFD_0^{t+\delta}(X_0)-\CFD_0^t(X_0)\right)^p \right]^{1/p},
\end{eqnarray*}
so, multiplying both sides by $e^{-t-\delta}$, we obtain
\begin{eqnarray}
\label{E:forwinequa2}
&&\mathbb{E}\left[ \left(\frac{\|X_0\|+\CFD_0^{t+\delta}(X_0)}{e^{t+\delta}}\right)^p \right]^{1/p} \le e^{-\delta}\mathbb{E}\left[ \left(\frac{\|X_0\|+\CFD_0^t(X_0)}{e^t}\right)^p \right]^{1/p} \nonumber\\
&&+e^{-t-\delta}\mathbb{E}\left[ \left(\CFD_0^{t+\delta}(X_0)-\CFD_0^t(X_0)\right)^p \right]^{1/p}.
\end{eqnarray}
The first term in the r.h.s. of (\ref{E:forwinequa2}) corresponds to $e^{-\delta}s$, with \[s=\mathbb{E}\left[ \left(e^{-t}(\|X_0\|+\CFD_0^t(X_0))\right)^p \right]^{1/p}.\]The factor $e^{-\delta}$ comes from the rescaling and is crucial for the existence of the fixed point.

It remains to upperbound the second term by $C_0s^\frac{d}{p+d}$. We use Proposition \ref{Lem:MTlemma2} to rewrite the quantity $\mathbb{E}\left[ \left(\CFD_0^{t+\delta}(X_0)-\CFD_0^t(X_0)\right)^p \right]$. Let us introduce the level $t$-weighted association function $(f_t,w_t)$ defined as
\begin{eqnarray*}
\left\{
\begin{aligned}[c|l]
f_t(x,\eta,\xi)&=\mathcal{A}_0^t(X_0(T_{-x}\eta,\xi)+x)(\eta) \in \mathcal{L}_t(\eta)\\
w_t\left( x,\eta,\xi) \right)&=\CFD_t^{t+\delta}(f_t( x,\eta,\xi))^p.
\end{aligned}
\right.
\end{eqnarray*}
for all $x \in \mathbb{R}^d$, $\eta \in \mathcal{N}_S$ and $\xi \in \Upsilon$. Checking that $(f_t,w_t)$ is well-defined and is a level $t$-weighted association function is done in \cite[Section 6.6]{version_longue}. Proposition \ref{Lem:MTlemma2} applied to $(f_t,w_t)$ gives,
\begin{eqnarray}
\label{E:resultTM}
\mathbb{E}\left[ w_t(0) \right]=\alpha_0e^{-dt}\mathbb{E}_{0 \in \mathcal{L}_t}\left[ \int_{\Lambda_{f_t}(0)} w_t(x)~dx\right].
\end{eqnarray}
We have
\begin{eqnarray*}
\mathbb{E}\left[w_t(0)\right]=\mathbb{E}\left[ \CFD_t^{t+\delta}\left(\mathcal{A}_0^t(X_0)\right)^p\right]=\mathbb{E}\left[ \left(\CFD_0^{t+\delta}(X_0)-\CFD_0^t(X_0)\right)^p \right]
\end{eqnarray*}
and, since for all $x \in \tvc{\Lambda_{f_t}(0)}$, $f_t(x)=0$,
\begin{eqnarray*}
\mathbb{E}_{0 \in \mathcal{L}_t}\left[ \int_{\tvc{\Lambda_{f_t}(0)}} w_t(x)~dx\right]=\mathbb{E}_{0 \in \mathcal{L}_t}\left[ \int_{\tvc{\Lambda_{f_t}(0)}} \CFD_t^{t+\delta}(0)^p~dx\right]
=\mathbb{E}_{0 \in \mathcal{L}_t}\left[ V_t(0) ~ \CFD_t^{t+\delta}(0)^p\right]
\end{eqnarray*}
where $V_t(0)=\Leb(\Lambda_{f_t}(0))$. Then (\ref{E:resultTM}) can be rewritten as:
\begin{eqnarray}
\label{E:resultTM2}
\mathbb{E}\left[ \left(\CFD_0^{t+\delta}(X_0)-\CFD_0^t(X_0)\right)^p \right]=\alpha_0e^{-dt}\mathbb{E}_{0 \in \mathcal{L}_t}\left[ V_t(0) ~ \CFD_t^{t+\delta}(0)^p\right].
\end{eqnarray}
By Proposition \ref{Lem:technical} applied to $f_t$,
\begin{eqnarray}
\label{E:controlvt}
\mathbb{E}_{0 \in \mathcal{L}_t}\left[ V_t(0)^{1+p/d} \right] \le C_{p,d}e^{dt}\mathbb{E}\left[ \|\mathcal{A}_0^t(X_0)\|^p \right] \le C_{p,d}e^{dt}\mathbb{E}\left[ \left( \|X_0\|+\CFD_0^t(X_0)\right)^p \right]
\end{eqnarray}
since $\|\mathcal{A}_0^t(X_0)\| \le \|X_0\|+\CFD_0^t(X_0)$. Thus Hölder inequality gives,
\begin{eqnarray}
\label{E:applyholder}
&&\mathbb{E}_{0 \in \mathcal{L}_t}\left[ V_t(0) ~\CFD_t^{t+\delta}(0)^p\right] \le \mathbb{E}_{0 \in \mathcal{L}_t}\left[ V_t(0)^{1+p/d} \right]^{\frac{d}{p+d}}\mathbb{E}_{0 \in \mathcal{L}_t}\left[ \CFD_t^{t+\delta}(0)^{p+d} \right]^{\frac{p}{p+d}} \nonumber\\
&&\overset{(\ref{E:controlvt})}{\le} C_{p,d}^{\frac{d}{p+d}}e^{\frac{d^2t}{p+d}}\mathbb{E}\left[ \left( \|X_0\|+\CFD_0^t(X_0)\right)^p \right]^{\frac{d}{p+d}}\mathbb{E}_{0 \in \mathcal{L}_t}\left[ \CFD_t^{t+\delta}(0)^{p+d} \right]^{\frac{p}{p+d}}.
\end{eqnarray}
Since
\begin{eqnarray*}
\CFD_t^{t+\delta}(0)(N)=e^t ~ \CFD_0^\delta(0)(D_{e^{-t}}N),
\end{eqnarray*}
by dilation invariance (Lemma \ref{Lem:scaleinv}),
\begin{eqnarray*}
\mathbb{E}_{0 \in \mathcal{L}_t}\left[ \CFD_t^{t+\delta}(0)^{p+d} \right]=e^{t(p+d)}\mathbb{E}_{0 \in \mathcal{L}_0}\left[ \CFD_0^\delta(0)^{p+d} \right].
\end{eqnarray*}
Then (\ref{E:applyholder}) can be rewritten as
\begin{eqnarray}
\label{E:applyholder2}
&&\mathbb{E}_{0 \in \mathcal{L}_t}\left[ V_t(0)~ \CFD_t^{t+\delta}(0)^p\right]
\nonumber\\
&&\le C_{p,d}^{\frac{d}{p+d}}e^{tp+\frac{d^2t}{p+d}}\mathbb{E}\left[ \left( \|X_0\|+\CFD_0^t(X_0)\right)^p \right]^{\frac{d}{p+d}}\mathbb{E}_{0 \in \mathcal{L}_0}\left[ \CFD_0^\delta(0)^{p+d} \right]^{\frac{p}{p+d}}.
\end{eqnarray}
Then
\begin{eqnarray}
\label{E:forwinequa3}
&&\mathbb{E}\left[ (\CFD_0^{t+\delta}(X_0)-\CFD_0^t(X_0))^p \right] \overset{(\ref{E:resultTM2})}{=}\alpha_0e^{-dt}\mathbb{E}_{0 \in \mathcal{L}_t}\left[ V_t(0)~\CFD_t^{t+\delta}(0)^p\right] \nonumber\\
&&\overset{(\ref{E:applyholder2})}{\le} \alpha_0C_{p,d}^{\frac{d}{p+d}}e^{t(p-d)+\frac{d^2t}{p+d}}\mathbb{E}\left[ \left( \|X_0\|+\CFD_0^t(X_0)\right)^p \right]^{\frac{d}{p+d}}\mathbb{E}_{0 \in \mathcal{L}_0}\left[ \CFD_0^\delta(0)^{p+d} \right]^{\frac{p}{p+d}} \nonumber\\
&&\le \tilde{C}_0e^{\frac{p^2t}{p+d}}\mathbb{E}\left[ \left( \|X_0\|+\CFD_0^t(X_0)\right)^p \right]^{\frac{d}{p+d}}
\end{eqnarray}
where $\tilde{C}_0=\alpha_0C_{p,d}^{\frac{d}{p+d}}\mathbb{E}_{0 \in \mathcal{L}_0}\left[ \CFD_0^\delta(0)^{p+d} \right]^{\frac{p}{p+d}}<\infty$ by Proposition \ref{Prop:smallheight}, and where we used that $p-d+d^2/(p+d)=p^2/(p+d)$. We rewrite (\ref{E:forwinequa3}) as
\begin{eqnarray}
\label{E:forwinequa4}
e^{-t-\delta}\mathbb{E}\left[ \left(\CFD_0^{t+\delta}(X_0)-\CFD_0^t(X_0)\right)^p \right]^{1/p} 
&\le \tilde{C}_0^{1/p}e^{-\delta-\frac{td}{p+d}}\mathbb{E}\left[ \left( \|X_0\|+\CFD_0^t(X_0)\right)^p \right]^{\frac{d}{p(p+d)}} \nonumber\\
&=\tilde{C}_0^{1/p}e^{-\delta}\mathbb{E}\left[ \left(\frac{\|X_0\|+\CFD_0^t(X_0)}{e^t}\right)^p \right]^{\frac{d}{p(p+d)}}.
\end{eqnarray}
Combining (\ref{E:forwinequa2}) and (\ref{E:forwinequa4}) we obtain that (\ref{E:propag}) holds with $C_0=\tilde{C}_0^{1/p}e^{-\delta}$. This completes the proof.

\subsection{A geometrical lemma}
We now prove the following lemma:

\begin{lemma}
\label{Lem:closest}
For all $p \ge 1$, we have
\begin{eqnarray*}
\mathbb{E}\left[ \left(\min_{x \in \mathcal{L}_0} \|x\|\right)^p \right]<\infty.
\end{eqnarray*}
\end{lemma}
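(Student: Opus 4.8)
The plan is to prove that the void probability
$$q(r):=\mathbb{P}\big[\mathcal{L}_0\cap B_{\mathbb{R}^d}(0,r)=\emptyset\big]$$
decays at least like $e^{-cr^d}$ as $r\to\infty$. Granting this, the lemma follows at once: since $\min_{x\in\mathcal{L}_0}\|x\|\ge r$ forces $\mathcal{L}_0\cap B_{\mathbb{R}^d}(0,r)=\emptyset$, one has
$$\mathbb{E}\Big[\big(\min_{x\in\mathcal{L}_0}\|x\|\big)^p\Big]=\int_0^\infty p\,r^{p-1}\,\mathbb{P}\big[\min_{x\in\mathcal{L}_0}\|x\|\ge r\big]\,dr\le\int_0^\infty p\,r^{p-1}q(r)\,dr ,$$
and splitting the integral at a large radius $r_0$ bounds it by $r_0^p+\int_{r_0}^\infty p\,r^{p-1}e^{-cr^d}\,dr<\infty$. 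In passing, $q(r)\to0$ also shows $\mathcal{L}_0\neq\emptyset$ a.s., so the minimum is a.s.\ well defined.

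To control $q(r)$ I would exhibit, near each point $(a,1)$ with $a\in\mathbb{R}^d$, a positive-probability \emph{local} event forcing a DSF edge to cross the hyperplane $\mathbb{R}^d\times\{1\}$ at abscissa close to $a$. Fix $\epsilon>0$ and a small $\eta>0$, set $p_{\mathrm{low}}=(0,1-\epsilon)$, $p_{\mathrm{high}}=(0,1+\epsilon)$, and choose $\rho_0$ large enough (depending on $\epsilon,\eta$) that $B_H(p_{\mathrm{high}},\eta)\subset B_H(p_{\mathrm{low}},\rho_0)$ and that $B_H(z_0,d(z_0,w))\subset B_H(p_{\mathrm{low}},\rho_0)$ for all $z_0\in B_H(p_{\mathrm{low}},\eta)$, $w\in B_H(p_{\mathrm{high}},\eta)$ — possible since $d(z_0,w)$ stays bounded. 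Put $R_0:=B_H(p_{\mathrm{low}},\rho_0)\setminus\big(B_H(p_{\mathrm{low}},\eta)\cup B_H(p_{\mathrm{high}},\eta)\big)$, so that for $\eta$ small the three regions $B_H(p_{\mathrm{low}},\eta)$, $B_H(p_{\mathrm{high}},\eta)$, $R_0$ are pairwise disjoint, of finite positive volume, all contained in the bounded set $B_H(p_{\mathrm{low}},\rho_0)$. Define
$$G:=\big\{\#(N\cap B_H(p_{\mathrm{low}},\eta))=1\big\}\cap\big\{\#(N\cap B_H(p_{\mathrm{high}},\eta))=1\big\}\cap\big\{N\cap R_0=\emptyset\big\}.$$
Then $\mathbb{P}[G]=:p_0>0$ and $G$ is $\sigma(N\cap B_H(p_{\mathrm{low}},\rho_0))$-measurable. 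On $G$, let $z_0,w$ be the two Poisson points in the small balls; for $\eta$ small one has $\pi_y(z_0)<1<\pi_y(w)$, and since $N\cap B_H(p_{\mathrm{low}},\rho_0)=\{z_0,w\}$ while $B_+(z_0,d(z_0,w))\subset B_H(p_{\mathrm{low}},\rho_0)$ contains neither its centre $z_0$ nor the sphere point $w$, one gets $B_+(z_0,d(z_0,w))\cap N=\emptyset$, hence $A(z_0)=w$. Therefore the edge $[z_0,w]_{eucl}$ crosses $\mathbb{R}^d\times\{1\}$, and (for $\eta$ small, both balls being within Euclidean distance $1/2$ of $(0,1)$) its crossing abscissa lies in $B_{\mathbb{R}^d}(0,1)$, so $\mathcal{L}_0\cap B_{\mathbb{R}^d}(0,1)\neq\emptyset$.

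Now I would translate: for $a\in\mathbb{R}^d$, let $Good(a)$ be the event defined exactly as $G$ but with $p_{\mathrm{low}},p_{\mathrm{high}},R_0$ replaced by their images under the translation $T^H_a$. By translation invariance of $N$ and of the DSF (Section \ref{S:properties}), $\mathbb{P}[Good(a)]=p_0$, the event $Good(a)$ depends only on $N$ restricted to a Euclidean ball of fixed radius $L$ around $(a,1)$, and on $Good(a)$ one has $\mathcal{L}_0\cap B_{\mathbb{R}^d}(a,1)\neq\emptyset$. Hence the events $\{Good(a):a\in(2L+1)\mathbb{Z}^d\}$ are mutually independent. Given $r\ge2$, let $S_r=\{a\in(2L+1)\mathbb{Z}^d:\|a\|\le r-1\}$, so $\#S_r\ge c_d r^d$ for $r$ large; if any $Good(a)$ with $a\in S_r$ occurs then $\mathcal{L}_0\cap B_{\mathbb{R}^d}(0,r)\neq\emptyset$, whence
$$q(r)\le\prod_{a\in S_r}\big(1-\mathbb{P}[Good(a)]\big)=(1-p_0)^{\#S_r}\le e^{-cr^d}\qquad(r\text{ large}),$$
with $c=-c_d\ln(1-p_0)>0$. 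Feeding this into the moment formula above completes the proof. The main obstacle is the construction of the local event $G$: one must choose the empty region $R_0$ large enough to \emph{certify} that the random upper semi-ball $B_+(z_0,A(z_0))$ contains no Poisson point, while keeping $R_0$ bounded (so $G$ is genuinely local, giving independence on a grid) and of finite volume (so $\mathbb{P}[G]>0$). This is possible precisely because we work at height $0$, i.e.\ at ordinate $\approx1$, where the hyperbolic metric is comparable to the Euclidean one and all relevant hyperbolic balls are uniformly bounded Euclidean sets; the same naive attempt near the boundary hyperplane would involve unbounded regions. Everything else — the volume computations, the compatibility conditions on $\epsilon,\eta,\rho_0$, and the final integration — is routine.
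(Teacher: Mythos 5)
Your proof is correct and takes essentially the same approach as the paper: construct a bounded local event that forces an explicit DSF edge to cross the level-$0$ hyperplane at a nearby abscissa, then exploit independence of translated copies to get an exponential tail for $\min_{x\in\mathcal{L}_0}\|x\|$. The only difference is cosmetic — you place the local events on a $d$-dimensional grid, yielding $q(r)\le e^{-cr^d}$, whereas the paper places them along a single axis and uses a geometric first-success argument, yielding $\mathbb{P}[\min_{x\in\mathcal{L}_0}\|x\|>r]\le e^{-cr}$; both give all polynomial (indeed exponential) moments.
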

This will be used in the proof of (ii) in Theorem \ref{Thm:controlforward}, and several times in the following.

\begin{proof}
We will in fact prove that $\min_{x \in \mathcal{L}_0} \|x\|$ admits exponential moments. Choose $A>0$ large enough such that, for $x_1,x_2 \in \mathbb{R}^d$, if $\|x_1-x_2\| \ge A$ then $d((x_1,e^0),(x_2,e^0)) \ge 6$. For $n \in \mathbb{N}$, define
\begin{eqnarray*}
p_n:=(Ane_1,e^0), \quad B_n^1:=B_H(p_n,1), \quad B_n^3:=B_H(p_n,3).
\end{eqnarray*}
Let us also define
\begin{eqnarray*}
B_n^{1-}=B_n^1 \cap (\mathbb{R}^d \times (0,e^0)), \quad B_n^{1+}=B_n^1 \cap (\mathbb{R}^d \times [e^0,\infty)).
\end{eqnarray*}
For $n \in \mathbb{N}$, we now define the event $E_n$ meaning that there is exactly one point of $N$ in $B_n^{1-}$, exactly one point of $N$ in $B_n^{1+}$ and no more points in $B_n^3$:
\begin{eqnarray*}
E_n:=\{\#(N \cap B_n^{1-})=\#(N \cap B_n^{1+})=1 \text{ and } \#(N \cap B_n^3)=2\}.
\end{eqnarray*}
The event $E_n$ only depend on the process $N$ inside the ball $B_n^3$, and the balls $(B_n^3)_{n \in \mathbb{N}}$ are pairwise disjoint by our choice of $A$, so the events $(E_n)$ are mutually independent. Moreover they all have the same probability $p>0$. It is shown in the next paragraph that, on $E_n$, $\min_{x \in \mathcal{L}_0} \|x\| \le An+3$. Consider $n_{\min}=\min\{n \in \mathbb{N},~E_n \text{ occurs}\}$.
The random variable $n_{\min}$ is distributed according to a geometric distribution so it admits exponential moments. Since $\min_{x \in } \|x\| \le An_{\min}+3$, it implies Lemma \ref{Lem:closest}.

It remains to show that $E_n$ implies $\min_{x \in \mathcal{L}_0} \|x\| \le An+3$. Fix $\eta \in E_n$ and consider $z_1$ (resp. $z_2$) the unique point in $\eta \cap B_n^{1-}$ (resp. $\eta \cap B_n^{1+}$). For any $z \in B^+(z_1)(\eta)$,
\begin{eqnarray*}
d(z,p_n) \le d(z,z_1)+d(z_1,p_n) \le d(z_1,z_2)+d(z_1,p_n) \le 2d(z_1,p_n)+d(p_n,z_2) \le 3,
\end{eqnarray*}
so $B^+(z_1)(\eta) \subset B_n^3$. Since $\eta \cap B_n^3$ contains no more points than $z_1$ and $z_2$, this implies that $B^+(z_1)(\eta)=\emptyset$, so $[z_1,z_2]_{eucl} \in \DSF(\eta)$. Consider the intersection point $z=(x_0,e^0)$ of $[z_1,z_2]_{eucl}$ and the hyperplane $\mathbb{R}^d \times \{1\}$. Then $z \in B_n^3$ so, by the discussion below Proposition \ref{Prop:disthalfspace}, $\|x-Ane_1\| \le d(z,p_n) \le 3$. Thus $\|x\| \le An+3$, and $x \in \mathcal{L}_0(\eta)$, so $\min_{x \in \mathcal{L}_0(\eta)} \|x\| \le An+3$. This completes the proof of Lemma \ref{Lem:closest}.
\end{proof}

\subsection{Proof of (ii) in Theorem \ref{Thm:controlforward}}
\label{S:proofcontrolforward2}

\tvc{
Let $p \ge 1$. Let us assume first that there exists some level $0$-association function $f$ verifying $\mathbb{E}\left[ \|f(0)\|^{2p} \right]<\infty$ and  $\mathbb{E}_{0 \in \mathcal{L}_0}\left[ \Leb(\Lambda_f(0))^{-1} \right]<\infty$. The construction of such an $f$ is done later, at the end of the proof.\\
}

\tvc{Applying part (i) of Theorem \ref{Thm:controlforward} to $X_0:=f(0)$, since $\mathbb{E}\left[ \|f(0)\|^{2p} \right]<\infty$, we obtain
\begin{eqnarray}
\label{E:forw2eq1}
\limsup_{t \to \infty}\mathbb{E}\left[ \left(\frac{\CFD_0^t(f(0))}{e^t} \right)^{2p}\right]<\infty.
\end{eqnarray}
If we set
\begin{eqnarray*}
w(x,\eta,\xi)=\left(\frac{\CFD_0^t(f(x,\eta,\xi))(\eta)}{e^t}\right)^{2p},
\end{eqnarray*}
then $(f,w)$ is a level $0$-weighted association function. So using mass transport, we have by Proposition \ref{Lem:MTlemma2},
\begin{eqnarray}
\label{E:forw2eq2}
&\mathbb{E}\left[ \left( \frac{\CFD_0^t(f(0))}{e^t} \right)^{2p} \right]&=\alpha_0\mathbb{E}_{0 \in \mathcal{L}_0}\left[ \int_{\Lambda_f(0)} \left( \frac{\CFD_0^t(f(x))}{e^t} \right)^{2p}~dx \right] \nonumber\\
&&=\alpha_0\mathbb{E}_{0 \in \mathcal{L}_0}\left[ \Leb(\Lambda_f(0)) ~ \left(\frac{\CFD_0^t(0)}{e^t}\right)^{2p} \right].
\end{eqnarray}
Hence \eqref{E:forw2eq1} and \eqref{E:forw2eq2} provide
\begin{equation}
\limsup_{t\rightarrow \infty} \mathbb{E}_{0 \in \mathcal{L}_0}\left[ \Leb(\Lambda_f(0)) ~ \left(e^{-t}\CFD_0^t(0)\right)^{2p} \right]<\infty. 
\end{equation}We can hence obtain the result (\ref{E:conclcontrolbackwardii}) announced in part (ii) is obtained by Cauchy-Schwarz inequality under
the assumption $\mathbb{E}_{0 \in \mathcal{L}_0}\left[ \Leb(\Lambda_f(0))^{-1} \right]<\infty$:
\begin{multline}
\label{E:forw2CS}
\limsup_{t\rightarrow \infty}\mathbb{E}_{0 \in \mathcal{L}_0}\left[ \left( \frac{\CFD_0^t(0)}{e^t} \right)^p \right] \\
\le \limsup_{t\rightarrow \infty} \mathbb{E}_{0 \in \mathcal{L}_0}\left[ \Leb(\Lambda_f(0))\left(\frac{\CFD_0^t(0)}{e^t}\right)^{2p} \right]^{1/2} \mathbb{E}_{0 \in \mathcal{L}_0}\left[ \Leb(\Lambda_f(0))^{-1} \right]^{1/2}<\infty.
\end{multline}
}
To complete the proof, it remains to show that there exists a level $0$-association function $f$ such that $\mathbb{E}\left[ \|f(0)\|^{2p}\right]<\infty$ and $\mathbb{E}_{0 \in \mathcal{L}_0}\left[ \Leb(\Lambda_f(0))^{-1} \right]<\infty$. Let $\Upsilon=\mathbb{R}^d/\mathbb{Z}^d$ be the $d$-dimensional torus, and let $Y$ be a random variable independent of $N$ and uniformly distributed on $\mathbb{R}^d/\mathbb{Z}^d$.

Now, fix $\eta \in \mathcal{N}_S$ and $\xi \in \mathbb{R}^d/\mathbb{Z}^d$. Let us construct $f(x,\eta,\xi)$ for all $x \in \mathbb{R}^d$ as follows. We pave $\mathbb{R}^d$ by cubes of size $1$ such that (any representative for) $\xi$ is a node of the grid. More precisely, let $u=(u_1,...,u_d) \in \mathbb{R}^d$ be a representative for $\xi$, and define
\begin{eqnarray*}
\mathcal{K}(\xi)=\left\{\prod_{i=1}^d [u_i+a_i,u_i+a_i+1),~a=(a_1,...,a_d) \in \mathbb{Z}^d \right\}.
\end{eqnarray*}
Clearly, this definition does not depend of the choice of the representative $u$, so $\mathcal{K}(\xi)$ is well-defined.
We construct $f(\cdot,\eta,\xi)$ separately on each cube $K \in \mathcal{K}(\xi)$. Let $K \in \mathcal{K}(\xi)$ and let $b=(b_1,...,b_d) \in \mathbb{R}^d$ be the vertex of $K$ with the lowest coordinates (that is, such that $K=\prod_{i=1}^d[b_i,b_i+1)$). Let $n(K):=\#(\mathcal{L}_0(\eta) \cap K)$ be the number of $0$-level points in $K$. If $n(K)=0$, then, for all $x \in K$, we set $f(x,\eta,\xi)$ to be the point of $\mathcal{L}_0(\eta)$ the closest to $x$ (in case of equality pick, say, the smallest for the lexicographical order). Now suppose $n(K) \ge 1$. We divide $K$ into $n(K)$ equal slices: for $1 \le j \le n(K)$, we set
\begin{eqnarray*}
&&\mathcal{S}_j(K)=\prod_{i=1}^{d-1} \big[b_i,b_i+1) \times \left[b_d+\frac{j-1}{n(K)},b_d+\frac{j}{n(K)}\right).
\end{eqnarray*}
Let $x_1,...,x_{n(K)}$ be the $n(K)$ points of $\mathcal{L}_0(\eta) \cap K$ (in, say, the lexicographical order). For $1 \le j \le n(K)$, we send the slice $\mathcal{S}_j(K)$ on $x_j$, i.e. for all $x \in \mathcal{S}_j(K)$, we set $f(x,\eta,\xi)=x_j$. 

We now show that $f$ is a level $0$-association function. First, for all $x \in \mathbb{R}^d$, $\eta \in \mathcal{N}_S$, $\xi \in \Upsilon$, $f(x) \in \mathcal{L}_0$ by construction. We now check the translation invariance. Let $x,x' \in \mathbb{R}^d$ and $\eta \in \mathcal{N}_S$. By construction, for all $u \in \mathbb{R}^d$, $f(x+x',T_{x'}\eta,\overline{u+x'})=f(x,\eta,\overline{u})+\overline{x'}$, where $\overline{x}$ denotes the class of $x$ in $\mathbb{R}^d/\mathbb{Z}^d$. Then, since $Y \overset{(d)}{=} Y+\overline{u}$, $f(x+x',T_{x'}\eta,Y)=f(x,\eta,Y)+x'$, so $f$ is an association function.

We move on to show that $\mathbb{E}\left[ \|f(0)\|^{2p} \right]<\infty$. By construction $f(0)$ is either the point of $\mathcal{L}_0$ the closest to $0$, or a point of the cube $K_0$ containing the origin. Thus, almost surely, $\|f(0)\| \le \min_{x \in \mathcal{L}_0}\|x\| \vee \sqrt{d}$.
By Lemma \ref{Lem:closest}, $\mathbb{E}\left[ (\min_{x \in \mathcal{L}_0}\|x\|)^{2p} \right]<\infty$ therefore $\mathbb{E}\left[ \|f(0)\|^{2p} \right]<\infty$. 

We finally show that $\mathbb{E}_{0 \in \mathcal{L}_0}\left[ \Leb(\Lambda_f(0))^{-1} \right]<\infty$. For $x \in \mathcal{L}_t$, we note $K(x)$ the (random) cube of $\mathcal{K}(Y)$ containing $x$. By construction, almost surely, $\Lambda_f(0)$ contains at least a slice of volume $1/n(K(0))$ (plus eventually additional points contained in empty cubes), therefore
\begin{eqnarray}
\label{E:forw2lebtok}
\Leb(\Lambda_f(0)) \ge \frac{1}{n(K(0))} \implies \mathbb{E}_{0 \in \mathcal{L}_0}\left[ \Leb(\Lambda(0))^{-1} \right] \le \mathbb{E}_{0 \in \mathcal{L}_0}\left[ n(K(0)) \right].
\end{eqnarray}
Let us introduce the weight function $w(x,\eta,\xi):=\mathbf{1}_{x \in \mathcal{L}_0}n(K(0))(\eta,\xi)$. Applying Lemma \ref{Lem:technicallemmageneral} to $w$ with $A=[-1/2,1/2]^d$ leads to
\begin{eqnarray}
\label{E:forw2equahat}
\mathbb{E}\left[ \sum_{x \in \mathcal{L}_0 \cap [-1/2,1/2]^d} n(K(x)) \right]=\alpha_0\mathbb{E}_{0 \in \mathcal{L}_0}\left[ n(K(0)) \right].
\end{eqnarray}
Since a.s. $K(x) \subset [-3/2,3/2]^d$ for all $x \in [-1/2,1/2]^d$,
\begin{eqnarray}
\label{E:forw2final}
&\mathbb{E}\left[ \sum_{x \in \mathcal{L}_0 \cap [-1/2,1/2]^d} n(K(x)) \right] &\le \mathbb{E}\left[\#(\mathcal{L}_0 \cap [-1/2,1/2]^d)~\#(\mathcal{L}_0 \cap [-3/2,3/2]^d) \right] \nonumber\\
&&\le \mathbb{E}\left[ \#(\mathcal{L}_0 \cap [-3/2,3/2]^d)^2 \right] \nonumber\\
&&<\infty \text{ by Proposition \ref{Prop:nop}}.
\end{eqnarray}
Combining (\ref{E:forw2lebtok}), (\ref{E:forw2equahat}) and (\ref{E:forw2final}), we obtain that $\mathbb{E}_{0 \in \mathcal{L}_0}[\Leb(\Lambda_f(0))^{-1}]<\infty$. This completes the proof.

\subsection{Proof of Theorem \ref{Thm:controlbackward}}
\label{S:proofcontrolbackward}

In order to prove Theorem \ref{Thm:controlbackward}, we will use that, for all $a,t \ge 0$ and $p \ge 1$:
\begin{eqnarray}
\label{E:propagback}
\mathbb{E}_{0 \in \mathcal{L}_0}\left[ \MBD_{-t-a}^0(0)^p \right] \le 2^{p-1}e^{a(d-p)}\left(\mathbb{E}_{0 \in \mathcal{L}_0}\left[ \MBD_{-t}^0(0)^p \right]+\mathbb{E}_{0 \in \mathcal{L}_0}\left[ \CFD_0^a(0)^p \right] \right).
\end{eqnarray}

\paragraph{Step 1:} we prove Theorem \ref{Thm:controlbackward} assuming (\ref{E:propagback}).
We can suppose $p>d$ since the result for $p>d$ immediately implies the result for all $p \ge 1$. Then $2^{p-1}e^{a(d-p)} \to 0$ when $a \to \infty$, so we can choose $a_0>0$ such that $2^{p-1}e^{a_0(d-p)}<1$. For $n \in \mathbb{N}$, we define:
\begin{eqnarray*}
S_n:=\sup_{\substack{t \in [na_0,\\(n+1)a_0]}}\mathbb{E}_{0 \in \mathcal{L}_0}\left[ \MBD_{-t}^0(0)^p \right].
\end{eqnarray*}
Then we \tvc{prove that} $\limsup_{n \to \infty} S_n<\infty$.
Let $s \in [0,a_0]$. Applying (\ref{E:propagback}) with $t=0$ and $a=s$ leads to
\begin{eqnarray*}
\mathbb{E}_{\tvc{0 \in \mathcal{L}_0}}\left[ \MBD_{-s}^0(0)^p \right] \le 2^{p-1}e^{s(d-p)}\mathbb{E}_{0 \in \mathcal{L}_0}\left[ \CFD_0^s(0)^p \right]
\end{eqnarray*}
since $\mathbb{E}_{0 \in \mathcal{L}_0}\left[ \MBD_0^0(0)^p \right]=0$. Using the fact that $\mathbb{P}_{0 \in \mathcal{L}_0}$-a.s., the function $s \mapsto \CFD_0^s(0)$ is non-decreasing and that $e^{s(d-p)} \le 1$, we obtain
\begin{eqnarray}
\label{E:backfs}
\forall s \in [0,a_0],~\mathbb{E}_{0 \in \mathcal{L}_0}\left[ \MBD_{-s}^0(0)^p \right] \le 2^{p-1}\mathbb{E}_{0 \in \mathcal{L}_0}\left[ \CFD_0^{a_0}(0)^p \right]<\infty
\end{eqnarray}
since $\mathbb{E}_{0 \in \mathcal{L}_0}\left[ \CFD_0^{a_0}(0)^p \right]<\infty$ by (ii) in Theorem \ref{Thm:controlforward}. Therefore $S_0<\infty$.

Let $n \in \mathbb{N}$. Then
\begin{eqnarray}
\label{E:ntonplusun}
&&S_{n+1}=\sup_{\substack{t \in [na_0,\\(n+1)a_0]}} \mathbb{E}_{0 \in \mathcal{L}_0}\left[ \MBD_{-t-a_0}^0(0)^p \right] \nonumber\\
&&\overset{(\ref{E:propagback})}{\le} \sup_{\substack{t \in [na_0,\\(n+1)a_0]}}2^{p-1}e^{a_0(d-p)}\left(\mathbb{E}_{0 \in \mathcal{L}_0}\left[ \MBD_{-t}^0(0)^p \right]+\mathbb{E}_{0 \in \mathcal{L}_0}\left[ \CFD_0^{a_0}(0)^p \right] \right) \nonumber\\
&&=2^{p-1}e^{a_0(d-p)}\left(\sup_{\substack{t \in [na_0,\\(n+1)a_0]}}\mathbb{E}_{0 \in \mathcal{L}_0}\left[ \MBD_{-t}^0(0)^p \right]+\mathbb{E}_{0 \in \mathcal{L}_0}\left[ \CFD_0^{a_0}(0)^p \right] \right) \nonumber\\
&&=\varphi(S_n)
\end{eqnarray}
where $\varphi:\mathbb{R}_+ \rightarrow \mathbb{R}_+$ is defined as
\begin{eqnarray*}
\varphi(t)=2^{p-1}e^{a_0(d-p)}\big(t+\mathbb{E}_{0 \in \mathcal{L}_0}\left[ \CFD_0^{a_0}(0)^p \right] \big).
\end{eqnarray*}
The function $\varphi$ is well-defined since $\mathbb{E}_{0 \in \mathcal{L}_0}\left[ \CFD_0^{a_0}(0)^p \right]<\infty$. By iterating (\ref{E:ntonplusun}), since $\varphi$ is non-decreasing, we get $S_n \le \varphi^n(S_0)$, where $\varphi^n=\varphi \circ ... \circ \varphi$ $n$ times. Since $2e^{a_0(d-p)}<1$, $\varphi$ is a contraction linear mapping, it admits a finite \tvc{fixed} point $t_0$ and $\varphi^n(S_0) \to t_0$. Therefore
\begin{eqnarray*}
\limsup_{t \to \infty} \mathbb{E}_{0 \in \mathcal{L}_0}\left[ \MBD_{-t}^0(0)^p \right]= \limsup_{n \to \infty} S_n \le \limsup_{n \to \infty} \varphi^n (S_0)=t_0<\infty.
\end{eqnarray*}
This proves Theorem \ref{Thm:controlbackward}. 

\bigbreak

\paragraph{Step 2:} we show (\ref{E:propagback}). \\
Let $a,t \ge 0$ and $p \ge 1$. For $x \in \mathcal{L}_0$, we have
\begin{eqnarray*}
&\MBD_{-t-a}^0(x)&=\max_{x'' \in \mathcal{D}_{-t-a}^0(x)} \CFD_{-t-a}^0(x'') \nonumber\\
&&=\max_{x'' \in \mathcal{D}_{-t-a}^0(x)} \left(\CFD_{-t-a}^{-a}(x'')+\CFD_{-a}^0(\mathcal{A}_{-t-a}^{-a}(x'')\right) \nonumber\\
&&=\max_{x' \in \mathcal{D}_{-a}^0(x)}\max_{x'' \in \mathcal{D}_{-t-a}^{-a}(x')} \left( \CFD_{-t-a}^{-a}(x'')+\CFD_{-a}^0(x') \right) \nonumber\\
&&=\max_{x' \in \mathcal{D}_{-a}^0(x)} \left(\MBD_{-t-a}^{-a}(x')+\CFD_{-a}^0(x')\right).
\end{eqnarray*}
Therefore
\begin{eqnarray}
\label{E:backinequa0}
&&\mathbb{E}_{0 \in \mathcal{L}_0}\left[ \MBD_{-t-a}^0(0)^p \right] \\
&&=\mathbb{E}_{0 \in \mathcal{L}_0}\left[\max_{x \in \mathcal{D}_{-a}^0(0)} \left(\MBD_{-t-a}^{-a}(x)+\CFD_{-a}^0(x) \right)^p \right] \nonumber\\
&&\le \mathbb{E}_{0 \in \mathcal{L}_0}\left[\sum_{x \in \mathcal{D}_{-a}^0(0)} \left(\MBD_{-t-a}^{-a}(x)+\CFD_{-a}^0(x) \right)^p \right] \nonumber\\
&&\le \mathbb{E}_{0 \in \mathcal{L}_0}\left[\sum_{x \in \mathcal{D}_{-a}^0(0)} 2^{p-1}\left( \MBD_{-t-a}^{-a}(x)^p+\CFD_{-a}^0(x)^p\right) \right] \nonumber\\
&&=2^{p-1} \left(\mathbb{E}_{0 \in \mathcal{L}_0}\left[\sum_{x \in \mathcal{D}_{-a}^0(0)} \MBD_{-t-a}^{-a}(x)^p \right]+\mathbb{E}_{0 \in \mathcal{L}_0}\left[\sum_{x \in \mathcal{D}_{-a}^0(0)}\CFD_{-a}^0(x) ^p \right] \right)\nonumber,
\end{eqnarray}
where \tvc{convexity} was used in the second inequality. Now, we use the Mass Transport Principle to rewrite the quantities $\mathbb{E}_{0 \in \mathcal{L}_0}\left[\sum_{x \in \mathcal{D}_{-a}^0(0)} \MBD_{-t-a}^{-a}(x)^p \right]$ and $\mathbb{E}_{0 \in \mathcal{L}_0}\left[\sum_{x \in \mathcal{D}_{-a}^0(0)}\CFD_{-a}^0(x) ^p \right]$. Let us introduce the two weight functions $w_1$ and $w_2$ defined as
\begin{eqnarray*}
w_1(x,\eta)=\mathbf{1}_{x \in \mathcal{L}_{-a}(\eta)}\MBD_{-t-a}^{-a}(x)^p(\eta), \quad w_2(x,\eta)=\mathbf{1}_{x \in \mathcal{L}_{-a}(\eta)}\CFD_{-a}^0(x)^p(\eta).
\end{eqnarray*}
Applying Proposition \ref{Lem:MTlemma1} to $w_1$ with $t_1=-a,t_2=0$ leads to:
\begin{eqnarray}
\label{E:cont1w1}
\mathbb{E}_{0 \in \mathcal{L}_{0}}\left[\sum_{x \in \mathcal{D}_{-a}^0(0)} \MBD_{-t-a}^{-a}(x)^p \right]=e^{ad}\mathbb{E}_{0 \in \mathcal{L}_{-a}}\left[ \MBD_{-t-a}^{-a}(0)^p \right].
\end{eqnarray}
For all $\eta \in \mathcal{N}_S$ such that $0 \in \mathcal{L}_{-a}[\eta]$, we have
\begin{eqnarray*}
\MBD_{-t-a}^{-a}(0)(\eta)=e^{-a}\MBD_{-t}^0(0)(D_{e^a}\eta),
\end{eqnarray*}
so by scale invariance (Lemma \ref{Lem:scaleinv} applied with $t=-a,t'=0$),
\begin{eqnarray}
\label{E:cont2w1}
\mathbb{E}_{0 \in \mathcal{L}_{-a}}\left[ \MBD_{-t-a}^{-a}(0)^p \right]=e^{-ap}\mathbb{E}_{0 \in \mathcal{L}_0}\left[ \MBD_{-t}^0(0)^p \right].
\end{eqnarray}
Combining (\ref{E:cont1w1}) and (\ref{E:cont2w1}), we obtain:
\begin{eqnarray}
\label{E:contfw1}
\mathbb{E}_{0 \in \mathcal{L}_{0}}\left[\sum_{x \in \mathcal{D}_{-a}^0(0)} \MBD_{-t-a}^{-a}(x)^p \right]=e^{a(d-p)}\mathbb{E}_{0 \in \mathcal{L}_0}\left[ \MBD_{-t}^0(0)^p \right].
\end{eqnarray}
The same calculations with $w_2$ lead to
\begin{eqnarray}
\label{E:contfw2}
\mathbb{E}_{0 \in \mathcal{L}_{0}}\left[\sum_{x \in \mathcal{D}_{-a}^0(0)} \CFD_{-a}^0(x)^p \right]=e^{a(d-p)}\mathbb{E}_{0 \in \mathcal{L}_0}\left[ \CFD_0^a(0)^p \right].
\end{eqnarray}
Finally, we rewrite (\ref{E:backinequa0}) using (\ref{E:contfw1}) and (\ref{E:contfw2}), we obtain (\ref{E:propagback}). It completes the proof.

\section{Proof of coalescence}
\label{S:coalescencealld}

In this section we prove Theorem \ref{Thm:coalescence}.

\subsection{A short proof in dimension $1+1$}

We first prove Theorem \ref{Thm:coalescence} in the bi-dimensional case (i.e. $d=1$). It is based on planarity, so it only works for $d=1$. A general (but more complex) proof of coalescence in all dimensions is given after. A useful consequence of planarity we shall need is the following:
\begin{lemma}
\label{Cor:noncrossing}
Suppose $d=1$. Let $t \ge 0$ and $x \in \mathcal{L}_t$. If $x_1,x_2,x_3 \in \mathcal{L}_0$ are such that $x_1<x_2<x_3$, and if $x_1,x_3 \in \mathcal{D}_0^t(x)$, then $x_2 \in \mathcal{D}_0^t(x)$.
\end{lemma}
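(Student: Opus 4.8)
The plan is to track, level by level, the abscissas of the three trajectories issued from $(x_1,e^0)$, $(x_2,e^0)$ and $(x_3,e^0)$, and to show that the middle one is forced to coalesce with the two outer ones at level $t$ because it is trapped between them. I would work on the almost sure event on which the edges of the DSF do not cross (Proposition \ref{Prop:noncrossing}) and on which every trajectory crosses every level above its starting point (Proposition \ref{Prop:welldefined}). For $s\in[0,t]$ set $a(s)=\mathcal{A}_0^s(x_1)$, $b(s)=\mathcal{A}_0^s(x_2)$ and $c(s)=\mathcal{A}_0^s(x_3)$; these are well defined and finite by Proposition \ref{Prop:welldefined}. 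First I would record that $s\mapsto a(s),b(s),c(s)$ are continuous: each trajectory is a concatenation of Euclidean segments along which the $y$-coordinate increases strictly, hence it can be parametrised as $\{(\phi(y),y)\}$ with $\phi$ continuous, and $\mathcal{A}_0^s(\cdot)=\phi(e^s)$.

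The key step is the monotonicity claim: for every $s\in[0,t]$ one has $a(s)\le b(s)\le c(s)$. Since the three functions are continuous and satisfy $x_1<x_2<x_3$ at $s=0$, it suffices to treat one pair, say to show $a(s)\le b(s)$ for all $s$. If this failed, the intermediate value theorem would produce a level $s^\ast\in(0,t]$ with $a(s^\ast)=b(s^\ast)$, so that the common point $Q=(a(s^\ast),e^{s^\ast})$ lies on both trajectories. By Proposition \ref{Prop:noncrossing} — together with its consequence recalled in Definition \ref{Def:updownarrow}, namely that through any point of the DSF passes a unique Euclidean edge $[z_0,A(z_0))_{eucl}$ — the point $Q$ is either a vertex of $N$ or an interior point of a single edge, and in both cases the two trajectories share the unique upward continuation from $Q$; hence $a(s)=b(s)$ for all $s\ge s^\ast$, contradicting that $a$ is strictly above $b$ at some later level. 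The same reasoning applies to the pair $(b,c)$, so the ordering is preserved on $[0,t]$. Note that this is where planarity of $H=\mathbb{R}\times\mathbb{R}_+^*$ and the linear order of $\mathcal{L}_0\subset\mathbb{R}$ (i.e. $d=1$) are used.

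Finally, since $x_1,x_3\in\mathcal{D}_0^t(x)$ we have $a(t)=c(t)=x$, so the monotonicity claim at level $t$ yields $x=a(t)\le b(t)\le c(t)=x$, whence $b(t)=\mathcal{A}_0^t(x_2)=x$, that is $x_2\in\mathcal{D}_0^t(x)$. The main obstacle, and the only point requiring care, is the monotonicity claim: one must convert ``edges never cross'' into ``trajectories never cross, and once two trajectories meet they remain merged'', which hinges on knowing whether the meeting point is a vertex of $N$ or lies in the relative interior of an edge — precisely the uniqueness statement underlying Definition \ref{Def:updownarrow}. Everything else is continuity and a squeeze argument.
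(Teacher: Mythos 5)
Your proof is correct and takes essentially the same route as the paper, which also argues via non-crossing (Proposition \ref{Prop:noncrossing}) and planarity that the $x_2$-trajectory is squeezed between the other two and therefore must hit $(x,e^t)$. You merely fill in details the paper leaves implicit — the continuity of $s\mapsto\mathcal{A}_0^s(\cdot)$, the intermediate-value-theorem reduction to a first meeting level, and the ``once-merged-always-merged'' step via the uniqueness of the edge through any point of $\DSF$ — which makes the squeeze fully explicit.
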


\begin{proof}[Proof of Lemma \ref{Cor:noncrossing}]
Let $x_1,x_2,x_3 \in \mathcal{L}_0$ such that  $x_1<x_2<x_3$, and $x_1,x_3 \in \mathcal{D}_0^t(x)$. Since the DSF is non-crossing \cite[Section 3.1]{version_longue}, the trajectory from $(x_2,e^0)$ cannot cross the trajectories from $(x_1,e^0)$ and $(x_3,e^0)$. The point $(x,e^h)$ belongs to both trajectories from $(x_1,e^0)$ and $(x_3,e^0)$, so it also belongs to the trajectory from $(x_2,e^0)$, so $x_2 \in \mathcal{D}_0^h(x)$.
\end{proof}

For $t \ge 0$, we define $B_t$ as the set of points of level $t$ that have descendants at level $0$; for $x \in B_t$, we define $M_t(x)$ as the left-most descendant of $x$ at level $0$:
\begin{eqnarray*}
B_t:=\{x \in \mathcal{L}_t,~\mathcal{D}_0^t(x) \neq \emptyset\}, \quad M_t(x)=\inf \mathcal{D}_0^t(x) \in \{-\infty\} \cup \mathbb{R}.
\end{eqnarray*}
We now prove:
\begin{lemma}
\label{Lem:existsmin}
\tvc{Almost surely, for all $x \in B_t$, there exists $y \in \mathcal{D}_0^t (x)$ such that $M t (x) = y$, that is, each point $x \in B_t$ admits a left-most descendant at level $0$.}
\end{lemma}

\begin{proof}[Proof of lemma \ref{Lem:existsmin}]
Since $\mathcal{D}_0^t(x)$ is locally finite and non empty for all $x \in B_t$, it suffices to show that $\inf \mathcal{D}_0^t(x)>-\infty$  for all $x \in B_t$ a.s. Consider the event
\begin{eqnarray*}
A:=\{\exists x \in B_t, \inf \mathcal{D}_0^t(x)=-\infty\}.
\end{eqnarray*}

On $A$, we show that there is a unique $x \in B_t$ such that $\inf \mathcal{D}_0^t(x)=-\infty$. Indeed, suppose that $\inf \mathcal{D}_0^t(x')=-\infty$ for some $x' \in B_t$. Let $\hat{x} \in \mathcal{D}_0^t(x)$. Pick $\hat{x}' \in \mathcal{D}_0^t(x')$ such that $\hat{x}'<\hat{x}$ (such a $\hat{x}'$ exists since $\inf \mathcal{D}_0^t(x)=-\infty$). Then pick $\hat{x}'' \in \mathcal{D}_0^t(x)$ such that $\hat{x}''<\hat{x}'$. By Lemma \ref{Cor:noncrossing}, $\hat{x}' \in \mathcal{D}_0^t(x)$ which implies $x=x'$. 

Suppose that $\mathbb{P}[A]>0$. Then, conditionally to $A$, we can define $X$ as the (random) unique $x \in B_t$ such that $\mathcal{D}_0^t(x)=-\infty$. Since the event $A$ is invariant by translations, the distribution of $N$ conditioned by $A$ is also invariant by translations. Therefore the law of $X$ must be invariant by translations, but there's no probability distribution on $\mathbb{R}$ invariant by translations. This is a contradiction, therefore $\mathbb{P}[A]=0$.
\end{proof}

We call \emph{level $t$-separating points} the points $M_t(x)$ for $x \in B_t$ . We denote by $S_t:=\{M_t(x),~x \in B_t\}$ the set of level $t$-separating points. Let us prove:
\begin{lemma}
\label{Lem:coalesc}
If $S_t \cap [-a,a]=\emptyset$, then $\tau_{[-a,a]} \le t$.
\end{lemma}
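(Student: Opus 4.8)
The plan is to prove directly that, under the hypothesis $S_t\cap[-a,a]=\emptyset$, the ancestor map $x\mapsto\mathcal{A}_0^t(x)$ is constant on $\mathcal{L}_0\cap[-a,a]$; since this exactly says that $t$ belongs to the set defining $\tau_{[-a,a]}$ in Definition~\ref{Def:coalesceheight}, it yields $\tau_{[-a,a]}\le t$. All the reasoning takes place on the almost sure event where Propositions~\ref{Prop:noncrossing} and~\ref{Prop:welldefined}, Lemma~\ref{Cor:noncrossing}, Lemma~\ref{Lem:existsmin}, and local finiteness of $\mathcal{L}_0$ all hold. I would first reduce to a two-point statement: it suffices to show that whenever $x<x'$ are in $\mathcal{L}_0\cap[-a,a]$ one has $\mathcal{A}_0^t(x)=\mathcal{A}_0^t(x')$ (if $\mathcal{L}_0\cap[-a,a]$ has at most one point there is nothing to prove).

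So suppose, for contradiction, that $x<x'$ are in $\mathcal{L}_0\cap[-a,a]$ with $\mathcal{A}_0^t(x)\neq\mathcal{A}_0^t(x')$. Since $\mathcal{L}_0\cap[x,x']$ is finite and contains $x'$ (whose ancestor differs from that of $x$), the set $\{w\in\mathcal{L}_0\cap(x,x']:\ \mathcal{A}_0^t(w)\neq\mathcal{A}_0^t(x)\}$ is nonempty and finite, so I can define
\[
w_+:=\min\{w\in\mathcal{L}_0\cap(x,x']:\ \mathcal{A}_0^t(w)\neq\mathcal{A}_0^t(x)\},\qquad y_+:=\mathcal{A}_0^t(w_+).
\]
Then $w_+\in(x,x']\subseteq[-a,a]$, and by Proposition~\ref{Prop:welldefined} the point $y_+$ is a genuine element of $\mathcal{L}_t$ with $w_+\in\mathcal{D}_0^t(y_+)\neq\emptyset$, so $y_+\in B_t$ and $M_t(y_+)\in S_t$ by Lemma~\ref{Lem:existsmin}. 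The heart of the proof is then to show $w_+=M_t(y_+)$, which places a level $t$-separating point inside $[-a,a]$ and contradicts the hypothesis.

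To establish $w_+=M_t(y_+)$, suppose there were $v\in\mathcal{D}_0^t(y_+)$ with $v<w_+$. If $v>x$, then $v\in\mathcal{L}_0\cap(x,x']$ and $\mathcal{A}_0^t(v)=y_+\neq\mathcal{A}_0^t(x)$, so $v$ competes in the minimum defining $w_+$, contradicting $v<w_+$. If $v\le x$, then $v$ and $w_+$ both lie in $\mathcal{D}_0^t(y_+)$ and bracket $x$ (i.e.\ $v\le x<w_+$), so the planar no-crossing/interval property of descendant sets in dimension $1$, namely Lemma~\ref{Cor:noncrossing}, forces $x\in\mathcal{D}_0^t(y_+)$, that is $\mathcal{A}_0^t(x)=y_+=\mathcal{A}_0^t(w_+)$, contradicting the choice of $w_+$. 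Hence no such $v$ exists, $w_+=\min\mathcal{D}_0^t(y_+)=M_t(y_+)\in S_t\cap[-a,a]$, the desired contradiction. This gives $\mathcal{A}_0^t(x)=\mathcal{A}_0^t(x')$ for all $x,x'\in\mathcal{L}_0\cap[-a,a]$, hence $\tau_{[-a,a]}\le t$. The only mildly delicate point is the bookkeeping in this last paragraph — carefully separating the case where a putative smaller descendant $v$ lands in $(x,w_+)$ (contradicting minimality) from the case $v\le x$ (where Lemma~\ref{Cor:noncrossing} applies); everything else (finiteness of $\mathcal{L}_0\cap[x,x']$, $\mathcal{A}_0^t$ being a bona fide point via Proposition~\ref{Prop:welldefined}, and the trivial reduction to two points) is routine.
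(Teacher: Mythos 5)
Your proof is correct and follows essentially the same route as the paper: both arguments derive a contradiction by locating a level $t$-separating point strictly above $x$ and at most $x'$, hence inside $[-a,a]$, using Lemma~\ref{Cor:noncrossing} (the planarity/sandwich property) together with Lemma~\ref{Lem:existsmin}. The only difference is bookkeeping --- you first isolate the ``first defector'' $w_+$ and then verify $w_+ = M_t(\mathcal{A}_0^t(w_+))$, whereas the paper works directly with $\hat{x} = M_t(\mathcal{A}_0^t(x'))$ and shows $x < \hat{x} \le x'$ in one step.
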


\begin{proof}
Suppose that $S_t \cap [-a,a]=\emptyset$. Let $x,x' \in [-a,a]$ with $x<x'$, and suppose that $\mathcal{A}_0^t(x) \neq \mathcal{A}_0^t(x')$. Consider $\hat{x}=M_t(\mathcal{A}_0^t(x'))$. Suppose that $\hat{x} \le x$. Thus $\hat{x} \le x<x'$, and by construction $\hat{x}$ and $x'$ have the same ancestor at level $t$. Then by Lemma \ref{Cor:noncrossing}, $x \in \mathcal{D}_0^t(\mathcal{A}_0^t(x'))$, which contradicts $x \neq x'$. Therefore $\hat{x}>x$. Moreover $\hat{x} \le x'$ by construction, so $x<\hat{x}<x'$, therefore $\hat{x} \in [-a,a]$. But $\hat{x} \in S_t$, this contradicts $S_t \cap [-a,a]=\emptyset$. Thus for all $x,x' \in [-a,a]$, $\mathcal{A}_0^t(x)=\mathcal{A}_0^t(x')$, which implies $\tau_{[-a,a]} \le t$.
\end{proof}

We show that the level $t$- separating points are rare when $t$ is large. We apply the Mass Transport Principle (Theorem \ref{Thm:masstranport}) on $\mathbb{R}$ with the following mass transport: from each point with descendants at level $0$ we transport a unit mass to its left-most descendant.

The following measure $\pi$ expresses this mass transport:
\begin{eqnarray*}
\pi(A \times B)=\mathbb{E}\left[ \sum_{x \in B_t \cap A} \mathbf{1}_{M_t(x) \in B} \right].
\end{eqnarray*}

The measure $\pi$ is diagonally invariant because horizontal translations preserve the DSF's distribution. Then, by the Mass Transport Principle, \tvc{$\pi(A \times \mathbb{R})=\pi(\mathbb{R} \times A)$} for all $A \subset \mathbb{R}$ with non-empty interior. On the one hand,
\begin{eqnarray}
\label{E:mt1}
\pi(A \times \mathbb{R})=\mathbb{E}\left[ \#(B_t \cap A) \right] \le \mathbb{E}\left[\#(\mathcal{L}_h \cap A)\right]=\alpha_0e^{-t}\Leb(A)
\end{eqnarray}
and, on the other hand,
\begin{eqnarray}
\label{E:mt2}
\pi(\mathbb{R} \times A)=\mathbb{E}\left[ \sum_{x \in B_t} \mathbf{1}_{M_t(x) \in A} \right]=\mathbb{E}\left[ \sum_{\substack{x \in B_t, \\ x' \in A}} \mathbf{1}_{x'=M_t(x)}\right]=\mathbb{E}\left[ \#(S_t \cap A) \right].
\end{eqnarray}

Therefore, combining (\ref{E:mt1}) and (\ref{E:mt2}) with $A=[-a,a]$, we obtain $\mathbb{E}[\#(S_t \cap [-a,a])] \le 2a\alpha_0e^{-t}$. Hence
\begin{eqnarray*}
\mathbb{P}[S_t \cap [-a,a] \neq \emptyset] \le \mathbb{E}[\#(S_t \cap [-a,a])] \le 2a \alpha_0e^{-t}.
\end{eqnarray*}

By Lemma \ref{Lem:coalesc}, it implies that $\mathbb{P}[\tau_{[-a,a]}>t] \le 2a \alpha_0 e^{-t}$,
which proves the second statement of Theorem \ref{Thm:coalescence}. The fact that the DSF is almost surely a tree immediately follows. Indeed,
\begin{eqnarray*}
\{\text{The DSF is a tree} \}=\{\forall a \in \mathbb{N},~\exists t \in \mathbb{N},~\tau_{[-a,a]} \le t\}=\bigcap_{a \in \mathbb{N}} \downarrow \bigcup_{t \in \mathbb{N}} \uparrow \{\tau_{[-a,a]} \le t\}.
\end{eqnarray*}
Therefore
\begin{eqnarray*}
\mathbb{P}[\text{The DSF is a tree}]=\lim_{a \in \mathbb{N}} \downarrow \lim_{t \in \mathbb{N}} \uparrow \mathbb{P}[\tau_{[-a,a]} \le t].
\end{eqnarray*}
Since for all $a>0$, $\mathbb{P}[\tau_{[-a,a]} \le t] \ge 1-2a\alpha_0e^{-t} \underset{t \rightarrow \infty}{\longrightarrow} 1$, we obtain
\begin{eqnarray*}\mathbb{P}[\text{The DSF is a tree}]=\lim_{a \in \mathbb{N}} \downarrow 1=1,\end{eqnarray*}
which proves Theorem \ref{Thm:coalescence} for $d=1$.

\subsection{General case: ideas of the proof}

We move on to show the coalescence for all dimensions $d$. Let us consider two trajectories starting from level $0$. The choice of those trajectories will be discussed later. We want to prove that those two trajectories coalesce.

The intuition behind the coalescence can be understood as follows. We can deduce from Theorem \ref{Thm:controlforward} that the two trajectories stay almost in a cone. That is, for $A$ large enough and for all height $h$ large enough, their projection on $\mathbb{R}^d$ at height $h$ are contained in $\mathbb{B}_{\mathbb{R}^d}(0,Ae^h)$ with high probability. That is, they stay close to each other as they go up. Then, at each height, they have a positive probability to coalesce, so they must coalesce.

This is true because the metric of $H$ becomes larger as the ordinate increases, so this behavior is specific to the Hyperbolic geometry. In Euclidean space, the two trajectories move away from each other as they go up, so the same argument cannot be used. Indeed, we expect that the DSF in $\mathbb{R}^d$ with $d \ge 4$ does not coalesce.

\bigbreak

The idea of the proof is the following. We suppose by contradiction that the two trajectories do not coalesce with positive probability. We consider some height $h$ large enough such that, with high probability, the process $N$ below height $h$ almost determines if the two trajectories coalesce or not. \tvc{Then, on the event of non-coalescence and apart from an event of small probability, the probability of coalescence conditionally to the process $N$ below height $h$ is close to $0$.}

On the other hand, with high probability, for some large fixed $A>0$, both trajectories are contained in the cylinder $B_{\mathbb{R}^d}(0,Ae^h) \times (0,e^h)$ (that is, the two trajectories are not too far from each other). Thus we show that we can modify the process above height $h$ in a way that forces the two trajectories to coalesce, and we show that the set of configurations above height $h$ that forces coalescence has probability bounded below independently of $h$. This contradicts the fact that the probability of coalescence knowing the process $N$ below height $h$ \tvc{is} close to $0$ with macroscopic probability.

Some technical difficulties are due to the geometry of the model and the fact that a modification of the point process above height $h$ can affect trajectories below height $h$.

\subsection{Introduction and notations}

Let $d \ge 1$. The following notations are illustrated in Figure \ref{Fig:coalescence}. Let $p_1,p_2 \in \mathbb{Q}^d \times (\mathbb{Q} \cap (0,e^0))$ two points of $H$ below level $0$ with rational coordinates. We define $Z_1$ (resp. $Z_2$) as the (random) point of $N \cap (\mathbb{R}^d \times (0,e^0))$ the closest to $p_1$ (resp. $p_2$):
\begin{eqnarray*}
Z_i=\argmin_{\substack{(x,y) \in N, \\y<1}}d(p_i,Z_i).
\end{eqnarray*}
for $i=1,2$. We will prove that the trajectories from $Z_1$ and $Z_2$ coalesce almost surely, i.e. that a.s. there exists $n \ge 0$ such that $A^n(Z_1)=A^n(Z_2)$, where $A^n=A \circ ... \circ A$ $n$ times (recall that $A(z)$ denotes the parent of $z\in N$). If this is proved, then the result will be true almost surely for all $p_1,p_2 \in \mathbb{Q}^d \times (\mathbb{Q}\cap(0,e^0))$ simultaneously, which implies that the whole DSF coalesces a.s.

\bigbreak

For $t \ge 0$, define $k_i(t)$ as the unique non-negative integer such that $[A^{k_i(t)}Z_i,A^{k_i(t)+1}Z_i]_{eucl}$ crosses the level $t$. It is well defined a.s. because each trajectory starting below the level $t$ crosses the level $t$ a.s.

Let $A,h,M,\delta,\varepsilon>0$ be five parameters that will be chosen later. We define
\begin{eqnarray}
&K(M,h,\delta):&=\{z=(x,y) \in H,~y \ge e^{h-\delta} \text{ and } d(z,(0,e^h))<M\}\nonumber\\
&&=B_H((0,e^h),M) \cap (\mathbb{R}^d \times (e^{h-\delta},\infty)).\label{def:K(Mhdelta)}
\end{eqnarray}
Note that $K(M,h,\delta)=D_{e^h}K(M,0,\delta)$ so $K(M,h,\delta)$ and $K(M,0,\delta)$ are isometric. 

Let $\mathcal{F}_{in}(M,h,\delta)$ (resp. $\mathcal{F}_{out}(M,h,\delta)$) be the $\sigma$-algebra on $\mathcal{N}_S$ generated by the process $N$ inside $K(M,h,\delta)$ (resp. outside $K(M,h,\delta)$). 

Finally, define
\begin{eqnarray*}
\Slice(A,h,\delta):=B_{\mathbb{R}^d}(0,Ae^h) \times (e^{h-\delta},e^h).
\end{eqnarray*}
Notice that $\Slice(A,h,\delta)=D_{e^h}\Slice(A,0,\delta)$, so $\Slice(A,h,\delta)$ and $\Slice(A,0,\delta)$ are isometric.

We now introduce the following events. The event $\CO$ means that trajectories from $Z_1$ and $Z_2$ coalesce:
\begin{eqnarray*}
\CO:=\{\exists n \ge 0,~A^n(Z_1)=A^n(Z_2)\}.
\end{eqnarray*}
The event $\Cyl(A,h)$ means that below the level $h$, trajectories from $Z_1$ and $Z_2$ are entirely contained in the cylinder $B_{\mathbb{R}^d}(0,Ae^h) \times (0,e^h)$:
\begin{eqnarray*}
\Cyl(A,h):=\{\text{For } i=1,2, \text{ for all } 0 \le n \le k_i(h), \ 
A^n(Z_i) \in B_{\mathbb{R}^d}(0,Ae^h) \times (0,e^h)\}.
\end{eqnarray*}
Let us also define
\begin{eqnarray*}
\EmptySlice(A,h,\delta):=\{N \cap \Slice(A,h,\delta)=\emptyset\},
\end{eqnarray*}
\begin{eqnarray*}
\Approx(M,\varepsilon,\delta,h):=\big\{\big|\mathbb{P}[\CO|\mathcal{F}_{out}(M,h,\delta)]-\mathbf{1}_{\CO} \big|<\varepsilon\big\}.
\end{eqnarray*}

\subsection{Heart of the proof of coalescence}
The proof of coalescence is based on the three following lemmas.

\begin{lemma}
\label{Lem:chooseA}
We have
\begin{eqnarray*}
\lim_{A \to \infty}\liminf_{h \to \infty}\mathbb{P}\left[ \Cyl(A,h) \right]=1.
\end{eqnarray*}
\end{lemma}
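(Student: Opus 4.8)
The plan is to deduce Lemma~\ref{Lem:chooseA} from the forward fluctuation control (Theorem~\ref{Thm:controlforward}) together with the geometric Lemma~\ref{Lem:closest}. The event $\Cyl(A,h)$ says that the two trajectories, projected onto $\mathbb{R}^d$, stay inside the ball of radius $Ae^h$ up to height $h$. The key observation is that for a trajectory starting from a point of $N$ below level $0$, its horizontal position at any level $t\in[0,h]$ is controlled by the Cumulative Forward Deviation $\CFD_0^t$ plus the horizontal position where it first crosses level $0$; and since $\CFD_0^t$ is non-decreasing in $t$, it suffices to control $\CFD_0^h$ and the crossing point at level $0$. So first I would fix notation: let $X_i$ be the abscissa of the point where the trajectory from $Z_i$ crosses level $0$, i.e.\ $X_i = \mathcal{A}^0_{\cdot}(\cdot)$ evaluated appropriately, so that $X_i \in \mathcal{L}_0$, and note that for all $0\le n\le k_i(h)$ the point $A^n(Z_i)$ has ordinate at most $e^h$ (once $n\ge$ the index at which level $0$ is crossed) and horizontal position within distance $\CFD_0^h(X_i)$ of $X_i$. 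The portion of the trajectory below level $0$ stays within a bounded (in fact, $O(1)$) horizontal box around $Z_i$ by Corollary~\ref{Cor:disthor} and the fact that $B_+(z)$ is empty, so that part contributes only a $O(1)$ term.

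Second, I would bound $\|X_i\|$. Since $Z_i$ is the point of $N$ below level $0$ closest to the fixed rational point $p_i$, the abscissa of $Z_i$ is within a random $O(1)$ distance of $\pi_x(p_i)$ (the relevant tail being controlled as in the proof of Lemma~\ref{Lem:closest}, or simply by noting $Z_i$ has finite second moment), and then $X_i$ differs from $\pi_x(Z_i)$ by at most the horizontal span of the single edge from $Z_i$ to $A(Z_i)$, which is again $O(1)$ in an integrable way. Hence $\mathbb{E}[\|X_i\|^p]<\infty$ for every $p\ge 1$, with a bound not depending on $h$. Now $X_i$ is a random point of $\mathcal{L}_0$ with all moments finite, so part (i) of Theorem~\ref{Thm:controlforward} applies (taking $Y$ trivial, or absorbing the randomness of $Z_i$): there is a constant $K=K(p,d,\lambda)$ with $\limsup_{t\to\infty}\mathbb{E}[(e^{-t}\CFD_0^t(X_i))^p]\le K$.

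Third, I would assemble the estimate. On the complement of $\Cyl(A,h)$, for some $i\in\{1,2\}$ and some $n$ the horizontal coordinate of $A^n(Z_i)$ exceeds $Ae^h$ (the ordinate constraint $A^n(Z_i)\in \mathbb{R}^d\times(0,e^h)$ being automatic for $n\le k_i(h)$ once the trajectory is above level $0$, and the below-level-$0$ part being handled by the $O(1)$ bound above), hence $\|X_i\| + \CFD_0^h(X_i) + O(1) \ge Ae^h$. Therefore, for $h$ large,
\begin{eqnarray*}
\mathbb{P}[\Cyl(A,h)^c] \le \sum_{i=1}^2 \mathbb{P}\big[\|X_i\| + \CFD_0^h(X_i) \ge Ae^h/2\big] \le \sum_{i=1}^2 \frac{\mathbb{E}\big[(\|X_i\|+\CFD_0^h(X_i))^p\big]}{(Ae^h/2)^p}
\end{eqnarray*}
by Markov's inequality, and by Minkowski the numerator is at most $\big(\mathbb{E}[\|X_i\|^p]^{1/p} + e^h K^{1/p} + o(e^h)\big)^p \le C e^{hp}$ for $h$ large, with $C$ independent of $h$ and $A$. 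Thus $\liminf_{h\to\infty}\mathbb{P}[\Cyl(A,h)]\ge 1 - 2C(2/A)^p$, and letting $A\to\infty$ (with, say, $p=2$ fixed) gives the claim.

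The main obstacle I expect is bookkeeping the contributions \emph{below} level $0$ and at the level-$0$ crossing cleanly: one must verify that the trajectory from $Z_i$, before it reaches level $0$, does not wander horizontally by more than an integrable $O(1)$ amount (so that $\|X_i\|$ genuinely has all moments uniformly in $h$), and that the single edge straddling level $0$ adds only a controllable horizontal span. Both follow from the emptiness of the upper semi-balls $B_+(z)$ and the distance estimates in Corollary~\ref{Cor:disthor} / Corollary~\ref{Cor:distvert}, but making this rigorous (rather than hand-waving ``$O(1)$'') is the part that needs care; the rest is a direct application of Theorem~\ref{Thm:controlforward}(i) and Markov's inequality.
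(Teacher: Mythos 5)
Your high-level plan is the same as the paper's: split the cylinder event into the part below level $0$ and the part above, control the part above by applying Theorem~\ref{Thm:controlforward}(i) to the level-$0$ crossing point $X_i$, and finish with a Markov inequality. The gap is precisely where you admit it yourself: your argument hinges on the unproved claim that $\mathbb{E}[\|X_i\|^p]<\infty$. This is exactly the step the paper goes out of its way \emph{not} to prove. Instead of showing $X_i$ has finite moments, the paper introduces the truncated variable
$X_i^B=X_i$ if $\|X_i\|\le B$ and $X_i^B=\argmin_{x\in\mathcal{L}_0}\|x\|$ otherwise,
applies Theorem~\ref{Thm:controlforward}(i) to $X_i^B$ (whose moments are finite by Lemma~\ref{Lem:closest}, with no further work), gets the Markov bound uniformly in $B$, and then sends $B\to\infty$ using only $\mathbb{P}[X_i^B\neq X_i]\to 0$. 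This sidesteps the moment question entirely.

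Your justification for $\mathbb{E}[\|X_i\|^p]<\infty$ does not hold up as written. The trajectory from $Z_i$ may take several steps before crossing level $0$, not a ``single edge from $Z_i$ to $A(Z_i)$'', and the cumulative horizontal displacement across those steps is not obviously integrable: each step's horizontal span is governed by the emptiness of an upper semi-ball, the number of steps is random, and the two are dependent. Making that ``$O(1)$ in an integrable way'' rigorous would require a block-control argument of the same flavour as Proposition~\ref{Prop:smallheight}, which is substantial machinery — and the whole point of the paper's truncation trick is to avoid having to run that argument again for $X_i$. Similarly, the paper controls the below-level-$0$ part by a separate qualitative event $C_i^-(A,h)$ whose probability tends to $1$ (because the finite trajectory below level $0$ is a.s.\ bounded, with no quantitative moment needed), rather than by folding a random ``$O(1)$'' term into a quantitative Markov bound as you do. So your proposal is not wrong in spirit, but as stated it asserts a moment bound it never proves, and that assertion is the crux; the paper's proof is specifically designed to circumvent it.
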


\begin{lemma}
\label{Lem:chooseh}
Let $M,\varepsilon,\delta>0$. We have
\begin{eqnarray*}
\lim_{h \to \infty} \mathbb{P}\left[ \Approx(M,\varepsilon,\delta,h) \right]=1.
\end{eqnarray*}
\end{lemma}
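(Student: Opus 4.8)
The plan is to show that, as $h\to\infty$, the event $\CO$ becomes essentially $\mathcal{F}_{out}(M,h,\delta)$-measurable. The driving idea is that the configuration inside $K(M,h,\delta)$ influences the coalescence event $\CO$ only through a bounded region near the single point $(0,e^h)$, and the "weight" of this region relative to the rest of the space tends to zero after rescaling. First I would fix $M,\varepsilon,\delta>0$ and observe that, by the tower property and Jensen's inequality applied to the conditional expectation, it suffices to bound the $L^1$ distance between $\mathbf 1_{\CO}$ and $\mathbb{P}[\CO\mid\mathcal{F}_{out}(M,h,\delta)]$; indeed
\begin{eqnarray*}
\mathbb{P}\big[\Approx(M,\varepsilon,\delta,h)^c\big]
\le \frac1\varepsilon\,\mathbb{E}\Big[\big|\mathbb{P}[\CO\mid\mathcal{F}_{out}(M,h,\delta)]-\mathbf 1_{\CO}\big|\Big].
\end{eqnarray*}
Writing $\mathbb{P}[\CO\mid\mathcal{F}_{out}]$ as an average over the (Poisson) law of $N\cap K(M,h,\delta)$ conditionally on $\mathcal{F}_{out}$, the inner expectation is controlled by the probability that re-sampling $N$ inside $K(M,h,\delta)$ (independently, since disjoint Poisson restrictions are independent) changes the value of $\mathbf 1_{\CO}$. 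So the target reduces to: the probability that $\CO$ is "sensitive" to the configuration in $K(M,h,\delta)$ goes to $0$ as $h\to\infty$.

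Next I would make precise why $\CO$ is a tail-type event with respect to the bottom of the space, so that its sensitivity to a region at height $h$ vanishes. The key point is that $\CO$ occurs iff $A^n(Z_1)=A^n(Z_2)$ for some $n$, and this is determined by the trajectories of $Z_1,Z_2$, which by Proposition \ref{Prop:welldefined} go up to infinity almost surely. If the two trajectories both pass above level $h-\delta$ before coalescing, then whether they have already coalesced below that level is $\mathcal{F}_{out}(M,h,\delta)$-measurable provided the coalescence point (if any) lies outside $K(M,h,\delta)$; and conversely if they have coalesced below level $h-\delta$, that fact is also $\mathcal{F}_{out}$-measurable. The only "ambiguous" scenarios are: (a) the common ancestor, or one of the relevant edges near the first meeting, lies inside $K(M,h,\delta)$; or (b) one of the two trajectories has not yet reached level $h-\delta$ by the time... — but the latter is not an obstruction, since trajectories eventually leave every compact set. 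I would therefore bound $\mathbb{P}[\Approx^c]$ by the sum of: the probability that the trajectories of $Z_1,Z_2$ have a vertex inside the bounded set $K(M,h,\delta)$, plus the probability that they do \emph{not} coalesce below level $h$ at all (which will be handled via the forward-fluctuation control). Using the dilation invariance $K(M,h,\delta)=D_{e^h}K(M,0,\delta)$ together with Corollary \ref{Cor:expnumdesc} (the expected number of descendants at level $0$ of a point at level $h-\delta$ is $e^{d\delta}$, hence bounded, so only $O(1)$ trajectories can visit a fixed-hyperbolic-volume slab) and Proposition \ref{Prop:nop}, the probability that the specific trajectories from $Z_1,Z_2$ pass through the fixed-size window $K(M,h,\delta)$ located at hyperbolic distance $\to\infty$ from $p_1,p_2$ tends to $0$: the Cumulative Forward Deviation control (Theorem \ref{Thm:controlforward}) shows the trajectory from $Z_i$ at level $t$ is, with high probability, within $O(e^t)$ of the origin, so hitting a window of fixed hyperbolic radius $M$ centred at $(0,e^h)$ — which in abscissa coordinates is a Euclidean ball of radius $\asymp e^h$ but of vanishing \emph{relative} size — becomes a rare event as $h\to\infty$; more simply, the probability that the edge crossing level $h-\delta$ has its lower endpoint within hyperbolic distance $M$ of $(0,e^h)$ is $O(e^{-c h})$-type small by the same Campbell/volume estimates used in Section \ref{S:proofbasics}.

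The main obstacle I anticipate is handling case (a) cleanly: one must argue that the \emph{first} coalescence point of the two trajectories, if it occurs below level $h$, avoids the small window $K(M,h,\delta)$ with probability $\to1$, and that if it occurs above level $h-\delta$ then the \emph{earlier} portions of both trajectories (which are $\mathcal{F}_{out}$-measurable) already "see" the non-coalescence, so that only a genuinely small-probability event is lost. The careful bookkeeping here — splitting on $\Cyl(A,h)$ (Lemma \ref{Lem:chooseA}) to confine the trajectories, then on whether the pair has met below level $h-\delta$, then using Proposition \ref{Prop:welldefined} and the descendant-count bound of Corollary \ref{Cor:expnumdesc} to rule out the trajectories lingering near $(0,e^h)$ — is routine in spirit but needs to be assembled in the right order. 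Once these pieces are in place, summing the error terms and sending $h\to\infty$ (with $M,\varepsilon,\delta$ fixed) gives $\mathbb{P}[\Approx(M,\varepsilon,\delta,h)]\to1$, which is the claim.
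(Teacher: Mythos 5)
Your reduction to an $L^1$ estimate,
$\mathbb{P}\big[\Approx(M,\varepsilon,\delta,h)^c\big]
\le \varepsilon^{-1}\,\mathbb{E}\big[|\mathbb{P}[\CO\mid\mathcal{F}_{out}(M,h,\delta)]-\mathbf 1_{\CO}|\big]$,
and the further reduction to a re-sampling/pivotality bound are both valid in principle. The gap is in the geometric step that follows: your claim that the trajectories from $Z_1,Z_2$ avoid $K(M,h,\delta)$ with probability tending to one as $h\to\infty$ is false, and the $O(e^{-ch})$ estimate you invoke does not hold. The region $K(M,h,\delta)$ is a fixed-hyperbolic-size ball whose abscissa extent is of order $e^h\sinh M$, while Theorem~\ref{Thm:controlforward} puts the trajectory abscissa at height $h$ at order $e^h$ as well. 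After rescaling by $D_{e^{-h}}$, both the window and the trajectory position are $O(1)$, so the hitting probability does not vanish with $h$. In fact the construction of Lemma~\ref{Lem:chooseMepsilon} explicitly relies on the trajectories lying inside $B_{\mathbb{R}^d}(0,Ae^h)\times(0,e^h)$, i.e.\ precisely in the neighbourhood of $K(M,h,\delta)$, so that modifying the configuration inside the window \emph{does} alter the trajectories; a ``trajectories miss the window'' argument would undercut the rest of the coalescence proof. Even setting that aside, showing the pivotality probability vanishes would implicitly require knowing that non-coalescence is a null event, which is exactly what the overall theorem is trying to prove, so that route risks circularity.

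The paper sidesteps all geometry. It introduces the filtration $\mathcal{F}_{h-}=\sigma\big(N\cap(\mathbb{R}^d\times(0,e^{h-\delta}))\big)$, notes the sandwich $\mathcal{F}_{h-}\subset\mathcal{F}_{out}(M,h,\delta)\subset\sigma(N)$ and that $\mathcal{F}_{h-}\uparrow\sigma(N)$, so L\'evy's upward theorem gives $\mathbb{E}[\mathbf 1_{\CO}\mid\mathcal{F}_{h-}]\to\mathbf 1_{\CO}$ a.s. The remaining step, bounding $|\mathbb{E}[\mathbf 1_{\CO}\mid\mathcal{F}_{out}]-\mathbb{E}[\mathbf 1_{\CO}\mid\mathcal{F}_{h-}]|$, is handled by a conditional Markov inequality on $X(h):=\mathbb{E}[\mathbf 1_{\CO}\mid\mathcal{F}_{out}]$ exploiting only $0\le X(h)\le 1$ and the tower identity $\mathbb{E}[X(h)\mid\mathcal{F}_{h-}]=\mathbb{E}[\mathbf 1_{\CO}\mid\mathcal{F}_{h-}]$: one gets
$\mathbb{P}[E_2\mid\mathcal{F}_{h-}]\le \tfrac{4}{\varepsilon}\min\big(\mathbb{E}[\mathbf 1_{\CO}\mid\mathcal{F}_{h-}],\,1-\mathbb{E}[\mathbf 1_{\CO}\mid\mathcal{F}_{h-}]\big)$,
which tends to $0$ a.s.\ by the martingale convergence, and dominated convergence finishes. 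No estimate on where the trajectories go, on descendant counts, or on Poisson volumes is needed. I would abandon the geometric pivotality bound and use this soft filtration-sandwiching argument instead.
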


\begin{lemma}
\label{Lem:chooseMepsilon}
Let $A,\delta>0$. There exist $M,\varepsilon>0$, such that, for all $h>\delta$:
\begin{eqnarray}
\label{E:implic}
\Cyl(A,h) \cap \EmptySlice(A,h,\delta)  \subset \{ \mathbb{P}\left[\CO|\mathcal{F}_{out}(M,h,\delta)\right]>\varepsilon\}.
\end{eqnarray}
\end{lemma}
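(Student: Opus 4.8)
The plan is to exhibit, for a suitable choice of $M$ and $\varepsilon$, an explicit event measurable with respect to $\mathcal{F}_{in}(M,h,\delta)$ (the process inside the "window" $K(M,h,\delta)$) which, when it occurs together with $\Cyl(A,h)$ and $\EmptySlice(A,h,\delta)$, forces the trajectories from $Z_1$ and $Z_2$ to coalesce. Since this event is independent of $\mathcal{F}_{out}(M,h,\delta)$ by the independence property of Poisson point processes on disjoint regions, it will have probability bounded below by a constant $\varepsilon>0$ not depending on $h$, and this constant will be a lower bound for $\mathbb{P}[\CO \mid \mathcal{F}_{out}(M,h,\delta)]$ on the event $\Cyl(A,h)\cap\EmptySlice(A,h,\delta)$. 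By dilation invariance (recall $K(M,h,\delta)=D_{e^h}K(M,0,\delta)$ and $\Slice(A,h,\delta)=D_{e^h}\Slice(A,0,\delta)$), it suffices to carry out the geometric construction at height $h=0$, rescale, and check that the bounds are scale-invariant.

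The first step is the geometric core: I would choose $M$ large enough (depending on $A$, $\delta$ and $d$) so that the ball $B_H((0,e^h),M)$, intersected with $\mathbb{R}^d\times(e^{h-\delta},\infty)$, contains a region tall enough to "catch" any trajectory that, below height $h$, stays inside the cylinder $B_{\mathbb{R}^d}(0,Ae^h)\times(0,e^h)$. Concretely: on $\Cyl(A,h)$ both trajectories cross the level $h$ at points whose abscissas lie in $B_{\mathbb{R}^d}(0,Ae^h)$; using Corollary \ref{Cor:disthor} (the estimate $d(z_1,z_2)\le \|x_1-x_2\|/y$ between points on the same horizontal hyperplane), two such points at ordinate $e^h$ are within hyperbolic distance $2A$ of each other, hence within distance $M$ of $(0,e^h)$ once $M\ge 2A+1$, say. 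Next I place a single extra Poisson point $z^\star$ near the top of the window — say in a small hyperbolic ball around $(0,e^{h+C})$ for a constant $C$ depending on $M$ — chosen so that the upper semi-ball condition forces both trajectories, right after they leave $\Slice$ (which is empty of points by $\EmptySlice$) and enter $K(M,h,\delta)$, to connect to $z^\star$: the point $z^\star$ must be the closest point of $N$ above each of the two crossing points, which I arrange by requiring $N$ to have no points in a suitable upper semi-ball of $z^\star$ and below — this is where $\EmptySlice(A,h,\delta)$ is used to guarantee there is no interfering point in the thin slab just below the window, and where one must be careful that $z^\star$ does not create new (unwanted) parents for points below height $h-\delta$, which it cannot since those points' parents are determined by $N$ below $z^\star$'s ordinate up to the already-filled upper semi-balls. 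Once both trajectories pass through $z^\star$, they have coalesced, so $\CO$ holds.

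The main obstacle is the bookkeeping in the last sentence above: a modification (addition) of points near the top of $K(M,h,\delta)$ can in principle change the parent of points at lower ordinate, hence alter the trajectories of $Z_1,Z_2$ below height $h$ — this is exactly the dependence phenomenon highlighted in Figure \ref{fig:constructionhyper}. I would handle this by defining the favorable event inside $K$ as: "$N\cap K(M,h,\delta)$ consists of exactly one point $z^\star$ lying in the designated small ball near $(0,e^{h+C})$" (or a slightly larger but still scale-invariant family of configurations), so that inside the window nothing competes with $z^\star$; combined with $\EmptySlice$, the trajectories from $Z_1,Z_2$ are unchanged below $e^{h-\delta}$, travel freely through the empty slice, and are forced upward to $z^\star$. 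The probability of this favorable event equals $\lambda\mu(\text{small ball})\,e^{-\lambda\mu(K(M,h,\delta))}$, which by dilation invariance of $\mu$ is a strictly positive constant $\varepsilon=\varepsilon(M,\lambda,d)$ independent of $h$; taking this $\varepsilon$ (shrunk slightly if needed to absorb the conditioning) and the chosen $M$ proves (\ref{E:implic}). Making the window $K(M,h,\delta)$ large enough that "forced upward to $z^\star$" is genuinely forced — i.e. that no point of $N$ outside $K$ but above height $h$ can be a closer parent than $z^\star$ for either crossing point — requires enlarging $M$ relative to $A$ and invoking the distance formula (Proposition \ref{Prop:disthalfspace}) to compare hyperbolic distances; this comparison is the one technical computation I would actually grind through.
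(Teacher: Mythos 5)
Your high-level strategy matches the paper's: exhibit a favorable configuration of $N\cap K(M,h,\delta)$ which, on $\Cyl(A,h)\cap\EmptySlice(A,h,\delta)$, forces the two trajectories to merge; independence of the process on $K(M,h,\delta)$ and on its complement, together with dilation invariance of $K$, then yields an $h$-independent lower bound $\varepsilon$ for $\mathbb{P}[\CO\mid\mathcal{F}_{out}(M,h,\delta)]$ on that event. This is indeed the paper's framework (the event $\FC(\eta)$ and Claims 7.5--7.6).

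But the specific favorable event you propose---a single point $z^\star$ in a small ball around $(0,e^{h+C})$ with $K(M,h,\delta)$ otherwise empty---does not force coalescence, and the gap is precisely at the step you flag as a "technical computation to grind through." Let $z_i^{sep}=(x_i,y_i)=A^{k_i(h)}(Z_i)$ be the last trajectory point below level $h$; on $\Cyl\cap\EmptySlice$ one has $\|x_i\|\le Ae^h$ and $y_i<e^{h-\delta}$, while the old parent $A(z_i^{sep})(\eta)$ sits at ordinate $>e^h$ with $d_0:=d(z_i^{sep},A(z_i^{sep})(\eta))$. For $z^\star$ to be the new parent of $z_i^{sep}$ in $\eta'=\eta_{out}\cup\{z^\star\}$, you need $B_+(z_i^{sep},d(z_i^{sep},z^\star))\cap\eta_{out}=\emptyset$. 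When the old parent is nearly directly above $z_i^{sep}$ and just above $e^h$ (which is entirely possible), $d_0\approx h-\ln y_i$ while $d(z_i^{sep},z^\star)\ge h+C-\ln y_i>d_0$. The annular shell $B_+(z_i^{sep},d(z_i^{sep},z^\star))\setminus B_+(z_i^{sep},d_0)$ is then nonempty, and a portion of it lies at ordinates in $(y_i,e^{h-\delta}]$: these points are outside $K(M,h,\delta)$ (too low) and outside $\Slice(A,h,\delta)$ (the slice only covers $(e^{h-\delta},e^h)$), so neither $\FC$ nor $\EmptySlice$ clears them, and $\eta_{out}$ may place a point there that is closer to $z_i^{sep}$ than $z^\star$. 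Enlarging $M$ cannot help, since the problem is below the slab; and requiring this shell to be empty would make your favorable event depend on $\mathcal{F}_{out}$, destroying the independence you rely on.

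This is exactly what the two intermediate balls $B_i^{down}$ in the paper are for. The paper places a point $z_i^{down}$ in a small hyperbolic ball around $(x_i,e^{h-\delta/2})$, i.e., at the \emph{same abscissa} as $z_i^{sep}$, with the ball's vertical span equal to $[e^{h-\delta},e^h]$. Since hyperbolic spheres in the half-space model are Euclidean spheres with centers on the same vertical line (Proposition 2.9), and since $B_H(z_i^{sep},d_0)$ is a Euclidean ball on that same axis whose vertical span contains $[e^{h-\delta},e^h]$ (its top is above $e^h$, its centre below $e^{h-\delta}$), one gets $B_i^{down}\subset B_+(z_i^{sep},d_0)$, a region empty of $\eta$ by definition of the parent. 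Hence $d(z_i^{sep},z_i^{down})<d_0$, so the new parent of $z_i^{sep}$ is strictly closer than the old one, hence cannot lie in $\eta_{out}\subset\eta$, and must be one of the three designated points; the role of $M$ is then only the second step $z_i^{down}\to z^{up}$, where $z_i^{down}$ lives in a compact set $\Xi(h)$ at ordinate $\ge e^{h-\delta}$, so one can pick $M$ so that $B_H(z_i^{down},d(z_i^{down},z^{up}))\subset B_H((0,e^h),M)$. To repair your proof you would need to replace your one-point favorable event by this three-point event (two "catchers" at the trajectories' exit abscissas plus one "funnel" above level $h$) and carry out the case analysis on whether the trajectory below $h$ is altered ($\kappa_i<k_i(h)$) or not.
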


Lemmas \ref{Lem:chooseA}, \ref{Lem:chooseh} and \ref{Lem:chooseMepsilon} are proved in Sections \ref{S:proofofchooseA}, \ref{S:proofofchooseh} and  \ref{S:proofofchooseMepsilon} respectively. We assume them for the moment and we prove that trajectories from $Z_1$ and $Z_2$ coalesce almost surely, i.e. $\mathbb{P}\left[ \CO \right]=1$. Let us suppose by contradiction that $\mathbb{P}\left[ \CO \right]<1$. We choose the parameters $A,h,M,\delta,\varepsilon$ as follows:

\begin{itemize}
\item We first choose $A$ large enough such that $\liminf_{h \to \infty} \mathbb{P}\left[ \Cyl(A,h) \right]>1-\mathbb{P}[\CO^c]/3$, it is possible by Lemma \ref{Lem:chooseA}. Then we pick $h_0$ large enough such that for all $h \ge h_0$, $\mathbb{P}\left[ \Cyl(A,h) \right]>1-\mathbb{P}[\CO^c]/3$.
\item Then we choose $\delta>0$ small enough such that, for all $h \ge 0$, $\mathbb{P}[\EmptySlice(A,h,\delta)]>1-\mathbb{P}[\CO^c]/3$. It is possible since $\mathbb{P}[\EmptySlice(A,h,\delta)]$ does not depend on $h$ by dilatation invariance and, for all $h>0$ (recall that $\mu$ is the Hyperbolic volume):
\begin{eqnarray*}
\lim_{\delta \to 0}\mu(\Slice(A,h,\delta))=\mu\left(\bigcap_{\delta \downarrow 0} \downarrow \Slice(A,h,\delta) \right)=\mu(B_{\mathbb{R}^d}(0,Ae^h) \times \{e^h\})=0,
\end{eqnarray*}
so
\begin{eqnarray*}
\mathbb{P}[\EmptySlice(A,h,\delta)]=\exp(-\lambda\mu(\Slice(A,h,\delta)) \underset{\delta \to 0}{\to} 1.
\end{eqnarray*}
\item Then we choose $M,\varepsilon>0$ such that, for all $h>\delta$, inclusion (\ref{E:implic}) holds. It is possible by Lemma \ref{Lem:chooseMepsilon}.
\item We finally choose $h$ large enough (and larger than $h_0$ and $\delta$) such that \\ $\mathbb{P}\left[ \Approx(M,\varepsilon,\delta,h) \right]>1-\mathbb{P}[\CO^c]/3$. It is possible by Lemma \ref{Lem:chooseh}.
\end{itemize}

Define
\begin{eqnarray*}
E:=\Cyl(A,h) \cap \EmptySlice(A,h,\delta) \cap \Approx(M,\varepsilon,\delta,h) \cap \CO^c.
\end{eqnarray*}

With this choice of parameters, all the events $\Cyl(A,h)$, $\EmptySlice(A,h,\delta)$ and $\Approx(M,\varepsilon,\delta,h)$ have probability larger than $1-\mathbb{P}[\CO^c]/3$. Therefore
\begin{eqnarray*}
\mathbb{P}[\Cyl(A,h)^c \cup \EmptySlice(A,h,\delta)^c \cup \Approx(M,\varepsilon,\delta,h)^c \cup \CO]<3 \frac{\mathbb{P}[\CO^c]}{3}+\mathbb{P}[\CO]=1.
\end{eqnarray*}
Then $\mathbb{P}[E]>0$. On $E$, since both $\Approx(M,\varepsilon,\delta,h)$ and $\CO^c$ occur, $\mathbb{P}[\CO|\mathcal{F}_{out}(M,h,\delta)]<\varepsilon$; on the other hand, by (\ref{E:implic}), on $E$, $\mathbb{P}[\CO|\mathcal{F}_{out}(M,h,\delta)]>\varepsilon$. This is a contradiction, therefore the assumption $\mathbb{P}[\CO]<1$ is wrong, so  $\mathbb{P}[\CO]=1$. This proves Theorem \ref{Thm:coalescence}.

\subsection{Proof of Lemma \ref{Lem:chooseA}}
\label{S:proofofchooseA}

The proof is based on Theorem \ref{Thm:controlforward} and Markov inequality. For $i \in \{1,2\}$, define
\begin{eqnarray*}
C_i(A,h):=\{\text{For all } 0 \le n \le k_i(h), A^n(Z_i) \in B_{\mathbb{R}^d}(0,Ae^h) \times (0,e^h)\}.
\end{eqnarray*}
Then $\Cyl(A,h)=C_1(A,h) \cap C_2(A,h)$, therefore it suffices to prove that, for $i=1,2$, \[\lim_{A \to \infty}\liminf_{h \to \infty} \mathbb{P}[C_i(A,h)]=1.\]

Let $i \in \{1,2\}$. Let us define $X_i$ to be the (random) point of $\mathcal{L}_0$ such that $(X_i,e^0)$ belongs to the trajectory from $Z_i$. Now, we write $C_i(A,h)$ as the intersection of two events $C_i^-(A,h)$ and $C_i^+(A,h)$, that means respectively that the trajectory is contained in the cylinder below (resp. above) the level 0. We define
\begin{eqnarray*}
C_i^{-}(A,h):=\{\text{For all } 0 \le n \le k_i(0),~A^n(Z_i) \in  B_{\mathbb{R}^d}(0,Ae^h) \times (0,e^0) \}
\end{eqnarray*}
and
\begin{eqnarray*}
C_i^{+}(A,h):=\{\sup_{0 \le t \le h} \|\mathcal{A}_0^t(X_i)\| \le Ae^h\}.
\end{eqnarray*}
Thus $C_i(A,h)=C_i^-(A,h) \cap C_i^+(A,h)$. Clearly, for all $A>0$, $\lim_{h \to \infty}\mathbb{P}[C_i^-]=1$ because the trajectory from $Z_i$ goes above the level $0$. It remains to show that $\lim_{A \to \infty}\liminf_{h \to \infty} \mathbb{P}[C_i^+(A,h)]=1$.

We would like to apply Theorem $\ref{Thm:controlforward}$ to $X_i$ with, say $p=1$, but this would demand showing $\mathbb{E}[\|X_i\|]<\infty$. A workaround is done by using the following trick. Let $B \ge 0$. Define
\begin{eqnarray*}
X_i^B=\left\{
\begin{aligned}[l|ll]
&X_i & \text{ if } \|X_i\| \le B,\\
&\argmin_{x \in \mathcal{L}_0} \|x\| & \text{otherwise}.
\end{aligned}
\right.
\end{eqnarray*}
Using \tvc{Lemma} \ref{Lem:closest} with $p=1$, we get that $\mathbb{E}\left[ \|X_i^B\| \right]<\infty$.
Thus Theorem \ref{Thm:controlforward} applied to $X_i^B$ with $p=1$ gives,
\begin{eqnarray}
\label{E:finitelimsup}
\limsup_{h \to \infty} \mathbb{E}\left[ \frac{\sup_{0 \le t \le h}\|\mathcal{A}_0^t(X_i^B)\|}{e^h} \right] \le \limsup_{h \to \infty}\mathbb{E}\left[\frac{\|X_i^B\|+\CFD_0^h(X_i^B)}{e^h} \right]<\infty.
\end{eqnarray}
By Markov inequality,
\begin{eqnarray}
\label{E:equawithxim}
&\underset{h \to \infty}{\limsup}~ \mathbb{P}[\sup_{0 \le t \le h} \|\mathcal{A}_0^t(X_i^B)\| > Ae^h] &\le \limsup_{h \to \infty} \frac{\mathbb{E}\left[\sup_{0 \le t \le h}\|\mathcal{A}_0^t(X_i^B)\|\right]}{Ae^h} \nonumber\\
&&=A^{-1}\limsup_{h \to \infty} \mathbb{E}\left[e^{-h}\sup_{0 \le t \le h}\|\mathcal{A}_0^t(X_i^B)\|\right] \nonumber\\
&&\to 0 \text{ when } A \to \infty,
\end{eqnarray}
by (\ref{E:finitelimsup}). 

We now need to replace $X_i^B$ by $X_i$ in (\ref{E:equawithxim}). It will be done by taking $B \to \infty$. For $A,B,h>0$,
\begin{eqnarray*}
\left\{\sup_{0 \le t \le h} \|\mathcal{A}_0^t(X_i^B)\| \le Ae^h \text{ and } X_i^B=X_i\right\} \subset \{C_i^+(A,h)\},
\end{eqnarray*}
so
\begin{eqnarray*}
\mathbb{P}[C_i^+(A,h)] \ge 1-\mathbb{P}[\sup_{0 \le t \le h} \|\mathcal{A}_0^t(X_i^B)\| > Ae^h]-\mathbb{P}[X_i \neq X_i^B].
\end{eqnarray*}
Thus, for all $B \ge 0$,
\begin{eqnarray}
\label{E:equawithxim2}
&\underset{A \to \infty}{\lim}\underset{h \to \infty}{\liminf} ~\mathbb{P}[C_i^+(A,h)] &\ge 1-\lim_{A \to \infty}\limsup_{h \to \infty} \mathbb{P}[\sup_{0 \le t \le h} \|\mathcal{A}_0^t(X_i^B)\| > Ae^h]-\mathbb{P}[X_i^B \neq X_i] \nonumber\\
&& \overset{(\ref{E:equawithxim})}{\ge} 1-\mathbb{P}[X_i^B \neq X_i].
\end{eqnarray}
Since $\mathbb{P}[X_i^B \neq X_i] \le \mathbb{P}[X_i>B] \underset{B \to \infty}{\to} 0$, we obtain the wanted result by taking $B \to \infty$ in (\ref{E:equawithxim2}).
This proves Lemma $\ref{Lem:chooseA}$.

\subsection{Proof of Lemma \ref{Lem:chooseh}}
\label{S:proofofchooseh}
Let $M,\varepsilon,\delta>0$. For $h \ge 0$, we denote by $\mathcal{F}_{h-}$ the $\sigma$-algebra generated by the process $N$ on $\mathbb{R}^d \times (0,e^{h-\delta})$. Since $\mathbb{R}^d \times (0,e^{h-\delta}) \subset K(M,h,\delta)^c$, $\mathcal{F}_{h-} \subset \mathcal{F}_{out}(M,h,\delta)$. Since $\bigcup_{h \uparrow \infty} \uparrow \mathcal{F}_{h-}=\sigma(N)$, the martingale convergence theorem gives,
\begin{eqnarray}
\label{E:martconv}
\lim_{h \to \infty} \mathbb{E}[\mathbf{1}_{\CO}|\mathcal{F}_{h-}]=\mathbf{1}_{\CO} \text{ a.s.}
\end{eqnarray}
\tvc{We define
\begin{eqnarray*}
E_1:=\left\{\Big|\mathbb{E}[\mathbf{1}_{\CO}|\mathcal{F}_{h-}]-\mathbf{1}_{\CO}\Big| \ge \frac{\varepsilon}{2} \right\},
\quad
E_2:=\left\{\Big| \mathbb{E}[\mathbf{1}_{\CO}|\mathcal{F}_{out}(M,h,\delta)]-\mathbb{E}[\mathbf{1}_{\CO}|\mathcal{F}_{h-}] \Big| \ge \frac{\varepsilon}{2} \right\}.
\end{eqnarray*}
By (\ref{E:martconv}), $\lim_{h \to \infty}\mathbb{P}[E_1]=0$. Suppose for the moment that $\lim_{h \to \infty}\mathbb{P}[E_2]=0$. Then 
$\lim_{h \to \infty}\mathbb{P}[E_1^c \cap E_2^c]=1$, and on this set
\begin{eqnarray*}
\Big|\mathbb{E}[\mathbf{1}_{\CO}|\mathcal{F}_{out}(M,h,\delta)]-\mathbf{1}_{\CO}\Big|<\varepsilon.
\end{eqnarray*}
Therefore,
$\lim_{h \to \infty}\left[\Approx(M,\varepsilon,\delta,h) \right]=1$,
which proves Lemma \ref{Lem:chooseh}. \\}

It remains to show that $\lim_{h \to \infty}\mathbb{P}[E_2]=0$. Let us define 
\begin{eqnarray*}
X(h):=\mathbb{E}[\mathbf{1}_{\CO}|\mathcal{F}_{out}(M,h,\delta)].
\end{eqnarray*}
Since $\mathcal{F}_{h-} \subset \mathcal{F}_{out}$, we have $\mathbb{E}[\mathbf{1}_{\CO}|\mathcal{F}_{h-}]=\mathbb{E}[X(h)|\mathcal{F}_{h-}]$, so $E_2$ can be rewritten as $\{|X(h)-\mathbb{E}[X(h)|\mathcal{F}_{h-}]| \ge \varepsilon/2\}$. By Markov inequality,
\begin{eqnarray*}
\mathbb{P}\left[\Big|X(h)-\mathbb{E}[X(h)|\mathcal{F}_{h-}]\Big| \ge \frac{\varepsilon}{2} \Big|\mathcal{F}_{h-}\right] \le \frac{2\mathbb{E}\left[ \Big|X(h)-\mathbb{E}[X(h)|\mathcal{F}_{h-}]\Big|~\Big|\mathcal{F}_{h-}\right]}{\varepsilon} \text{ a.s.}
\end{eqnarray*}
So, on the one hand, by triangular inequality and because $X(h) \ge 0$ a.s.,
\begin{eqnarray*}
\mathbb{P}\left[E_2|\mathcal{F}_{h-}\right] \le \frac{2\mathbb{E}\left[X(h)+\mathbb{E}[X(h)|\mathcal{F}_{h-}]\Big|\mathcal{F}_{h-}\right]}{\varepsilon}=\frac{4\mathbb{E}[X(h)|\mathcal{F}_{h-}]}{\varepsilon} \text{ a.s.}
\end{eqnarray*}
On the other hand, again by triangular inequality and because $1-X(h) \ge 0$ a.s.,
\begin{eqnarray*}
\mathbb{P}\left[E_2|\mathcal{F}_{h-}\right] \le \frac{2\mathbb{E}\left[ (1-X(h))+(1-\mathbb{E}[X(h)|\mathcal{F}_{h-}]) \Big|\mathcal{F}_{h-} \right]}{\varepsilon}=\frac{4\mathbb{E}\left[ 1-X(h)|\mathcal{F}_{h-} \right]}{\varepsilon} \text{ a.s.}
\end{eqnarray*}
Thus
\begin{eqnarray*}
\mathbb{P}\left[E_2 \Big|\mathcal{F}_{h-}\right] \le \frac{4\big(\mathbb{E}[X(h)|\mathcal{F}_{h-}] \wedge (1-\mathbb{E}[X(h)|\mathcal{F}_{h-}])\big)}{\varepsilon} \text{ a.s.}
\end{eqnarray*}
Since $\mathbb{E}[X(h)|\mathcal{F}_{h-}]=\mathbb{E}[\mathbf{1}_{\CO}|\mathcal{F}_{h-}] \to \mathbf{1}_{\CO}$ when $h \to \infty$,
\begin{eqnarray*}
\frac{4\big(\mathbb{E}[X(h)|\mathcal{F}_{h-}] \wedge (1-\mathbb{E}[X(h)|\mathcal{F}_{h-}])\big)}{\varepsilon} \le \frac{4\big|\mathbb{E}[X(h)|\mathcal{F}_{h-}]-\mathbf{1}_{\CO}\big|}{\varepsilon} \underset{h \to \infty}{\to} 0 \text{ a.s.}
\end{eqnarray*}
Therefore $\lim_{h \to \infty}\mathbb{P}[E_2|\mathcal{F}_{h-}]=0$ a.s., so dominated convergence theorem gives that $\lim_{h \to \infty} \mathbb{P}[E_2]=0$. This completes the proof of Lemma \ref{Lem:chooseh}.

\subsection{Proof of Lemma \ref{Lem:chooseMepsilon}}
\label{S:proofofchooseMepsilon}

Let us introduce the following notation. For $\eta \in  \mathcal{N}_S$, we define
\begin{eqnarray*}
\eta_{in}:=\eta \cap K(M,h,\delta), \quad \eta_{out}=\eta \cap K(M,h,\delta)^c,
\end{eqnarray*}where we recall that $K(M,h,\delta)$ has been defined in \eqref{def:K(Mhdelta)}.
In particular, $N_{in}=N \cap K(M,h,\delta)$ and $N_{out}=N \cap K(M,h,\delta)^c$, are two independent PPPs of intensity $\lambda$ on $K(M,h,\delta)$ and $K(M,h,\delta)^c$ respectively. 

Let $A,\delta>0$ and consider $\eta \in \Cyl(A,h) \cap \EmptySlice(A,h,\delta)$. The idea of the proof is to build an event $\FC(\eta)$ on $\mathcal{N}_S(K(M,h,\delta))$, of probability bounded below by some $\varepsilon>0$ independent of $\eta$, such that, when we replace the process $\eta$ inside the box $K(M,h,\delta)$ by some $\eta'_{in} \in \FC(\eta)$, then we force trajectories from $Z_1$ and $Z_2$ to coalesce.

For $i=1,2$, consider the point $(x_i,y_i)=A^{k_i(h)}(Z_i)(\eta)$ (i.e. the highest point of $N$ in the trajectory below the level $h$). Notice that $\|x_i\| \le Ae^h$ since $\eta \in \Cyl(A,h)$. We define three balls as follows:
\begin{eqnarray*}
B_i^{down}=B_{H}\left((x_i,e^{h-\delta/2}),\frac{\delta}{2}\right) \text{ for } i=1,2, \quad B^{up}=B_{H}\left((0,e^{h+\delta/2}),\frac{\delta}{2}\right).
\end{eqnarray*}
We now make the choice of $M$. For $h \ge 0$, define
\begin{eqnarray*}
\Xi(h):=\left\{z \in H,~d\left(z,B_{\mathbb{R}^d}(0,Ae^h) \times \{e^{h-\delta/2}\}\right)<\delta/2 \right\}.
\end{eqnarray*}
So $\Xi(h)$ is a \tvc{relatively} compact set and $\Xi(h)=D_{e^h}\Xi(0)$, therefore $\Xi(h)$ and $\Xi(0)$ are isometric. By construction, since $\|x_1\|,\|x_2\| \le Ae^h$, $B_1^{down},B_2^{down} \subset \Xi(h)$. 
Let us pick $M$ large enough such that, for all $z^{down} \in \Xi(0)$ and for all $z^{up} \in B_H((0,e^{\delta/2}),\delta/2)$,
\begin{eqnarray*}
B_H(z^{down},d(z^{down},z^{up})) \subset B_H((0,e^0),M).
\end{eqnarray*}
Since
\begin{eqnarray*}
\Xi(h)=D_{e^h}\Xi(0), \quad B^{up}=D_{e^h}B_H((0,e^0),\delta/2),
\end{eqnarray*}
this choice of $M$ guarantees that, for all $h \ge 0$, for all $z^{down} \in \Xi(h)$ and $z^{up} \in B^{up}$,
\begin{eqnarray}
\label{E:choiceofM}
B_H(z^{down},d(z^{down},z^{up})) \subset B_H((0,e^h),M).
\end{eqnarray}

\begin{figure}[!ht]
\begin{center}
\includegraphics[width=12cm,height=7.5cm]{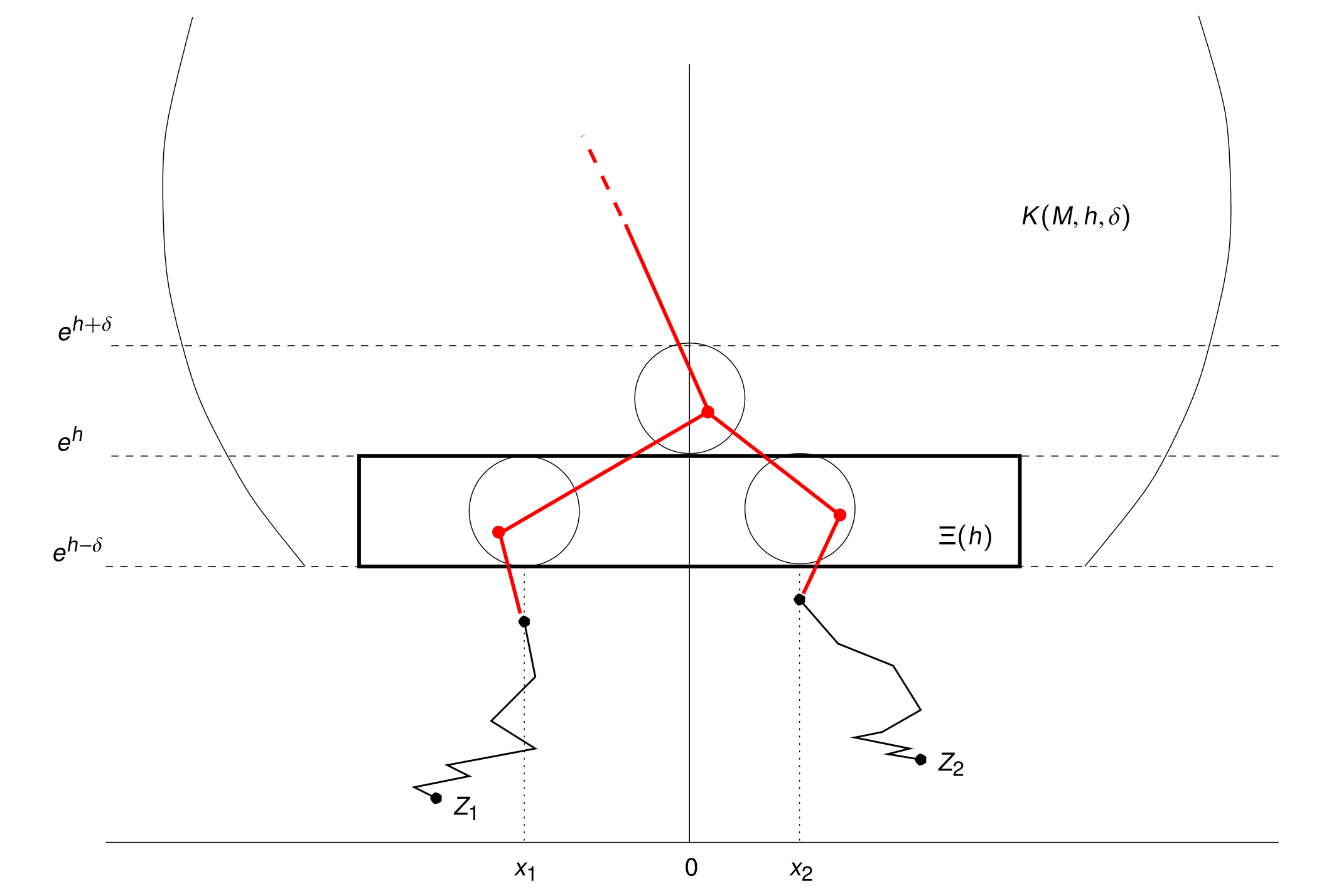}
\caption{Enlever Figure 9 ???}
\end{center}
\end{figure}

We define $\FC(\eta)$ as the event on $\mathcal{N}_S(K(M,h,\delta))$ where there is exactly one point in $B_1^{down}$, exactly one point in $B_2^{down}$, exactly one point in $B^{up}$ and no other point in $K(M,h,\delta)$:
\begin{eqnarray*}
&\FC(\eta)&:=\Big\{\eta'_{in} \in \mathcal{N}_S(K(M,h,\delta)), \nonumber\\
&&\#(\eta'_{in} \cap B_1^{down})=1 \text{ and } \#(\eta'_{in} \cap B_2^{down})=1 \text{ and } \#(\eta'_{in} \cap B^{up})=1 \nonumber\\
&&\text{ and } \eta'_{in} \backslash (B_1^{down} \cup B_2^{down} \cup B^{up})=\emptyset\Big\}. 
\end{eqnarray*}
This defines an event $\FC(\eta)$ on $\mathcal{N}_S(K(M,h,\delta))$ for any $\eta \in \Cyl(A,h)  \cap \EmptySlice(A,h,\delta)$.

We will use the two following claims. 
\begin{claim}
\label{Cl:forcecoalescence}
For any $\eta \in \Cyl(A,h) \cap \EmptySlice(A,h,\delta)$, the event $FC(\eta)$ forces coalescence, that is: for all $\eta'_{in} \in \FC(\eta)$, $\eta':=\eta_{out} \cup \eta'_{in} \in \CO$.
\end{claim}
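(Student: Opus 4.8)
The plan is to fix $\eta \in \Cyl(A,h)\cap\EmptySlice(A,h,\delta)$ and $\eta'_{in}\in\FC(\eta)$, set $\eta':=\eta_{out}\cup\eta'_{in}$, and denote by $w_1\in B_1^{down}$, $w_2\in B_2^{down}$, $w^{up}\in B^{up}$ the points of $\eta'_{in}$ in each of the three balls. First I would note that $\eta$ and $\eta'$ coincide on $\mathbb{R}^d\times(0,e^{h-\delta}]$, since $\eta_{in}$ and $\eta'_{in}$ are supported in $K(M,h,\delta)\subset\mathbb{R}^d\times(e^{h-\delta},\infty)$; in particular $Z_1$ and $Z_2$ are unchanged. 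Using $\eta\in\Cyl(A,h)\cap\EmptySlice(A,h,\delta)$ I would then show that $(x_i,y_i):=A^{k_i(h)}(Z_i)(\eta)$ satisfies $\|x_i\|<Ae^h$ and $y_i\le e^{h-\delta}$ — the event $\Cyl(A,h)$ forces $\|x_i\|<Ae^h$ and $y_i<e^h$, and $\EmptySlice(A,h,\delta)$ forbids $y_i\in(e^{h-\delta},e^h)$ — and that the whole portion of the $\eta$-trajectory from $Z_i$ up to and including $(x_i,y_i)$, edges included, stays in $\mathbb{R}^d\times(0,e^{h-\delta}]$ (the $y$-coordinate is monotone along each Euclidean edge).

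Next I would follow the trajectory from $Z_i$ in $\eta'$. At any vertex $z$ of this path other than $(x_i,y_i)$, the $\eta'$-parent of $z$ equals $A(z)(\eta)$ unless a point of $\eta'_{in}$ falls in $B_+(z)$, in which case it is such a point: indeed $B_+(z)$ is empty of $\eta$-points, hence of $\eta_{out}$-points, while $A(z)(\eta)$ — being a vertex of the trajectory below level $h$ — lies in $\eta_{out}$ (so it is not deleted, and erasing $\eta_{in}$ cannot produce a closer point). Consequently the trajectory from $Z_i$ in $\eta'$ either agrees vertex by vertex with the $\eta$-trajectory up to $(x_i,y_i)$, or it already hits $\{w_1,w_2,w^{up}\}$ at an earlier step. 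In the former case I would check that the $\eta'$-parent of $(x_i,y_i)$ also lies in $\{w_1,w_2,w^{up}\}$: the point $w_i$ sits almost vertically above $(x_i,y_i)$, so by Corollary~\ref{Cor:distvert} and Remark~\ref{Rem:disthalfplane} one has $d((x_i,y_i),w_i)<h-\ln y_i\le d\big((x_i,y_i),A((x_i,y_i))(\eta)\big)$, using $y_i\le e^{h-\delta}$ and that $A((x_i,y_i))(\eta)$ has ordinate $\ge e^h$, whence $B_i^{down}\subset B_+((x_i,y_i))$; since every point of $\eta_{out}$ above $(x_i,y_i)$ is at distance at least $d((x_i,y_i),A((x_i,y_i))(\eta))$, the $\eta'$-parent of $(x_i,y_i)$ is strictly closer and therefore belongs to $\eta'_{in}$. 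Either way, the trajectory from $Z_i$ in $\eta'$ visits $\{w_1,w_2,w^{up}\}$.

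It then remains to show that, once a trajectory reaches $\{w_1,w_2,w^{up}\}$, it necessarily passes through $w^{up}$. From $w_i$, the choice of $M$ through \eqref{E:choiceofM} (applied with $z^{down}=w_i\in\Xi(h)$ and $z^{up}=w^{up}$) guarantees that every point of $\eta_{out}$ above $w_i$ is at distance at least $d(w_i,w^{up})$, while $w^{up}$ sits at distance exactly $d(w_i,w^{up})$; hence the $\eta'$-parent of $w_i$ is $w^{up}$ or $w_{3-i}$. Since ordinates strictly increase along a trajectory and $y(w_1),y(w_2)<e^h<y(w^{up})$, the trajectory can make at most one $w_i\rightarrow w_{3-i}$ move before it leaves $\{w_1,w_2\}$, and it can only leave to $w^{up}$. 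Thus both trajectories contain the single point $w^{up}$, so $Z_1$ and $Z_2$ have a common ancestor, that is, $\eta'\in\CO$.

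The step I expect to be most delicate is the control of parents at the levels between $h-\delta$ and $h$ in the last two paragraphs: one has to make the distance comparisons strict (which holds almost surely, since a configuration in the support of the Poisson process contains no degenerate pattern such as a pair of equidistant points) and to dispose of the boundary cases where $B_1^{down}$ and $B_2^{down}$ coincide or overlap (in which case $w_1,w_2$ are already a common vertex of the two trajectories). The positioning of $B_1^{down},B_2^{down},B^{up}$, the choice of $M$ in \eqref{E:choiceofM}, and $\delta$ being small are precisely what make these comparisons go through uniformly in $h>\delta$.
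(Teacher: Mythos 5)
Your proposal is correct and takes essentially the same approach as the paper's proof. The only cosmetic difference is that you track the $\eta'$-trajectory vertex by vertex and let the dichotomy (agrees with $\eta$ up to $(x_i,y_i)$, or hits $\eta'_{in}$ sooner) emerge inductively, whereas the paper packages the same idea by naming $\kappa_i$ and $z_i^{sep}$ and splitting into the two cases explicitly; your distance comparison $d((x_i,y_i),w_i)<h-\ln y_i\le d((x_i,y_i),A((x_i,y_i))(\eta))$ is a slightly more computational substitute for the paper's geometric observation that $B_i^{down}$ and $B_H(z_i^{sep},d(z_i^{sep},A(z_i^{sep})(\eta)))$ are concentric vertically and nested, but both arguments use the same inputs ($\Cyl$, $\EmptySlice$, the choice of $M$ via~(\ref{E:choiceofM}), and the vertical alignment of $B_i^{down}$ above $(x_i,y_i)$) and reach the same conclusion.
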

\begin{claim}
\label{Cl:infeta}
There exists $\varepsilon>0$ independent of $h$ such that $\mathbb{P}[N_{in} \in FC(\eta)]>\varepsilon$ for all $\eta \in \Cyl(A,h) \cap \EmptySlice(A,h,\delta)$.
\end{claim}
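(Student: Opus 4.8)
The final statement to prove is Claim~\ref{Cl:infeta}. I sketch the plan below.

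The plan is to exploit the independence structure of the Poisson point process together with the dilation invariance of the geometric data. First, recall that $N_{in}=N\cap K(M,h,\delta)$ is a homogeneous Poisson point process of intensity $\lambda$ on $K(M,h,\delta)$, so the probability of the event $\FC(\eta)$ is a product over the three disjoint balls $B_1^{down}$, $B_2^{down}$, $B^{up}$ and their complement inside $K(M,h,\delta)$:
\begin{eqnarray*}
\mathbb{P}[N_{in}\in\FC(\eta)]=\Big(\prod_{i=1,2}\lambda\mu(B_i^{down})e^{-\lambda\mu(B_i^{down})}\Big)\cdot\lambda\mu(B^{up})e^{-\lambda\mu(B^{up})}\cdot e^{-\lambda\mu\big(K(M,h,\delta)\setminus(B_1^{down}\cup B_2^{down}\cup B^{up})\big)}.
\end{eqnarray*}
Thus it suffices to bound below each factor uniformly in $\eta$ and in $h$. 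Since $\mu$ is the hyperbolic volume and hyperbolic isometries preserve it, the three balls $B_1^{down}$, $B_2^{down}$, $B^{up}$ all have radius $\delta/2$, hence hyperbolic volume equal to a fixed constant $v_{\delta/2}:=\mu(B_H(\cdot,\delta/2))>0$ depending only on $\delta$ and $d$ (by Lemma~\ref{Lem:polarcoord} or Lemma~\ref{Lem:volball}, the volume of a hyperbolic ball depends only on its radius). Therefore the first three factors equal $\big(\lambda v_{\delta/2}e^{-\lambda v_{\delta/2}}\big)^{3}$, a strictly positive constant independent of $\eta$ and $h$.

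The only factor that could \emph{a priori} depend on $h$ is $e^{-\lambda\mu(K(M,h,\delta)\setminus(\cdots))}$, so the key step is to check that $\mu\big(K(M,h,\delta)\big)$ does not depend on $h$. This follows from the dilation invariance already noted in the text: $K(M,h,\delta)=D_{e^h}K(M,0,\delta)$, and $D_{e^h}$ is a hyperbolic isometry, hence $\mu(K(M,h,\delta))=\mu(K(M,0,\delta))$, a finite constant depending only on $M,\delta,d$ (finite because $K(M,0,\delta)\subset B_H((0,1),M)$ which has finite volume by Lemma~\ref{Lem:volball}). Consequently
\begin{eqnarray*}
\mu\big(K(M,h,\delta)\setminus(B_1^{down}\cup B_2^{down}\cup B^{up})\big)\le\mu(K(M,0,\delta))=:V_{M,\delta}<\infty,
\end{eqnarray*}
so the last factor is bounded below by $e^{-\lambda V_{M,\delta}}$, again uniformly in $\eta$ and $h$. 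Combining, one obtains $\mathbb{P}[N_{in}\in\FC(\eta)]\ge\varepsilon:=\big(\lambda v_{\delta/2}e^{-\lambda v_{\delta/2}}\big)^{3}e^{-\lambda V_{M,\delta}}>0$ for every $\eta\in\Cyl(A,h)\cap\EmptySlice(A,h,\delta)$ and every $h>\delta$, as required.

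There is essentially no serious obstacle here; the statement is a soft consequence of Poisson independence plus scale invariance. The one point requiring a line of care is the implicit assertion that the three balls $B_1^{down},B_2^{down},B^{up}$ are pairwise disjoint and genuinely contained in $K(M,h,\delta)$ (so that the product formula for the Poisson process applies and the three "exactly one point" constraints are consistent). Disjointness follows since $B_1^{down},B_2^{down}$ lie in the strip $\{e^{h-\delta}<y<e^h\}$ while $B^{up}\subset\{e^h<y<e^{h+\delta}\}$ (recall $B^{up}=B_H((0,e^{h+\delta/2}),\delta/2)$, and by Corollary~\ref{Cor:distvert} its bottom has ordinate $e^h$); containment of $B_1^{down},B_2^{down}$ in $K(M,h,\delta)$ uses that $B_i^{down}\subset\Xi(h)$ together with the choice of $M$ via \eqref{E:choiceofM}, and containment of $B^{up}$ holds since $B^{up}=D_{e^h}B_H((0,e^0),\delta/2)$ and $M$ was chosen large. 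This bookkeeping being routine, the proof concludes by simply assembling the uniform lower bounds on the four factors.
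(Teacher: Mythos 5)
Your product formula
\begin{eqnarray*}
\mathbb{P}[N_{in}\in\FC(\eta)]=\Big(\prod_{i=1,2}\lambda\mu(B_i^{down})e^{-\lambda\mu(B_i^{down})}\Big)\cdot\lambda\mu(B^{up})e^{-\lambda\mu(B^{up})}\cdot e^{-\lambda\mu\big(K(M,h,\delta)\setminus(\cdots)\big)}
\end{eqnarray*}
is not valid, because $B_1^{down}$ and $B_2^{down}$ are \emph{not} disjoint in general. Your disjointness remark only separates $B^{up}$ (in $\{y>e^h\}$) from the two down-balls (in $\{y<e^h\}$); it says nothing about $B_1^{down}$ versus $B_2^{down}$. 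These are both hyperbolic balls of radius $\delta/2$ centered at the same ordinate $e^{h-\delta/2}$ but at abscissas $x_1,x_2$, which are data read off from $\eta$ and can be arbitrarily close (even equal). When they overlap, the Poisson process restricted to $B_1^{down}$ and to $B_2^{down}$ is not independent, and the probability of $\{\#(N_{in}\cap B_1^{down})=1,\ \#(N_{in}\cap B_2^{down})=1\}$ is \emph{not} $\big(\lambda v_{\delta/2}e^{-\lambda v_{\delta/2}}\big)^2$; in the extreme case $x_1=x_2$ it equals $\lambda v_{\delta/2}e^{-\lambda v_{\delta/2}}$, which your formula gets wrong.

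The paper closes this gap by decomposing the event on the two down-balls into the disjoint union $E_1\sqcup E_2$ (exactly one point in $B_1^{down}\cap B_2^{down}$ and none in the two crescents, or no point in the intersection and exactly one in each crescent), computing $\mathbb{P}[E_1]+\mathbb{P}[E_2]=(\lambda\eta_1+\lambda^2\eta_2^2)\exp(-\lambda(2\eta_1+\eta_2))$ with $\eta_1=\mu(B_1^{down}\cap B_2^{down})$ and $\eta_2=\eta-\eta_1$, and then bounding this below by $\inf_{0\le s\le\eta}(\lambda s+\lambda^2(\eta-s)^2)e^{-2\lambda\eta}>0$, which is independent of $\eta$ (and hence of $h$). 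Everything else in your argument --- Poisson independence over the disjoint pieces, constancy of the hyperbolic volumes of the balls, and the dilation-invariance argument showing $\mu(K(M,h,\delta))=\mu(K(M,0,\delta))$ is finite and $h$-independent --- is correct and matches the paper's reasoning. What is missing is precisely the treatment of the overlap between the two down-balls, which is the one genuinely nontrivial point of the claim.
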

Suppose for the moment Claims \ref{Cl:forcecoalescence} and \ref{Cl:infeta}. Choose $\varepsilon$ as in Claim \ref{Cl:infeta}. Then, since, $\mathcal{F}_{in}$ and $\mathcal{F}_{out}$ are independent, for any $\eta \in \Cyl(A,h) \cap \EmptySlice(A,h,\delta)$,
\begin{eqnarray*}
\mathbb{P}[CO|N_{out}=\eta_{out}]=\mathbb{P}[\eta_{out} \cup N_{in} \in \CO] \ge \mathbb{P}[N_{in} \in \FC(\eta)]>\varepsilon,
\end{eqnarray*}
so Lemma \ref{Lem:chooseMepsilon} is proved.

\bigbreak

We now prove Claim \ref{Cl:forcecoalescence}. Let $\eta'_{in} \in FC(\eta)$ and consider $\eta':=\eta_{out} \cup \eta'_{in}$. Define $z_1^{down}$ (resp. $z_2^{down}$) as the unique point of $B_1^{down}$ (resp. $B_2^{down}$) and $z^{up}$ as the unique point of $B^{up}$.

If we change the point process $N$ inside the box $K(M,h,\delta)$, we potentially change trajectories from $Z_1$ and $Z_2$ below the level $h-\delta$, so care is required. However, we will see that this is not a real problem. Since $Z_1,Z_2$ are measurable w.r.t the process $N$ below level 0 (i.e. $N \cap \mathbb{R}^d \times (0,e^0)$), and since $K(M,h,\delta) \in \mathbb{R}^d \times (e^{h-\delta},\infty)$, changing the point process in $K(M,h,\delta)$ does not affect the positions of $Z_1$ and $Z_2$. That is, $Z_1(\eta)=Z_1(\eta')$, $Z_2(\eta)=Z_2(\eta')$. 

For $i=1,2$, we show that, for the realisation $\eta'$, the trajectory from $Z_i$ contains $z^{up}$ (which proves that trajectories from $Z_1$ and $Z_2$ coalesce).

By our assumption on $M$ (\ref{E:choiceofM}) and since $z_i^{down} \in B_i^{down} \subset \Xi(h)$,
\begin{eqnarray*}
B_H(z_i^{down},d(z_i^{down},z^{up})) \subset B_H((0,e^h),M).
\end{eqnarray*}
Thus, since $z^{up}$ is higher than $z_i^{down}$ and the only points in $K(M,h,\delta)$ are $z_1^{down}$, $z_2^{down}$, $z^{up}$, the parent of $z_1^{down}$ is necessarily $z_2^{down}$ or $z^{up}$, and the parent of $z_2^{down}$ is necessarily $z_1^{down}$ or $z^{up}$. In all cases, $z^{up}$ is on both trajectories from $z_1^{down}$ and $z_2^{down}$.

Now let us define
\begin{eqnarray*}
\kappa_i:=\max\{l \in \llbracket 0,k_i(h) \rrbracket,~A^l(Z_i)(\eta)=A^l(Z_i)(\eta')\} \cup \{k_i(h)\},
\end{eqnarray*}
and 
\begin{eqnarray*}
z_i^{sep}:=A^{\kappa_i}(Z_i).
\end{eqnarray*}
We show that the new parent of $z_i^{sep}$ is one of the three points $z_1^{down},z_2^{down}$ or $z^{up}$ (that is, $A(z_i^{sep})(\eta') \in \{z_1^{down},z_2^{down},z^{up} \}$). This implies that the trajectory from $Z_i$ contains $z^{up}$ for the realisation $\eta'$.

Suppose $\kappa_i<k_i(h)$ (that is, the change inside $K(M,h,\delta)$ affects the trajectory from $Z_i$ before the highest point below the level $h$, it is Case 1 in Figure \ref{Fig:coalescence}). Set
\begin{eqnarray*}
(x_i^{sep+},y_i^{sep+})=A(z_i^{sep})(\eta).
\end{eqnarray*}
By assumption $y_i^{sep+}<e^h$. Since $\eta \in \Cyl(A,h,\delta)$, $\|x_i^{sep+}\| \le Ae^h$. Thus, since $\eta \in \EmptySlice(A,h,\delta)$, $y_i^{sep+}<e^{h-\delta}$. Therefore $A(z_i^{sep})(\eta) \in \eta'$, so the new parent of $z_i^{sep}$ is (strictly) closer to $z_i^{sep}$ than its previous parent; in particular the new parent cannot be in $\eta$. The only possibility is that the new parent of $z_i^{sep}$ is one of the three points $z_1^{down},z_2^{down}$ or $z^{up}$ (i.e. $A(z_i^{sep})(\eta') \in \{z_1^{down},z_2^{down},z^{up}\}$).

Consider now the remaining case (Case 2 in Figure \ref{Fig:coalescence}), $\kappa_i= k_i(h)$ (that is, the trajectory is unchanged up to $A^{k_i(h)}(Z_i)$). In this case, $z_i^{sep}=A^{k_i(h)}(Z_i)$. Since $\eta \in \EmptySlice(A,h,\delta)$, and since $\|x_i\| \le Ae^h$ (recall that $(x_i,y_i)$ are coordinates of $A^{k_i(h)}(Z_i)$), $y_i \le e^{h-\delta}$. Therefore, $z_i^{down}$ is higher than $z_i^{sep}$. Then it suffices to show that $z_i^{sep}$ is closer to $z_i^{down}$ than to its previous parent, i.e. $d(z_i^{sep},z_i^{down})<d(z_i^{sep},A(z_i^{sep})(\eta))$. Indeed, it implies that the new parent of $z_i^{sep}$ cannot be in $\eta$, so it is necessarily $z_1^{down}$, $z_2^{down}$ or $z^{up}$. It is the case if
\begin{eqnarray*}
B_i^{down} \subset B^{sep}:=B_H(z_i^{sep},d(z_i^{sep},A(z_i^{sep})(\eta))).
\end{eqnarray*}
The inclusion follows from the construction of $B_i^{down}$. Indeed, by construction, the center of $B_i^{down}$ and $z_i^{sep}$ have the same abscissa, so $B_i^{down}$ and $B^{sep}$ are centered at the same abscissa. Since the height of $A(z_i^{sep})(\eta))$ is larger than $h$, the top (i.e. highest point) of the ball $B^{sep}$ has height larger than $h$. Moreover, its bottom
(i.e. lowest point) has height smaller than $h-\delta$ since $z_i^{sep}$ has height smaller than $h-\delta$. On the other hand, the top of $B_i^{down}$ has height $e^h$ and its bottom has height $e^{h-\delta}$. It follows that $B_i^{down} \subset B^{sep}$. This proves Claim \ref{Cl:forcecoalescence}.

\begin{figure}[!h]
    \centering
    \begin{tikzpicture}[scale=1,decoration={brace}]
    \draw (0,0)--(6,0)--(6,6)--(0,6)--(0,0);
    \draw (1,5.75) circle (0.25);
    \draw (5,5.75) circle (0.25);
    \draw (3,6.25) circle (0.25);
    \draw (-1.5,5.5)--(7.5,5.5);
    \draw (-1.5,5.5) arc (200:-20:4.78);
    \draw [dashed] (0,0)--(-3,0) node[left] {$e^0$};
    \draw [dashed] (0,5.5)--(-3,5.5) node[left] {$e^{h-\delta}$};
    \draw [dashed] (0,6)--(-3,6) node[left] {$e^h$};
    \draw [decorate] (8.1,11.8)--(8.1,5.5);
    \draw (8.1,8.65) node[right] {$K(M,h,\delta),$};
    \draw (8.1,8.15) node[right] {\color{red} no points of $\eta'_{in}$};
    \draw (8.1,7.65) node[right] {\color{red} appart from};
    \draw (8.1,7.15) node[right] {\color{red} $z_1^{down}$, $z_2^{down}$ and $z^{up}$};
    \draw [decorate](6,6)--(6,5.5);
    \draw (6,5.75) node[right] {$\Slice(A,h,\delta)$, \color{blue}no points of $\eta$};
    \draw (3,0.1)--(3,-0.1) node[below] {$0$};
    \draw[<->] (3,-0.1)--(6,-0.1);
    \draw (4.5,-0.15) node[below] {$Ae^h$};
    \draw [decorate] (0.75,6)--(1.25,6);
    \draw (1,6) node[above] {$B_1^{down}$};
    \draw [decorate] (4.75,6)--(5.25,6);
    \draw (5,6) node[above] {$B_2^{down}$};
    \draw [decorate] (2.75,6.5)--(3.25,6.5);
    \draw (3,6.5) node[above] {$B^{up}$};
    \filldraw [red] (1,5.9) circle(1pt) node[right] {$z_1^{down}$};
    \filldraw [red] (5.05,5.72) circle(1pt) node[right] {$z_2^{down}$};
    \filldraw [red] (3.06,6.3) circle(1pt) node[right] {$z^{up}$};

    \draw[blue] (2,-0.5)--(3,0.5)--(2.5,1.5)--(3.5,2.5)--(0.5,3.6)--(4,4.25)--(-0.25,6.375)--(5,8);
    \draw[red] (2.1,-0.5)--(3.1,0.5)--(2.6,1.5)--(3.7,2.5)--(0.7,3.6)--(4.2,4.25)--(1,5.9)--(3.06,6.3);
    \draw[red,dashed] (0.7,3.6)--(1,5.9);

    \draw[blue]  (2,-0.5) node[right] {$Z_1$};
    \draw[blue] (4,4.25) node[right] {$Z_{k_i(h)}$};
    
    \filldraw [blue]  (2,-0.5) circle(1pt);
    \filldraw [blue] (3,0.5) circle(1pt);
    \filldraw [blue] (2.5,1.5) circle(1pt);
    \filldraw [blue] (3.5,2.5) circle(1pt);
    \filldraw [blue] (0.5,3.6) circle(1pt);
    \filldraw [blue] (4,4.25) circle(1pt);
    \filldraw [blue] (-0.25,6.375) circle(1pt);
    \filldraw [blue] (5,8) circle(1pt);
    \filldraw [blue] (6,7) circle(1pt);
    \filldraw [blue] (1,9.5) circle(1pt);
    \filldraw [blue] (3,11.5) circle(1pt);
    
    \end{tikzpicture}
    \caption{If we replace the point process \textcolor{blue}{$\eta$} inside $K(M,h,\delta)$ by some \textcolor{red}{$\eta_{in}'$} $\in \FC(\eta)$, then, for $i=1,2$, the new trajectory (in \textcolor{red}{red}) from $Z_i$ is forced to go through $z_{up}$. Either it follows the previous trajectory (in \textcolor{blue}{blue}) up to $Z_{k_i(h)}$ which is connected to $z_1^{down}$, $z_2^{down}$ or $z_{up}$ (Case 2, represented in full line), or it takes a short cut (Case 1, represented in dashed line).}
    \label{Fig:coalescence}
\end{figure}
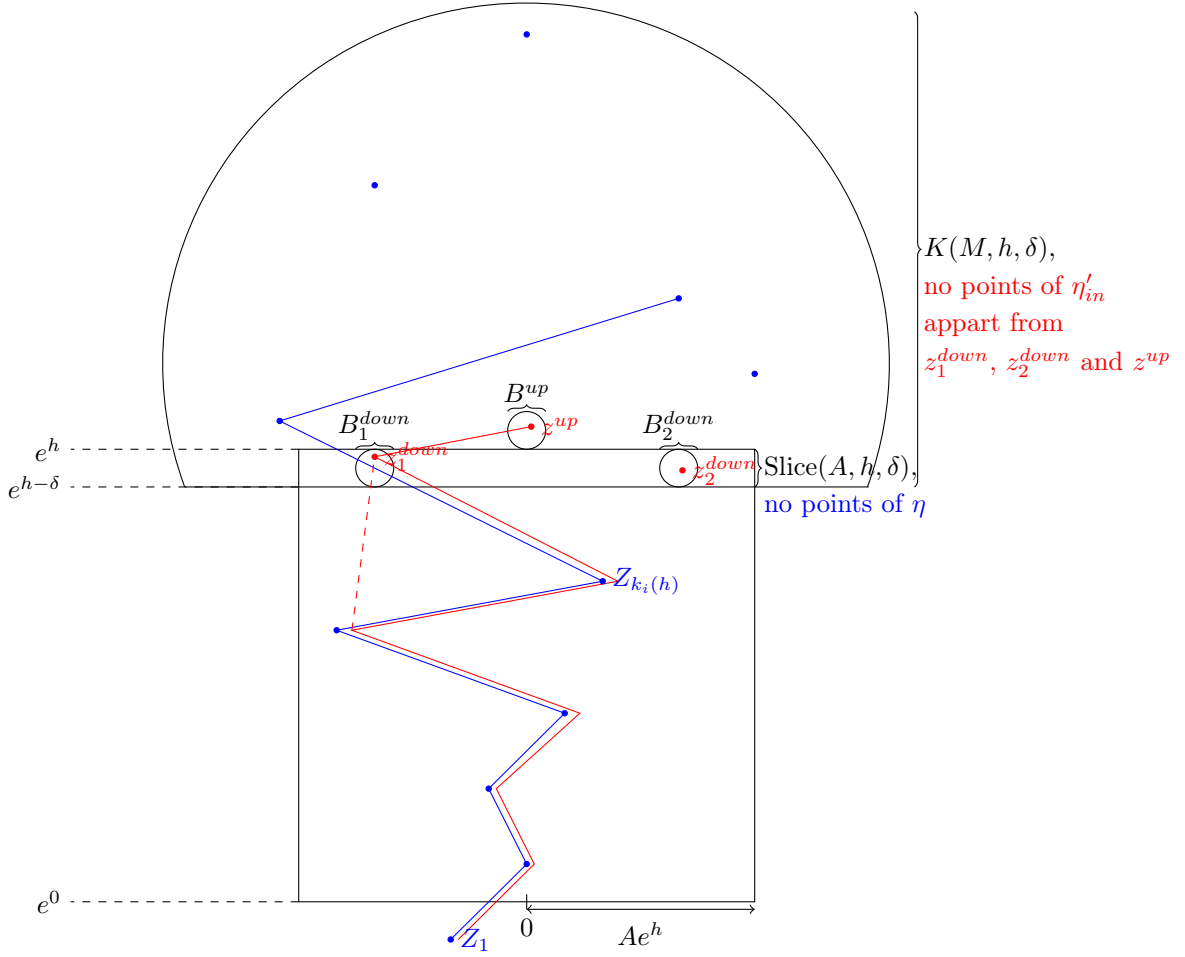

We finally prove Claim \ref{Cl:infeta}. The event $FC(\eta)$ can be realised in two different ways: (i) there is exactly one point of $N_{in}$ in $B^{up}$, exactly one point in $B_1^{down} \cap B_2^{down}$ and no other points in $K(M,h,\delta)$, or (ii) there is exactly one point in $B^{up}$, exactly one point in $B_1^{down} \backslash B_2^{down}$, exactly one point in $B_2^{down} \backslash B_1^{down}$, and no other points in $K(M,h,\delta)$. It is not hard to see that, whatever $\eta \in \Cyl(A,h) \cap \EmptySlice(A,h,\delta)$, one of the two possibilities occurs with uniformly lower-bounded probability, depending whether $B_1^{down} \cap B_2^{down}$ is large or not. The details are given in \cite[Section 7.7]{version_longue}.

\section{The bi-infinite branches and their asymptotic directions}
\label{S:proofofinfinitebranches}

In this section we prove Theorem \ref{Thm:biinfinite}.

\subsection{Notations and sketch of the proof}

Let us first introduce some notations. Recall that $\Bi$ is the set of functions $f:\mathbb{R} \to \mathbb{R}^d$ that encode a bi-infinite branch (see Definition \ref{Def:biinfinite}).

Let us denote by $\mathcal{I}$ the set of (abscissas of) points at infinity in $\mathbb{R}^d \times \{0\}$ that are the limit of at least one infinite branch in the direction of the past:
\begin{eqnarray*}
\mathcal{I}:=\{x \in \mathbb{R}^d,~\exists f \in \Bi,~ \lim_{t \to -\infty}f(t)=x\}.
\end{eqnarray*}

\begin{definition}
Let $t \in \mathbb{R}$ and $x \in \mathcal{L}_t$. We call the \emph{cell} of $x$, denoted by $\Psi_t(x)$, the set of abscissas $x'$ of points at infinity in $\mathbb{R}^d \times \{0\}$ such that there exists a infinite branch in the direction of past starting from $(x,e^t)$ that converges to $(x',0)$:
\begin{eqnarray}
\label{E:defofpsi}
\Psi_t(x):=\{x' \in \mathbb{R}^d,~\exists f \in \Bi,~f(t)=x \text{ and } \lim_{t \to -\infty}f(t)=x'\}.
\end{eqnarray}
\end{definition}

Thus $\mathcal{I}=\bigcup_{x \in \mathcal{L}_0}\Psi_t(x)$. In Step 1, we show that every infinite branch in the direction of the past converges to a point at infinity (point (ii) in Theorem \ref{Thm:biinfinite}), which is a direct consequence of Theorem \ref{Thm:controlbackward}. Then we show in Step 2 that the DSF is \emph{straight} with probability $1$ (Proposition \ref{Prop:straightness}). Recall that the Maximal Backward Deviation ($\MBD$) has been defined in Definition \ref{Def:defofcfd}. 

\begin{proposition}
\label{Prop:straightness}
\tvc{For $n\in \N$ and $x'\in \mathcal{L}_{-n}$, denote by $\MBD_{-\infty}^{-n}(x'):=\liminf_{t \to \infty} \MBD_{-t}^{-n}(x')$. Then, with probability $1$:
\begin{eqnarray}
\label{E:straightness}
\forall x \in \mathcal{L}_0,~\lim_{\substack{n \to \infty,\\n \in \mathbb{N}}}\max_{x' \in \mathcal{D}_{-n}^0(x)}  \MBD_{-\infty}^{-n}(x')=0.
\end{eqnarray}}
\end{proposition}

The above property (\ref{E:straightness}) is called the \emph{straightness} property.

The rest \tvc{of the} proof is organized as follows. In Step 3, we show that the cells $\Psi_t(x)$ are closed and we check measurably conditions on $\Psi_t(x)$ and $\mathcal{I}$. In Step 4, we use a second moment technique to show that there exists infinitely many bi-infinite branches (the point (i)). It will follow that $\mathcal{I}$ is dense in $\mathbb{R}^d$.

Then we show that $\mathcal{I}$ is a closed subset of $\mathbb{R}^d$ (Step 5). To do this, it is sufficient to show that the family of cells $\{\Psi_0(x),~x \in \mathcal{L}_0\}$ is locally finite, that is, every ball $B \subset \mathbb{R}^d$ intersects finitely many cells. Thus it follows that $\mathcal{I}$ is a dense closed subset of $\mathbb{R}^d$, therefore $\mathcal{I}=\mathbb{R}^d$. It proves (iii).

In Step 6, we prove (iv). The uniqueness follows from coalescence.

\subsection{Step 1: proof of (ii)}

By Theorem \ref{Thm:controlbackward} applied with $p=1$ and Fatou Lemma:
\begin{eqnarray}
\label{E:biinfinitestep2eq1}
\mathbb{E}_{0 \in \mathcal{L}_0} \left[ \MBD_{-\infty}^0(x) \right] \le \liminf_{h \to \infty} \mathbb{E}_{0 \in \mathcal{L}_0} \left[\MBD_{-h}^0(0)\right] \le \limsup_{h \to \infty} \mathbb{E}_{0 \in \mathcal{L}_0} \left[\MBD_{-h}^0(0)\right] <\infty.
\end{eqnarray}
Then, almost surely, for all $x \in \mathcal{L}_0$, $\MBD_{-\infty}^0(x)<\infty$. For any $f \in \Bi$, and any $h \ge 0$,
\begin{eqnarray*}
\int_{-h}^0 \|f'(t)\|~dt \le \MBD_{-h}^0(f(0)),
\end{eqnarray*}
thus
\begin{eqnarray*}
\int_{-\infty}^0 \|f'(t)\|~dt \le \liminf_{h \to \infty}\MBD_{-h}^0(f(0))=\MBD_{-\infty}^0(f(0))<\infty \text{ by } (\ref{E:biinfinitestep2eq1}),
\end{eqnarray*}
so $\lim_{t \to -\infty} f(t)$ exists. Then any bi-infinite branch admits an asymptotic direction toward the past.

\subsection{Step 2: proof of straightness}

The proof of straightness is based on Theorem \ref{Thm:controlbackward}. It is equivalent to prove the following statement:

\begin{eqnarray*}
\mathbb{P}_{0 \in \mathcal{L}_0} \left[ \lim_{\substack{n \to \infty,\\n \in \mathbb{N}}}\max_{x \in \mathcal{D}_{-n}^0(0)} \MBD_{-\infty}^{-n}(x) \right]=1.
\end{eqnarray*}

Let $n \in \mathbb{N}$. Consider the weight function
\begin{eqnarray*}
w(x,\eta):=\mathbf{1}_{x \in \mathcal{L}_{-n}} \MBD_{-\infty}^{-n}(x)^{2d}.
\end{eqnarray*}
Proposition \ref{Lem:MTlemma1} applied to $w$ with $t_1=-n$ and $t_2=0$ gives,
\begin{eqnarray*}
\mathbb{E}_{0 \in \mathcal{L}_{-n}}\left[ \MBD_{-\infty}^{-n}(0)^{2d} \right]=e^{-dn}\mathbb{E}_{0 \in \mathcal{L}_0}\left[ \sum_{x \in \mathcal{D}_{-n}^0(0)} \MBD_{-\infty}^{-n}(x)^{2d} \right].
\end{eqnarray*}
Thus
\begin{eqnarray}
\label{E:biinfeq1}
&&\mathbb{E}_{0 \in \mathcal{L}_0}\left[ \max_{x \in \mathcal{D}_{-n}^0(0)} \MBD_{-\infty}^{-n}(x)^{2d} \right] \le \mathbb{E}_{0 \in \mathcal{L}_0}\left[ \sum_{x \in \mathcal{D}_{-n}^0(0)} \MBD_{-\infty}^{-n}(x)^{2d} \right]\nonumber\\
&&=e^{dn}\mathbb{E}_{0 \in \mathcal{L}_{-n}}\left[ \MBD_{-\infty}^{-n}(0)^{2d} \right]=e^{-dn}\mathbb{E}_{0 \in \mathcal{L}_0}\left[ \MBD_{-\infty}^{0}(0)^{2d} \right].
\end{eqnarray}
where invariance by dilation (Lemma \ref{Lem:scaleinv}) was used in the last equality (with $t=-n$ and $t'=0$).

By Theorem \ref{Thm:controlbackward} applied to any $p \ge 1$ and Fatou Lemma,
\begin{eqnarray}
\label{E:lasteqnstep1}
\mathbb{E}_{0 \in \mathcal{L}_0}\left[ \MBD_{-\infty}^{0}(0)^p \right]=\mathbb{E}_{0 \in \mathcal{L}_0}\left[ \liminf_{t \to \infty}\MBD_{-t}^{0}(0)^p \right]\le \liminf_{t \to \infty} \mathbb{E}_{0 \in \mathcal{L}_0}\left[ \MBD_{-t}^0(0)^p \right]<\infty.
\end{eqnarray}

Thus, by taking $p=2d$,
\begin{eqnarray*}
\mathbb{E}_{0 \in \mathcal{L}_0}\left[ \sum_{n \in \mathbb{N}} \max_{x \in \mathcal{D}_{-n}^0(0)} \MBD_{-\infty}^{-n}(x)^{2d}\right]\overset{(\ref{E:biinfeq1})}{\le} \sum_{n \in \mathbb{N}} e^{-dn} \mathbb{E}_{0 \in \mathcal{L}_0}\left[ \MBD_{-\infty}^{0}(0)^{2d} \right] \overset{(\ref{E:lasteqnstep1})}{<}\infty.
\end{eqnarray*}
It follows that $\sum_{n \in \mathbb{N}} \max_{x \in \mathcal{D}_{-n}^0(0)}\MBD_{-\infty}^{-n}(x)^{2d}<\infty$ $\mathbb{P}_{0 \in \mathcal{L}_0}$-a.s., this implies Proposition \ref{Prop:straightness}.

\subsection{Step 3: the cells $\Psi_t(x)$ are closed and measurability conditions}

\begin{definition}
A point $(x,e^t) \in \DSF$ is said to be $\emph{connected to infinity}$ if for all $t' \le t$, $\mathcal{D}_{t'}^t(x) \neq \emptyset$. We denote by $\DSF^\infty \subset \DSF$ the set of points that are connected to infinity.
\end{definition}

For $t \in \mathbb{R}$, $x_0 \in \mathcal{L}_t$, we define the random subset of descendants of $(x_0,e^t)$:
\begin{eqnarray*}
D_t(x_0):=\{(x,e^{t'}) \in \DSF^\infty,~x \in \mathcal{D}_{t'}^t(x_0)\} \subset \DSF^\infty.
\end{eqnarray*}

The facts that $\Psi_t(x_0)$ is closed for all $t \in \mathbb{R}$ and $x_0 \in \mathcal{L}_t$ a.s. will be deduced from the following lemma:

\begin{lemma}
\label{Lem:measurlemma}
The following occurs outside a set of probability zero: for all $t \in \mathbb{R}$, $x_0 \in \mathcal{L}_t$ and $x \in \mathbb{R}^d$, $x \in \Psi_t(x_0)$ if and only if $(x,0) \in \overline{D_t(x_0)}$ (where $\overline{\cdot}$ denotes the closure operator).
\end{lemma}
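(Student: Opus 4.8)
\textbf{Proof plan for Lemma \ref{Lem:measurlemma}.}

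The plan is to prove the two inclusions separately. For the easy direction, suppose $x \in \Psi_t(x_0)$. Then by definition there is $f \in \Bi$ with $f(t) = x_0$ and $\lim_{s \to -\infty} f(s) = x$. For each $t' \le t$, the point $(f(t'), e^{t'})$ lies on the trajectory from itself up to $(x_0, e^t)$, so $f(t') \in \mathcal{D}_{t'}^t(x_0)$; moreover $(f(t'), e^{t'})$ is itself connected to infinity (because $f$ is bi-infinite, so it has descendants arbitrarily low), hence $(f(t'), e^{t'}) \in D_t(x_0)$. Since $f(t') \to x$ as $t' \to -\infty$ and $e^{t'} \to 0$, we get $(f(t'), e^{t'}) \to (x, 0)$, so $(x,0) \in \overline{D_t(x_0)}$. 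This direction requires no control estimates.

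For the converse, suppose $(x,0) \in \overline{D_t(x_0)}$. Then there is a sequence $(x_n, e^{t_n}) \in D_t(x_0)$ with $x_n \to x$ and $t_n \to -\infty$ (the last because the only way to approach the boundary hyperplane $\mathbb{R}^d \times \{0\}$ is to have ordinate going to $0$). Each $(x_n, e^{t_n})$ is connected to infinity and lies below $(x_0,e^t)$ on a trajectory, so it determines a downward-infinite branch through $(x_0,e^t)$; concatenating with the (finite, upward) piece of trajectory from $(x_0,e^t)$ is not yet bi-infinite, so I first need to extend upward. Here I invoke Proposition \ref{Prop:welldefined}: the trajectory from $(x_0, e^t)$ goes up to infinity, giving the forward half. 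Thus each $x_n$ gives an element $f_n \in \Bi$ with $f_n(t) = x_0$ and $f_n(t_n) = x_n$. Now I want to extract a limiting bi-infinite branch $f$ with $f(t) = x_0$ and $\lim_{s\to-\infty} f(s) = x$. The branches $f_n$ all agree with the (unique) forward trajectory from $(x_0,e^t)$ above level $t$; below level $t$, by the tightness property (Proposition \ref{Prop:tightness}), for any $\varepsilon > 0$ there is $m$ such that all descendants $x' \in \mathcal{D}_{-m}^t$ of $x_0$ satisfy $\MBD_{-\infty}^{-m}(x') < \varepsilon$, so the entire downward branch below level $-m$ stays within horizontal distance $\varepsilon$ of $f_n(-m)$. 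Since there are only finitely many descendants of $x_0$ at level $-m$ (Corollary \ref{Cor:expnumdesc}), infinitely many $f_n$ pass through the same level-$(-m)$ descendant; a diagonal extraction over $m \in \mathbb{N}$ produces a single bi-infinite branch $f$ that all these $f_n$ eventually follow below each level, and whose horizontal fluctuation below level $-m$ is at most $\varepsilon_m \to 0$. Hence $\lim_{s\to-\infty} f(s)$ exists, and since $x_n = f_n(t_n)$ is within $\varepsilon_m$ of $f(-m)$'s tail for $n$ large and $x_n \to x$, the limit equals $x$, so $x \in \Psi_t(x_0)$.

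Once the characterization $x \in \Psi_t(x_0) \iff (x,0) \in \overline{D_t(x_0)}$ is established a.s., the remaining conclusions follow: $\Psi_t(x_0)$ is the intersection of the closed set $\overline{D_t(x_0)}$ with $\mathbb{R}^d \times \{0\}$, identified with a subset of $\mathbb{R}^d$, hence closed; and measurability of $\Phi_t$ (and of $\Phi'$, writing $\mathcal{I} = \bigcup_{x_0 \in \mathcal{L}_0} \Psi_0(x_0)$) reduces to measurability of $(x, x_0, \eta) \mapsto \mathbf{1}_{(x,0) \in \overline{D_t(x_0)}}$, which is standard since $D_t(x_0)$ is built from $N$ by countably many measurable operations (the descendant relation and the forward-trajectory map are $\sigma(N)$-measurable, and the closure of a random countable-union-of-segments set depends measurably on $N$), working in the $\mathbb{P}_\lambda$-completion as stated. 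The main obstacle is the converse inclusion: making the diagonal-extraction argument rigorous requires carefully combining tightness (to control the tails of the $f_n$ uniformly) with the local finiteness of descendants at each fixed level (to force infinitely many $f_n$ to coincide below that level), and checking that the extracted limit is genuinely an element of $\Bi$ rather than merely a pointwise limit of branches.
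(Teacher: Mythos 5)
Your plan and the paper's proof attack the hard direction with the same two ingredients --- local finiteness of $\mathcal{D}_{t'}^t(x_0)$ at each fixed level $t'$ (Corollary \ref{Cor:expnumdesc}) and tightness of backward deviations (Proposition \ref{Prop:tightness}) --- so the argument is essentially sound. The packaging differs. You extract a single bi-infinite branch from a family of (selected) branches $f_n$ by a diagonal argument over levels; the paper instead builds the descendant sequence $x'_0 = x_0, x'_1, x'_2, \dots$ directly and inductively, choosing at each step $x'_{n+1} \in \mathcal{D}_{t-n-1}^{t-n}(x'_n)$ so that the invariant $(x,0) \in \overline{D_{t-n-1}(x'_{n+1})}$ is preserved. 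That choice is always possible because
\begin{eqnarray*}
\overline{D}_{t-n}(x'_n) \cap (\mathbb{R}^d \times \{0\})=\bigcup_{x'' \in \mathcal{D}_{t-n-1}^{t-n}(x'_n)} \big( \overline{D}_{t-n-1}(x'') \cap (\mathbb{R}^d \times \{0\})\big),
\end{eqnarray*}
the union commuting with closure since it is finite. The invariant then yields $\|x'_n - x\| \le \MBD_{-\infty}^{t-n}(x'_n)$ directly: for any $\varepsilon > 0$ pick a point of $D_{t-n}(x'_n)$ whose abscissa is within $\varepsilon$ of $x$, push its trajectory down to an arbitrarily low level, and let $\varepsilon \to 0$. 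Tightness then forces $x'_n \to x$.

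Two caveats about your route. First, your $f_n$ is not uniquely determined below level $t_n$ --- you need a K\"onig-type selection of an infinite downward path in the descendant tree of $(x_n,e^{t_n})$ --- yet that part of $f_n$ is never used afterward. It is cleaner to diagonalize directly over the ancestor maps $\mathcal{A}_{t_n}^{t-m}(x_n) \in \mathcal{D}_{t-m}^{t}(x_0)$, which are unambiguous, rather than over full bi-infinite branches. Second, once you write the diagonal extraction out carefully, the estimate you need on the extracted sequence is exactly $\|x'_n - x\| \le \MBD_{-\infty}^{t-n}(x'_n)$, which is what the paper's propagated closure invariant hands you by construction; your version has to reconstruct it from the surviving subsequence. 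Your plan is workable, but the inductive construction is shorter and sidesteps both the selection of the $f_n$ and the extraction bookkeeping.
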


Lemma \ref{Lem:measurlemma} implies that, outside a set of probability zero, for all $t \in \mathbb{R}$ and $x_0 \in \mathcal{L}_t$, $\Psi_t(x_0)=\overline{D_t(x_0)} \cap (\mathbb{R}^d \times \{0\})$ is closed in $\mathbb{R}^d$. It can also be deduced from Lemma \ref{Lem:measurlemma} that the maps
\begin{eqnarray*}
\Phi_{t}:&\mathbb{R}^d \times \mathbb{R}^d \times \mathcal{N}_S &\to \mathbb{R}\nonumber\\
&(x,x_0,\eta) &\mapsto \mathbf{1}_{x_0 \in \mathcal{L}_t(\eta)}\mathbf{1}_{x \in \Psi_{t}(x_0)(\eta)},
\end{eqnarray*}
for any $t \in \mathbb{R}$, and
\begin{eqnarray*}
\Phi':&\mathbb{R}^d \times \mathcal{N}_S &\to \mathbb{R} \nonumber\\
&(x,\eta) &\mapsto \mathbf{1}_{x \in \mathcal{I}(\eta)},
\end{eqnarray*}
are measurable if $\mathcal{N}_S$ is equipped with the completed $\sigma$-algebra (for the probability measure $\mathbb{P}$), which will be required in the following steps. The details concerning these measurability conditions are given in \cite[Section 8.4]{version_longue}.

\begin{proof}[Proof of Lemma \ref{Lem:measurlemma}] Let $t \in \mathbb{R}$ and $x_0 \in \mathcal{L}_t$. It is clear that $x \in \Psi_t(x_0)$ implies that  $(x,0) \in \overline{D_t(x_0)}$. Let us suppose $(x,0) \in \overline{D_t(x_0)}$ and we show that $x \in \Psi_t(x_0)$. Let us construct a sequence $(x'_n) \in (\mathbb{R}^d)^\mathbb{N}$ inductively such that, at each step $n \in \mathbb{N}$, $x'_n \in \mathcal{L}_{t-n}$, $x'_{n+1} \in \mathcal{D}_{t-n-1}^{t-n}(x'_n)$ and $(x,0) \in \overline{D_{t-n}(x_n)}$.

We set $x'_0=x_0$. Let $n \in \mathbb{N}$ and suppose that $x'_n$ has been constructed. Since
\begin{eqnarray*}
D_{t-n}(x'_n)=\left(\mathbb{R}^d \times (0,e^{t-n-1}] \right) \cap \left(\bigcup_{x'' \in \mathcal{D}_{t-n-1}^{t-n}(x'_n)} D_{t-n-1}(x'') \right)
\end{eqnarray*}
and $\#\mathcal{D}_{t-n-1}^{t-n}(x)<\infty$ with probability $1$ (Corollary \ref{Cor:expnumdesc}), it follows that
\begin{eqnarray}
\label{E:measureqn2}
\overline{D}_{t-n}(x'_n) \cap (\mathbb{R}^d \times \{0\})=\bigcup_{x'' \in \mathcal{D}_{t-n-1}^{t-n}(x'_n)} \left( \overline{D}_{t-n-1}(x'') \cap (\mathbb{R}^d \times \{0\})\right).
\end{eqnarray}

Since $(x,0) \in \overline{D}_{t-n}(x'_n)$, by (\ref{E:measureqn2}) it is possible to choose $x'_{n+1} \in \mathcal{D}_{t-n-1}^{t-n}(x'_n)$ such that $(x,0) \in \overline{D}_{t-n-1}(x'_{n+1})$.

This construction defines a sequence $(x'_n)_{n \in \mathbb{N}}$ such that, for all $n \in \mathbb{N}$, $(x'_n,0) \in \overline{D_{t-n}(x'_n)}$. The sequence of points $(x'_n,e^{t-n})$ naturally defines an infinite branch toward the past and it remains to show that this branch converges to $(x,0)$ toward the past. By Step 1, this branch converges to some point at infinity, thus it suffices to show that $x'_n \to x$ as $n \to \infty$. 

Let us assume for the moment that
\begin{eqnarray}
\label{E:measureqn3}
\forall n \in \mathbb{N},~\|x'_n-x\| \le \MBD_{-\infty}^{t-n}(x'_n).
\end{eqnarray}

Then, by the straightness property (Proposition \ref{Prop:straightness}),
\begin{eqnarray*}
\|x'_n-x\| \le \MBD_{-\infty}^{t-n}(x'_n) \le \max_{x'' \in \mathcal{D}_{t-n}^t(x_0)} \MBD_{-\infty}^{t-n}(x'') \to 0 \text{ as } n \to \infty.
\end{eqnarray*}

It remains to prove (\ref{E:measureqn3}). Let $n \in \mathbb{N}$. Let $\varepsilon>0$. Since $x \in \overline{D_{t-n}(x'_n)}$, there exists $t_2 \le t-n$ and $x_2 \in \mathcal{D}_{t_2}^{t-n}(x'_n)$ such that $(x_2,t_2) \in \DSF^\infty$ and $\|x_2-x\|<\varepsilon$. Let $t_3 \le t_2$. Since $(x_2,t_2) \in \DSF^\infty$, there exists some $x_3 \in \mathcal{D}_{t_3}^{t_2}(x_2)$. Thus,
\begin{eqnarray*}
&\|x'_n-x\| &\le \|x'_n-x_2\|+\|x_2-x\| \le  \CFD_{t_2}^{t-n}(x_2)+\varepsilon \nonumber\\
&&\le \CFD_{t_3}^{t-n}(x_3)+\varepsilon \le \MBD_{t_3}^{t-n}(x'_n)+\varepsilon.
\end{eqnarray*}
Thus $\|x'_n-x\| \le \MBD_{t_3}^{t-n}(x'_n)+\varepsilon$ for any $t_3$ small enough, then
\begin{eqnarray*}
\|x'_n-x\| \le \liminf_{t' \to \infty}\MBD_{t'}^{t-n}+\varepsilon=\MBD_{-\infty}^{t-n}(x'_n)+\varepsilon.
\end{eqnarray*}
Since this is true for any $\varepsilon>0$, we obtain (\ref{E:measureqn3}), this completes the proof of Lemma \ref{Lem:measurlemma}.
\end{proof}

\subsection{Step 4: $\mathcal{I}$ is nonempty and dense in $\mathbb{R}^d$}

The main part of the proof consists in proving that $\mathcal{I} \neq \emptyset$ (i.e. the point (i) in Theorem \ref{Thm:biinfinite}). The density will follow easily. The proof is based on a second moment method, Theorem \ref{Thm:controlforward} and Lemma \ref{Lem:technical}.
For $t \ge 0$, let us define the level $t$-association function $f_t$ as follows:
\begin{eqnarray*}
f_t(x,\eta):=\mathcal{A}_0^t\left(\argmin_{x' \in \mathcal{L}_0(\eta)}\|x'-x\|\right)(\eta),
\end{eqnarray*}
for any $x \in \mathbb{R}^d$ and $\eta \in \mathcal{N}_S$. That is, we consider the point $x'$ of $\mathcal{L}_0(\eta)$ the closest to $x$ and we follow the trajectory from $x'$ up to the level $t$.
We apply a second moment method on $V_t(0):=\Leb(\Lambda_{f_t}(0))$ (recall that $\Lambda_{f_t}(\cdot)$ is defined in Definition  \ref{Def:cellofpoint}). First, Corollary \ref{Cor:expectedvolcell} gives that $\mathbb{E}_{0 \in \mathcal{L}_t}[V_t(0)]=\alpha_0^{-1}e^{dt}$, where $\alpha_0$ has been defined in Proposition \ref{Prop:finiteintensity}. We now apply Theorem \ref{Thm:controlforward} to upper-bound $\mathbb{E}_{0 \in \mathcal{L}_t}[V_t(0)^2]$.

Let $X_0:=\argmin_{x \in \mathcal{L}_0}\|x\|$ be the point of $\mathcal{L}_0$ the closest to $0$. By Proposition \ref{Lem:closest}, $\mathbb{E}[\|X_0\|^p]<\infty$. Then (i) in Theorem \ref{Thm:controlforward} applied to $X_0$ with $p=d$ gives,
\begin{eqnarray}
\label{E:bininfeq1}
\limsup_{t \to \infty} ~e^{-dt}\mathbb{E}\left[\|\mathcal{A}_0^t(X_0)\|^d \right] \le \limsup_{t \to \infty} \mathbb{E}\left[ \left(\frac{\|X_0\|+\CFD_0^t(X_0)}{e^t}\right)^d \right]<\infty.
\end{eqnarray}
Applying Proposition \ref{Lem:technical} to $f_t$ with  $p=d$, we obtain
\begin{eqnarray}
\label{E:bininfeq2}
\mathbb{E}_{0 \in \mathcal{L}_t}[V_t(0)^2] \le C_{d,d}e^{dt}\mathbb{E}[\|f_t(0)\|^d]=C_{d,d}e^{dt}\mathbb{E}[\|\mathcal{A}_0^t(X_0)\|^d].
\end{eqnarray}
Then
\begin{eqnarray}
\label{E:bininffinitelimsup}
\limsup_{t \to \infty} ~e^{-2dt}\mathbb{E}_{\tvc{0 \in \mathcal{L}_t}}[V_t(0)^2] \overset{(\ref{E:bininfeq2})}{\le}\limsup_{t \to \infty} C_{d,d}e^{-dt}\mathbb{E}[\|\mathcal{A}_0^t(\tvc{X_0})\|^d] \overset{(\ref{E:bininfeq1})}{<}\infty.
\end{eqnarray}
By scale invariance (Lemma \ref{Lem:scaleinv}) and Cauchy-Schwarz,
\begin{eqnarray*}
\mathbb{P}_{0 \in \mathcal{L}_0}[\mathcal{D}_{-t}^0(0) \neq \emptyset]=\mathbb{P}_{0 \in \mathcal{L}_t}[\mathcal{D}_0^t(0) \neq \emptyset]=\mathbb{P}_{0 \in \mathcal{L}_t}[V_t(0)>0] \ge \frac{\mathbb{E}_{0 \in \mathcal{L}_t}[V_t(0)]^2}{\mathbb{E}_{0 \in \mathcal{L}_t}[V_t(0)^2]}.
\end{eqnarray*}
Thus
\begin{eqnarray*}
&\mathbb{P}_{0 \in \mathcal{L}_0}[\forall t \ge 0,~\mathcal{D}_{-t}^0(0) \neq \emptyset]
&=\lim_{t \to \infty} \mathbb{P}_{0 \in \mathcal{L}_0}[\mathcal{D}_{-t}^0(0) \neq \emptyset] \ge \liminf_{t \to \infty} \frac{e^{-2dt}\mathbb{E}_{0 \in \mathcal{L}_t}[V_t(0)]^2}{e^{-2dt}\mathbb{E}_{0 \in \mathcal{L}_t}[V_t(0)^2]} \nonumber\\
&&=\frac{\alpha_0^{-2}}{\underset{t \to \infty}{\limsup}~e^{-2dt}\mathbb{E}_{0 \in \mathcal{L}_t}[V_t(0)^2]}>0 \text{ by } (\ref{E:bininffinitelimsup}).
\end{eqnarray*}
Note that, if the DSF has finite degree (which happens with probability 1), then a point $(x,e^0)$ with $x \in \mathcal{L}_0$ belongs to an infinite branch if and only if for all $t \ge 0$, $\mathcal{D}_{-t}^0(x) \neq \emptyset$. Thus 
\begin{eqnarray*}
\mathbb{P}_{0 \in \mathcal{L}_0}[0 \text{ belongs to a bi-infinite branch}]>0.
\end{eqnarray*}
It easily follows that there exists a bi-infinite branch with positive probability, and bi-infinite branches converge in the direction of the past by Step 1, so $\mathbb{P}[\mathcal{I} \neq \emptyset]>0$. Since the event $\mathcal{I} \neq \emptyset$ is translation invariant, it implies $\mathbb{P}[\mathcal{I} \neq \emptyset]=1$ by ergodicity.

We move on to show that $\mathcal{I}$ is dense in $\mathbb{R}^d$ almost surely. Let $x \in \mathbb{R}^d$. By translation invariance, $\mathcal{I}\overset{d}{=}T_x\mathcal{I}$. Moreover, for $r,r'>0$, by dilation invariance, $\mathcal{I} \overset{d}{=} D_{r/r'}\mathcal{I}$, \tvc{where $D_{r/r'} :  x\in \mathbb{R}^d\mapsto x\times r/r'$ is a dilation of $\mathbb{R}^d$}. Thus, since $\mathcal{I} \neq \emptyset$ a.s.,
\begin{eqnarray*}
&\mathbb{P}[\mathcal{I} \cap B(x,r) \neq \emptyset]&=\mathbb{P}[T_x\mathcal{I} \cap B(x,r) \neq \emptyset]=\mathbb{P}[\mathcal{I} \cap B(0,r) \neq \emptyset] \\
&&=\mathbb{P}[D_{r/r'}\mathcal{I} \cap B(0,r) \neq \emptyset]=\mathbb{P}[\mathcal{I} \cap B(0,r') \neq \emptyset] \to 1 \text{ as } r' \to \infty, \nonumber
\end{eqnarray*}
thus $\mathbb{P}[\mathcal{I} \cap B(x,r) \neq \emptyset]=1$. Since $\mathbb{R}^d$ admits a countable basis, it follows that $\mathcal{I}$ is dense in $\mathbb{R}^d$ almost surely.

\subsection{Step 5: $\mathcal{I}$ is closed in $\mathbb{R}^d$}

Since $\Psi_0(x)$ is closed in $\mathbb{R}^d$ for all $x \in \mathcal{L}_0$ by Step 3, \tvc{it is sufficient to show that almost surely, for any ball $B \subset \mathbb{R}^d$ of radius $1$, $B \cap \Psi_0(x) \neq \emptyset$ for finitely many $x \in \mathcal{L}_0$.} Let $B \subset \mathbb{R}^d$ be some ball of radius $1$, it will be shown that $B$ intersects finitely many cells a.s. The conclusion will immediately follow since $\mathbb{R}^d$ admits a countable basis. By translation invariance it is enough to consider $B(0,e^0)$.

For $t \in \mathbb{R}$ and $x \in \mathcal{L}_t$, we define the radius of the cell $\Psi_t(x)$, denoted by $\Rad_t(x)$, as
\begin{eqnarray*}
\Rad_t(x):=\sup_{x' \in \Psi_t(x)} \|x'-x\|.
\end{eqnarray*}
with the convention $\sup \emptyset=0$, where $\Psi_t(x)$ is defined in (\ref{E:defofpsi}). We now show that $\Rad_0(x) \le \MBD_{-\infty}^0(x)$. Let $x' \in \Psi_0(x)$. There exists $f \in \Bi$ such that $f(0)=x$ and $\lim_{t \to -\infty} f(t)=x'$. Thus
\begin{eqnarray*}
\|x'-x\|=\left\|\int_{-\infty}^0 f'(t)~dt\right\| \le \int_{-\infty}^0 \|f'(t)\|~dt \le \liminf_{t \to -\infty} \MBD_t^0(x)=\MBD_{-\infty}^0(x).
\end{eqnarray*}
Since this is true for each $x' \in \Psi_0(x)$, $\Rad_0(x) \le \MBD_{-\infty}^0(x)$. Thus, by (\ref{E:lasteqnstep1}), for any $p \ge 1$, $\mathbb{E}_{0 \in \mathcal{L}_0}[\Rad_0(0)^p]<\infty$.
For $x \in \mathcal{L}_0$, we now define the \emph{augmented cell} of $x$ by the set of points $x'$ that are at distance at most 1 from $\Psi_0(x)$:
\begin{eqnarray*}
\Psi'_0(x):=\{x' \in \mathbb{R}^d,~\exists x'' \in \Psi_0(x),\|x''-x'\|<1\}.
\end{eqnarray*}
Note that $\Psi_0(x) \cap B(0,e^0) \neq \emptyset$ if and only if $0 \in \Psi'_0(x)$, this is the reason why $\Psi'_0(x)$ has been introduced. Thus what we want to show is that $0 \in \Psi_0'(x)$ for finitely many $x \in \mathcal{L}_0$. It is done by the Mass Transport Principle. From each $x \in \mathcal{L}_0$, we transport a unit mass from $x$ to each unit volume of $\Psi_0'(x)$. It corresponds to the measure $\pi$ defined as
\begin{eqnarray*}
\pi(E):=\mathbb{E}\left[\sum_{x \in \mathcal{L}_0} \int_{\Psi_0'(x)} \mathbf{1}_{(x,x') \in E} ~dx'\right].
\end{eqnarray*}
for all $E \subset \mathbb{R}^d \times \mathbb{R}^d$.
Let $A$ be a nonempty open subset of $\mathbb{R}^d$. On the one hand,
\begin{eqnarray*}
\pi(A \times \mathbb{R}^d)=\mathbb{E}\left[ \sum_{x \in \mathcal{L}_0 \cap A} \Leb(\Psi'_0(x)) \right]=\alpha_0\Leb(A)\mathbb{E}_{0 \in \mathcal{L}_0}[\Leb(\Psi'_0(0))],
\end{eqnarray*}
where Lemma \ref{Lem:technicallemmageneral} is used in the second equality. On the other hand,
\begin{eqnarray*}
&\pi(\mathbb{R}^d \times A)&=\mathbb{E}\left[ \sum_{x \in \mathcal{L}_0} \int_{\Psi'_0(x)} \mathbf{1}_{x'  \in A}~dx' \right] =\mathbb{E}\left[\sum_{x \in \mathcal{L}_0} \int_{A} \mathbf{1}_{x' \in \Psi'_0(x)} ~dx'\right] \\
&&=\mathbb{E}\left[\int_A \#\{x \in \mathcal{L}_0,~x' \in \Psi'_0(x)\} ~dx' \right]=\int_A \mathbb{E}\left[ \#\{x \in \mathcal{L}_0,~x' \in \Psi'_0(x)\} \right]~dx' \nonumber\\
&&=\int_A \mathbb{E}\left[ \#\{x \in \mathcal{L}_0,~0 \in \Psi'_0(x)\} \right]~dx'=\Leb(A) \mathbb{E}\left[ \#\{x \in \mathcal{L}_0,~0 \in \Psi'_0(x)\} \right]. \nonumber
\end{eqnarray*}
Fubini was used in second, third and fourth equality and translation invariance was used in the fifth equality. Thus, since $\pi$ is diagonally invariant, the Mass Transport Principle gives,
\begin{eqnarray*}
\mathbb{E}\left[ \#\{x \in \mathcal{L}_0,~0 \in \Psi'_0(x)\} \right]=\alpha_0\mathbb{E}_{0 \in \mathcal{L}_0}[\Leb(\Psi'_0(0))].
\end{eqnarray*}
Denoting by $\vartheta(d)$ the volume of the unit ball in $\mathbb{R}^d$, we have
\begin{eqnarray*}
\mathbb{E}_{0 \in \mathcal{L}_0}[\Leb(\Psi'_0(0))] \le \mathbb{E}_{0 \in \mathcal{L}_0}[\vartheta(d) (\Rad_0(0)+1)^d] \le 2^{d-1}\vartheta(d)\left(\mathbb{E}_{0 \in \mathcal{L}_0}[\Rad_0(0)^d]+1 \right)<\infty,
\end{eqnarray*}
it follows that $\mathbb{E}\left[ \#\{x \in \mathcal{L}_0,~0 \in \Psi'_0(x)\} \right]<\infty$, this proves that the family $\{\Psi_0(x),~x \in \mathcal{L}_0\}$ is locally finite almost surely.

Therefore, $\mathcal{I}$ is dense and closed in $\mathbb{R}^d$, thus $\mathcal{I}=\mathbb{R}^d$. This proves (iii) in Theorem \ref{Thm:biinfinite}.

\subsection{Step 6 proof of (iv)}

Let us call $\mathcal{I}'$ the set of (abscissas of) points in $\mathbb{R}^d \times \{0\}$ which are the limit in the direction of past of at least two bi-infinite branches:
\begin{eqnarray*}
\mathcal{I}':=\{x \in \mathbb{R}^d,\exists f_1,f_2 \in \Bi,~f_1 \neq f_2 \text{ and }\lim_{t \to -\infty}f_1(t)=\lim_{t \to -\infty}f_2(t)=x\}.
\end{eqnarray*}

The proof that $\Phi':(x,\eta) \mapsto \mathbf{1}_{x \in \mathcal{I}(\eta)}$ is measurable, done in Step 3, can be easily adapted to show that $(x,\eta) \mapsto \mathbf{1}_{x \in \mathcal{I}'(\eta)}$ is also measurable. By translation invariance and Fubini,
\begin{eqnarray*}
\mathbb{E}[\Leb(\mathcal{I}')]=\mathbb{E}\left[ \int_{\mathbb{R}^d} \mathbf{1}_{x \in \mathcal{I}'}~dx \right]=\int_{\mathbb{R}^d}\mathbb{P}[x \in \mathcal{I}']~dx=\int_{\mathbb{R}^d}\mathbb{P}[0 \in \mathcal{I}']~dx=\infty\mathbb{P}[0 \in \mathcal{I}'].
\end{eqnarray*}

Thus, in order to show that $\Leb(\mathcal{I}')=0$ a.s., we will prove that $\mathbb{P}[0 \in \mathcal{I}']=0$.

Consider the set of bi-infinite branches that converges to $(0,0)$ in the direction of the past. For $t \in \mathbb{R}$, let $\mathcal{P}(t)$ be the set of $t$-level points through which these branches pass:
\begin{eqnarray*}
\mathcal{P}(t):=\{f(t),~f \in \Bi \text{ and } \tvc{\lim_{t \to -\infty}f(t)}=0\}=\{x \in \mathcal{L}_t,~0 \in \Psi_t(x)\}.
\end{eqnarray*}

We define the \emph{coalescing time of $0$}, denoted by $\tau_0$, as the first time $t$ for which all branches converging to $(0,0)$ in the direction of the past have coalesced:
\begin{eqnarray*}
\tau_0:=\inf\{t \in \mathbb{R},~\#\mathcal{P}(t)=1\} \in \mathbb{R} \cup \{-\infty,+\infty\}.
\end{eqnarray*}

Let us show that $\tau_0<+\infty$ a.s. It has been shown in Step 3 that the family of cells $\{\Psi_0(x),~x \in \mathcal{L}_0\}$ is locally finite, so $\#\mathcal{P}(0)<\infty$ a.s. (and it is also true that $\#\mathcal{P}(t)<\infty$ for all $t$). By coalescence (Theorem \ref{Thm:coalescence}), there exists a.s. some $t \ge 0$ such that a.s. (and it is also true that $\#\mathcal{P}(t)<\infty$ for all $t$). By coalescence (Theorem \ref{Thm:coalescence}ll trajectories starting from the points $\{(x,e^0),~x \in \mathcal{P}(0)\}$ coalesce before time $t$. For such a $t$, $\#\mathcal{P}(t)=1$, therefore $\tau_0<\infty$ a.s.

By dilation invariance, for all $t \in \mathbb{R}$, $\tau_0 \overset{d}{=}\tau_0+t$, therefore the only possibility is that $\tau_0=-\infty$ a.s. This implies that $\#\mathcal{P}(t)=1$ for all $t \in \mathbb{R}$ a.s., so there exists a unique $f \in \Bi$ such that $\lim_{t \to -\infty} f(t)=0$. This shows that $\mathbb{P}[0 \in \mathcal{I}']=0$ so $\mathcal{I}'$ has measure zero almost surely.

We move on to show that $\mathcal{I}'$ is dense in $\mathbb{R}^d$. We first show that $\mathcal{I}' \neq \emptyset$ a.s. Let us suppose that $\mathcal{I}'=\emptyset$ with positive probability. On the event $\{\mathcal{I}'=\emptyset\}$, the cells $\{\Psi_0(x),~x \in \mathcal{L}_0\}$ are pairwise disjoint. So for all $x \in \mathcal{L}_0$,

\begin{eqnarray*}
\Psi_0(x)^c=\bigcup_{\substack{x' \in \mathcal{L}_0 \\ x' \neq x }} \Psi_0(x').
\end{eqnarray*}

Since the cells $\Psi_0(x)$ are closed in $\mathbb{R}^d$ (Step 3) and the family $\{\Psi_0(x)~x \in \mathcal{L}_0\}$ is locally finite (Step 5), both $\Psi_0(x)$ and $\Psi_0(x)^c$ must be closed in $\mathbb{R}^d$. By connectivity, this implies that $\Psi_0(x)$ is $\emptyset$ or $\mathbb{R}^d$ and there is unique $x \in \mathbb{R}^d$ such that $\Psi_0(x)=\mathbb{R}^d$. Then, conditioning to the event $\{\mathcal{I}'=\emptyset\}$, the law of the unique random $X \in \mathcal{L}_0$ such that $\Psi_0(X)=\mathbb{R}^d$ must be translation invariant, which is impossible. Therefore $\mathbb{P}[\mathcal{I}'=\emptyset]=0$.

We now show that $\mathcal{I}'$ is dense in $\mathbb{R}^d$ by the same argument that have been use to show that $\mathcal{I}$ is dense. For any $x \in \mathbb{R}^d$ and $0<\varepsilon<R<\infty$, by translation and dilation invariance,
\begin{eqnarray*}
\mathbb{P}[\mathcal{I}' \cap B(x,\varepsilon)]=\mathbb{P}[\mathcal{I}' \cap B(0,R)] \to 1 \text{ as } R \to \infty,
\end{eqnarray*}
so $\mathbb{P}[\mathcal{I}' \cap B(x,\varepsilon)]=1$. Since $\mathbb{R}^d$ admits a countable basis, we can conclude that $\mathcal{I}'$ is dense in $\mathbb{R}^d$ almost surely. 

The last point is to show that $\mathcal{I}'$ is countable in the bi-dimensional case ($d=1$). Note that, for $x \in \mathbb{R}^d$, $x \in \mathcal{I}'$ if and only if there exists some level $t \in \mathbb{R}$ and two points $x_1,x_2 \in \mathcal{L}_t$ with $x_1 \neq x_2$ such that $x \in \Psi_t(x_1) \cap \Psi_t(x_2)$. Moreover the level $t$ can be chosen rational without loss of generality. Thus it suffices to show that, for a given level $t \in \mathbb{Q}$, $\cup_{x_1,x_2 \in \mathcal{L}_t,~x_1 \neq x_2} (\Psi_t(x_1) \cap \Psi_t(x_2))$ is countable. Let us consider the set $L_t^\infty:=\{x \in \mathcal{L}_t,~\Psi_t(x) \neq \emptyset\}$. Since it is a discrete subset of $\mathbb{R}$, let us index its elements by $\mathbb{Z}$ in the ascending order: $\mathcal{L}_t^\infty=\{x_n,~n \in \mathbb{Z}\}$. It has been shown that, for $n \in \mathbb{Z}$, $\Psi_t(x_n) \subset \mathbb{R}$ is closed (Step 3) and bounded (Step 5); moreover it has to be connected by planarity. Thus $\Psi_t(x_n)$ is a segment (eventually reduced to a single point); let us write $\Psi_t(x_n)=[a_n,b_n]$ for all $n \in \mathbb{Z}$. Again by planarity, $b_n \le a_{n+1}$ for all $n \in \mathbb{N}$ (else a trajectory from $(b_n,0)$ should cross a trajectory from $(a_{n+1},0)$). Moreover, since the segments $[a_n,b_n]$ cover $\mathbb{R}$, $b_n \le a_{n+1}$ so $a_n=b_{n+1}$ for all $n \in \mathbb{Z}$. Finally, the set of points in $\mathbb{R}$ belonging to two different cells $[a_n,b_n]$ are exactly the set of extremities $\{a_n,~n \in \mathbb{Z}\}$, so it is countable. This completes the proof.

\bigbreak

We can wonder what are the possible numbers of bi-infinite branches sharing a same asymptotic direction toward the past. This question is unsolved, but we can give the following conjecture:

\begin{conj}
\label{conj:infinitebranches}
Almost surely, the maximal number of bi-infinite branches sharing a same asymptotic direction toward the past is $d+1$. That is,
\begin{eqnarray*}
\max_{x \in \mathbb{R}^d} \#\{ f \in \Bi,~\lim_{t \to -\infty}f(t)=x\}=d+1.
\end{eqnarray*}
\end{conj}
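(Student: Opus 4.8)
The plan is to prove the matching inequalities $M\le d+1$ and $M\ge d+1$, where $M:=\max_{x\in\mathbb{R}^d}\#\{f\in\Bi,~\lim_{t\to-\infty}f(t)=x\}$. First I would record a reduction. By Steps~4--5, for each level $t$ the cells $\{\Psi_t(y):y\in\mathcal{L}_t\}$ form a locally finite cover of $\mathbb{R}^d$ by closed sets with pairwise Lebesgue-null intersections, and a point $x$ receives exactly $k$ bi-infinite branches toward the past if and only if, for all sufficiently negative $t$, it lies in exactly $k$ distinct cells $\Psi_t(y)$; moreover this number is non-decreasing as $t\downarrow-\infty$ because $\Psi_t(y)=\bigcup_{y'\in\mathcal{D}_{t'}^t(y)}\Psi_{t'}(y')$ and descendants of distinct points are distinct. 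Consequently $t\mapsto N_t:=\max_{x}\#\{y\in\mathcal{L}_t:x\in\Psi_t(y)\}$ is a non-increasing, dilation-invariant (invariance under the $D_\alpha$) integer-valued process, hence a.s. constant, and that constant equals $M$ (for $M\ge N_0$, track a maximizing point downwards through descendants; for $M\le N_0$, use that $N_t$ is constant). So it suffices to analyze the single cell decomposition at level $0$.

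\textbf{Upper bound.} Suppose for contradiction that some $x$ lies in $d+2$ distinct cells $\Psi_0(y_1),\dots,\Psi_0(y_{d+2})$. Following these branches upward to their common vertices organizes $\{y_1,\dots,y_{d+2}\}$ into a finite tree whose internal nodes are \emph{branch points} of the DSF (Poisson vertices having at least two infinite son-subtrees); at the topmost branch point $z^\star$, at level $s^\star$, the $d+2$ branches are distributed among $m\ge2$ infinite son-subtrees $T_1,\dots,T_m$ of $z^\star$, at least two of which converge, along a sub-branch, to $x$. The crux is a geometric lemma asserting that (a) the closures in $\partial H=\mathbb{R}^d$ of the past-limit sets of the infinite son-subtrees of a fixed Poisson vertex are in general position — at most $d+1$ of them contain a common point of $\partial H$ — and (b) a cascade of two or more branch points cannot feed branches into a single point of $\partial H$, since each successive split strictly separates the corresponding regions of $\partial H$. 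Granting this, $m\le d+1$ and there is no cascade, contradicting the existence of $d+2$ branches; hence $M\le d+1$.

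\textbf{Lower bound.} Since $\{M\ge d+1\}$ is translation invariant, it suffices to show $\mathbb{P}[M\ge d+1]>0$ and invoke ergodicity. I would build a positive-probability event on which a Poisson vertex $z_0$ (at height close to $e^0$, say) has exactly $d+1$ sons $z_1,\dots,z_{d+1}$ placed near the vertices of a small regular simplex below $z_0$, with the upper semi-ball below each $z_i$ large and empty so that $z_0$ is indeed their parent and with the local configuration around $z_0$ generic. Each son-subtree $T_i$ is then infinite with positive probability by the second-moment estimate of Step~4 ($\mathbb{P}_{0\in\mathcal{L}_0}[\forall t\ge0,~\mathcal{D}_{-t}^0(0)\neq\emptyset]>0$), and the control of backward fluctuations (Theorem~\ref{Thm:controlbackward}) confines the past-limit of $T_i$ to a small neighbourhood of $\pi_x(z_i)$; refining the event so that all these neighbourhoods, hence all $d+1$ past-asymptotic directions, coincide at the common centre $\pi_x(z_0)$ produces $d+1$ distinct bi-infinite branches sharing an asymptotic direction, so $M\ge d+1$.

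\textbf{Main obstacle.} Both halves hinge on the same difficulty: the hypersurfaces bounding the cells $\Psi_0(y)$ are not bisectors of a tractable point process (as in a Voronoï tessellation) but global objects determined by the full, long-range-dependent DSF tree, so neither the general-position lemma of the upper bound nor the "forcing of a common asymptotic direction" of the lower bound is a local event. Turning these into rigorous statements — presumably via delicate local-surgery or mass-transport arguments showing that the set of $x$ with $\ge d+2$ incident cells has expected cardinality zero, while $(d+1)$-fold incidences occur with positive density — is exactly what is missing, which is why the statement is only conjectured here.
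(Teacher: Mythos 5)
This statement is labelled as a \emph{conjecture} in the paper, not a theorem; the paper offers no proof, only a heuristic paragraph. You should be aware you were being asked to prove an open statement, and you handled this honestly.

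Your reduction to the cell decomposition at a single level and the framing ``upper bound $=$ general position of cells, lower bound $=$ positive-probability event $+$ ergodicity'' matches the paper's own heuristic almost exactly (cells $\{\Psi_t(x)\}$ tile $\mathbb{R}^d$, a shared boundary point of $k$ cells corresponds to $k$ branches that have not yet coalesced, and one expects Voronoï-type general position). You also correctly isolate the obstruction: the cell boundaries are not bisectors of a Poisson process but global, long-range-dependent objects, so there is no local zero--one argument that rules out $(d+2)$-fold incidences or forces a $(d+1)$-fold one. That is exactly why the paper leaves this as a conjecture, and your proposal does not close the gap.

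One concrete weakness in your sketch, beyond the acknowledged one: in the upper bound you assert both (a) a single branch point has at most $d+1$ infinite son-subtrees sharing an asymptotic direction, and (b) ``a cascade of two or more branch points cannot feed branches into a single point of $\partial H$.'' Statement (b) is not merely unproved --- it is likely false as stated, because a cascade is a perfectly legitimate way to produce $\le d+1$ branches with a common limit (e.g.\ for $d=2$, a first branch point with two infinite sons, one of which contains a second branch point with two infinite sons, gives three branches). What you actually need is the quantitative statement that the \emph{total} multiplicity aggregated over the whole finite subtree of branch points is $\le d+1$, which is closer in spirit to a codimension count (a $k$-fold intersection of cell boundaries has codimension $k-1$, hence is empty once $k-1>d$). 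That codimension count, transported through the non-Markovian tree structure, is the genuinely missing ingredient; your ``no cascade'' shortcut overclaims. Similarly, in the lower bound, Theorem~\ref{Thm:controlbackward} only controls backward deviations in $L^p$ under the Palm measure; turning that into a positive-probability event in which $d+1$ prescribed son-subtrees all have their past limits land on one common point (rather than merely near it) is a forcing statement that the fluctuation bounds in the paper do not supply. Reformulating both halves as mass-transport identities --- as you suggest in your closing paragraph --- is a sensible direction, but that is a research program, not a proof.
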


The intuition behind this conjecture can be explained as follows. Let us consider the family of cells $\{\Psi_t(x),~x \in \mathcal{L}_t)$ for a given level $t \in \mathbb{R}$. They cover $\mathbb{R}^d$ and they do not overlap except for boundaries. A boundary point shared by $k$ cells corresponds to an asymptotic direction with $k$ branches that have not coalesced at level $t$. It is reasonable to expect that it exists $d+1$ cells sharing a same boundary point, but that it does not exist $d+2$ cells overlapping at a same point. If this is true for every level $t \in \mathbb{R}$, it implies the existence of $d+1$ branches sharing a same asymptotic direction but the non-existence of $d+2$ such branches.

\section{Appendix : first properties of the hyperbolic DSF}
\label{S:firstprop}

In this section, we show Proposition \ref{Prop:firstprop}.

\subsection{The edges never cross}
Let us show that the DSF is non-crossing a.s. We first run out the case $d \ge 2$. Almost surely, $N$ does not contain four coplanary points, so two edges never cross.

In the following, we suppose $d=1$. Recall that $\pi_y:(x,y) \mapsto y$ is the projection on the $y$-coordinate. Let $z_1,z_2 \in \mathcal{N}_S$ and suppose that $[z_1,A(z_1)]_{eucl} \cap [z_2, A(z_2)]_{eucl} \neq \emptyset$. We denote by $P_{eucl}$ the intersection point of $[z_1,A(z_1)]_{eucl}$ and $[z_2, A(z_2)]_{eucl}$. Let us suppose that there are no two points $z_1$, $z_2$ with $\pi_y(z_1)=\pi_y(z_2)$ (this happens with probability $0$). We will prove the following:

\begin{claim}
\label{Cl:geodesicscross}
The geodesics $[z_1,A(z_1)]$ and $[z_2,A(z_2)]$ meet at one point $P_{hyp}$.
\end{claim}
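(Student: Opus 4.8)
We are in dimension $d=1$, so $H \subset \mathbb{R}^2$. We have two edges $[z_1,A(z_1)]_{eucl}$ and $[z_2,A(z_2)]_{eucl}$ whose Euclidean segments cross at a point $P_{eucl}$, and we assumed no two Poisson points share the same ordinate. We want to show the hyperbolic geodesics $[z_1,A(z_1)]$ and $[z_2,A(z_2)]$ also meet, at a point $P_{hyp}$.

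**Plan.** The plan is to exploit a topological/continuity argument. Each Euclidean segment $[z_i,A(z_i)]_{eucl}$ and the corresponding geodesic $[z_i,A(z_i)]$ share the same endpoints $z_i$ and $A(z_i)$. Since $z_i$ lies strictly below $A(z_i)$ (recall $\pi_y$ increases from daughter to parent, by the definition of the parent via $\mathcal{H}_\infty$), both curves are graphs over the $y$-interval $[\pi_y(z_i),\pi_y(A(z_i))]$ — this is clear for the segment, and for the semicircular geodesic it holds because a vertical line meets a circle centered on $\partial H$ in at most one point within $H$ (and for a vertical geodesic it is trivial). So I can parametrize each edge, in both its Euclidean and hyperbolic incarnations, by the ordinate $y$. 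First I would consider, on the common range of ordinates of the two edges (a nonempty interval, since $P_{eucl}$ exists at some ordinate belonging to both ranges), the function $\delta_{eucl}(y) = x$-coordinate of edge $1$ at ordinate $y$ minus $x$-coordinate of edge $2$ at ordinate $y$; it changes sign (or vanishes) because the Euclidean segments cross. Then I would consider the analogous function $\delta_{hyp}(y)$ for the geodesics, and argue it must also change sign, whence by the intermediate value theorem it vanishes, giving $P_{hyp}$.

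**The key step.** The heart of the matter is showing that $\delta_{hyp}$ changes sign given that $\delta_{eucl}$ does. The clean way is via the endpoints. At the two extreme ordinates of the common range — each of which is either $\pi_y(z_i)$ or $\pi_y(A(z_i))$ for one of the two edges — the Euclidean segment and the geodesic of that edge agree (if the extreme ordinate is an endpoint of that edge) OR I need a comparison. Let me restrict to the subinterval of ordinates between $\max(\pi_y(z_1),\pi_y(z_2))$ and $\min(\pi_y(A(z_1)),\pi_y(A(z_2)))$; this interval is nonempty since it contains $\pi_y(P_{eucl})$. Actually the cleanest route: at an ordinate $y^\star$ where $\delta_{eucl}$ is positive and another $y^{\star\star}$ where it is negative, I want the same for $\delta_{hyp}$. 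Since the four endpoints $z_1,A(z_1),z_2,A(z_2)$ are points of $H$ and edges share endpoints with geodesics, the sign of $\delta_{hyp}$ at an ordinate equal to one of the four endpoint-ordinates is forced: e.g. at $y = \pi_y(z_1)$, edge $1$'s $x$-coordinate is $\pi_x(z_1)$ for both the Euclidean and hyperbolic versions. The subtlety is that at such an ordinate, edge $2$'s $x$-coordinates (Euclidean vs. hyperbolic) differ. So I instead compare the full curves: I claim that within the common ordinate range, the geodesic $[z_2,A(z_2)]$ stays on a definite side relative to a suitable reference, and conclude by the following two-line observation — if two Jordan arcs in a vertical strip share the endpoint-ordinate structure and one pair crosses, then... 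Honestly the most robust formulation is: consider the closed region bounded by $[z_1,A(z_1)]_{eucl}$, $[z_1,A(z_1)]$ (same endpoints, so together they bound a region $\Omega_1$), similarly $\Omega_2$; a short case analysis on whether $[z_2,A(z_2)]$ enters $\Omega_1$ or not, using that $[z_2,A(z_2)]_{eucl}$ does cross $[z_1,A(z_1)]_{eucl}$.

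**Main obstacle.** The main obstacle is purely the bookkeeping of the planar topology: making precise, without hand-waving, the passage from "the straight segments cross" to "the geodesics cross." I expect the cleanest execution is an intermediate-value-theorem argument on $\delta_{hyp}(y)$ over the common ordinate interval $[y_-,y_+]$ with $y_- = \max(\pi_y(z_1),\pi_y(z_2))$, $y_+ = \min(\pi_y(A(z_1)),\pi_y(A(z_2)))$: at $y_-$ one of the two edges is at its lower endpoint, at $y_+$ one is at its upper endpoint, and I can pin down the sign of $\delta_{hyp}$ at these two ordinates by comparing with $\delta_{eucl}$ there — because at $y_-$, say $y_- = \pi_y(z_1)$, we have $\delta_{hyp}(y_-) = \pi_x(z_1) - (\text{hyp-}x\text{-coord of edge 2 at }y_-)$, and I only need that this has the same sign as $\delta_{eucl}(y_-) = \pi_x(z_1) - (\text{eucl-}x\text{-coord of edge 2 at }y_-)$, which holds provided $\pi_x(z_1)$ is not strictly between those two $x$-coordinates; if it is, then $z_1$ lies inside the region $\Omega_2$ bounded by segment $2$ and geodesic $2$, and since $A(z_1)$ lies on the boundary-hyperplane side... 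I would handle that degenerate sub-case by noting it forces the geodesic $[z_1,A(z_1)]$ to exit $\Omega_2$, hence to cross $\partial\Omega_2$, and $\partial\Omega_2$ consists of the segment (which edge $1$'s segment already crosses) and the geodesic — concluding in either case. This last sub-case is the only genuinely fiddly point; everything else is routine continuity.
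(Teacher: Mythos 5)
Two things go wrong. The first is a flaw in the setup: you justify that the geodesic $[z_i,A(z_i)]$ is a graph over the ordinate interval by the (true) fact that a vertical line meets a circle centred on $\partial H$ at most once in $H$ — but that makes the arc a graph over $x$, not over $y$. In fact the arc need not be monotone in $y$: when $z_i$ and $A(z_i)$ lie on opposite sides of the apex of the semicircle (which does occur for DSF edges — the paper itself remarks that \emph{``we want that the $y$-coordinate increases along a given edge, and it is not the case using geodesics''}), the arc rises above $\pi_y(A(z_i))$ and comes back down, so the ordinate-parametrization is double-valued near the top endpoint and its left-limit at $y_+=\pi_y(A(z_1))$ is the reflection of $A(z_1)$ across the apex, not $A(z_1)$ itself. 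The sign comparison you rely on at $y_+$ is therefore not between the quantities you think. The paper avoids all of this by replacing the IVT with a Jordan-curve parity count on the closed curve $[z_i,A(z_i)]\cup[z_i,A(z_i)]_{eucl}$, which needs no monotonicity.

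The second gap is the one you flag as ``fiddly'' and leave open. To pin the sign of $\delta_{hyp}$ at the endpoints you need $z_1,A(z_1)\notin R_2$ and $z_2,A(z_2)\notin R_1$, where $R_i$ (your $\Omega_i$) is the lens bounded by segment $i$ and geodesic $i$. Your workaround for ``$z_1\in R_2$'' implicitly assumes $A(z_1)\notin R_2$ without justification, and even then shows only that $[z_1,A(z_1)]$ meets $\partial R_2$, possibly through the segment part rather than the geodesic $[z_2,A(z_2)]$. The observation you are missing, which drives the paper's proof and makes this sub-case impossible, is that both $[z_i,A(z_i)]_{eucl}$ and $[z_i,A(z_i)]$ lie in $B_H(z_i,d(z_i,A(z_i)))$ — which is also a Euclidean ball, hence Euclidean-convex, so it contains both the chord and the arc — and in the half-plane $\mathbb{R}^d\times(\pi_y(z_i),\infty)$ since $\pi_y(A(z_i))>\pi_y(z_i)$; hence both curves, and by simple connectedness the lens $R_i$ they bound, lie in the upper semi-ball $B^+(z_i)$, which is Poisson-free by definition of $A(z_i)$. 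This is not a free geometric fact: two upward geodesic arcs in $\mathbb{H}^2$ can have crossing chords yet disjoint arcs in general, and the empty semi-ball coming from the DSF construction is exactly what forbids such a configuration here.
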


We suppose Claim \ref{Cl:geodesicscross} for the moment. We have $\pi_y(A(z_1))>\pi_y(P_{eucl})>\pi_y(z_2)$, thus by definition of the parent, $d(z_2,A(z_2))<d(z_2,A(z_1))$. Then
\begin{eqnarray}
&d(z_2,P_{hyp})+d(P_{hyp},A(z_2))&=d(z_2,A(z_2))<d(z_2,A(z_1)) \nonumber\\
&&\le d(z_2,P_{hyp})+d(P_{hyp},A(z_1)),
\end{eqnarray}
so $d(P_{hyp},A(z_2))<d(P_{hyp},A(z_1))$. On the other hand, interchanging $z_1$ and $z_2$ in the previous calculation leads to $d(P_{hyp},A(z_1))<d(P_{hyp},A(z_2))$. This is a contradiction. Therefore $[z_1,A(z_1)]_{eucl} \cap [z_2,A(z_2)]_{eucl}=\emptyset$.

It remains to show Claim \ref{Cl:geodesicscross}. For $i=1,2$, consider the simple closed curve supported on $[z_i,A(z_i)] \cup [z_i,A(z_i)]_{eucl}$. Let us denote by $R_i$ the region of $H$ inside this closed curve. We now show that $R_i$ contains no point of $N$. Both $[z_i,A(z_i)]_{eucl}$ and $[z_i,A(z_i)]$ are contained in $B_H(z_i,d(z_i,A(z_i)))$ since $B_H(z_i,d(z_i,A(z_i)))$ is a Euclidean ball so it is convex for both Hyperbolic and Euclidean metrics. Moreover, $\pi_y(A(z_i))>\pi_y(z_i)$, so both $[z_i,A(z_i)]$ and $[z_i,A(z_i)]_{eucl}$ are contained in the upper-half plane $\mathbb{R}^d \times (\pi_y(z_i),\infty)$. Thus, both $[z_i,A(z_i)]$ and $[z_i,A(z_i)]_{eucl}$ are contained in $B^+(z_i)$. By simple connexity, $R_i \subset B^+(z_i)$. Thus, since $N \cap B^+(z_i)=\emptyset$, $R_i$ contains no points of $N$.

By assumption $[z_1,A(z_1)]_{eucl}$ crosses $[z_2,A(z_2)]_{eucl}$ exactly once, and none of the extremities $z_1$ and $A(z_1)$ belong to $R_2$. Thus $[z_1,A(z_1)]_{eucl}$ should cross $[z_2,A(z_2)]$ exactly once. Now, consider $[z_2,A(z_2)]$. None of the extremities $z_2$ and $A(z_2)$ belong to $R_1$, so by the same argument, $[z_2,A(z_2)]$ crosses $[z_1,A(z_1)]$ exactly once. This proves Claim \ref{Cl:geodesicscross} and achieves the proof of Proposition \ref{Prop:noncrossing}.

\subsection{The DSF has finite degree}

We move on to show that the DSF is locally finite a.s. Fix the origin $z_0:=(0,e^0)$. Consider $N'=N \cup \{z_0\}$ and consider the DSF on $N'$. Since $N$ is a Poisson Point Process, $N'$ has same law as the Palm version of $N$ conditioned that $z_0 \in N$. The origin $z_0$ has one parent almost surely, so it has to be shown that $z_0$ has finitely many sons almost surely. We apply Campbell formula \cite{stochastic}. Consider the function
\begin{eqnarray}
F: \mathcal{N}_S \times \mathbb{H}^d &\rightarrow& \mathbb{R}_+ \nonumber\\
(\eta,z) &\mapsto& \mathbf{1}_{B_+(z,d(z,0)) \cap N=\emptyset}
\end{eqnarray}

For $z \in N$, if $z$ is a son of $z_0$ then $B_+(z,d(z,z_0))=\emptyset$ so $F(N \backslash \{z\},z)=1$. Therefore,
\begin{eqnarray}
\label{E:finitedegreeeq1}
&\mathbb{E}\left[ \#\{z \in N,~(z,z_0) \in \vec{E} \} \right] &\le \mathbb{E}\left[ \sum_{z \in N} F(N \backslash \{z\},z) \right] \nonumber\\
&&=\int_{\mathbb{H}^d} \mathbb{E}\left[ F(N,z) \right]~dz \nonumber\\
&&=\int_{\mathbb{H}^d} \exp\big(-\lambda\mu(B^+(z,d(z,z_0)))\big)~dz, \nonumber\\
&&=\int_{\mathbb{H}^d} \exp\big(-\lambda\mu(B^+(0,d(z,z_0))) \big)~dz,
\end{eqnarray}
where Campbell formula was used in the first equality. The last inequality holds since, for all $\rho>0$ $B^+(z,\rho)$ have same volume as $B^+(z_0,\rho)$ by isometry invariance. We now rewrite the integral above using the following coordinates transformation formula:

\begin{lemma}
\label{Lem:polarcoord}
Let $f:\mathbb{R}_+ \to \mathbb{R}_+$. Then
\begin{eqnarray}
\int_{\mathbb{H}^d} f(d(z,z_0))~dz=\int_{\mathbb{R}_+} s(\rho)f(\rho)~d\rho
\end{eqnarray}
$s:\mathbb{R}_+ \to \mathbb{R}_+$ is some function. This function verify $s(\rho) \sim \beta e^{d\rho}$ when $\rho \to \infty$ for some constant $\beta>0$. 
\end{lemma}

The proof of lemma \ref{Lem:polarcoord} is given in the Appendix. This formula applied to $f(\rho)=\mu(B^+(z_0,\rho))$ and (\ref{E:finitedegreeeq1}) lead to:
\begin{eqnarray}
&\mathbb{E}\left[ \#\{z \in N,~(z,z_0) \in \vec{E} \} \right] \le \int_{\mathbb{R}_+} s(\rho) e^{-\lambda\mu(B^+(z_0,\rho))})~d\rho.
\end{eqnarray}

In order to show that the right-hand side is finite, we need to lower-bound $\mu(B^+(z_0,\rho))$. Suppose for the moment that, for all $\rho$ large enough,
\begin{eqnarray}
\label{E:lowbplus}
\mu(B^+(z_0,\rho)) \ge e^{d\rho/3},
\end{eqnarray}
Then
\begin{eqnarray}
&\mathbb{E}\left[ \#\{z \in N,~(z,z_0) \in \vec{E} \} \right] &\le \int_{\mathbb{R}_+} s(\rho) e^{-\lambda\mu(B^+(z_0,\rho))})~d\rho \nonumber\\
&&\le \int_{\mathbb{R}_+} s(\rho)\exp(-\lambda e^{d\rho/3})~d\rho \nonumber\\
&&<\infty
\end{eqnarray}
since $s(\rho) \sim \beta \exp(d\rho) \ll \exp(\lambda e^{d\rho/3})$. Thus $0$ has a finite number of sons almost surely, this shows that the DSF is locally finite almost surely.

It remains to show (\ref{E:lowbplus}). Let $\rho>0$. Consider the cylinder
\begin{eqnarray}
\mathcal{C}_\rho:=B_{\mathbb{R}^d}(0,e^{\frac{2}{5}\rho}) \times [1,e^\rho-e^{-\rho}].
\end{eqnarray}

The claim is that, when $\rho$ is large enough, $\mathcal{C}_\rho \subset B^+(0,\rho)$. Indeed, by the discussion below Corollary \ref{Cor:distvert}, it follows that the Euclidean center of $B_H(z_0,\rho)$ is $(e^\rho+e^{-\rho})/2$, thus by reflectional symmetry with respect to the hyperplane $\mathbb{R}^d \times \{(e^\rho+e^{-\rho})/2\}$, it suffices to show that $B_{\mathbb{R}^d}(0,e^{\frac{2}{5}\rho}) \subset B_H(0,\rho)$ for $\rho$ large enough. It follows from Corollary 2 in the Supplementary materials that, for $r$ large enough, $d((0,e^0),(x,e^0)) \le 5/2\ln(r)$ for all $x \in B_{\mathbb{R}^d}(0,r)$, thus, for $\rho$ large enough $B_{\mathbb{R}^d}(0,e^{\frac{2}{5}\rho}) \subset B_H(z_0,\rho)$ and the claim is proved.

Finally, we can easily compute the volume of $\mathcal{C}_\rho$:
\begin{eqnarray}
\mu(\mathcal{C}_\rho)=\eta e^{\frac{2}{5}d\rho} \int_1^{e^\rho-e^{-\rho}} \frac{dz}{z^{d+1}}\sim\frac{\eta}{d} e^{\frac{2}{5}d\rho} \text{ when } \rho \to \infty.
\end{eqnarray}
where $\eta$ denotes the volume of the (Euclidean) unit ball in $\mathbb{R}^d$.
Thus, for $\rho$ large enough,
\begin{eqnarray}
\mu(B^+(z_0,\rho)) \ge \mu(\mathcal{C}_\rho) \ge \frac{\eta}{2d} e^{\frac{2}{5}d\rho} \ge e^{d\rho/3},
\end{eqnarray}
this achieves the proof.

\subsection{Controlling the number of points at a given level}
\label{S:nop}

We finally prove Proposition \ref{Prop:nop}. By the dilation invariance property of model, it is enough to show it for $t=0$. Let $R>0$. We will in fact prove that $(\#\mathcal{L}_0 \cap B(0,R))^p$ admits exponential moments. Let $n \in \mathbb{N}$, and $L>0$ depending on $n$ that will be chosen later. Let us partition $\#\mathcal{L}_0 \cap B(0,R)$ in two sets:
\begin{eqnarray*}
E_{\le L}:=\{x \in \#\mathcal{L}_0 \cap B(0,R),~d((x,e^0)_\downarrow, (0,e^0)) \le L\}, \nonumber\\
E_{>L}:=\{x \in \#\mathcal{L}_0 \cap B(0,R),~d((x,e^0)_\downarrow, (0,e^0))>L\}.
\end{eqnarray*}
We have
\begin{eqnarray}
\label{E:majorprobaterm0}
\mathbb{P}[\#\mathcal{L}_0 \cap B(0,R) \ge n] \le \mathbb{P}[E_{\le L} \ge n]+\mathbb{P}[E_{>L} \neq \emptyset].
\end{eqnarray}
Then we will upperbound the two terms of this sum.

\bigbreak

\paragraph{Step 1:} we upperbound $\mathbb{P}[E_{\le L} \ge n]$.

Clearly, $\#E_{\le L} \le \#(N \cap B_H((0,e^0),L)$. Let us denote by $\mathcal{V}$ the Hyperbolic volume of $\#(N \cap B_H((0,e^0),L)$. We use the following lemma to estimate $\mathcal{V}$:

\begin{lemma}[Volume of a Hyperbolic ball]
\label{Lem:volball}
For any $z_0 \in \mathbb{H}^{d+1}$,
\begin{eqnarray*}
\mu(B_{(z_0,\rho)}) \sim \frac{\mathcal{S}(d)}{d2^d} e^{d\rho}\text{ when } \rho \to \infty.
\end{eqnarray*}
\end{lemma}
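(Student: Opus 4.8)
The plan is to reduce the computation to a one-dimensional integral via the polar-coordinate formula (Lemma \ref{Lem:polarcoord}) and then carry out an elementary asymptotic analysis. Since hyperbolic volume is invariant under isometries and $\mathbb{H}^{d+1}$ is homogeneous, $\mu(B_H(z_0,\rho))$ does not depend on $z_0$, so it suffices to treat one fixed origin. Applying Lemma \ref{Lem:polarcoord} to the non-negative measurable function $f=\mathbf{1}_{[0,\rho]}$ gives
\[ \mu(B_H(z_0,\rho)) = \int_{\mathbb{H}^{d+1}} \mathbf{1}_{d(z,z_0)\le \rho}\,dz = \mathcal{S}(d)\int_0^\rho \sinh(r)^d\,dr . \]

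It then remains to show that $\int_0^\rho \sinh(r)^d\,dr \sim \frac{1}{d\,2^d}\,e^{d\rho}$ as $\rho\to\infty$. The cleanest route is the binomial expansion $\sinh(r)^d = 2^{-d}\sum_{k=0}^d \binom{d}{k}(-1)^k e^{(d-2k)r}$: integrating term by term over $[0,\rho]$, the $k=0$ summand contributes $\frac{1}{d\,2^d}(e^{d\rho}-1)$, while every remaining summand is $O(e^{(d-2)\rho})$ — and $O(\rho)$, resp. $O(1)$, in the two boundary cases — hence $o(e^{d\rho})$. Equivalently, one may apply L'Hôpital's rule to the ratio $\int_0^\rho \sinh(r)^d\,dr \big/ e^{d\rho}$ (both quantities diverging): the numerator's derivative is $\sinh(\rho)^d\sim 2^{-d}e^{d\rho}$ and the denominator's is $d\,e^{d\rho}$, so the ratio tends to $\frac{1}{d\,2^d}$. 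Plugging this back into the displayed identity yields $\mu(B_H(z_0,\rho)) \sim \frac{\mathcal{S}(d)}{d\,2^d}\,e^{d\rho}$, which is the claim.

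There is no genuine obstacle here; the only step deserving a line of care is the verification that the subleading contributions to $\int_0^\rho \sinh(r)^d\,dr$ are negligible, which the binomial computation (or L'Hôpital) settles at once. As a sanity check, in the base case $d=1$ one has $\mathcal{S}(1)=2$ and $\int_0^\rho\sinh(r)\,dr = \cosh\rho - 1 \sim e^\rho/2$, so $\mu(B_H(z_0,\rho))\sim e^\rho = \frac{\mathcal{S}(1)}{1\cdot 2^1}\,e^\rho$, in agreement with the stated formula.
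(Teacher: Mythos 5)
Your argument is exactly the paper's: apply Lemma \ref{Lem:polarcoord} with $f=\mathbf{1}_{[0,\rho]}$ to reduce to $\mathcal{S}(d)\int_0^\rho\sinh(r)^d\,dr$, then extract the leading asymptotics — the paper dismisses the latter as "easy computations," which your binomial-expansion/L'Hôpital step carries out correctly. Nothing to change.
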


This Lemma follows from Lemma \ref{Lem:polarcoord} applied to $f=\mathbf{1}_{[0,\rho]}$ and easy computations. Then, when $L \to \infty$, large enough, $\mathcal{V} \sim \mathcal{S}(d)/(d2^d) e^{dL}=O(e^{dL})$. So $\#N \cap B_H((0,e^0),L)$ is distributed according to a Poisson law of parameter $\lambda\mathcal{V} \le Ce^{dL}$ for some constant $C$ large enough. We use the following Chernoff bound \cite{boundPoisson}:

\begin{lemma}[Chernoff bound for a Poisson distribution]
If $X$ is distributed according to a Poisson low of parameter $\alpha>0$, then, for $n \ge \alpha$,
\begin{eqnarray*}
\mathbb{P}[X \ge n] \le \frac{e^{-\alpha}(e\alpha)^n}{n^n}.
\end{eqnarray*}
\end{lemma}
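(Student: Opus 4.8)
The plan is to use the exponential Markov inequality (the Chernoff method), which is the standard route for Poisson tail bounds. First I would recall that the moment generating function of a Poisson variable $X$ of parameter $\alpha$ is $\mathbb{E}[e^{\theta X}] = \exp(\alpha(e^\theta-1))$ for every $\theta \in \mathbb{R}$, an immediate consequence of the power series for the exponential. Then, for any $\theta > 0$, since $t \mapsto e^{\theta t}$ is nonnegative and increasing, Markov's inequality gives
\begin{eqnarray*}
\mathbb{P}[X \ge n] = \mathbb{P}[e^{\theta X} \ge e^{\theta n}] \le e^{-\theta n}\,\mathbb{E}[e^{\theta X}] = \exp\!\big(-\theta n + \alpha(e^\theta - 1)\big).
\end{eqnarray*}

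The second step is to optimize the exponent $\varphi(\theta) := -\theta n + \alpha(e^\theta - 1)$ over $\theta > 0$. This function is strictly convex, with $\varphi'(\theta) = -n + \alpha e^\theta$, which vanishes at $\theta^\star = \ln(n/\alpha)$; the hypothesis $n \ge \alpha$ is precisely what guarantees $\theta^\star \ge 0$, so this critical point is admissible. Substituting $\theta = \theta^\star$ yields $\varphi(\theta^\star) = -n\ln(n/\alpha) + n - \alpha$, and exponentiating,
\begin{eqnarray*}
\mathbb{P}[X \ge n] \le \exp\!\big(-n\ln(n/\alpha) + n - \alpha\big) = e^{-\alpha}\,\frac{(e\alpha)^n}{n^n},
\end{eqnarray*}
which is the claimed inequality. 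In the boundary case $n = \alpha$ one simply takes $\theta^\star = 0$, where the right-hand side equals $1$ and the bound is vacuous.

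I do not expect any genuine obstacle here: the computation is routine and the only point requiring care is the role of the assumption $n \ge \alpha$, which keeps the optimal exponential tilt $\theta^\star$ nonnegative. Without this hypothesis the unconstrained minimizer of $\varphi$ over $\mathbb{R}$ would be negative, and the stated inequality would fail, since $\mathbb{P}[X \ge n]$ is bounded below by a positive constant when $n < \alpha$. If one wished to avoid calculus, an alternative would be to bound the tail series $\mathbb{P}[X \ge n] = e^{-\alpha}\sum_{k \ge n}\alpha^k/k!$ directly using $k! \ge (k/e)^k$ and summing a geometric-type majorant, but the Chernoff argument above is shorter and is the version I would record.
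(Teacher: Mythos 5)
The paper does not actually prove this lemma; it simply states it and cites a reference (\cite{boundPoisson}), so there is no internal proof to compare against. Your argument is the standard Chernoff bound for Poisson tails — compute the moment generating function $\exp(\alpha(e^\theta-1))$, apply exponential Markov, and optimize the tilt at $\theta^\star=\ln(n/\alpha)$, with the hypothesis $n\ge\alpha$ ensuring $\theta^\star\ge 0$ — and the computation is correct, including the boundary case $n=\alpha$ where the bound degenerates to $1$. This is exactly the argument one would expect to find in the cited source, so your proof fills the gap the paper leaves to the reference.
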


See \cite{boundPoisson} for a proof. Applying this bound to $\#N \cap B_H((0,e^0),L)$ leads to:
\begin{eqnarray}
\label{E:boundterm1}
\mathbb{P}[\#E_{\le L} \ge n] \le \mathbb{P}[\#(N \cap B_H((0,e^0),L) \ge n] \le \frac{\exp(-Ce^{dL})(C e^{dL+1})^n}{n^n}
\end{eqnarray}
if $n \ge Ce^{dL}$ and for $L$ large enough.

\bigbreak

\paragraph{Step 2:} we upperbound $\mathbb{P}[E_{> L} \neq \emptyset]$.

For $x \in E_{>L}$, by triangular inequality and Corollary 1 in the Supplementary materials,
\begin{eqnarray*}
&d((x,e^0)_\downarrow,(x,e^0)_\uparrow) &\ge d((x,e^0),(x,e^0)_\downarrow) \nonumber\\
&&\ge d((x,e^0)_\downarrow,(0,e^0))-d((x,e^0),(0,e^0)) \nonumber\\
\end{eqnarray*}
The second part of Corollary 2 in the Supplementary materials gives that $d((x,e^0),(0,e^0)) \le \|x\| \le R$. Then  $d((x,e^0)_\downarrow,(x,e^0)_\uparrow) \ge d((x,e^0)_\downarrow,(0,e^0))-R$.
Thus $N \cap B^+((x,e^0)_\downarrow,d((x,e^0)_\downarrow,(0,e^0))-R)=\emptyset$.
Let us define the set
\begin{eqnarray*}
E'_{>L}:=\{z \in N,~d(z,(0,e^0))>L \text{ and } N \cap B^+(z,d(z,(0,e^0))-R)=\emptyset \}.
\end{eqnarray*}
Then $\#E_{>L} \le \#E'_{L}$. Consider the function $f:\mathcal{N}_S \times H \to \mathbb{R}_+$ defined as
\begin{eqnarray*}
f(z,\eta):=\mathbf{1}_{d(z,(0,e^0))>L}\mathbf{1}_{B^+(z,d(z,(0,e^0))-R)=\emptyset}.
\end{eqnarray*}
By Campbell formula \cite{stochastic} and Fubini,
\begin{eqnarray}
\label{E:applycampbell}
\mathbb{E}\left[ \#E'_{>L} \right]=\mathbb{E}\left[ \sum_{z \in N} f(z,N \backslash \{z\}) \right]=\mathbb{E}\left[ \int_H f(z,N)~dz \right]=\int_H \mathbb{P}[z \in E']~dz.
\end{eqnarray}
For $z \in H$ such that $d(z,(0,e^0)) \le L$, $\mathbb{P}[z \in E'_{>L}]=0$. If $d(z,(0,e^0))>L$, then
\begin{eqnarray}
\label{E:boundproba}
\mathbb{P}[z \in E'_{>L}]=\exp(-\lambda \mu(B^+(z,d(z,(0,e^0))-R))) \overset{(\ref{E:lowbplus})}{\le} \exp\left(-\lambda e^{d (d(z,(0,e^0))-R)/3}\right).
\end{eqnarray}
Thus, using the change of coordinates formula (Lemma \ref{Lem:polarcoord}),
\begin{eqnarray*}
&\mathbb{E}\left[ \#E'_{>L} \right] &\overset{(\ref{E:applycampbell}),(\ref{E:boundproba})}{\le} \int_H \exp\left(-\lambda e^{d (d(z,(0,e^0))-R)/3}\right)~dz \nonumber\\
&&=\mathcal{S}(d) \int_L^\infty \sinh(\rho)^d \exp\left(-\lambda e^{d(\rho-R)/3} \right)~d\rho \nonumber\\
&&\le \mathcal{S}(d) \int_L^\infty \exp\left(d\rho-\lambda e^{d(\rho-R)/3} \right)~d\rho \text{ for $L$ large enough}.
\end{eqnarray*}
For $L$ large enough, since $d\rho-\lambda e^{d(\rho-R)/3} \le -e^{\rho/4}$,
\begin{eqnarray}
\label{E:boundterm2}
&\mathbb{E}\left[ \#E'_{>L} \right] &\le \int_L^\infty e^{-e^{\rho/4}} ~d\rho \le \int_L^\infty e^{\rho/4}e^{-e^{\rho/4}}=\left[ -4e^{-e^{\rho/4}}\right]_L^\infty=4e^{-e^{L/4}}.
\end{eqnarray}

\bigbreak

\paragraph{Step 3:} conclusion.
We now combine upperbounds obtained in Step 1 and Step 2. Let us take 
\begin{eqnarray*}
L=\frac{1}{d}\ln\left(\frac{n}{2C}\right),
\end{eqnarray*}
then $Ce^{dL}=n/2$. Consider $n$ large enough such that upperbounds (\ref{E:boundterm1}) and (\ref{E:boundterm2}) are satisfied. Then
\begin{eqnarray*}
&\mathbb{P}[\#\mathcal{L}_0 \cap B(0,R) \ge n] &\overset{(\ref{E:majorprobaterm0})}{\le} \mathbb{P}[E_{\le L} \ge n]+\mathbb{P}[E_{>L} \neq \emptyset]\nonumber\\
&&\overset{(\ref{E:boundterm1}),(\ref{E:boundterm2})}{\le} \frac{\exp(-n/2)(en/2)^n}{n^n}+4\exp\left(-e^{1/(4d)\ln(n/(2C))}\right) \nonumber\\
&&=\left(\frac{e^{1/2}}{n}\right)^n+4\exp\left(-\left(\frac{n}{2C}\right)^{\frac{1}{4d}} \right) \nonumber\\
&& \le e^{-n^{1/(5d)}} \text{ for $n$ large enough}.
\end{eqnarray*}
Therefore $\#\mathcal{L}_0 \cap B(0,R) \in L^p$ for all $p \ge 1$, this achives the proof of Proposition \ref{Prop:nop}. 


\begin{thebibliography}{10}

\bibitem{athreya}
Siva Athreya, Rahul Roy, and Anish Sarkar.
\newblock Random directed trees and forest-drainage networks with dependence.
\newblock {\em Electronic Journal of Probability}, 13:2160--2189, 2008.

\bibitem{baccelli}
Francois Baccelli and Charles Bordenave.
\newblock The radial spanning tree of a poisson point process.
\newblock {\em The Annals of Applied Probability}, 17(1):305--359, 2007.

\bibitem{percolationbeyondZd}
Itai Benjamini and Oded Schramm.
\newblock Percolation beyond $\mathbb{Z}^d$, many questions and a few answers.
\newblock {\em Electronic Communications in Probability}, 1:71--82, 1996.

\bibitem{benjamini}
Itai Benjamini and Oded Schramm.
\newblock Percolation in the hyperbolic plane.
\newblock {\em Journal of the American Mathematical Society}, 14(2):487--507,
  2001.

\bibitem{socialnetworks}
Mari{\'a}n Bogun{\'a}, Fragkiskos Papadopoulos, and Dmitri Krioukov.
\newblock Sustaining the internet with hyperbolic mapping.
\newblock {\em Nature Communications}, 1:62, 2010.

\bibitem{burton}
Robert~M Burton and Michael Keane.
\newblock Density and uniqueness in percolation.
\newblock {\em Communications in Mathematical Physics}, 121(3):501--505, 1989.

\bibitem{calka2018mean}
Pierre Calka, Aur{\'e}lie Chapron, and Nathana{\"e}l Enriquez.
\newblock Mean asymptotics for a poisson-voronoi cell on a riemannian manifold.
\newblock {\em International Mathematics Research Notes}, 2021(7):5413--5459,
  2021.

\bibitem{visibility}
Pierre Calka and Johan Tykesson.
\newblock Asymptotics of visibility in the hyperbolic plane.
\newblock {\em arXiv preprint arXiv:1012.5220}, 2010.

\bibitem{cannon}
James~W Cannon, William~J Floyd, Richard Kenyon, and Walter~R Parry.
\newblock Hyperbolic geometry.
\newblock {\em Flavors of geometry}, 31:59--115, 1997.

\bibitem{chavel}
Isaac Chavel.
\newblock {\em Riemannian Geometry}, volume~98 of {\em Cambridge studies in
  advanced mathematics}.
\newblock Cambridge University Press, 2 edition, 2006.

\bibitem{stochastic}
Sung~Nok Chiu, Dietrich Stoyan, Wilfrid~S Kendall, and Joseph Mecke.
\newblock {\em Stochastic geometry and its applications}.
\newblock John Wiley \& Sons, 2013.

\bibitem{dsftobw}
David Coupier, Kumarjit Saha, Anish Sarkar, and Viet~Chi Tran.
\newblock The 2d-directed spanning forest converges to the brownian web.
\newblock {\em The Annals of Probability}, 49(1):435--484, 2021.

\bibitem{dsf}
David Coupier and Viet~Chi Tran.
\newblock The 2d-directed spanning forest is almost surely a tree.
\newblock {\em Random Structures \& Algorithms}, 42(1):59--72, 2013.

\bibitem{ferrari}
P.A. Ferrari, C~Landim, and H~Thorisson.
\newblock Poisson trees, succession lines and coalescing random walks.
\newblock In {\em Annales de l'Institut Henri Poincare (B) Probability and
  Statistics}, volume~40, pages 141--152. Elsevier, 2004.

\bibitem{version_longue}
L.~Flammant.
\newblock The directed spanning forest in the hyperbolic space: extended
  version.
\newblock {\em arXiv preprint}, 2019.

\bibitem{flammant-RST}
Lucas Flammant.
\newblock Hyperbolic radial spanning tree.
\newblock {\em arXiv preprint arXiv:2012.03467}, 2020.

\bibitem{vanderhoorn}
Nikolaos Fountoulakis, Pim Van Der~Hoorn, Tobias M\"uller, and Markus Schepers.
\newblock Clustering in a hyperbolic model of complex network.
\newblock {\em Electronic Journal of Probability}, 26:1--132, 2021.

\bibitem{gangopadhyay}
Sreela Gangopadhyay, Rahul Roy, and Anish Sarkar.
\newblock Random oriented trees: a model of drainage networks.
\newblock {\em The Annals of Applied Probability}, 14(3):1242--1266, 2004.

\bibitem{howard}
C.~Douglas Howard and Charles~M. Newman.
\newblock Geodesics and spanning trees for {E}uclidean first-passage
  percolation.
\newblock {\em Ann. Probab.}, 29(2):577--623, 2001.

\bibitem{iversen}
Birger Iversen.
\newblock {\em Hyperbolic geometry}, volume~25 of {\em London Math. Soc.
  Student Texts}.
\newblock Cambridge U. Press, Cambridge, 1993.

\bibitem{liggett}
Thomas~M Liggett, Roberto~H Schonmann, and Alan~M Stacey.
\newblock Domination by product measures.
\newblock {\em The Annals of Probability}, 25(1):71--95, 1997.

\bibitem{lyons2017probability}
Russell Lyons and Yuval Peres.
\newblock {\em Probability on trees and networks}, volume~42.
\newblock Cambridge University Press, 2017.

\bibitem{markvorsen}
Steen Markvorsen, Sean McGuinness, and Carsten Thomassen.
\newblock Transient random walks on graphs and metric spaces with applications
  to hyperbolic surfaces.
\newblock {\em Proceedings of the London Mathematical Society}, 3(1):1--20,
  1992.

\bibitem{continuum}
Ronald Meester and Rahul Roy.
\newblock {\em Continuum percolation}, volume 119.
\newblock Cambridge University Press, 1996.

\bibitem{norristurner}
James Norris and Amanda Turner.
\newblock Hastings--levitov aggregation in the small-particle limit.
\newblock {\em Communications in Mathematical Physics}, 316(3):809--841, 2012.

\bibitem{paupert}
Julien Paupert.
\newblock Introduction to hyperbolic geometry, 2016.
\newblock Lecture notes (Arizona State University),
  \url{https://math.la.asu.edu/~paupert/HyperbolicGeometryNotes.pdf}.

\bibitem{pete2014probability}
G{\'a}bor Pete.
\newblock Probability and geometry on groups.
\newblock {\em Lecture notes for a graduate course, present version is at
  http://www. math. bme. hu/\~{} gabor/PGG. pdf}, 2014.

\bibitem{ratcliffe}
John~G. Ratcliffe.
\newblock {\em Foundations of Hyperbolic Manifolds}, volume~25 of {\em Graduate
  Texts in Mathematics}.
\newblock Springer, 2 edition, 2006.

\bibitem{roy2016random}
Rahul Roy, Kumarjit Saha, Anish Sarkar, et~al.
\newblock Random directed forest and the brownian web.
\newblock In {\em Annales de l'Institut Henri Poincar{\'e}, Probabilit{\'e}s et
  Statistiques}, volume~52, pages 1106--1143. Institut Henri Poincar{\'e},
  2016.

\bibitem{tykesson}
Johan Tykesson et~al.
\newblock The number of unbounded components in the poisson boolean model of
  continuum percolation in hyperbolic space.
\newblock {\em Electronic Journal of Probability}, 12:1379--1401, 2007.

\bibitem{boundPoisson}
Eli Upfal.
\newblock {\em Probability and computing: randomized algorithms and
  probabilistic analysis}.
\newblock Cambridge university press, 2005.

\end{thebibliography}

{\footnotesize

}
\end{document}